\documentclass[a4paper]{amsart}

\usepackage{lineno,hyperref,amsmath, amssymb,amsthm, amsfonts}
\usepackage[dvipdfmx]{graphicx}
\usepackage{color}
\usepackage{bm}
\usepackage[all]{xy} 
\usepackage{booktabs}
\usepackage{tabularx}
\newtheorem{theorem}{Theorem}[section] 
\newtheorem{lemma}[theorem]{Lemma}
\newtheorem{proposition}[theorem]{Proposition}
\newtheorem{corollary}[theorem]{Corollary}
\newtheorem{example}[theorem]{Example}

\newtheorem{claim}{Claim}

\theoremstyle{definition}
\newtheorem{remark}[theorem]{Remark}

\newcommand{\C}{\mathbb{C}}
\newcommand{\R}{\mathbb{R}}

\newcommand{\p}{\partial}

\newcommand{\onab}{\overline{\nabla}}

\newcommand{\fg}{\mathfrak{g}}

\newcommand{\abs}[1]{\lvert#1\rvert}

\begin{document}
\pagestyle{plain}

\title{Convergence of Cohomogeneity-One Lagrangian Mean Curvature Flow in Positive K\"ahler-Einstein Manifolds}

%% Group authors per affiliation:
\author{Naotoshi Fujihara, Toru Kajigaya and Albert Wood}%\fnref{myfootnote}}

\address[N. Fujihara]{Department of Mathematics, Faculty of Science, Tokyo University of Science
1-3 Kagurazaka Shinjuku-ku, Tokyo 162-8601 Japan\\ 
Research Institute for Science and Technology at Tokyo University of Science,
          Division of Joint Research of Geometry and Natural Science
}
\email{fujihara@rs.tus.ac.jp}

\address[T. Kajigaya]{Department of Mathematical Sciences, Shibaura Institute of Technology
307 Fukasaku, Minuma-ku, Saitama-shi, Saitama 337-8570, Japan\\
Research Institute for Science and Technology at Tokyo University of Science,
          Division of Joint Research of Geometry and Natural Science. }
\email{kajigaya@sic.shibaura-it.ac.jp}

\address[A. Wood]{Department of Mathematics, King's College London, Strand, London WC2R 2LS}
\email{albert.1.wood@kcl.ac.uk}

%\fntext[myfootnote]{}

%% or include affiliations in footnotes:
%\author[mymainaddress,mysecondaryaddress]{Elsevier Inc}
%\ead[url]{www.elsevier.com}

%\author[mysecondaryaddress]{Global Customer Service\corref{mycorrespondingauthor}}
%\cortext[mycorrespondingauthor]{Corresponding author}

%\address[mymainaddress]{1600 John F Kennedy Boulevard, Philadelphia}
%\address[mysecondaryaddress]{360 Park Avenue South, New York}

\subjclass[2020]{Primary 53E10; Secondary  53D12, 53D20}
\date{\today}
\keywords{}

\begin{abstract}
We prove that Lagrangian mean curvature flow starting from a closed, embedded, cohomogeneity-one Lagrangian in a closed, positive Kähler-Einstein manifold exists for all time, remains embedded, and converges smoothly and graphically to a minimal Lagrangian, under natural exactness and regularity assumptions.

We also study the generalised Lagrangian mean curvature flow of Behrndt in K\"ahler manifolds which are almost-Einstein in the sense that the Ricci form satisfies $\rho = C\omega + ndd^cf$, and which satisfy $C>0$. With analogous assumptions on the flow, we obtain subconvergence to an $f$-minimal Lagrangian submanifold, with an upgrade to smooth graphical convergence in the case that $(M,g,f)$ is analytic. 
%To the authors' knowledge, this is the first long-time existence and convergence result for the flow that does not make a smallness assumption on the initial condition.

The proof proceeds by first reducing the flow to a weighted curve shortening flow on a compact two-dimensional orbifold. We then establish both a Grayson-type description of finite-time singularities and a long-time subconvergence theorem for weighted curve shortening flow in the orbifold setting. Finally, we use a \L{}ojasiewicz--Simon inequality argument to upgrade to smooth convergence of the full flow.
\end{abstract}

\maketitle

\section{Introduction}

Given a K\"ahler manifold $(M^{2n}, J, \omega, g)$, a \textit{Lagrangian submanifold} is an $n$-dimensional submanifold $L^n \subset M^{2n}$ on which the symplectic form $\omega$ vanishes. Given such a submanifold, a natural question is whether there exists a canonical representative for that Lagrangian in the same isotopy class. 

One well-studied sub-class of K\"ahler manifolds is the \textit{Calabi--Yau manifolds}, which may be characterised as those admitting a nonzero parallel holomorphic volume form $\Omega \in \Gamma(\Lambda^{n,0}M)$; this condition implies that the Ricci form $\rho$ vanishes. The natural candidate for a canonical Lagrangian representative in this setting would be a \textit{special Lagrangian}, which may be defined either as a minimal Lagrangian (i.e.\ with vanishing mean curvature), or equivalently as a submanifold calibrated by $\text{Re}(e^{-i\overline\theta}\Omega)$ for some $\overline\theta \in \mathbb{R}$. Since special Lagrangians are calibrated, it follows from work of Harvey--Lawson \cite{harvey_calibrated_1982} that they are volume-minimising in their homology class. 

It was shown by Smoczyk that the mean curvature flow (the negative gradient flow for the volume functional on the space of submanifolds) preserves the Lagrangian condition in this setting, and it is therefore natural to hope that it may flow any Lagrangian to a special Lagrangian representative. Refined versions of this conjecture are now referred to as the Thomas-Yau conjecture, see \cite{thomas_special_2002, joyce_conjectures_2015, li_thomasyau_2025} for the development of the conjectures. \textit{Lagrangian mean curvature flow} (henceforth \textit{LMCF}) has since become an important and active field of study.

More generally, the mean curvature flow also preserves the Lagrangian condition in \textit{K\"ahler-Einstein manifolds}, i.e.\ manifolds for which $\rho = C \omega$ for a constant $C$. In this case, the natural representatives for Lagrangian homology classes are minimal Lagrangians. 
Note that not all Lagrangians can be expected to converge to a minimal representative; for example among closed embedded curves in the round sphere, precisely those which divide the total area of the sphere in half will converge to a geodesic. In general, defining the \textit{mean curvature one-form} by $\alpha_H := \omega(H, \cdot)$, it can be shown that under mean curvature flow, the mean curvature one-form at time $t$ belongs to the rescaled cohomology class $e^{Ct}[\alpha_H(0)]$. Since $\alpha_H$ vanishes for minimal Lagrangians, in the case where $C\geq 0$ only Lagrangian embeddings for which $\alpha_H$ is \textit{exact}  can possibly converge to a minimal representative (In the case of Calabi-Yau, this is known as the {\it zero-Maslov condition}). 

Even under this necessary condition, however, long-time existence and convergence results for LMCF  remain rare;
historically, such results have typically assumed a graphicality or convexity condition (as in work of Smoczyk--Wang \cite{smoczyk_mean_2002}, Chau--Chen--He \cite{chau_lagrangian_2012} and Tsai--Tsui--Wang \cite{tsai_mean_2024}, \cite{tsai_entire_2025}) or closeness to the minimal example (as in work of Li \cite{li_convergence_2012}, Tsai--Wang \cite{tsai_mean_2018} and Lee--Tsai \cite{lee_dynamical_2024}).
Furthermore, it is known that in general, LMCF may develop finite-time singularities, which are of the more pathological `Type II' in many situations (see Chen-Li \cite{CL}, Neves \cite{Neves} and Wang \cite{Wang}). In recent years, there has been substantial progress in the analysis of singularities of LMCF especially in a Calabi-Yau manifold; see, for example, Joyce \cite{joyce_conjectures_2015}, Lotay--Schulze--Sz\'ekelyhidi \cite{LSS}, and Li--Sz\'ekelyhidi \cite{LiSz}. Nevertheless, the formation of finite-time singularities makes establishing long-time existence for LMCF a non-trivial problem in general.

%\subsection{Symmetric Lagrangian Submanifolds}
\subsection{A Main Result}

In this work, our main goal is to prove a long-time existence and convergence result for Lagrangian mean curvature flow on a compact K\"ahler--Einstein manifold $(M^{2n}, J, \omega, g)$ with positive Einstein constant $C>0$, so that $\rho = C\omega$. To make the problem tractable, we will assume that our Lagrangians are symmetric with respect to a group action of a compact Lie group acting by holomorphic isometries. Symmetry assumptions in the context of LMCF have been considered before, for example in the works of Pacini on the flow of Lagrangian orbits \cite{Pacini}, Anciaux on $O(n)$-invariant solitons \cite{anciaux_construction_2006}, Madnick--W. on cohomogeneity-one flows in $\mathbb{C}^n$ \cite{wood_singularities_2024, MW}, Evans on symmetric tori in $\mathbb{CP}^2$ \cite{evans_lagrangian_2022} and Lotay--Oliveira on $S^1$-invariant flows in the Gibbons--Hawking ansatz \cite{lotay_special_2024,lotay_neck_2025}.

The key advantage of working in the symmetric setting is the following dimension reduction. 
Firstly, in our setting we assume that a compact Lie group $G$ acts on $M$ as a \textit{Hamiltonian group action}, with a moment map $\mu:M \to \mathfrak{g}^*$. It is known that any $G$-invariant Lagrangian $L$ must lie in a level set of $\mu$, i.e.\ there exists $\xi \in \mathfrak{g}^*$ for which $L \subset \mu^{-1}(\xi)$. Moreover, if $\xi$ is a regular value of $\mu$, then $G$ acts on $\mu^{-1}(\xi)$ locally freely, and by taking the quotient of this level set by the group action, we obtain a {\it K\"ahler quotient} $\mu^{-1}(\xi)/G$. If furthermore $G$ acts on $L$ freely, then the Lagrangian submanifold $L$ also reduces to a Lagrangian submanifold $L/G$ in $\mu^{-1}(\xi)/G$.

Secondly, when $M$ is a positive K\"ahler-Einstein manifold and $G\subset {\rm Aut}(M,J,\omega)$,  there exists a canonical choice of the moment map $\mu_{\rm can}: M\to \fg^*$ (see Futaki \cite{Futaki} or  eq.\eqref{eq-mucan} for the explicit formula established by  Podest\`a \cite{Podesta} and the second author \cite{K}). Moreover,  the second author proved in \cite{K} that any $G$-invariant Lagrangian submanifold $L\subset M$ with exact mean curvature form must lie  in the $0$-level set of $\mu_{\rm can}$ whenever $G$ acts on $L$ freely. Since the LMCF preserves the exactness of the mean curvature form,  the LMCF starting from such an $L$ remains contained in $\mu_{\rm can}^{-1}(0)$. Therefore, by taking the quotient of the $0$-level set, the LMCF corresponds to a flow of the quotient Lagrangian $L/G \subset \mu_{\mathrm{can}}^{-1}(0)/G$ in place of $L$. 
A notable feature of the setting considered here is that the symmetry reduction of the LMCF described above naturally fits into the framework of K\"ahler reduction. See Section \ref{sec:glmcf} for details.

If we further assume that the action of $G$ on $L$ is \textit{cohomogeneity-one}, i.e. that the principal orbits are $(n-1)$-dimensional, then the lower dimensional problem becomes a flow of curves in $\mu_{\mathrm{can}}^{-1}(0)/G$. 
However, we remark that, in general, the quotient $\mu_{\mathrm{can}}^{-1}(0)/G$ need not be a smooth manifold and may have singular points, which complicates the analysis of the reduced flow.

Our first main result, concerning long-time existence and convergence of cohomogeneity-one LMCF in K\"ahler-Einstein manifolds, may now be stated as follows:

\begin{theorem}\label{thm:mainKE}
Let  $(M^{2n},J,\omega,g)$ be a closed K\"ahler-Einstein manifold with Einstein constant $C>0$. Let $G$ be a connected closed Lie subgroup of ${\rm Aut}(M,\omega,J)$ with canonical moment map $\mu_{\rm can}:M \to \mathfrak{g}^*$, and suppose $0$ is a regular value of $\mu_{{\rm can}}$.

Let $\varphi: L^{n}\to M$ be a $G$-equivariant Lagrangian embedding of a closed manifold $L^{n}$ such that:
\begin{itemize}
    \item[a)] The mean curvature form $\alpha_{H}$ is exact,
    \item[b)] The $G$-action on $L^{n}$ is free and cohomogeneity-one. 
\end{itemize}

Then there exists an LMCF $\{F_{t}\}_{t=0}^\infty$ starting from $F_{0} = \varphi$, such that $F_{t}: L\to M$ is a Lagrangian embedding for every $t\in [0,\infty)$. Moreover, there exists a smooth $G$-invariant minimal Lagrangian submanifold $L_\infty \subset M$ such that the image $L_t := F_t(L)$ converges smoothly and graphically to $L_\infty$.
\end{theorem}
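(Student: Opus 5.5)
The plan follows the three-stage strategy outlined in the abstract: reduce, analyse the reduced curve flow, then upgrade.

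\emph{Step 1: reduction.} By the result of \cite{K} quoted above, assumptions a) and b) force $\varphi(L)\subset \mu_{\rm can}^{-1}(0)$. Since $0$ is a regular value and $G$ is compact and acts locally freely, the quotient $X:=\mu_{\rm can}^{-1}(0)/G$ is a compact Kähler orbifold. Because the action on $L$ is cohomogeneity-one, $X$ has real dimension $2$. The quotient $\gamma_0:=L/G$ is then a closed embedded curve in $X$; freeness of the action on $L$ keeps $\gamma_0$ away from orbifold points in the appropriate sense.

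I would next show the following correspondence. A $G$-equivariant LMCF remaining in $\mu_{\rm can}^{-1}(0)$ is equivalent to a \emph{weighted} curve shortening flow on $X$, namely the gradient flow of $\int_\gamma V\,ds$, where $V(x)$ is the volume of the orbit over $x$. The reason is that the mean curvature of $L$ is the horizontal lift of the curvature of $\gamma$, corrected by $\nabla\log V$. Exactness of $\alpha_H$ is preserved along LMCF when $C>0$, so the flow stays in the zero level set. In the reduced picture, exactness becomes an ``area-bisecting''-type constraint: the weighted enclosed area is fixed, as in the sphere example.

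\emph{Step 2: analysis of the reduced flow.} First, short-time existence and embeddedness follow from the maximum principle (the two-point/ avoidance argument for curves) applied in $X$. Second, I would establish a Grayson-type theorem: a finite-time singularity for embedded weighted CSF on a compact surface orbifold forces the curve to shrink to a point. This is done by blow-up, using Huisken/Hamilton-type isoperimetric monotonicity to rule out Grim Reaper and other type II models, and treating orbifold points via local finite covers.

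The constraint from exactness then rules out shrinking to a point. A vanishing curve would enclose weighted area tending to $0$ or to the total area, whereas the conserved quantity pins the enclosed area away from both. Hence the flow exists for all time. Finally, the weighted length is bounded below and decreasing, so $\int_0^\infty\int_\gamma |\vec\kappa_V|^2\,ds\,dt<\infty$. Curvature bounds, which come from the same no-singularity argument applied uniformly in time, then give sequential smooth subconvergence to a weighted geodesic $\gamma_\infty$. This lifts to a $G$-invariant minimal Lagrangian $L_\infty$.

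\emph{Step 3: full convergence.} To upgrade subconvergence to convergence, I would use a Łojasiewicz--Simon inequality for the weighted length functional near $\gamma_\infty$. The functional is analytic because $M$ is Kähler--Einstein, hence real-analytic, and so $V$ and the orbifold metric are analytic. The standard argument then gives $\int_{t_0}^\infty\|\partial_t\gamma\|\,dt<\infty$, which yields uniqueness of the limit and smooth graphical convergence over $\gamma_\infty$, and therefore over $L_\infty$ upstairs.

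I expect the main obstacle to be Step 2's Grayson-type dichotomy in the weighted orbifold setting. The points needing most care are controlling the curve near cone points and showing that the only finite-time singularity is collapse to a round point.
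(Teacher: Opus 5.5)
Your Step 2 passes from $\int_0^\infty\!\int\kappa_\psi^2\,ds_\psi\,dt<\infty$ and curvature bounds directly to smooth subconvergence to a weighted geodesic $\gamma_\infty$ that lifts to a smooth, free, $G$-invariant Lagrangian. That skips the main orbifold difficulty at $t=\infty$.

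Each $\gamma_t$ lies in the regular part $\mathcal{O}^{\rm reg}$ of the quotient orbifold (your $X$). Nothing in your argument stops the curves from accumulating at a cone point as $t\to\infty$, and curvature bounds on $\mathcal{O}^{\rm reg}$ do not prevent it. In a uniformizing chart the subsequential limit is then a smooth $\psi$-geodesic arc through the origin of the cover. So $\gamma_\infty$ passes through the cone point and $\pi^{-1}(\gamma_\infty)$ contains an exceptional orbit.

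The paper devotes Theorem~\ref{thm:subcon} to this case.
\begin{itemize}
\item At a cone point of order $k\geq 3$, the $\mathbb{Z}_k$-rotated lift of the limit would cross it transversally. That is impossible for a limit of the pairwise disjoint lifts of the embedded curves $\gamma_{t_i}$.
\item At an order-$2$ point the limiting geodesic is reflected. Uniqueness of geodesics then forces it to shuttle back and forth along a geodesic segment between two order-$2$ cone points. Along such a subsequence $L_{t_i}$ would converge with multiplicity two, not graphically.
\end{itemize}
The curve-flow analysis alone does not rule out the order-$2$ case. It is excluded only because the enclosed area is conserved, while curves collapsing onto a segment enclose area tending to $0$ or to the total area. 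You need this step, together with multiplicity one of the limit (the paper gets it from Lemma~\ref{lem:graph}), before $\gamma_\infty\subset\mathcal{O}^{\rm reg}$ and $L_\infty$ make sense. You also need it before your \L{}ojasiewicz--Simon step, which is otherwise a reasonable downstairs variant of the paper's argument for ${\rm Vol}_f$ over graphs on $L_\infty$.

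Several secondary points also need repair.
\begin{itemize}
\item \textbf{Curvature bounds.} Uniform-in-time bounds do not come from ``the same no-singularity argument'': excluding finite-time singularities gives no control as $t\to\infty$. The paper (Lemmas~\ref{lem:leng}--\ref{lem:nabbd}) follows Gage--Grayson. A positive lower bound on length plus the space-time $L^2$ bound give a differential inequality forcing $\int\kappa_\psi^2\,ds_\psi\to 0$, and then decay of every $\partial_s^m\kappa_\psi$.
\item \textbf{Finite-time part.} A priori the flow can leave $\mathcal{O}^{\rm reg}$ by reaching a cone point with bounded curvature, so a blow-up analysis need not be triggered at all. What is needed is that two distinct lifts in the uniformizing chart cannot collide. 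Localizing this avoidance requires keeping the endpoints of the lifted arcs apart. The paper arranges this with the distance comparison (Proposition~\ref{prop:distcomp}, Lemmas~\ref{lem:key1} and \ref{lem:key2}) before running the maximum principle in Proposition~\ref{prop:key}.
\item \textbf{Enclosed area.} For ``enclosed area'' to make sense you must show $|\mathcal{O}|\cong S^2$; this is Proposition~\ref{prop:topo}, where $C>0$ enters via orbifold Gauss--Bonnet. The conserved quantity is the area for the reduced K\"ahler form $\overline{\omega}$, since $\frac{d}{dt}A(E_t)=\int_{S^1}\gamma_t^*(i_{\partial_t\gamma_t}\overline{\omega})=\int_{S^1}\alpha_{\overline{K}}=0$. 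It is not a weighted area.
\end{itemize}
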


    Note in particular that no smallness, graphicality or stability assumptions on the initial Lagrangian are necessary for Theorem \ref{thm:mainKE} to hold. To the authors' knowledge, this is the first long-time existence and convergence result for  LMCF of arbitrary dimension that does not make such assumptions on the initial condition.

Rather than directly prove this theorem, we will prove a more general result (Theorem \ref{thm:main}) regarding a generalisation of Lagrangian mean curvature flow which is valid for a much larger class of K\"ahler manifolds. In fact, it is more natural to work in this generalised setting, as it provides a unified framework for the arguments used in the proof. Theorem \ref{thm:mainKE} will then follow as a special case in the K\"ahler-Einstein setting. We will comment on the necessity of the assumptions after the statement of Theorem \ref{thm:main}.

\subsection{Generalised Lagrangian Mean Curvature Flow} In the general K\"ahler setting, the mean curvature flow does not preserve the Lagrangian condition. However, it was shown by Behrndt \cite{Behrndt} that if one considers the broader class of \textit{almost-Einstein} K\"ahler manifolds, i.e. those satisfying the condition
\[ \rho = C \omega + ndd^c f\]
for a constant $C$ and $f \in C^\infty(M)$, the class of Lagrangian submanifolds is preserved instead by \textit{$f$-mean curvature flow}:
\[ \frac{\partial F}{\partial t} = K := H - n \overline \nabla f^\perp. \]
Here, $K$ is referred to as the \textit{generalised mean curvature vector}. In the case where $M$ is compact, $C$ is determined cohomologically and $f$ is unique up to an additive constant. In this case, $K$ is therefore independent of the choice of $f$. A flow of Lagrangian submanifolds satisfying the $f$-MCF equation is referred to as a \textit{generalised Lagrangian mean curvature flow}, or GLMCF for short.

The analogues of special Lagrangians in the almost-Einstein K\"ahler setting are \textit{$f$-minimal Lagrangians}, which may be characterised either as the critical points of the weighted volume functional $\text{Vol}_f(L) = \int_L e^{nf}\,dv_L$, or Lagrangians for which the generalised mean curvature vector $K$ vanishes. 
Since the generalised Lagrangian mean curvature flow decreases the weighted volume, one may conjecture that in favourable cases GLMCF converges to an $f$-minimal Lagrangian representative. Analogously to the K\"ahler-Einstein setting, it may be shown that exactness of the \textit{generalised mean curvature one-form} $\alpha_K := \omega(K, \cdot) = \alpha_H - nd^c f$ is preserved by the flow, and so only Lagrangians with exact generalised mean curvature one-form could exhibit such a convergence. Results on long-time behaviour of GLMCF are uncommon, the most notable being the result of the second author and Kunikawa \cite{KK} which proves exponential convergence under smallness and stability assumptions.

Almost-Einstein K\"ahler manifolds arise naturally in many settings. Any K\"ahler form $\omega$ belonging to the first Chern class $2\pi c_1(M)$ on a Fano manifold is almost-Einstein with $C>0$. Gradient K\"ahler-Ricci solitons satisfying $\rho = C \omega + dd^cu$ are almost-Einstein with $f = \frac{u}{n}$. Another example is given by flat $\mathbb{C}^n$ equipped with the potential function $f(x) := \frac{1}{n}\langle T, x \rangle$ for a constant vector $T$. Since $dd^c f = 0$, this defines an almost-Einstein structure with $C = 0$, for which $f$-minimal Lagrangians are precisely translating solitons for Lagrangian mean curvature flow with velocity $T$ (see \cite{su_f_2021} for more details).

A particular case of GLMCF is the one-dimensional setting, which is also known as \textit{weighted curve shortening flow} (or $\psi$-CSF for short). Explicitly, given a Riemann surface $(M,g)$ with a weight function $\psi:M \to \mathbb{R}$, a family of immersions $\gamma:S^1 \times [0,T) \to M$ is a weighted curve shortening flow if it satisfies the equation
\[ \frac{\partial \gamma}{\partial t} \, = \, \kappa_\psi N := \kappa N - \langle \overline\nabla \psi, N \rangle N,\]
where $N$ is a unit normal to the curve and $\kappa$ is the geodesic curvature of $\gamma_t$. The weighted curve shortening flow decreases the $\psi$-length functional of the curve, defined as $L_\psi(\gamma) := \int_{S^1} e^\psi ds$. One may therefore expect solutions which exist for all time to converge to \textit{$\psi$-geodesics}: critical points of $L_\psi$ on which the $\psi$-curvature $\kappa_\psi$ vanishes. The first systematic study of weighted curve shortening flow was undertaken by Miquel and Vi\~nado-Lereu \cite{MV, miquel_type_2018}; their first work in particular includes a long-time existence and subconvergence result. The one-dimensional case is of particular importance in our work. In fact, the reduction procedure using a Hamiltonian group action works similarly in the almost-Einstein setting, and under our symmetry assumptions, the GLMCF will reduce to a weighted curve shortening flow in a K\"ahler quotient.

Our second main result, concerning long-time existence and convergence of cohomogeneity-one GLMCF in almost-Einstein K\"ahler manifolds, may be stated as follows.

\begin{theorem}\label{thm:main}
Let  $(M^{2n},J,\omega,g)$ be a closed K\"ahler manifold which is positive almost-Einstein, i.e.\ such that
$\rho=C\omega+ndd^{c}f$ for some $C>0$ and $f\in C^{\infty}(M)$. Let $G$ be a connected closed Lie subgroup of ${\rm Aut}(M,\omega,J)$ with canonical moment map $\mu_{\rm can}:M \to \mathfrak{g}^*$, and suppose $0$ is a regular value of $\mu_{{\rm can}}$.

Let $\varphi: L^{n}\to M$ be a $G$-equivariant Lagrangian embedding of a closed manifold $L^{n}$ such that:
\begin{itemize}
    \item[a)] The generalised mean curvature form $\alpha_{K}$ is exact,
    \item[b)] The $G$-action on $L^{n}$ is free and cohomogeneity-one. 
\end{itemize}

Then there exists a GLMCF $\{F_{t}\}_{t=0}^\infty$ starting from $F_{0} = \varphi$, such that $F_{t}: L\to M$ is a Lagrangian embedding for every $t\in [0,\infty)$. Moreover, there exists a subsequence $\{t_{i}\}_{i=1}^{\infty}$ with $t_{i}\to \infty$ such that the sequence of Lagrangian submanifolds $\{L_{i}=F_{t_{i}}(L)\}_{i=1}^{\infty}$ smoothly and graphically converges to a $G$-invariant $f$-minimal Lagrangian submanifold $L_{\infty}$ in $M$.

Furthermore, if $(M,g,f)$ is real analytic, then $L_t := F_t(L)$ converges smoothly and graphically to $L_\infty$ as $t \to \infty$.
\end{theorem}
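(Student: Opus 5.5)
The plan is to follow the reduction outlined in the introduction. We reduce to a weighted curve shortening flow on a two-dimensional orbifold, analyse that flow (no finite-time singularities, subconvergence), lift back, and finally upgrade the convergence using a Łojasiewicz--Simon inequality.

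\textbf{Step 1 (reduction).} By the result of \cite{K}, adapted to the almost-Einstein setting, exactness of $\alpha_K$ together with freeness of the $G$-action forces $\varphi(L)\subset\mu_{\rm can}^{-1}(0)$. Since $0$ is a regular value, $G$ acts locally freely on the level set. Hence $B:=\mu_{\rm can}^{-1}(0)/G$ is a compact Kähler orbifold, of real dimension $2$ because the action on $L$ is cohomogeneity-one. The quotient $\gamma_0=L/G$ is then a closed embedded curve in $B$ which avoids the orbifold points, since $G$ acts freely on $L$. I would verify three things:
\begin{itemize}
\item the GLMCF of $G$-invariant Lagrangians in $\mu_{\rm can}^{-1}(0)$ is equivalent to $\psi$-CSF on $B$, with weight $\psi=nf+\log \mathrm{vol}(G\cdot p)$, i.e.\ $f$ corrected by the orbit volume;
\item the weighted length satisfies $L_\psi(\gamma)=\mathrm{Vol}_f(\pi^{-1}\gamma)$ up to a constant;
\item exactness of $\alpha_K$ becomes a ``balancing'' condition on $\gamma$: the $\psi$-curvature has zero integral against $ds$. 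In the simply connected picture this says that $\gamma$ bisects a weighted area, for the weighted area form $e^{\psi}\omega_B$ (which should be the relevant form via $\rho=C\omega+ndd^cf$ with $C>0$).
\end{itemize}
The balancing condition is preserved along the flow, in analogy with $[\alpha_K(t)]=e^{Ct}[\alpha_K(0)]$.

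\textbf{Step 2 (long-time existence).} Next I would prove a Grayson-type dichotomy for $\psi$-CSF on compact orbifold surfaces. An embedded curve stays embedded, and at a finite maximal time $T$ one of two things happens: either the curve shrinks to a point, possibly an orbifold point, asymptotically like a round circle or its $\mathbb{Z}_k$-quotient analogue; or it touches a cone point in a way that encloses vanishing area. The tools are the Huisken/Hamilton two-point distance comparison, lifted locally to orbifold charts, plus blow-up analysis showing that singularities are modelled on the grim reaper or shrinking circles, and ruling out the grim reaper by embeddedness.

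Then I would use the preserved balancing condition. If $\gamma_t$ shrank into a small disc (or orbifold disc), the weighted enclosed area would tend to $0$. But it must equal a fixed fraction of the total weighted area, which is positive because $C>0$ gives positivity of the relevant area form. This contradiction excludes finite-time singularities, so $T=\infty$. Embeddedness of $F_t$ follows from embeddedness of $\gamma_t$ together with freeness of the $G$-action.

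\textbf{Step 3 (convergence).} Since $L_\psi(\gamma_t)$ is decreasing and bounded below, we get $\int_0^\infty\!\int\kappa_\psi^2\,e^\psi ds\,dt<\infty$. Combined with a length lower bound (the balancing condition again prevents collapse) and curvature bounds (from the Step 2 analysis, with no singularity forming even at infinity), this gives smooth subconvergence $\gamma_{t_i}\to\gamma_\infty$, a $\psi$-geodesic that is embedded and avoids orbifold points. The last two properties hold by the same area argument and the local Grayson description. Lifting gives the subsequential statement of the theorem.

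In the analytic case, $\psi$ is analytic away from the orbifold points, so $L_\psi$ satisfies a Łojasiewicz--Simon inequality near $\gamma_\infty$:
\[
|L_\psi(\gamma)-L_\psi(\gamma_\infty)|^{1-\theta}\le c\,\|\kappa_\psi\|_{L^2}.
\]
The standard Simon argument then yields finite total $L^1$ length of the trajectory, and hence full convergence. The Kähler-Einstein Theorem~\ref{thm:mainKE} follows by taking $f$ constant, where analyticity is automatic.

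The main obstacle is Step 2: establishing the Grayson-type classification in the weighted orbifold setting, in particular controlling curves that approach cone points, where the local lift need not remain embedded, and making the area-balancing contradiction precise there.
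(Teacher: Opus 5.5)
Your reduction ($\psi=nf+\log V$, $L_\psi(\gamma)=\mathrm{Vol}_f(\pi^{-1}\gamma)$, exactness of $\alpha_K$ $\Leftrightarrow$ $\int_\gamma\kappa_\psi\,ds=0$, a conserved enclosed area) is essentially right, apart from the corrections below. Your final step is sound: running \L{}ojasiewicz--Simon for $L_\psi$ on the quotient curve, rather than for $\mathrm{Vol}_f$ on graphs over $L_\infty$ as the paper does, is a legitimate variant once you check that $\overline g$ and $\psi$ are real analytic on $\mathcal{O}^{\rm reg}$.

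The gap is that both places where cone points matter are left without an argument. In Step 2, leaving $\mathcal{O}^{\rm reg}$ is not a curvature singularity. A priori an arc of $\gamma_t$ can run into a cone point at a finite time $T$ with bounded curvature and $L_\psi(\gamma_t)$ bounded below. Blow-up analysis (grim reapers, shrinking circles) does not detect this, and your alternative ``touches a cone point in a way that encloses vanishing area'' is asserted, not derived. What you need is that the curve cannot reach a cone point at all while $L_\psi$ stays positive. The paper proves this in several steps:
\begin{itemize}
\item A Huisken--Ma comparison bound $\sup_t R(t)<\infty$ (available when $L_\psi(T)>0$) forces preimages of points under the $C^0$-limit $\gamma_T$ to be connected.
\item This yields an arc $[a,b]$ around the touching parameter whose endpoints stay separated from the rest of the arc up to time $T$.
\item Two \emph{distinct} lifts of this arc to the orbifold chart both reach the origin at time $T$, while their distance is positive on the parabolic boundary.
\item The variation formulas for the distance, together with Jacobi field bounds, show that $e^{Bt}d$ cannot attain a new interior minimum, which is a contradiction.
\end{itemize}
Your worry about embeddedness of lifts is misplaced: lifts of an embedded curve in $\mathcal{O}^{\rm reg}$ are automatically embedded and pairwise disjoint. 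The real difficulty is that avoidance for arcs needs control of the endpoints. Only after this can one cap off the cone points and apply Grayson/Oaks to get $L_\psi\to0$, which area conservation then excludes.

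In Step 3 the same issue recurs at $t=\infty$. Curvature does decay, but this comes from the $L^2$-decay argument of Gage--Grayson and Miquel--Vi\~nado-Lereu using a lower length bound, not from the finite-time analysis. Even so, the curves may drift onto a cone point along $t_i\to\infty$, and area conservation alone does not exclude this. At a cone point of odd order $k\geq3$ the limit would pass through with two distinct branches and can bound positive area. The missing local analysis is as follows. For $k\geq3$, the lift of the limit and its $2\pi/k$-rotation cross transversally at the origin, so distinct lifts of $\gamma_{t_i}$ would intersect for large $i$, contradicting embeddedness. For $k=2$, the rotation reverses the tangent, so by uniqueness of geodesics the limit reflects and runs back and forth along a geodesic arc between order-two cone points, enclosing zero area. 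Only in that case does area conservation give the contradiction.

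Two smaller corrections to Step 1. First, the conserved area is that of the unweighted quotient K\"ahler form $\overline\omega$ (the flow is Hamiltonian for $\overline\omega$), not of $e^{\psi}\omega_B$. Second, $C>0$ enters topologically rather than through positivity of an area form. The metric $e^{2\psi}\overline g$ has Gauss curvature $Ce^{-2\psi}>0$, so orbifold Gauss--Bonnet gives $\chi^{\rm orb}(\mathcal{O})>0$ and $|\mathcal{O}|\cong S^2$. That is what makes $\gamma_t$ bound a region at all, and your ``simply connected picture'' presupposes it without proof.
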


The precise meaning of the smooth convergence $L_{i}\to L_{\infty}$ is given in Theorem \ref{thm:main1}. Note that Theorem \ref{thm:mainKE} corresponds to the case where $f\equiv 0$. 
In particular, the assumption of real analyticity is automatically satisfied when $M$ is K\"ahler-Einstein. Also, a closed K\"ahler manifold with positive almost-Einstein metric is precisely a Fano manifold equipped with a K\"ahler metric whose K\"ahler class is a positive multiple of $c_1(M)$. Many examples of  cohomogeneity-one Lagrangian submanifold satisfying the assumptions in Theorem \ref{thm:main1} are provided by torus invariant Lagrangian submanifolds in a toric Fano manifold. See subsection \ref{subsec:eg} for concrete examples.

We remark briefly upon the assumptions of Theorem \ref{thm:main} before sketching the proof. As already mentioned, the exactness assumption a) is necessary for cohomological reasons, else there is no possibility of convergence to an $f$-minimal Lagrangian. The freeness and cohomogeneity-one assumptions b) imply that the Lagrangian descends to a smooth curve $\gamma$ in the K\"ahler quotient. The assumption that $0$ is a regular value of the moment map $\mu_{\rm can}$ implies that the curve $\gamma$ naturally lies in an \textit{orbifold} $\mathcal{O}$, which allows for local analysis of the flow in smooth orbifold neighbourhoods. Finally, the assumption $C>0$ forces the K\"ahler quotient to be topologically $S^2$, so that the curve $\gamma$ encloses an area which can then be shown to be preserved under the flow; this will rule out the possibility of finite-time collapse.

\subsection{Overview of the Proof of Theorem \ref{thm:main}}

Theorem \ref{thm:main} is proven in Section \ref{sec:mainproof} as Theorems \ref{thm:main1} and \ref{thm:main2}. We give here a sketch of the proof for convenience.

Firstly, the assumptions that are made in Theorem \ref{thm:main} have the overall effect of reducing the study of the GLMCF $L_t \subset M$ to the study of a weighted curve shortening flow $\gamma_t$ in a 2-orbifold $\mathcal{O}$; the details are as follows. The assumption of exactness of $\alpha_K$ implies that $L \subset \mu_{\rm can}^{-1}(0)$ (Theorem \ref{thm:K}). The assumption that $0$ is a regular value implies that $\mu_{\rm can}^{-1}(0)$ is a smooth submanifold and that the $G$-action on $\mu_{\rm can}^{-1}(0)$ is locally free; this implies that the quotient $\mathcal{O} := \mu_{\rm can}^{-1}(0)/G$ is an orbifold. By K\"ahler reduction, $\mathcal{O}$ is K\"ahler; furthermore it is almost-Einstein (Proposition \ref{prop:Ricci}), and since $C>0$ it can be shown to be homeomorphic to a $2$-sphere (Proposition \ref{prop:topo}).

Since the exactness of $\alpha_K$ is preserved under GLMCF, the quotient of a GLMCF $\{F_t\}$ starting at $\varphi$ is a GLMCF $\{\gamma_t\} \subset \mathcal{O}$ (Proposition \ref{prop:redu}), and an $f$-minimal Lagrangian in $\mu_{\mathrm{can}}^{-1}(0)$ corresponds to a $\psi$-geodesic in $\mathcal{O}$, for a suitable induced weight $\psi$ on $\mathcal{O}$. The cohomogeneity-one assumption implies that $\gamma_t$ is 1-dimensional and $\mathcal{O}$ is 2-dimensional; $\gamma_t$ is therefore a weighted curve shortening flow.  
Note that even if $M$ is K\"ahler-Einstein (i.e.\ $f\equiv 0$), the reduced flow is in general a ``weighted" CSF. We also remark that, as mentioned above, the weighted CSF has been studied in \cite{MV, miquel_type_2018} when the ambient space is a smooth Riemann surface. In our setting, however, we need to deal with the flow on an orbifold, a space with singular points. More precisely, our quotient orbifold $\mathcal{O}$ may have finitely many singular points called {\it cone points} (see Appendix \ref{app-orbifold} and Section \ref{sec:2dorbifolds} for relevant notions on orbifolds).

We  require two key results relating to $\psi$-CSF in orbifolds. The first key result (proved as Theorem \ref{thm:beh}) is a Grayson-type theorem, stating that the $\psi$-length of a $\psi$-CSF $\gamma_t$ shrinks to zero at a finite time singularity.

\begin{theorem}\label{thm:introbeh}
    Let $(\mathcal{O},g, {\psi})$ be a closed, orientable $2$-dimensional weighted Riemannian orbifold and $\gamma: S^{1}\to \mathcal{O}^{{\rm reg}}$ be a smooth embedding into the regular part of $\mathcal{O}$. 
    
    If the $\psi$-CSF $\gamma_t$ starting from $\gamma$ has finite maximal existence time, then $\lim_{t\to T_{\max}}L_\psi(\gamma_t)=0$.
\end{theorem}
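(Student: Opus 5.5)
Our plan is to adapt Grayson's argument, as refined for weighted flows by Miquel and Vi\~nado-Lereu, to the orbifold setting. The first step is a localisation lemma. The cone points of $\mathcal{O}$ are finitely many, so around each cone point $p$ we fix a small orbifold chart $B_r(p)\cong \tilde B_r/\Gamma_p$, with $\Gamma_p$ a finite cyclic group. We will show that a finite-time singularity of the $\psi$-CSF cannot form at a point where the flow stays away from the cone points, unless the whole curve shrinks. Away from the cone points, $\mathcal{O}$ is a smooth weighted surface, so the standard estimates apply there: short-time existence, the evolution equations for $\kappa_\psi$ and its derivatives, and the fact that $\sup|\kappa_\psi|$ blows up at $T_{\max}$. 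We will also need embeddedness to be preserved, and we will get it from the maximum principle for the distance function in regular charts. Near a cone point we instead lift the curve to $\tilde B_r$. There it becomes a $\Gamma_p$-invariant union of $|\Gamma_p|$ arcs, which might intersect one another, and these arcs evolve by $\tilde\psi$-CSF in a smooth weighted disc. Because this is a smooth local setting, local parabolic and Ecker--Huisken-type pseudolocality estimates hold there as well.

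The second step is to rule out singularities at which the $\psi$-length stays bounded below. To do this we define a weighted isoperimetric ratio, in the spirit of Hamilton's or Huisken's distance comparison. For an embedded curve bounding a region $\Omega_t$, we take the ratio of chord length to the arc/area functional, measured in the lifted metric $e^{2\psi/(n-1)}$-type conformal metric (or simply with weights). We will show that this ratio cannot degenerate in finite time without $\mathcal{A}(\Omega_t)$ or its complement tending to zero. Next we blow up at a singular point $x_0$, working in a regular chart if $x_0$ is regular and in the lifted chart $\tilde B_r$ if $x_0$ is a cone point. Under parabolic rescaling, $\psi$ and the metric become flat. The distance-comparison bound then excludes grim-reaper-type Type II limits, exactly as in the classical argument, so the limit must be a shrinking circle or a Type I limit with small entropy. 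In the cone-point case the lifted limit is $\Gamma_p$-invariant. An embedded curve in the quotient lifts either to a single invariant circle that avoids the origin's neighbourhood at scale, or to several disjoint arcs, and in both situations Huisken's monotonicity on the cover identifies the limit as a round shrinking circle. Either way, the curve becomes asymptotically round at the singular time and its diameter tends to zero, which gives $L_\psi(\gamma_t)\to 0$. Here we use that $e^{\psi}$ is bounded above and below on the compact orbifold $\mathcal{O}$.

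We expect the main obstacle to be the cone-point blow-up. The issue is that the curve lies in $\mathcal{O}^{\rm reg}$ only at $t=0$, and it could a priori hit a cone point, where the lift is non-embedded and the distance comparison breaks down. Our first approach to this would be to show that the flow never reaches a cone point before $T_{\max}$. On the cover, a cone point is a fixed point of $\Gamma_p$, and a smooth $\Gamma_p$-invariant solution cannot pass through it transversely when $|\Gamma_p|\ge 2$. By the strong maximum principle applied to the winding angle about $p$, that angle is preserved, so the curve keeps $p$ on a fixed side. Consequently the only way the curve can approach $p$ is by collapsing onto it, and that collapse is precisely the shrinking case already covered by the second step.
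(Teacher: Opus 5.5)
There is a genuine gap, and it sits where you locate the main obstacle: you never validly exclude that $\gamma_t$ accumulates at a cone point as $t\to T_{\max}$ while $L_\psi(\gamma_t)$ stays bounded below. Since $T_{\max}$ is the exit time from $\mathcal{O}^{\rm reg}$, the curve meets no cone point for $t<T_{\max}$ by definition; only the limit is at issue.

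Your winding-angle argument does not handle this. Constancy of the winding number about $p$ only says that $p$ stays on one side of the curve. It does not follow that the curve can approach $p$ only by collapsing onto it, since an arc can move onto $p$ from one side.

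The remark that a smooth $\Gamma_p$-invariant curve cannot cross the fixed point transversely is what drives Theorem~\ref{thm:subcon}(i). But it presupposes a smooth limit at $T_{\max}$, which you do not have. It also gives nothing at cone points of order $2$, where the two lifts can meet tangentially (compare Theorem~\ref{thm:subcon}(ii)).

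The blow-up argument does not rescue this either. It presumes curvature blow-up at the singular point, whereas leaving $\mathcal{O}^{\rm reg}$ could a priori happen with bounded curvature, simply by touching the cone point. Even after rescaling, monotonicity does not force a round circle. A $\mathbb{Z}_2$-invariant multiplicity-two line through the origin, the natural limit of two lifts converging to each other, is an admissible self-shrinker.

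Relatedly, Step~1 says the lifts ``might intersect one another''. In fact, for $t<T_{\max}$ they are pairwise disjoint, because $\Gamma_p$ acts freely off the origin and $\gamma_t$ is embedded. This disjointness is exactly what has to be exploited.

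The missing idea is an avoidance principle in the orbifold chart. Take a parameter arc $[a,b]$ containing $u_*$ with $\gamma_{T_{\max}}(u_*)=p$, and two distinct lifts $\Gamma^1,\Gamma^2$ of $\gamma_t|_{[a,b]}$. These are $\widetilde\psi$-CSFs in a smooth weighted disc, disjoint for $t<T_{\max}$. Reaching $p$ means $\Gamma^1_{T_{\max}}(u_*)=\Gamma^2_{T_{\max}}(u_*)=0$.

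The paper rules this out with the maximum principle for $e^{Bt}\,\widehat d(\Gamma^1(u_1,t),\Gamma^2(u_2,t))$. It uses the variation formulas of Lemmas~\ref{lem:J} and~\ref{lem:formulas} and the operators $\mathcal{L}_\pm$.

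The delicate input is that this distance stays positive on the parabolic boundary up to and including $T_{\max}$. That is, the endpoints $\gamma_t(a),\gamma_t(b)$ must not run into the rest of the arc, even in the limit. This needs two things:
\begin{enumerate}
\item the $C^0$ extension of the flow to $T_{\max}$;
\item connectedness of point-preimages of $\gamma_{T_{\max}}$ (Lemma~\ref{lem:key1}).
\end{enumerate}
The second comes from a Huisken--Ma distance comparison (Proposition~\ref{prop:distcomp}). That estimate holds precisely because $L_\psi$ is bounded below, and it must be done in lifted charts, since the orbifold distance is not differentiable near cone points. From it one builds the interval of Lemma~\ref{lem:key2}.

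Once cone points are excluded, the flow stays in a compact subset of $\mathcal{O}^{\rm reg}$. You can then cap off the cone points to get a closed surface and apply Oaks' Grayson-type theorem (Theorem~\ref{thm:Oaks}), rather than redoing the blow-up classification.
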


The main idea in the proof of Theorem \ref{thm:introbeh} is an avoidance-principle argument around a cone point. The structure of an orbifold chart around a cone point allows us to use a maximum-principle argument for the distance between two parts of the flow. More precisely, we first prove a Huisken-type distance comparison estimate. This estimate is used to show the existence of a suitable interval in $S^1$ on which the maximum-principle argument can be carried out. It then follows that the flow cannot approach a cone point of the orbifold if the $\psi$-length $L_\psi(\gamma_t)$ remains bounded below, since otherwise, in the orbifold chart, two parts of the flow corresponding to the interval would have to come together, contradicting the existence of such an interval. This shows that our flow belongs to the smooth part of the orbifold, and we may use a Grayson-type theorem to argue that the curve must collapse to a point.

The second key result (proved as Theorem \ref{thm:subcon}) essentially states that a $\psi$-CSF that exists for infinite time must subconverge to a $\psi$-geodesic:

\begin{theorem}\label{thm:introsubcon}
Let $(\mathcal{O},g, {\psi})$ be a closed, orientable $2$-dimensional weighted Riemannian orbifold, and $\gamma:[0,\infty)\times S^1 \to \mathcal{O}^{{\rm reg}}$ be a long-time solution of $\psi$-CSF contained in the regular part of $\mathcal{O}$.  Then $\kappa_{\psi}(t)$ uniformly converges to $0$, and there exists a subsequence $\{\gamma_{t_{i}}\}_{i=1}^{\infty}$ which converges to a continuous map $\gamma_{\infty}: S^{1}\to \mathcal{O}$ in $C^{0}(S^{1},\mathcal{O})$. Moreover, we have the following:
\begin{enumerate}
 \item  $\gamma_{\infty}$ does not pass through any cone point of order $\geq 3$. 
\item  If $\gamma_{\infty}$ passes through a cone point $x_0$ of order $2$, then there exists a cone point $y_0$ (where possibly $y_0=x_0$) of order $2$ such that $\gamma_\infty$ goes back and forth on a geodesic path between $x_0$ and $y_0$.
\item If $\gamma_{\infty}$ does not pass through any singular point of $\mathcal{O}$, then up to reparametrisation the convergence $\gamma_{t_i} \to \gamma_\infty$ is smooth and the image of $\gamma_\infty$ is a $\psi$-geodesic.
\end{enumerate}
\end{theorem}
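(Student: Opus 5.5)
The plan is to split Theorem \ref{thm:introsubcon} into three parts: an energy estimate from the gradient-flow structure, a compactness step, and a local analysis of the limit near cone points.

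\textbf{Step 1: energy identity.} Along $\psi$-CSF one has
\[ \frac{d}{dt}L_\psi(\gamma_t) = -\int_{S^1}\kappa_\psi^2 e^\psi\,ds . \]
Since $L_\psi(\gamma_t)$ is nonnegative and nonincreasing, integrating over $[0,\infty)$ gives
\[ \int_0^\infty\int \kappa_\psi^2 e^\psi\,ds\,dt < \infty . \]
This only yields smallness of $\kappa_\psi$ in time-averaged $L^2$. To upgrade it to uniform convergence $\kappa_\psi\to 0$, I need curvature bounds that are uniform in time.

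\textbf{Step 2: uniform curvature bounds.} In the regular part the evolution equation for $\kappa_\psi$ has the form
\[ \partial_t\kappa_\psi = \Delta_\psi\kappa_\psi + (\kappa_\psi^2+\mathrm{Ric}_\psi(N,N))\kappa_\psi , \]
with $\Delta_\psi$ the weighted Laplacian in the arclength parameter. Near a cone point I will lift to an orbifold chart $\mathbb{R}^2/\mathbb{Z}_m$, where the metric and weight are smooth and invariant. There the lifted curve is a smooth $\psi$-CSF, so local estimates (Ecker--Huisken/Brakke-type, or Shi-type derivative estimates) apply. The difficulty is to get a uniform-in-time bound on $\sup|\kappa_\psi|$ without having Theorem \ref{thm:introbeh}-type information. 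I would first try a blow-up argument: if $\sup|\kappa|$ were unbounded along $t_k\to\infty$, rescale, lifting to the chart if needed. The rescaled limit would be an ancient curve shortening flow in the plane with zero $L^2$ curvature energy on unit-size parabolic neighbourhoods, since the tail of the energy integral tends to zero. It would therefore be a straight line, contradicting unit curvature at the origin. The Huisken-type distance comparison from the proof of Theorem \ref{thm:introbeh} gives embeddedness control, which rules out multiplicity and grim-reaper-like limits. With $\kappa$ and all its derivatives uniformly bounded, the integrability of $\|\kappa_\psi\|_{L^2}^2$ together with the bounded time derivative of that quantity gives $\|\kappa_\psi\|_{L^2}\to 0$. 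Interpolation then gives $\sup|\kappa_\psi|\to 0$.

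\textbf{Step 3: subconvergence and smooth case.} Parametrise $\gamma_t$ proportional to arclength. The lengths are bounded above, because $L_\psi$ decreases and $e^\psi$ is bounded above and below. So the maps are uniformly Lipschitz, and Arzel\`a--Ascoli gives a subsequence converging in $C^0$ to $\gamma_\infty$. If $\gamma_\infty$ avoids all singular points, the uniform derivative bounds in the regular part give smooth convergence, which is item (3). The limit has $\kappa_\psi=0$, so its image is a $\psi$-geodesic.

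\textbf{Step 4: cone points.} This is the main obstacle. Suppose $\gamma_\infty$ passes through a cone point $x_0$ of order $m$. In a chart $\mathbb{R}^2/\mathbb{Z}_m$ around $x_0$, the lifts of the arcs of $\gamma_{t_i}$ near $x_0$ are nearly $\psi$-geodesic, so they are $C^1$-close to geodesic segments. In the quotient each such arc must turn around $x_0$, i.e.\ pass close to the apex. Lifted, it appears as $m$ rotated copies of a near-geodesic segment passing near the origin. Since $\gamma_{t_i}$ is embedded in $\mathcal{O}^{\rm reg}$, these copies must be pairwise disjoint. For $m\geq 3$, the copies are near-straight curves through a small neighbourhood of the origin with directions rotated by $2\pi/m$, so no two are parallel. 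Nearby copies must then intersect, contradicting embeddedness; this gives item (1). For $m=2$, the rotation by $\pi$ maps a line through the origin to itself. Hence the limit arc, as a geodesic through the origin, folds back in the quotient: $\gamma_\infty$ reverses direction at $x_0$. I will follow the limit $\psi$-geodesic away from $x_0$; it is smooth away from singular points. By compactness of $\mathcal{O}$ and closedness of $\gamma_\infty$, the curve must reverse again, and reversal is possible only at another order-$2$ cone point $y_0$. This gives back-and-forth motion along a geodesic path between $x_0$ and $y_0$, which is item (2). The curvature control across the cone point needed here must come from the chart-level estimates of Step 2, and I expect making this rigorous, especially the claim that near-geodesic arcs pass through the origin, to be the hardest part.
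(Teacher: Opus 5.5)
\textbf{There is a genuine gap in your Step 2, and Steps 3 and 4 depend on it.}

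The blow-up argument fails for a scaling reason. Under $\tilde x=\lambda x$, $\tilde t=\lambda^2 t$ one has $\tilde\kappa=\lambda^{-1}\kappa$, $d\tilde s=\lambda\,ds$ and $d\tilde t=\lambda^2\,dt$. Hence $\iint\tilde\kappa^2\,d\tilde s\,d\tilde t=\lambda\iint\kappa^2\,ds\,dt$. Knowing that the tail $\int_t^\infty\int\kappa_\psi^2\,ds_\psi\,dt\to0$ only bounds the rescaled energy by $\lambda_k\varepsilon_k$, which need not tend to zero.

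Concretely, a grim-reaper-shaped spike of curvature $\lambda_k$ lasting time $\sim\lambda_k^{-2}$ costs only $\sim\lambda_k^{-1}$ of the original energy. That is summable, so finiteness of $\int_0^\infty\int\kappa_\psi^2$ alone cannot exclude such spikes. The grim reaper is an embedded, multiplicity-one eternal limit with nonzero curvature.

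The distance comparison does not rescue this either. The quantity $R(t)$ in Proposition \ref{prop:distcomp} carries the factor $e^{-\mathcal K t}$. On $[0,\infty)$ the resulting chord--arc bound therefore degenerates like $e^{\mathcal K t}$. Since the ratio is scale-invariant, it gives no uniform control at blow-up times $t_k\to\infty$.

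Two ingredients are missing. The first is a uniform lower bound $L(\gamma_t)\ge c>0$, which your proposal never establishes. The paper proves it in Lemma \ref{lem:leng}:
\begin{enumerate}
\item If $L\to 0$, then for large $t$ the curve lies either in a convex ball of $\mathcal{O}^{\rm reg}$ or in a chart ball around a single cone point.
\item Applying the isoperimetric inequality to the lift (a $k$-fold cover when the enclosed disc contains the cone point) gives $L^2\ge\frac{2\pi}{k}A$, so $A(t)\to0$.
\item Orbifold Gauss--Bonnet (Theorem \ref{thm:GB}) gives $\chi^{\rm orb}(E_t)\ge\min\{1,1/k_i\}$ for such a disc. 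Hence $\frac{d}{dt}A\le-2\pi\min\{1,1/k_i\}+C\varepsilon<-\delta$, which contradicts $A(t)>0$ for all time.
\end{enumerate}

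The second ingredient is an $L^2$-decay argument that needs no a priori sup bound. This is the Gage--Grayson argument of Lemma \ref{lem:l2decay}. The evolution inequality for $\int\kappa_\psi^2\,ds_\psi$ is combined with the interpolation $\sup\kappa_\psi^2\le b_1\int\kappa_\psi^2\,ds_\psi+b_2\int|\p_s\kappa_\psi|^2\,ds_\psi$, where $b_1$ depends on $c$. Once $\int\kappa_\psi^2\,ds_\psi<1/b_2$, the term $3\sup\kappa_\psi^2\int\kappa_\psi^2$ is absorbed by $-2\int|\p_s\kappa_\psi|^2$. On any interval where the time-integrated energy is below $\varepsilon^2$, $\int\kappa_\psi^2$ then cannot climb back to $\varepsilon$. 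This is exactly what rules out the sparse spikes above. Decay of higher derivatives, and hence uniform decay of $\kappa_\psi$, follows by induction.

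The length bound is also what makes the constant-speed limit in Steps 3 and 4 an immersion, via $|\p_\eta\widehat\gamma_i|=L_i/2\pi\ge c/2\pi$. Without it, the limit (or its lift in a cone chart) could collapse to a point.

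With these ingredients in place, your cone-point analysis matches the paper's. The lifted limit is a smooth $\psi$-minimal immersion through the origin. For order $\ge3$, its rotate crosses it transversally; this persists under $C^0$ convergence and contradicts disjointness of the lifts. For order $2$, uniqueness of $g_\psi$-geodesics forces reflection at the cone point and back-and-forth motion between two order-$2$ cone points.
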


To prove this theorem, we start by using the orbifold Gauss-Bonnet theorem to obtain a lower bound on the length of our flow, which is then utilised to produce a decay estimate on the curvature and its derivatives. Away from the cone points this then gives subsequential smooth convergence, which combined with a careful case analysis in the orbifold charts at cone points yields the full result.

\begin{remark}We remark that Theorems \ref{thm:introbeh} and \ref{thm:introsubcon} directly generalise existing results on curve shortening flow on surfaces, and should be of independent interest to those working with weighted curve shortening flow in orbifolds.
\end{remark}

Theorems \ref{thm:introbeh} and \ref{thm:introsubcon} together imply the main theorem as follows. Firstly, the fact that $\alpha_K$ is exact and $\mathcal{O}$ is homeomorphic to a 2-sphere can be shown to imply that the $\psi$-CSF is a Hamiltonian deformation, and therefore preserves the enclosed area. Together with Theorem \ref{thm:introbeh}, this implies that the flow can be extended for infinite time. Now, Theorem \ref{thm:introsubcon} implies that we have subsequential convergence to a $\psi$-geodesic (case (ii) can be ruled out as the $\psi$-CSF encloses a region of constant area). We can relate this result back to the original GLMCF to show subsequential convergence to an $f$-minimal Lagrangian submanifold. Finally, assuming real-analyticity of $g$ and $f$, we may employ a \L{}ojasiewicz-Simon inequality argument near the $f$-minimal Lagrangian in the limit to upgrade to smooth convergence in time, completing the theorem.

\subsection{Structure of the Paper}

The first few sections of the paper focus on $\psi$-CSF. Section \ref{sec:2dorbifolds} includes preliminaries on compact 2D orbifolds and one-parameter families of curves in such orbifolds (further background material on orbifolds is included as Appendix \ref{app-orbifold}). Section \ref{sec:csf} regards $\psi$-CSF and its long-time behaviour, in particular the proof of Theorem \ref{thm:subcon} (Theorem \ref{thm:introsubcon} above). Section \ref{sec:csf2} regards finite-time singularities of $\psi$-CSF, in particular the proof of Theorem \ref{thm:beh} (Theorem \ref{thm:introbeh} above). 

We then move on to the main area of study, generalised Lagrangian mean curvature flow. Section \ref{sec:glmcf} introduces almost-Einstein K\"ahler manifolds and GLMCF, outlines some important results regarding symmetric Lagrangians, and explores the relationship between cohomogeneity-one GLMCF and $\psi$-CSF. Section \ref{sec:mainproof} then contains the proof of the main theorems, Theorems \ref{thm:main1} and \ref{thm:main2} (Theorem \ref{thm:main} above). Some technical lemmas are relegated to Appendices \ref{app:distcomp} and \ref{app:norgra} for readability.

\subsection*{Acknowledgements}
T.K. is supported by JSPS KAKENHI Grant Number 23K03122. A.W. was partially supported by research grants from the Research Grants Council of
the Hong Kong Special Administrative Region, China [Project No.: CUHK 14304121, CUHK 14305122 and CUHK 14307123]. The authors also acknowledge the use of AI for assistance in refining some mathematical arguments and in improving the presentation of the manuscript.

\section{Preliminaries on compact 2-dimensional orbifolds}\label{sec:2dorbifolds}

Let $\mathcal{O}$ be a $2$-dimensional compact, connected, oriented orbifold without boundary (see Appendix \ref{app-orbifold} for definitions relating to orbifolds). It is known that such orbifolds arise as one of those listed in Table \ref{tb:orbi} (cf.\ \cite{KL}, \cite[Theorem 13.3.6]{Thurston}). 
By the classification of singular points of 2-dimensional orbifolds (see \cite[Proposition 13.3.1]{Thurston}), the singular set
$\mathcal{O}^{{\rm sing}}$ consists of finitely many {\it cone points}, where an element $x\in \mathcal{O}$ is called a {\it cone point} if there is an orbifold chart $(\widetilde U,G,\varphi)$ around $x$ such that $G$ is isomorphic to the cyclic group $\mathbb{Z}_{k}$ of order $k\geq 2$. Without loss of generality, we may assume
that $G=\mathbb{Z}_{k}$ acts on $\widetilde U$ by the rotation of order $k$ around the origin $0$ and $\varphi(0)=x$. We call the integer $k$ the {\it order} at the cone point $x$ (we define the order at a regular point $x$ to be $1$). 

\begin{table}[ht]
\centering
\begin{tabular}{l|l}
Type & Classification\\
\hline\hline
Bad & $S^{2}(k)$, $S^{2}(k_{1},k_{2})$ with $k_{1}<k_{2}$ \\ 
Spherical  &  $S^{2}$, $S^{2}(k,k)$, $S^{2}(2,2,k)$, $S^{2}(2,3,3)$, $S^{2}(2,3,4)$, $S^{2}(2,3,5)$\\ 
Euclidean  & $T^{2}$, $S^{2}(2,3,6)$, $S^{2}(2,4,4)$, $S^{2}(3,3,3)$, $S^{2}(2,2,2,2)$\\ 
Hyperbolic & otherwise ($\exists$ infinitely many)\\ \hline
\end{tabular}
\vspace{10pt}
\caption{List of connected, compact oriented 2-dimensional 
orbifolds without boundary. $S^2(k_{1},\ldots, k_{r})$ denotes an oriented orbifold with $|\mathcal{O}|=S^2$ and having cone points of order $k_{1},\ldots, k_{r}>1$. 
% A spherical (resp. Euclidean or hyperbolic) orbifold $\mathcal{O}$ is an orbifold obtained by $|\mathcal{O}|=S^{2}/\Gamma$ (resp. $|\mathcal{O}|=T^{2}/\Gamma$ or $\mathbb{H}^{2}/\Gamma$) for some discrete subgroup $\Gamma$.
Spherical, Euclidean and hyperbolic orbifolds are those whose orbifold universal covers are $S^2, \mathbb{R}^2$ and $\mathbb{H}^2$ respectively.
A bad orbifold is an orbifold {\it not} obtained by a quotient of smooth manifold divided by a discrete subgroup. }
\label{tb:orbi}
\end{table}

\subsection{Gauss-Bonnet for 2-orbifolds}

We call a closed subset $E\subset \mathcal{O}$ a {\it region} in $\mathcal{O}$ if $E$ is homeomorphic to a closed disk $D^{2}$; note that by compactness, a region may contain only finitely many cone points. We then define the {\it Euler characteristic} of a region $E$ as follows. We take a cell decomposition $\mathbf{C}(E)$ of $E$ so that any cone point becomes a vertex of $\mathbf{C}(E)$ and denote the set of vertices, edges and faces of $\mathbf{C}(E)$
by $\mathbf{V}, \mathbf{E}$ and $\mathbf{F}$, respectively. Then, we define
\begin{equation}\label{eq-eulercharacteristic}
\chi^{{\rm orb}}(E):=\sum_{v\in \mathbf{V}}\frac{1}{k_{v}}- \#\mathbf{E}+ \#\mathbf{F} \, = \,1- \sum_{v \in {E}^{{\rm sing}}}\left(1-\frac{1}{k_v}\right),
\end{equation}
where $k_{v}$ is the order at the vertex $v\in E\subset \mathcal{O}$ and $E^{{\rm sing}}$ is the set of cone points in $E$.
Note that we have  $\chi^{{\rm orb}}(E)\leq 1$ and the equality holds if and only if the region $E$ is contained in the regular part $\mathcal{O}^{{\rm reg}}$. We remark also that if $E$ contains at most one cone point, then we have $\chi^{{\rm orb}}(E)>0$ (otherwise, $\chi^{{\rm orb}}(E)$ possibly becomes non-positive). Indeed, if $E$ contains exactly one cone point of order $k$, we have that $\chi^{{\rm orb}}(E)=1/k>0$.

The Euler characteristic for the $2$-dimensional compact orbifold $\mathcal{O}$ is defined similarly, and we denote it by $\chi^{{\rm orb}}(\mathcal{O})$. 
By \eqref{eq-eulercharacteristic}, we have
\begin{equation}\label{eq-eulercharacteristicorbifold}
    \chi^{{\rm orb}}(\mathcal{O}) \, = \, \chi(|\mathcal{O}|) \, - \, \sum_{v \in \mathcal{O}^{{\rm sing}}}\left( 1 - \frac{1}{k_v}\right), 
\end{equation}
where $\chi(|\mathcal{O}|)$ is the Euler characteristic of the topological space $|\mathcal{O}|$. Note that in particular we have $\chi^{{\rm orb}}(\mathcal{O})\leq \chi(|\mathcal{O}|)$.
Moreover, if $\mathcal{O}$ is spherical (resp. Euclidean or hyperbolic),
then $\chi^{{\rm orb}}(\mathcal{O})>0$ (resp. $\chi^{{\rm orb}}(\mathcal{O})=0$ or $\chi^{{\rm orb}}(\mathcal{O})<0$). 

Now, assume that $\mathcal{O}$ is equipped with a Riemannian metric $g$. The {\it Gaussian curvature} $K$ of $\mathcal{O}$ is a smooth function $K:\mathcal{O}\to \R$ such that  the local lift $\widetilde{K}: \widetilde{U}\to \R$ defined on each chart $\widetilde{U}$ coincides with the Gaussian curvature with respect to the associated Riemannian metric $\widetilde{g}$ on $\widetilde{U}$.  The {\it area form} $\omega$ on $\mathcal{O}$ is a $2$-form defined 
by gluing the associated area forms $\{\widetilde{\omega}_{\alpha}\}_{\alpha\in A}$  with respect to the metric $g$.

For a $2$-form $\Omega$ on a compact $2$-dimensional orbifold $\mathcal{O}$, the integral over a chart $(\widetilde{U},G,\varphi)$ is defined by 
\[
\int_{U} \Omega:=\frac{1}{|G|}\int_{\widetilde{U}} \widetilde{\Omega},
\]
where $|G|$ is the order of the finite subgroup $G$ and $\widetilde{\Omega}$ is a $G$-invariant $2$-form defined on $\widetilde{U}$ associated with $\Omega$. 
The integral of a $2$-form over the entire orbifold $\mathcal{O}$ can then be defined by using the partition of unity for the orbifold (cf.\ \cite{KL}). The proof is similar to the non-orbifold case with minor modifications.

Using the above notation, the Gauss-Bonnet theorem on compact 2-dimensional orbifold may be stated as follows.

\begin{theorem}[Gauss-Bonnet theorem for compact $2$-orbifolds]\label{thm:GB}
Let $(\mathcal{O},g)$ be a compact, oriented 2-dimensional Riemannian orbifold without boundary.
\begin{enumerate}
\item[{\rm (1)}]
 Let $E$ be a region in $\mathcal{O}$. Suppose the boundary $\partial E$ of $E$ is given by a piecewise-smooth embedded closed curve $\gamma: [a,b]\to \mathcal{O}$ such that there is a partition $a=a_{0}<a_{1}<\ldots <a_{n-1}<a_{n}=b$ of the interval $[a,b]$ satisfying $\gamma(a_{n})=\gamma(a_{0})$ where 
 $\gamma|_{[a_{i-1},a_{i}]}$ is smooth for each $i=1,\ldots, n$. 
 Suppose that $\gamma$ is contained in the regular part $\mathcal{O}^{{\rm reg}}$ and $\gamma$ is parametrised anti-clockwise with respect to the orientation (i.e.\ $J\dot{\gamma}$ points inward of the region $E$, where $\dot{\gamma}$ is the velocity vector and $J$ is a compatible complex structure on $\mathcal{O}^{\rm reg}$ so that $\{X,JX\}$ is positively oriented for any tangent vector $X$). Then, we have
  \begin{align*}\label{eq:locGB}
\int_{E}K\, \omega+\int_{\gamma}\kappa\,ds=2\pi\chi^{{\rm orb}}(E)-\sum_{i=1}^{n}\epsilon_{i},
\end{align*}
where 
\begin{itemize}
\item $s$ is the  the arclength parameter.
\item $\kappa := g(\overline\nabla_{\partial_s}\partial_s, J\partial_s)$ is the geodesic curvature of the piecewise smooth curve $\gamma$.
\item $\epsilon_{i}$ is the exterior angle of $\gamma$ at the point $\gamma(a_{i})$ for $i=1,\ldots, n$.
\end{itemize}
\item[\rm (2)] We have
\[
\int_{\mathcal{O}}K\,\omega=2\pi \chi^{{\rm orb}}(\mathcal{O}).
\]
\end{enumerate}
\end{theorem}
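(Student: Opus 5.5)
The plan is to reduce both statements to the classical Gauss--Bonnet theorem on smooth surfaces, working locally in orbifold charts around the cone points. The correction terms $1/k_v$ in $\chi^{\rm orb}$ will come from the factor $1/|G|$ in the definition of the orbifold integral.

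For (1), I first handle the case where $E$ has no cone points. Then $E\subset\mathcal{O}^{\rm reg}$, which is a smooth Riemannian surface, so the classical local Gauss--Bonnet theorem with exterior angles applies directly, with $\chi^{\rm orb}(E)=\chi(E)=1$.

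In general, let $x_1,\dots,x_m$ be the cone points in $E$, of orders $k_1,\dots,k_m$. They lie in the interior of $E$, since $\gamma\subset\mathcal{O}^{\rm reg}$. Around each $x_j$ I choose an orbifold chart $(\widetilde U_j,\mathbb{Z}_{k_j},\varphi_j)$ in which $\mathbb{Z}_{k_j}$ acts by rotation. Inside it I take a small geodesic disk $\widetilde D_j$ of radius $r$ about $0$ in the lifted metric $\widetilde g$; it is $\mathbb{Z}_{k_j}$-invariant because the action is isometric. Its image $D_j=\varphi_j(\widetilde D_j)$ is a topological disk about $x_j$, and I take the radius small enough that the $D_j$ are disjoint and lie in the interior of $E$. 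Set $E'=E\setminus\bigcup_j \mathrm{int}\,D_j$. This is a smooth compact surface with boundary in $\mathcal{O}^{\rm reg}$, with Euler characteristic $1-m$.

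I apply classical Gauss--Bonnet to $E'$. The inner boundary circles $\partial D_j$ are traversed with the opposite orientation, so
\[
\int_{E'}K\,\omega+\int_\gamma\kappa\,ds+\sum_i\epsilon_i-\sum_j\int_{\partial D_j}\kappa_{\partial D_j}\,ds=2\pi(1-m),
\]
where $\kappa_{\partial D_j}$ is the curvature of $\partial D_j$ computed with inward normal into $D_j$. Next I evaluate each inner term in the chart. Since $\varphi_j$ restricted to $\widetilde D_j\setminus\{0\}$ is a $k_j$-fold Riemannian covering of $D_j\setminus\{x_j\}$, both boundary curvature integrals and area integrals on $D_j$ equal $1/k_j$ times the corresponding quantities upstairs. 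Applying smooth Gauss--Bonnet to the disk $\widetilde D_j$ gives
\[
\int_{\partial D_j}\kappa\,ds=\frac{1}{k_j}\Big(2\pi-\int_{\widetilde D_j}\widetilde K\,\widetilde\omega\Big)=\frac{2\pi}{k_j}-\int_{D_j}K\,\omega,
\]
where the last equality uses the definition of the orbifold integral. Substituting, the area terms combine into $\int_E K\,\omega$, and the right-hand side becomes
\[
2\pi\Big(1-m+\sum_j\frac{1}{k_j}\Big)=2\pi\chi^{\rm orb}(E),
\]
which is (1).

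For (2), I cut $|\mathcal{O}|$ in the same way, removing small invariant disks $D_v$ around every cone point $v$. The complement is a smooth compact surface of Euler characteristic $\chi(|\mathcal{O}|)-\#\mathcal{O}^{\rm sing}$, and the identical computation yields
\[
\int_{\mathcal{O}}K\,\omega=2\pi\Big(\chi(|\mathcal{O}|)-\sum_v\big(1-\tfrac1{k_v}\big)\Big)=2\pi\chi^{\rm orb}(\mathcal{O}),
\]
using \eqref{eq-eulercharacteristicorbifold}.

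The main technical point is the chart computation. I need to check three things: that the orbifold integral over $D_j$ genuinely equals $\frac{1}{k_j}\int_{\widetilde D_j}$, which requires a partition of unity that is compatible near $x_j$, or simply the observation that the cone point has measure zero; that the orientation and sign conventions for $\kappa$ on the inner boundaries match; and that the equality of the cell-decomposition definition of $\chi^{\rm orb}$ with the closed formula in \eqref{eq-eulercharacteristic} is being taken as given.
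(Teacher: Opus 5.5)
Your argument is correct. Note that the paper gives no proof of Theorem \ref{thm:GB}. It states the result as known and says only that the proof is similar to the non-orbifold case with minor modifications.

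The paper defines $\chi^{\rm orb}(E)$ by a cell decomposition in which the cone points are vertices weighted by $1/k_v$. So the modification it presumably has in mind is the triangulation proof. There the only change is that the total angle at a cone vertex of order $k_v$ is $2\pi/k_v$ instead of $2\pi$. Summing Gauss--Bonnet over the faces then gives exactly $\sum_v 1/k_v-\#\mathbf{E}+\#\mathbf{F}$.

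Your excision route differs. You use smooth Gauss--Bonnet as a black box, applied once to $E'=E\setminus\bigcup_j\mathrm{int}\,D_j\subset\mathcal{O}^{\rm reg}$ and once to each lifted disk $\widetilde D_j$. The weight $1/k_j$ then comes from two sources: the $k_j$-fold covering $\partial\widetilde D_j\to\partial D_j$, and the factor $1/|G|$ in the orbifold integral.

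What each route buys:
\begin{itemize}
\item Yours avoids any triangulation or angle bookkeeping at singular vertices. However, it lands on the closed formula $1-\sum_v(1-1/k_v)$, so matching the paper's cell-count definition uses the identity in \eqref{eq-eulercharacteristic}, which the paper asserts without proof.
\item The triangulation proof yields the cell-count form directly. As a byproduct, it shows that this count is independent of the decomposition.
\end{itemize}

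The technical points you flag all go through:
\begin{itemize}
\item There are finitely many cone points, and each chart map is a $|G|$-fold covering away from the origin. Hence the orbifold integral of $K\omega$ equals the ordinary integral over the smooth surface $\mathcal{O}^{\rm reg}$. This gives both $\int_E=\int_{E'}+\sum_j\int_{D_j}$ and $\int_{D_j}=\frac{1}{k_j}\int_{\widetilde D_j}$.
\item $\mathbb{Z}_{k_j}$ acts by orientation-preserving isometries and the oriented chart map is orientation-preserving. So $\partial\widetilde D_j\to\partial D_j$ is an orientation-preserving local isometry, and your sign convention for the inner boundary terms is right.
\item $\chi(E')=1-m$ and $\chi(|\mathcal{O}|\setminus\bigcup_v\mathrm{int}\,D_v)=\chi(|\mathcal{O}|)-\#\mathcal{O}^{\rm sing}$ both follow from Mayer--Vietoris. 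This uses that $|\mathcal{O}|$ is a topological surface, since $\mathbb{R}^2/\mathbb{Z}_k\cong\mathbb{R}^2$.
\end{itemize}
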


\subsection{Curves on 2-orbifolds}\label{subsec:curvesonorbs}

Next, we prove some facts concerning closed curves and geodesics on a 2-dimensional Riemannian orbifold $\mathcal{O}$, which will be used in the proofs of our main results. Suppose that $\gamma: S^{1}\to \mathcal{O}^{{\rm reg}}$ is a smooth embedding into the regular part of $\mathcal{O}$. If $\gamma$ encloses a region $E_{\gamma}\subset \mathcal{O}$, namely,  the boundary $\partial E_{\gamma}$ of $E_{\gamma}$ coincides with the image of $\gamma$, then we define the {\it area} of $E_{\gamma}$ by 
\[
A(E_{\gamma}):=\int_{E_{\gamma}}\omega,
\]
where $\omega$ is the area form on $\mathcal{O}$ and the integral is defined on the orbifold $E_{\gamma}$ with boundary $\gamma$.

\begin{lemma}\label{lem:area}
Let $\{\gamma_{t}\}_{t\in [0,T)}$ be a smooth deformation of $\gamma=\gamma_{0}$. Suppose that for all $t\in [0,T)$, $\gamma_{t}: S^1\to \mathcal{O}$ is an embedded curve whose image is contained in $\mathcal{O}^{{\rm reg}}$ 
and which is the oriented boundary of a time-dependent region $E_t \subset \mathcal{O}$.
Then we have
\[
\frac{d}{dt}A(E_{t})=\int_{S^{1}}\gamma_{t}^{*} (i_{V_{t}}\omega),
\]
where $i$ denotes the interior product and $V_{t}=d\gamma_{t}/dt$.
\begin{proof}
Fix arbitrary $t_{0}\in (0,T)$. Because $E_{t}$ is homeomorphic to $D^{2}$, there exists a sufficiently small $\epsilon>0$ and an open subset $U\subset |\mathcal{O}|$ which is homeomorphic to $\R^{2}$ such that $E_{t}\subset U$ for any $t\in (t_{0}-\epsilon, t_{0}+\epsilon)$.  
Moreover, we may assume that the area form $\omega$ is exact on $U$, i.e.\ there exists a $1$-form $\eta$ defined on $U\subset|\mathcal{O}|$ such that $\omega=d\eta$. Indeed, since $U$ is an open suborbifold which is homeomorphic to $\R^{2}$,  de Rham's theorem for orbifolds (see \cite{Satake0}) tells us that the de Rham cohomology group $H^{2}(U)$ is isomorphic to the singular cohomology group $H^{2}(|U|;\R)=H^{2}(\R^{2};\R)=\{0\}$. Therefore, the closed $2$-form $\omega|_{U}$ is exact.

In particular, for any $t\in ( t_{0}-\epsilon, t_{0}+\epsilon)$, we have 
\[
A(E_{t})=\int_{E_{t}}d\eta=\int_{S^{1}}\gamma_{t}^{*}\eta
\]
by the Stokes theorem for orbifolds (see \cite{Satake0}).
Therefore, we obtain
\[
\frac{d}{dt}A(E_{t})=\int_{S^{1}}\frac{d}{dt}\gamma_{t}^{*}\eta=\int_{S^{1}}\gamma_{t}^{*}(i_{V_{t}}d\eta+d(i_{V_{t}}\eta))=\int_{S^{1}}\gamma_{t}^{*}(i_{V_{t}}\omega)
\]
as required.
\end{proof}
\end{lemma}

Since $\mathcal{O}$ is oriented, the area form $\omega$ is a non-degenerate closed $2$-form, and hence, it defines a symplectic structure on $\mathcal{O}$. Note that we have the relation $\omega(\cdot,\cdot)=g(J\cdot, \cdot)$ on  $\mathcal{O}^{\rm reg}$, where $J$ is the compatible complex structure on the oriented surface $\mathcal{O}^{\rm reg}$. A smooth deformation $\{\gamma_{t}\}_{t\in (-\epsilon,\epsilon)}$ of $\gamma=\gamma_{0}$ is called {\it Hamiltonian} if the variational vector field $V_{t}:=d\gamma_{t}/dt$ is a Hamiltonian vector field, i.e.\ if the associated $1$-form
\[
\alpha_{V_{t}}:=\gamma_{t}^{*}(i_{V_{t}}\omega)
\]
is an exact $1$-form. 
Hamiltonian deformations have the important property that they are area preserving:

\begin{proposition}\label{prop:areapre}
Let $\{\gamma_{t}\}_{t\in (-\epsilon,\epsilon)}$ be a Hamiltonian deformation of $\gamma=\gamma_{0}$.
Suppose that for all $t\in (-\epsilon,\epsilon)$, $\gamma_{t}: S^1\to \mathcal{O}$ is an embedding whose image is contained in $\mathcal{O}^{{\rm reg}}$ and which is the oriented boundary of a time-dependent region $E_t \subset \mathcal{O}$.
Then the area $A(E_{t})$ is preserved by the Hamiltonian deformation $\{\gamma_{t}\}_{t\in (-\epsilon,\epsilon)}$.
\end{proposition}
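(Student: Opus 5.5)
The natural route is to apply Lemma \ref{lem:area} and then use the Hamiltonian hypothesis directly. The hypotheses of Proposition \ref{prop:areapre} are exactly those of Lemma \ref{lem:area}: each $\gamma_t$ is an embedding into $\mathcal{O}^{\rm reg}$ and is the oriented boundary of a region $E_t$. The lemma therefore gives, for every $t\in(-\epsilon,\epsilon)$,
\[
\frac{d}{dt}A(E_t)=\int_{S^1}\gamma_t^*(i_{V_t}\omega)=\int_{S^1}\alpha_{V_t}.
\]
The lemma is stated for $t\in[0,T)$. Its proof is local in time around an arbitrary $t_0$, so it applies verbatim on the interval $(-\epsilon,\epsilon)$; alternatively one can reparametrise time.

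Next we use the Hamiltonian assumption. By definition, $\alpha_{V_t}$ is exact on $S^1$ for each $t$, so $\alpha_{V_t}=dh_t$ for some smooth function $h_t$ on $S^1$. Since $S^1$ is a closed manifold without boundary, Stokes' theorem gives $\int_{S^1}dh_t=0$; equivalently, the integral of $h_t'(\theta)$ over a period vanishes by periodicity. Hence $\frac{d}{dt}A(E_t)=0$ for all $t$.

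Finally, $t\mapsto A(E_t)$ is differentiable with vanishing derivative on the connected interval $(-\epsilon,\epsilon)$, so it is constant. Differentiability, and indeed continuity, of $A(E_t)$ is not a separate issue: in the proof of Lemma \ref{lem:area} it is written locally as $\int_{S^1}\gamma_t^*\eta$, which depends smoothly on $t$.

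There is essentially no hard step here. The only point requiring care is that Lemma \ref{lem:area} uses a primitive $\eta$ of $\omega$ on a neighbourhood $U$ of the $E_t$, and that neighbourhood is valid only for $t$ near $t_0$. This is why the argument shows the derivative vanishes pointwise and then concludes by connectedness, rather than writing $A(E_t)$ globally as a single line integral.
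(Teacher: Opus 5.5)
Your proposal is correct and matches the paper's proof: apply Lemma \ref{lem:area}, write the exact form as $\alpha_{V_t}=dh_t$, and use $\int_{S^1}dh_t=0$ to conclude that $A(E_t)$ is constant. Your remarks about the time interval and the locally defined primitive $\eta$ are sound bookkeeping that the paper leaves implicit.
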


\begin{proof}
By assumption, there exists a smooth function $h_{t}\in C^{\infty}(S^{1})$ such that $\alpha_{V_{t}}=dh_{t}$ for each $t$. Then, by Lemma \ref{lem:area}, we see
\begin{align*}
\frac{d}{dt}A(E_{t})=\int_{S^{1}}\gamma_{t}^{*}(i_{V_{t}}\omega)=\int_{S^{1}}\alpha_{V_{t}}=\int_{S^{1}}dh_{t}=0.
\end{align*}
Therefore, $A(E_{t})$ is constant.
\end{proof}

For any smooth embedding $\gamma: S^{1}\to \mathcal{O}^{{\rm reg}}$ into the regular part of $\mathcal{O}$, we can construct a collar neighbourhood $U_{\epsilon}$ around $\gamma(S^{1})$ as follows. We define a smooth map $\Phi: S^{1}\times (-\epsilon,\epsilon)\to \mathcal{O}^{{\rm reg}}$ by
\[
\Phi(x,y):={\rm exp}_{\gamma(x)}(yN_{\gamma(x)}),
\]
where $N$ is a smooth unit normal vector field along $\gamma$. If $\mathcal{O}^{{\rm reg}}$ is oriented, then for sufficiently small $\epsilon>0$, $\Phi$ defines a diffeomorphism between the annulus $S^{1}\times (-\epsilon,\epsilon)$ and the collar neighbourhood $U_{\epsilon}:=\Phi(S^{1}\times (-\epsilon,\epsilon))$ of $\gamma(S^{1})$. 

The following technical lemma is a weaker version of \cite[Lemma A.2]{Gage}.

\begin{lemma}\label{lem:graph}
Suppose $\mathcal{O}^{{\rm reg}}$ is oriented and the embedding $\gamma: S^{1}\to \mathcal{O}^{{\rm reg}}$ is a geodesic. We take positive constants $a\geq 2$ and $\delta\leq \frac{\pi}{8aL(\gamma)}$, where $L(\gamma)$ is the length of $\gamma$. 

Then there exists an $\epsilon>0$ such that for any smooth embedding $c: S^{1}\to U_{\epsilon}\subset \mathcal{O}^{{\rm reg}}$ such that $|\kappa|\leq \delta$ and $L(c)\leq a\cdot L(\gamma)$,  the image of $c$ is described by a graph over the geodesic $\gamma(S^{1})$. Namely,
$
c(S^{1})=\{\Phi(x, h_{c}(x))\mid x\in S^{1}\}
$
for some smooth function $h_{c}: S^{1}\to \R$.
\end{lemma}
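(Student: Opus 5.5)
Since everything happens inside the collar $U_\epsilon$, which lies in the regular part, the argument is a statement about curves in the annulus $S^1\times(-\epsilon,\epsilon)$. We pull back by $\Phi$ and work in Fermi coordinates $(x,y)$. Here $x$ is the arclength parameter along $\gamma$, so $x\in \R/L(\gamma)\mathbb{Z}$. The metric has the form $dy^2+G(x,y)^2dx^2$ with $G(x,0)=1$, $\partial_yG(x,0)=0$ (because $\gamma$ is a geodesic), and hence $G=1+O(y^2)$. The curves $y=\text{const}$ have geodesic curvature $O(\epsilon)$. The goal is to show that the tangent of $c$ never becomes vertical, i.e.\ it never becomes parallel to $\partial_y$. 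Then $x\circ c$ is a local diffeomorphism $S^1\to S^1$, hence a covering. One then rules out degree $\neq\pm1$, after which $c$ is a graph $y=h_c(x)$.

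\textbf{Step 1 (angle control).} Parametrise $c$ by arclength $s$. Let $\theta(s)$ be the angle between $\dot c$ and the coordinate direction $\partial_x/|\partial_x|$. In a Fermi frame, $\theta'(s)=\kappa_c-\kappa_{\rm hor}(\theta)$, where $\kappa_{\rm hor}$ is a combination of the geodesic curvatures of the coordinate lines. The lines $x=\text{const}$ are geodesics, and the lines $y=\text{const}$ have curvature $O(\epsilon)$. So $|\theta'|\le \delta+C\epsilon$. The total length satisfies $L(c)\le aL(\gamma)$. Hence $\theta$ varies by at most $(\delta+C\epsilon)aL(\gamma)\le \pi/8+CaL(\gamma)\epsilon$ along the whole curve, which is below $\pi/4$ once $\epsilon$ is small.

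\textbf{Step 2 (some point is nearly horizontal).} Because $c$ is trapped in a strip of width $2\epsilon$, it cannot travel far in the $y$-direction. Suppose $|\theta|\ge\pi/4$ everywhere. Then, by continuity and the bound from Step 1, $\theta$ stays in one of the sectors around $\pm\pi/2$. So $dy/ds$ keeps a fixed sign with $|dy/ds|\ge \sin(\pi/4)-o(1)$. This contradicts closedness: $y$ would be monotone along a closed curve. Therefore some point has $|\theta|<\pi/4$. Combining with Step 1, $\theta$ stays strictly inside $(-\pi/2,\pi/2)$ everywhere, up to the global choice of orientation. Consequently $dx/ds>0$ everywhere.

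\textbf{Step 3 (degree).} Now $x\circ c:S^1\to \R/L(\gamma)\mathbb Z$ is a covering of some degree $m\ge1$; it cannot have degree $0$ since $x\circ c$ is monotone and the curve closes up. Since $dx/ds\le 1/G\le 1+C\epsilon^2$, we get $L(c)\ge mL(\gamma)/(1+C\epsilon^2)$, so $m\le a(1+C\epsilon^2)$. This bound alone does not force $m=1$, so we use embeddedness. If $m\ge2$, write $c$ as the union of $m$ branches $y=h_j(x)$, which are cyclically permuted when $x$ goes once around. The function $h_1-h_2$ changes sign around the cycle: going once around sends $h_1\mapsto h_2\mapsto\cdots$, so $h_1-h_2$ becomes $h_2-h_3$, and the sum of these differences over the cycle telescopes to zero. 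A continuous function whose translates sum to zero must vanish somewhere. At such an $x$, two branches meet, which contradicts embeddedness. Hence $m=1$ and $c(S^1)=\{\Phi(x,h_c(x))\}$ with $h_c$ smooth, since $x\circ c$ is a diffeomorphism.

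\textbf{Main obstacle.} Step 1 is the most delicate part. There we must verify, with explicit constants, that the curvature of the coordinate foliation is $O(\epsilon)$ uniformly; this needs $\epsilon$ small enough that $\Phi$ is a diffeomorphism with bounded geometry. We must also check that the hypothesis $\delta\le \pi/(8aL(\gamma))$ leaves enough room. If the constant turns out too tight, I would instead compare $\theta$ with its value at a nearly horizontal point from Step 2 and use only half the length. Step 3 is topological and should be routine.
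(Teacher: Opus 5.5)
Your proof is correct and follows essentially the same route as the paper: Fermi coordinates, the bound $|\theta'|\le\delta+C\epsilon$ (the paper's $\kappa=\varphi'-\cos\varphi\,J_y/J$ with $|J_y/J|<\delta$) keeping the total angle variation below $\pi/4$, and a horizontal tangent forced by closedness. That gives $dx/ds>0$, so $x\circ c$ is a covering. The horizontal point is most directly found at a maximum of $y\circ c$, which also handles the case $\theta$ near $\pi$ that you absorb into the orientation choice. The only real difference is the degree-one step, which the paper cites from Gage's Lemma A.2; your telescoping argument, applied to $F(x)=Y(x+L(\gamma))-Y(x)$ for the $mL(\gamma)$-periodic lift $Y$, proves it in a self-contained way.
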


This lemma will be used in the proofs of Theorem \ref{thm:subcon} and the main result Theorem \ref{thm:main1}. Since the definition of $h_{c}$ will be important in the proof of Theorem \ref{thm:main1}, we briefly summarize the proof of this lemma according to \cite[Appendix A]{Gage}.

\begin{proof}[Proof of Lemma \ref{lem:graph}] 
Regarding $\Phi$ as a local coordinate (called the {\it Fermi coordinate}) of $\mathcal{O}^{{\rm reg}}$, the metric $g$ is written by $g = J^2 (dx)^2 + (dy)^2$,
where $J(x,y)$ is the Jacobi field coefficient satisfying $\p^{2}_{y}J+KJ=0$ with $J(x,0)=1$ and $\p_{y}J(x,0)=0$.
We denote the arclength parameter of the curve $c$ by $s$, and we define an angle function $\varphi(s)$ by $\cos \varphi(s)=\langle \frac{dc}{ds}(s), e_{1}\rangle$, where $e_{1}:=\frac{1}{J}\Phi_{*}\frac{\p}{\p x}$ is the  unit tangent vector to the curves $y=\text{const}.$
Then, a direct computation (see \cite[Appendix A]{Gage}) shows that the geodesic curvature $\kappa$ of $c$ is given by
\begin{align*}
\kappa=\frac{d\varphi}{ds}-\cos \varphi \cdot \frac{J_{y}}{J}.
\end{align*}
As shown in \cite[Lemma A.1]{Gage},  we have $|J_{y}/J|<\delta$ if we take sufficiently small $\epsilon>0$. Therefore, by assumption, we have
\[
\Big|\frac{d\varphi}{ds}\Big|\leq |\kappa|+\Big|\frac{J_{y}}{J}\Big|\leq 2\delta\leq \frac{\pi}{4aL(\gamma)}.
\]
Thus, we see
\[
|\varphi(s)-\varphi(0)|\leq \int_{0}^{s}\Big|\frac{d\varphi}{ds}\Big|\ ds\leq \frac{\pi}{4aL(\gamma)}\cdot L(c)\leq \frac{\pi}{4}
\]
since we assume $L(c)\leq a\cdot L(\gamma)$. Because $c$ is a closed curve, there is a point $c(s)$ such that $dc/ds(s)$ is parallel to $e_{1}$, and hence, we may assume that $\varphi(0)=0$ without loss of generality. Therefore, the angle function satisfies that $|\varphi(s)|\leq \pi/4$. 

We identify $U_{\epsilon}$ with $S^{1}\times (-\epsilon,\epsilon)$ by $\Phi$. Then $\gamma(S^{1})$ is identified with the slice $S^{1}\times \{0\}$. 
Define
$\widetilde{c}:=\Phi^{-1}\circ c$. Let $\pi_{1}: S^{1}\times (-\epsilon,\epsilon)\to S^{1}$ be the projection.  Since $d\pi_{1}(\p/\p x)={\p}/{\p x}|_{S^{1}\times \{0\}}$, $d\pi_{1}(\p/\p y)=0$, we see that
\[
(d\pi_{1})_{\widetilde{c}(s)}\Big(\frac{d\widetilde{c}}{ds}\Big)=\cos \varphi(s)\cdot \frac{1}{J}\frac{\p}{\p x}\Big|_{\pi_{1}\circ \widetilde{c}(s)}\neq 0
\]
since $|\varphi(s)|\leq \pi/4$. This implies that the restricted map $\pi_{1}|_{\widetilde{c}(S^{1})}: \widetilde{c}(S^{1})\to S^{1}$ defines a local diffeomorphism.  Moreover, by the same argument given in \cite[Lemma A.2]{Gage}, it turns out that $\pi_{1}|_{\widetilde{c}(S^{1})}$ is indeed a global diffeomorphism. Thus, we define a function $h_{c}:S^{1}\to \R$ by 
\begin{align}\label{def:h}
(\pi_{1}|_{\widetilde{c}(S^{1})})^{-1}(x)=(x, h_{c}(x)).
\end{align}
Then it is easy to see that $\widetilde{c}(S^{1})=\{(x,h_{c}(x))\mid x\in S^{1}\}$ and this implies the desired conclusion. 
\end{proof}

\section{Long-time behaviour of weighted CSF in  compact 2D-orbifolds}\label{sec:csf}

We now study the behaviour of weighted curve shortening flow in a compact, oriented 2-dimensional orbifold. In this section, we extend some fundamental results on CSF on a surface established by Gage \cite{Gage} and  Grayson \cite{Grayson} to our orbifold setting. The main result of this section is Theorem \ref{thm:subcon}, which concerns the infinite-time behaviour of the weighted CSF on the orbifold. The finite-time behaviour of the flow will be discussed in Section \ref{sec:csf2}.

% We will show in the next section that, in our setting, the analysis of the behaviour of the cohomogeneity-one GLMCF reduces to the study of the weighted curve shortening flow (CSF) on a 2-dimensional compact oriented orbifold. 
%In this section, we extend some fundamental results on CSF on a surface established by Gage \cite{Gage} and  Grayson \cite{Grayson} to our orbifold setting. Using these results, we prove two theorems concerning the finite-time and the infinite-time behaviour of the weighted CSF on a 2-dimensional compact orbifold.  The main results of this section are Theorems \ref{thm:beh} and \ref{thm:subcon}.

\subsection{Weighted CSF}
First, we summarize basic facts and formulas for the weighted curve shortening flow.  Let $(\mathcal{O}^2,g, \psi)$ be an oriented, compact 2-dimensional weighted Riemannian orbifold, where $\psi: \mathcal{O}\to \R$ is a smooth function on $\mathcal{O}$.

For our purpose, we suppose that {\it $\gamma: S^1\to \mathcal{O}^{{\rm reg}}$ is a smooth immersion from a circle $S^1$ into the regular part $\mathcal{O}^{{\rm reg}}$}.
We fix a manifold structure of $S^1$ by using a chart $\theta \mapsto (\cos \theta, \sin \theta)$, and $\theta$ will be called a \textit{parameter} of $S^1$. We denote the arclength parameter of $\gamma$ with respect to $g$ by $s$, and set $\p_s\gamma:=\partial \gamma/\partial s$.  We denote $\langle\cdot ,\cdot  \rangle:=g(\cdot,\cdot)$ and the Levi-Civita connection of $\mathcal{O}$ (resp. of $S^1$ with respect to the induced metric) by $\onab$ (resp. $\nabla$).

We define the {\it $\psi$-geodesic curvature} $\kappa_\psi$ of $\gamma$ by
\begin{align}\label{eq:kappa}
\kappa_{\psi}:=\kappa-\langle\overline{\nabla}\psi,N\rangle,
\end{align}
where $N=J\p_s$ is a unit normal vector field along $\gamma$ with respect to $g$ so that $\{\p_s, N\}$ is positively oriented, and $\kappa$ is the usual geodesic curvature which is defined by  $\kappa:=\langle \onab_{\p_s}\p_s, N\rangle$. An immersed curve $\gamma: S^{1}\to \mathcal{O}^{{\rm reg}}$ will be called {\it $\psi$-minimal} if $\kappa_{\psi}=0$ along $\gamma$. It is known that $\psi$-minimal immersion arises as a critical point of the {\it $\psi$-length functional}
\begin{equation}\label{eq:lengthfunctional}
L_{\psi}(\gamma):=\int_{S^1}ds_\psi,\quad \textup{where $ds_\psi:=e^{\psi}ds$.}
\end{equation}
Note that $L_{\psi}$ coincides with the length with respect to the weighted Riemannian metric $g_{\psi}:=e^{2\psi}g$.
Thus, the $\psi$-minimal curve is indeed a geodesic with respect to $g_{\psi}$ (we say {\it $\psi$-geodesic}) if we parametrise the curve by the arclength parameter w.r.t.\ $g_{\psi}$. 

\begin{remark}\label{rem:geocurv}
 It should be emphasised that the $\psi$-geodesic curvature $\kappa_{\psi}$ is different from  the geodesic curvature ${\kappa}'$ with respect to the weighted metric $g_\psi$. Indeed, we have the relation
$
\kappa'=e^{-\psi}\kappa_{\psi}.
$

\end{remark}

A one-parameter family of immersions $\gamma: S^1\times [0,T) \to \mathcal{O}^{{\rm reg}}$ is called a {\it weighted curve shortening flow} or {\it $\psi$-curve shortening flow} ({\it weighted CSF} or {\it $\psi$-CSF} for short) {\it contained in $\mathcal{O}^{{\rm reg}}$} if it satisfies 
\begin{align}\label{eq:CSF}
\frac{\p \gamma_{t}}{\p t}=\kappa_\psi(t) N(t),
\end{align}
where $\kappa_{\psi}(t)$ (resp. $N(t)$) is the $\psi$-geodesic curvature (resp. the unit normal vector field) of $\gamma_{t}=\gamma(t,\cdot)$.
 It follows from the first variational formula for $L_\psi$ (see also \eqref{eq:fv} below) that  $L_\psi(\gamma_t)$ is a decreasing function along $\psi$-CSF $\gamma_t$. In fact, the $\psi$-CSF is a negative gradient flow of $L_\psi$ w.r.t.\ the weighted $L^2$-norm $\langle V,W\rangle_{\psi}=\int_{S^1}\langle V,W\rangle ds_\psi$.

We also consider the following equation:
\begin{align}\label{eq:unpCSF}
\Big(\frac{\p \widehat{\gamma}_{t}}{\p t}\Big)^{\perp}=\widehat{\kappa}_{\psi}(t)\widehat{N}(t)
\end{align}
where $\perp$ means the orthogonal projection onto the normal space of $\widehat{\gamma}_{t}$, and $\widehat{\kappa}_{\psi}(t)$ (resp. $\widehat{N}(t)$) is the $\psi$-geodesic curvature (resp. the unit normal vector field)
of $\widehat{\gamma}_{t}$. We call a family of immersions $\{\widehat{\gamma}_{t}\}_{t\in [0,T)}$ satisfying \eqref{eq:unpCSF} an {\it unparametrised solution} of $\psi$-CSF. It is easy to see that if $\gamma: S^{1}\times [0,T)\to \mathcal{O}^{{\rm reg}}$ is a solution of $\psi$-CSF \eqref{eq:CSF} and  $\varphi_{t}: S^{1}\times [0,T)\to S^{1}$ is a 
smooth family of diffeomorphisms on $S^{1}$, then the family of immersions $\widehat{\gamma}_{t}:=\gamma_{t}\circ \varphi_{t}$ is an unparametrised solution. 
Note that if this is the case, the images of the flows coincide with each other.

The $\psi$-CSF satisfies the following evolution equations (see \cite[\S 2.2, eq. (10) and (14)]{MV}):
\begin{lemma}
Along the $\psi$-CSF, we have
 \begin{align}
 \label{eq:fv}
& \frac{\p}{\p t}ds_{\psi}(t)=-\kappa_\psi^2\,ds_{\psi}(t),\\
 \label{eq:kev}
&  \frac{\p \kappa_{\psi}}{\p t}=\Delta_{\psi}\kappa_{\psi}+\kappa_{\psi}(|\kappa|^{2}+{\rm Ric}_{\psi}(N,N)),
 \end{align}
 where $\Delta_\psi u=\Delta u+\langle \onab \psi, \nabla u\rangle$ for  $u\in C^\infty(S^1)$
 and ${\rm Ric}_{\psi}$ is the weighted Ricci curvature of $\mathcal{O}$ defined by using the Ricci curvature ${\rm Ric}$ of $g$ by ${\rm Ric}_{\psi}(N,N):={\rm Ric}(N,N)-\langle \onab_N\onab\psi, N\rangle$.
\end{lemma}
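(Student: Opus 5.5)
Since everything takes place in $\mathcal{O}^{\rm reg}$, which is a smooth surface, both formulas are local and I would verify them by direct computation in a smooth chart, following the unweighted curve shortening flow computations. Write $v=|\partial_\theta\gamma|$, so $ds=v\,d\theta$ and $\partial_s=v^{-1}\partial_\theta$. The velocity is $V=\kappa_\psi N$, which is purely normal.

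For \eqref{eq:fv}, first compute $\partial_t v$. Since $\onab_t\partial_\theta\gamma=\onab_\theta(\kappa_\psi N)$, we get
\[
\partial_t v = v^{-1}\langle \onab_\theta(\kappa_\psi N),\partial_\theta\gamma\rangle = -\kappa_\psi\kappa v ,
\]
using $\langle\onab_s N,\partial_s\rangle=-\langle N,\onab_s\partial_s\rangle=-\kappa$. Next,
\[
\partial_t e^{\psi\circ\gamma}=e^{\psi}\langle\onab\psi,\kappa_\psi N\rangle .
\]
Adding the two contributions, $\partial_t(e^\psi v)=-\kappa_\psi\bigl(\kappa-\langle\onab\psi,N\rangle\bigr)e^\psi v=-\kappa_\psi^2 e^\psi v$. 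This is \eqref{eq:fv}, and integrating it shows $L_\psi$ is decreasing.

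For \eqref{eq:kev}, I would use two standard facts for normal variations $V=uN$ of a curve in a surface: the commutator $[\partial_t,\partial_s]=\kappa u\,\partial_s$, and the variation formulas
\[
\partial_t\kappa=\partial_s^2u+u\bigl(\kappa^2+{\rm Ric}(N,N)\bigr), \qquad \onab_tN=-(\partial_su)\partial_s .
\]
In two dimensions ${\rm Ric}(N,N)=K$. Setting $u=\kappa_\psi$, we then need $\partial_t\langle\onab\psi,N\rangle$. This equals $u\,\onab^2\psi(N,N)-(\partial_s u)\langle\onab\psi,\partial_s\rangle$, the first term coming from moving the base point and the second from $\onab_tN$. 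Subtracting,
\[
\partial_t\kappa_\psi=\partial_s^2\kappa_\psi+\langle\onab\psi,\partial_s\rangle\partial_s\kappa_\psi+\kappa_\psi\bigl(\kappa^2+{\rm Ric}(N,N)-\onab^2\psi(N,N)\bigr).
\]
Since $\Delta=\partial_s^2$ on the curve and $\langle\onab\psi,\nabla u\rangle=\langle\onab\psi,\partial_s\rangle\partial_s u$, this is exactly \eqref{eq:kev} with $\Delta_\psi$ and ${\rm Ric}_\psi$ as defined in the statement.

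The main obstacle is bookkeeping in the curvature variation formula. The sign convention $\kappa=\langle\onab_{\partial_s}\partial_s,J\partial_s\rangle$ must be tracked carefully, and the curvature term must come out as $+{\rm Ric}(N,N)$, which arises from commuting $\onab_t$ and $\onab_s$ via the Riemann tensor, $\langle R(V,\partial_s)\partial_s,N\rangle=uK$. I would fix orientations by checking against the round circle shrinking in the plane.
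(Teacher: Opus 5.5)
Your proposal is correct. The variation $\partial_t(e^{\psi}v)=-\kappa_\psi^2e^{\psi}v$, the commutator $[\partial_t,\partial_s]=\kappa u\,\partial_s$, the formulas $\partial_t\kappa=\partial_s^2u+u(\kappa^2+{\rm Ric}(N,N))$ and $\onab_tN=-(\partial_su)\partial_s$, and the term $\partial_t\langle\onab\psi,N\rangle$ all check out with the paper's sign conventions, and they combine to give exactly \eqref{eq:fv} and \eqref{eq:kev} with the paper's $\Delta_\psi$ and ${\rm Ric}_\psi$.

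The paper gives no proof of its own and simply cites \cite[\S 2.2]{MV}. That reference derives the formulas by the same local first-variation computation, so your argument fills in the standard details the paper omits.
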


Moreover, similar to the usual CSF, we have the {\it avoidance principle} (see \cite[\S 2]{MV}):

\begin{proposition}\label{prop:ap}
Let $\gamma_{1},\gamma_{2}: S^{1}\to \mathcal{O}^{{\rm reg}}$ be two embedded curves
such that $\gamma_{1}(S^{1})\cap \gamma_{2}(S^{1})=\emptyset$,
and $\{\gamma_{1,t}\}_{t\in[0,T)},\{\gamma_{2,t}\}_{t \in [0,T)}$ be $\psi$-CSFs starting from $\gamma_1,\gamma_2$ respectively that remain in $\mathcal{O}^{{\rm reg}}$.
Then,  $\gamma_{1,t},\gamma_{2,t}$
do not intersect each other for all $t \in [0,T)$. 
Furthermore, the embeddedness is preserved under the $\psi$-CSF.
\end{proposition}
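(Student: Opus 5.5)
The plan is to reduce both claims to statements about a scalar function on $S^1\times S^1$, or on a fixed manifold, and then apply the parabolic maximum principle. Since the flow stays in $\mathcal{O}^{\rm reg}$, which is a smooth (possibly noncompact) surface, everything happens on a genuine Riemannian surface. Compactness of each time slice $[0,T']$ will give us what we need. For $T'<T$, the images $\gamma_{1}(S^1\times[0,T'])$ and $\gamma_{2}(S^1\times[0,T'])$ are compact subsets of $\mathcal{O}^{\rm reg}$. So there is a compact subset $K\subset\mathcal{O}^{\rm reg}$ containing both, on which the injectivity radius is bounded below by some $i_0>0$ and the geometry of $g$ and $\psi$ is uniformly controlled.

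\textbf{Disjointness.} Consider
\[
d(t):=\min_{(u,v)\in S^1\times S^1} \mathrm{dist}_g(\gamma_{1,t}(u),\gamma_{2,t}(v)).
\]
This is positive at $t=0$ and continuous in $t$. Suppose it first reaches $0$ at some $t_0\le T'$. Then for $t<t_0$ close to $t_0$ we have $0<d(t)<i_0$, so $d$ is realised by a minimising geodesic of length $d(t)$ lying in $K$, along which the distance function is smooth. At a minimising pair $(u,v)$, the first variation shows that the geodesic meets both curves orthogonally, and the unit normals are parallel along it. Write $e$ for the unit tangent to the geodesic. The second variation in the direction of moving both points by the same arclength gives
\[
\kappa_1\langle N_1,e\rangle-\kappa_2\langle N_2,e\rangle\ \ge\ -C\,d(t),
\]
with signs depending on the relative orientation. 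Here $C$ depends on the curvature bound on $K$, through the index form of the geodesic with parallel transport. The weight terms $\langle\overline\nabla\psi,N_i\rangle$ differ at the two endpoints by at most $\|\nabla^2\psi\|_{K}\,d(t)$, because the normals are parallel along the geodesic. Using Hamilton's trick for the time derivative of a minimum, we get
\[
\frac{d^+}{dt}d(t)\ \ge\ -C' d(t)
\]
in the viscosity (lim inf) sense. Hence $d(t)\ge d(0)e^{-C't}>0$, which contradicts $d(t_0)=0$.

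\textbf{Embeddedness.} For a single curve we use the same argument applied to the pair function on $S^1\times S^1$ off the diagonal, but the diagonal must be handled separately. The standard route is Huisken's chord–arc comparison, or more simply the following local argument. On $[0,T']$ the curvature $\kappa$ is bounded, so there is a $\delta>0$ such that for every $t\in[0,T']$ the restriction of $\gamma_t$ to any arc of intrinsic length $<\delta$ is embedded and has chord $\ge \tfrac12\cdot$arclength. A first self-intersection time $t_0$ would therefore occur between points $u,v$ at intrinsic distance $\ge\delta$. On the compact set $\{(u,v):\ \text{intrinsic distance}\ge\delta\}$, the function $\mathrm{dist}(\gamma_t(u),\gamma_t(v))$ is positive for $t<t_0$. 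Its minimum is either attained in the interior of this set, where the same maximum-principle computation as in the disjointness step gives exponential lower bounds, or on its boundary, where the local chord bound gives a lower bound of $\delta/2$. Either way the minimum cannot reach $0$ at $t_0$.

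The main technical obstacle is the second-variation inequality with the weight term on a curved, noncompact $\mathcal{O}^{\rm reg}$. This is exactly why we restrict to the compact set $K$ that depends on $T'$. We only need disjointness on $[0,T']$ for every $T'<T$, so constants that degenerate as $T'\to T$ are harmless. Alternatively, one can cite \cite[\S 2]{MV}, since their surface argument is purely local near the minimising pair.
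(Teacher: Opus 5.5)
Your argument is correct. It takes a different route from the paper only in the sense that the paper gives no proof: it simply cites \cite[\S 2]{MV}, where the avoidance principle is established for $\psi$-CSF on a smooth surface.

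You supply a self-contained maximum-principle proof. It combines Hamilton's trick for the minimal distance, first and second variation at a minimising pair, and a short-arc chord bound that keeps the self-distance away from the diagonal. Its main merit is that it shows why the smooth-surface result transfers to the present setting. $\mathcal{O}^{\rm reg}$ is in general incomplete, and the paper's citation passes over this silently. By working on $[0,T']$ with a compact neighbourhood $K\subset\mathcal{O}^{\rm reg}$ of the swept-out images, you show that only local data enter: curvature and $\overline\nabla^2\psi$ bounds on $K$, and a positive convexity radius there. The same radius keeps short minimising geodesics away from the cone points.

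Your observation that $N_i=\pm e$ at a minimising pair handles the density cleanly in the unweighted metric. It turns the weight contribution into $\langle\overline\nabla\psi,e\rangle(q)-\langle\overline\nabla\psi,e\rangle(p)=O(\|\overline\nabla^2\psi\|\,d)$. The paper's later distance arguments (proof of Proposition \ref{prop:key} and Appendix \ref{app:distcomp}) instead work in the conformal metric $\widehat g=g_\psi$ with the operators $\mathcal{L}_\pm$. The two computations are equivalent in substance.

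Two small points to tidy:
\begin{itemize}
\item Fix the direction of $e$. If $e$ points from $\gamma_{1,t}(u)$ to $\gamma_{2,t}(v)$, the required inequality is $\kappa_2\langle N_2,e\rangle-\kappa_1\langle N_1,e\rangle\ge -Cd$. This matches $\partial_t d=\langle\partial_t\gamma_2,e\rangle-\langle\partial_t\gamma_1,e\rangle$.
\item In the embeddedness step, the domain $\{l_t(u,v)\ge\delta\}$ moves with $t$, which complicates Hamilton's trick. It is simpler to use the fixed domain $\{|u-v|\ge\eta\}$ in a fixed parametrisation. This is possible because $|\partial_u\gamma_t|$ and the curvature are bounded above and below on $S^1\times[0,T']$, so the short-arc chord bound becomes a lower bound on the boundary of this fixed set.
\end{itemize}
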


\subsection{Curvature estimates for long-time flows}

In this subsection, we prove curvature estimates which will be employed in our convergence result, Theorem \ref{thm:subcon}. Throughout this section, we will assume that $(\mathcal{O},g, {\psi})$ is a compact, orientable $2$-dimensional weighted Riemannian orbifold with finitely many cone points, and $\gamma:S^{1}\times [0,\infty) \to \mathcal{O}^{{\rm reg}}$ is a long-time embedded solution of $\psi$-CSF contained in the regular part of $\mathcal{O}$.

Note that, by the compactness of $\mathcal{O}$, we may assume that 
\begin{itemize}
\item There exist constants $C_{j}$, $P_{j}$, $E$, $D$ ($j=0,1,2,\ldots$) such that
\begin{align}\label{as:1}
\textup{$|\onab^{j}K|\leq C_{j}$, $|\onab^{j}\psi|\leq P_{j}$ and $0<E\leq e^{\psi}\leq D$, }
\end{align}
where $K$ is the Gaussian curvature of $(\mathcal{O},g)$. 
\item There exists a constant $A>0$ depending only on $(\mathcal{O},g)$ such that 
\begin{align}\label{as:2}
\textup{ $A(E)\leq A$ for any region $E\subset \mathcal{O}$,}
\end{align}
where $A(E)$ is the area of $E$ measured by the area form on $(\mathcal{O},g)$ (see Subsection \ref{subsec:curvesonorbs})
\end{itemize}

We first prove the following lower length bound for the flow.

\begin{lemma}[Length estimates for the flow]\label{lem:leng}
Under the assumptions of this subsection, there exists a constant 
$c>0$ such that  
\begin{align}\label{as:3}
\textup{$L(t)\geq c$ for any $t\in [0,\infty)$,}
\end{align}
 where $L(t)$ is the length of $\gamma_{t}$ with respect to the metric $g$.
\end{lemma}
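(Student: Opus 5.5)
The plan is to argue by contradiction, combining the monotonicity of $L_\psi$ with the orbifold Gauss--Bonnet theorem (Theorem \ref{thm:GB}) and the area variation formula (Lemma \ref{lem:area}). Suppose no such $c$ exists, so $L(t_i)\to 0$ along some sequence $t_i$. By \eqref{eq:fv}, $L_\psi(\gamma_t)$ is non-increasing. By \eqref{as:1}, $E\,L(t)\le L_\psi(\gamma_t)\le D\,L(t)$. Hence for any $\varepsilon>0$ there is $t_0$ with
\[ L(t)\le \frac{D}{E}\,L(t_0)<\varepsilon \quad\text{for all } t\ge t_0. \]
So once the length is small it stays small forever. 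The aim is to show that a curve of small length bounds a small region whose area must then decrease at a definite rate for all later times, which is impossible.

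\textbf{Step 1 (small curves bound small disks).} Fix a scale $r_0>0$, depending only on $(\mathcal{O},g)$, with the following property: every metric ball of radius $r_0$ in $|\mathcal{O}|$ contains at most one cone point and is the image of a geodesically convex ball in an orbifold chart $(\widetilde U,\mathbb{Z}_k,\varphi)$, with $k=1$ at regular points. This is possible because there are finitely many cone points. If $L(t)<\varepsilon\ll r_0$, then $\gamma_t$ lies in a ball $B_t$ of radius $L(t)/2$ about any of its points. The embedded curve $\gamma_t$ then bounds a unique region $E_t\subset B_t$ homeomorphic to a disk, containing at most one cone point, of order $k_t\le k_{\max}$. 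By \eqref{eq-eulercharacteristic}, $\chi^{\rm orb}(E_t)\ge 1/k_{\max}$.

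Lifting to the chart and applying a Euclidean-type isoperimetric inequality there gives $A(E_t)\le C_1 L(t)^2$. If $E_t$ contains a cone point of order $k$, the lift of $\gamma_t$ is a closed curve of length $kL(t)$ bounding the preimage of $E_t$, and dividing by $|\mathbb{Z}_k|$ only improves the constant. Since the curves move continuously and remain inside small balls, $E_t$ varies continuously in $t$. This makes Lemma \ref{lem:area} applicable for $t\ge t_0$.

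\textbf{Step 2 (area decay).} Orient $\gamma_t$ so that $N=J\partial_s$ points into $E_t$. This may require flipping the orientation once; it is fixed thereafter by continuity. By Lemma \ref{lem:area} and \eqref{eq:CSF}, and using $\omega(N,\partial_s)=g(JN,\partial_s)=-1$, we get
\[ \frac{d}{dt}A(E_t)=-\int_{\gamma_t}\kappa_\psi\,ds = -\int_{\gamma_t}\kappa\,ds+\int_{\gamma_t}\langle\onab\psi,N\rangle\,ds. \]
The overall sign is to be checked against the orientation convention, which must agree with that of Theorem \ref{thm:GB}. Theorem \ref{thm:GB}(1) gives
\[ \int_{\gamma_t}\kappa\,ds=2\pi\chi^{\rm orb}(E_t)-\int_{E_t}K\omega\ \ge\ \frac{2\pi}{k_{\max}}-C_0C_1L(t)^2. \]
Combining the two, we obtain
\[ \frac{d}{dt}A(E_t)\le -\frac{2\pi}{k_{\max}}+C_0C_1\varepsilon^2+P_1\varepsilon\le-\frac{\pi}{k_{\max}} \]
for all $t\ge t_0$, provided $\varepsilon$ is chosen small.

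\textbf{Step 3 (conclusion).} Integrating gives $A(E_t)\le A(E_{t_0})-\pi(t-t_0)/k_{\max}$, which becomes negative for large $t$. This contradicts $A(E_t)\ge 0$, so $\inf_t L(t)>0$.

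\textbf{Main obstacle.} I expect Step 1 to be the hardest part. It requires the uniform choice of $r_0$ near cone points, the claim that a short embedded curve in the regular part bounds a disk lying in a single orbifold chart, the isoperimetric bound computed on the lift, and the continuity in $t$ of the enclosed region $E_t$, including the consistent choice of orientation.
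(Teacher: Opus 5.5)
Your proposal is correct and follows essentially the same route as the paper. In both, monotonicity of $L_\psi$ forces $L(t)\to 0$, and short curves bound small disks containing at most one cone point; the paper does this by a case analysis around geodesic balls at the cone points, using Croke's isoperimetric inequality and the $k$-fold lift. Lemma \ref{lem:area} together with orbifold Gauss--Bonnet and $\chi^{\rm orb}(E_t)\ge 1/k_{\max}$ then gives a uniform negative rate of area decrease. The only nit is that in the cone-point case the lift yields $A(E_t)\le kL(t)^2/(2\pi)$, a constant that grows with $k$ rather than one that is ``improved''; this is harmless since $k\le k_{\max}$.
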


\begin{proof}
We show that the argument given in \cite[Lemma 8]{MV} can be extended to our orbifold setting. We shall deduce a contradiction by assuming that ${\rm liminf}_{t\to \infty}L(t)=0$. Note that, since $L_{\psi}$ is decreasing and $e^{\psi}$ is bounded, ${\rm liminf}_{t\to \infty}L(t)=0$ implies that ${\rm lim}_{t\to \infty}L(t)=0$ (see also \cite[eq.(30)]{MV}). 
Then, our first claim is that for any sufficiently large $t$, $\gamma_{t}$ encloses a region $E_{t}\subset \mathcal{O}$  and we have $\lim_{t\to \infty}A(t)=0$, where $A(t)$ is the area of $E_{t}$.

To show this, we shall take a ``geodesic open ball'' $B(x,r_{x})$ around each cone point  $x$ as follows. Let $x\in \mathcal{O}$ be a cone point of order $k$, and $\varphi: \widetilde{U}\to \mathcal{O}$ be an orbifold chart around $x$. We may assume  that $0\in \widetilde{U}\subset \R^{2}$ and $\varphi(0)=x$. We induce a $\mathbb{Z}_{k}$-invariant metric $\widetilde{g}$ on $\widetilde{U}$ from $g$. We take a geodesic ball $\widetilde{B}(0,r_{x}):=\{{\rm exp}_{0}(tX)\mid X\in T_{0}\widetilde{U},\ 0\leq t<r_{x}\}$ so that $r_{x}>0$ is smaller than the convexity radius $\mathrm{conv}(0)$ at $0$. We then put $B(x,r_{x}):=\varphi(\widetilde{B}(0,r_{x}))$. Moreover, by taking $r_{x}$ sufficiently small for each cone point, we may assume that $B(x,r_{x})\cap B(y,r_{y})=\emptyset$ for any distinct  two cone points $x, y$.

Next, we define a closed subset $D\subset \mathcal{O}$ by the complement of the disjoint union $\sqcup_{i=1}^{N}B(x_{i},r_{x_{i}})$, where $x_{1},\ldots, x_{N}$ are all the cone points contained in $\mathcal{O}$. Then, by definition, $D\subset \mathcal{O}^{{\rm reg}}$ and $D$ is compact since so is $\mathcal{O}$. We put $r_{D}:={\rm min}_{x\in D}{\rm conv}(x)>0$, where ${\rm conv}(x)$ denotes the convexity radius at $x\in D\subset O^{{\rm reg}}$.

Now, we take an arbitrary $\epsilon>0$ such that $\epsilon<r_{D}$. Since $\lim_{t\to \infty}L(t)=0$, there exists a time $t_{\epsilon}>0$ such that $L(t)<\epsilon$ for any $t>t_{\epsilon}$. Then, for each $t>t_{\epsilon}$, there exists a point $x=x(t)\in \mathcal{O}$ and $r(t)>0$ such that $\gamma_{t}(S^{1})\subset B(x(t),r(t))$. Indeed, we have the following two possibilities:
\begin{itemize}
\item[(i)] If $\gamma_{t}(S^{1})\cap D\neq \emptyset$, we can take the point $x(t)$ as an arbitrary point in $\gamma_{t}(S^{1})\cap D$ because for any $y\in \gamma_{t}(S^{1})\subset \mathcal{O}^{{\rm reg}}$, we have $d(x(t), y)\leq L(t)<\epsilon<r_{D}$. Namely, we have $\gamma_{t}(S^{1})\subset B(x(t), r_{D})\subset O^{{\rm reg}}$. 
\item[(ii)] If $\gamma_{t}(S^{1})\cap D= \emptyset$, then by definition of $D$, we see $\gamma_{t}(S^{1})\subset B(x_{i},r_{x_{i}})$ for some $i=1,\ldots, N$. 
\end{itemize}

In case (i), $\gamma_{t} $ encloses a region $E_{t}$ contained in $B(x(t),r_{D})$ since $\gamma_{t}$ is an embedded curve. We denote the area enclosed by $\gamma_{t}$ by $A(t)$. Then, because $r_{D}\leq {\rm conv}(x(t))$, we can apply Croke's isoperimetric inequality \cite[Theorem 1.2]{Croke} to obtain that $L(t)^{2}\geq 2\pi A(t)$. 

In case (ii), we further divide the case into the following two possibilities:
\begin{itemize}
\item[(ii)-(a)] If the embedded curve $\gamma_{t}(S^{1})\subset B(x_{i},r_{i})$ is null-homotopic in $B(x_{i},r_{i})\setminus \{x_{i}\}$, then $\varphi^{-1}(\gamma_{t}(S^{1}))\subset \widetilde{B}(0,r_{x})$ consists of disjoint union of 
$k$ copies of closed curve $\gamma_{t}$.  Thus, in the same way as in (i), we see that the curve $\gamma_{t}$ satisfies $L(t)^{2}\geq 2\pi A(t)$ since a neighbourhood of each copy is locally isometric to a neighbourhood of $\gamma_{t}(S^{1})$.

 \item[(ii)-(b)] If the embedded curve $\gamma_{t}(S^{1})\subset B(x_{i},r_{i})$ is not null-homotopic in  $B(x_{i},r_{i})\setminus \{x_{i}\}$, then  $\varphi^{-1}(\gamma_{t}(S^{1}))$ becomes a connected, embedded closed curve $\widetilde{\gamma}_{t}: S^{1}\to \widetilde{B}(0,r_{x_i})$ homotopic to the origin $0\in \widetilde{B}(0,r_{x_i})$. Then we have $\widetilde{L}(t)^{2}\geq 2\pi \widetilde{A}(t)$, where $\widetilde{L}(t)$ (resp. $\widetilde{A}(t)$) is the length of $\widetilde{\gamma}_{t}$ (resp. the area enclosed by $\widetilde{\gamma}_{t}$). On the other hand, since the restriction $\varphi: \widetilde{\gamma}_{t}(S^{1})\to \gamma_{t}(S^{1})$ becomes a $k$-fold covering map, we have $\widetilde{L}(t)=kL(t)$. Moreover, $E_{t}=\widetilde{E}_{t}/\mathbb{Z}_k$ is homeomorphic to the closed disk and we have $\widetilde{A}(t)=kA(t)$ by definition of the integral over the region $E_{t}$ containing a cone point. Therefore, $\gamma_{t}$ encloses a region $E_{t}$ and we obtain
 \[
 L(t)^{2}\geq \frac{2\pi}{k}A(t).
 \]
\end{itemize}

We thus put $c_{1}:={\rm min}\{1, 1/k_{1},\ldots,1/k_{N}\}$, where $k_{i}$  is the order at the cone point $x_{i}$. Then for any sufficiently large $t$, we see that $\gamma_{t}$ encloses a region $E_{t}$ and we have $L(t)^{2}\geq 2\pi c_{1}
\cdot  A(t)$. Since $\lim_{t\to \infty}L(t)=0$, this implies that $\lim_{t\to \infty}A(t)=0$ as required. 

In particular,   we may assume that $L(t)\leq \epsilon$ and $A(t)\leq \epsilon$ for any sufficiently large $t$.
Recall that the time derivative of $A(t)$ is given by (Lemma \ref{lem:area})
\[
\frac{d}{dt}A(t)=\int_{S^{1}}\gamma_{t}^{*}(i_{\kappa_{\psi}N}\omega)=-\int_{S^{1}}(\kappa-\langle \overline{\nabla}\psi,N\rangle)\,ds_{t},
\]
Using the Gauss-Bonnet theorem for orbifold (Theorem \ref{thm:GB}), we obtain 
\begin{align*}
\frac{d}{dt}A(t)&=-2\pi\chi^{{\rm orb}}(E_{t})+\int_{E_{t}}K\,\omega+\int_{S^{1}} \langle \overline{\nabla}\psi,N\rangle\,ds_{t}\leq -2\pi\chi^{{\rm orb}}(E_{t})+(C_{0}+P_{1})\epsilon.
\end{align*}
Since each region $E_{t}$ contains at most one cone point, we have that 
\[
\chi^{{\rm orb}}(E_{t})=\begin{cases} 1 & \textup{if there is no cone point in $E_{t}$.}\\ 
1/k & \textup{if there is a cone point  of order $k$ in $E_{t}$}.
\end{cases}
\]
Hence, we see $\chi^{{\rm orb}}(E_{t})\geq c_{1}={\rm min}\{1, 1/k_{1},\ldots,1/k_{N}\}$. Therefore, by taking a sufficiently small $\epsilon$, it holds that 
\[
\frac{d}{dt}A(t)\leq -2\pi c_{1}+(C_{0}+P_{1})\epsilon<-\delta
\]
for any large $t$, where $\delta >0$
is some positive constant independent of $t$. In particular, we have $0<A(t)\leq A(t_{0})-\delta(t-t_{0})$ for some $t_{0}$, and this yields a contradiction by taking $t\to \infty$.  This proves the lemma.
\end{proof}

The following result is stated in \cite[Step 1,2 in \S 3]{MV} for the weighted CSF in a smooth surface. However, we provide an alternative proof of it for completeness.

\begin{lemma}[$L^2$-decay of the $\psi$-curvature]\label{lem:l2decay} 
Under the assumptions of this subsection,
    \begin{align}\label{cl:1}
 \lim_{t\to \infty}\int_{S^{1}}\kappa_{\psi}^{2}\,ds_{\psi}(t)=0.
 \end{align}
\end{lemma}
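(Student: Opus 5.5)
Set $F(t):=\int_{S^1}\kappa_\psi^2\,ds_\psi(t)$. The plan is to show two things: $F$ is integrable on $[0,\infty)$, and $F$ cannot grow quickly. Together these force $F(t)\to 0$.

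\emph{Integrability.} By \eqref{eq:fv},
\[
\frac{d}{dt}L_\psi(\gamma_t)=-F(t).
\]
Since $L_\psi\ge 0$, integrating gives
\[
\int_0^\infty F(t)\,dt\le L_\psi(\gamma_0)<\infty .
\]
By itself this only yields $F(t_i)\to 0$ along some sequence $t_i\to\infty$.

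\emph{Differential inequality.} Differentiate $F$ using \eqref{eq:kev} and \eqref{eq:fv}:
\[
\frac{dF}{dt}=\int_{S^1}\Big(2\kappa_\psi\Delta_\psi\kappa_\psi+2\kappa_\psi^2\big(\kappa^2+{\rm Ric}_\psi(N,N)\big)-\kappa_\psi^4\Big)\,ds_\psi .
\]
The weighted Laplacian is self-adjoint with respect to $ds_\psi$ (this is how $\Delta_\psi$ is defined), so the first term integrates by parts to $-2\int|\partial_s\kappa_\psi|^2\,ds_\psi$. Next substitute $\kappa=\kappa_\psi+\langle\onab\psi,N\rangle$ and use \eqref{as:1}, which bounds $|{\rm Ric}_\psi|$ and $|\onab\psi|$. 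This gives
\[
\frac{dF}{dt}\le -2\int|\partial_s\kappa_\psi|^2\,ds_\psi+\int\kappa_\psi^4\,ds_\psi+C_1\int|\kappa_\psi|^3\,ds_\psi+C_2F .
\]
The cubic term is absorbed into the quartic term and $F$ by Young's inequality.

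\emph{Controlling the quartic term.} This is the main step. Use the one-dimensional interpolation inequality
\[
\|\kappa_\psi\|_\infty^2\le \frac{F}{L_\psi}+2\,F^{1/2}\Big(\int|\partial_s\kappa_\psi|^2\,ds_\psi\Big)^{1/2},
\]
up to the constants $E,D$ that convert between $ds$ and $ds_\psi$. Here Lemma \ref{lem:leng} is essential: the length lower bound $L(t)\ge c$, hence $L_\psi\ge Ec$, controls the first term uniformly in $t$. Then
\[
\int\kappa_\psi^4\,ds_\psi\le\|\kappa_\psi\|_\infty^2F\le CF^2+2F^{3/2}\|\partial_s\kappa_\psi\|_{L^2}\le \|\partial_s\kappa_\psi\|_{L^2}^2+CF^2+CF^3,
\]
and the gradient term is absorbed. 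The result is
\[
\frac{dF}{dt}\le C(F+F^3)
\]
with $C$ independent of $t$.

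\emph{Conclusion.} Argue by contradiction. Suppose $F(s_j)\ge 2\varepsilon$ along some sequence $s_j\to\infty$. The ODE comparison gives $\tau>0$, depending only on $\varepsilon$ and $C$, such that $F\ge\varepsilon$ on $[s_j-\tau,s_j]$: going backwards from $s_j$, $F$ cannot drop from $2\varepsilon$ to below $\varepsilon$ faster than $dF/dt\le C(F+F^3)$ allows. To see this, apply the inequality on the set where $F\le 2\varepsilon$, where the right-hand side is at most a constant multiple of $F$. Each such interval contributes at least $\varepsilon\tau$ to $\int F\,dt$, and after passing to a subsequence the intervals are disjoint. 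This contradicts $\int_0^\infty F\,dt<\infty$, so $F(t)\to0$.

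I expect the interpolation step, where the length lower bound from Lemma \ref{lem:leng} enters, to be the only delicate point.
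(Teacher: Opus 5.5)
Your proof is correct and follows essentially the same route as the paper: you get integrability of $F$ from the monotonicity of $L_\psi$, compute the evolution of $F$ with integration by parts, bound $\sup\kappa_\psi^2$ via the length lower bound of Lemma \ref{lem:leng}, and finish with a contradiction argument. The only difference is the final bookkeeping. The paper keeps the term $b_2F\int|\partial_s\kappa_\psi|^2\,ds_\psi$, absorbs it only while $F<1/b_2$, and bootstraps from a time $t_1$ at which both $F(t_1)$ and $\int_{t_1}^\infty F\,dt$ are small. You instead absorb it unconditionally via Young's inequality, obtain the global inequality $F'\le C(F+F^3)$, and use the fact that an integrable function whose growth is controlled this way must tend to zero.
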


\begin{proof}
We generalise the argument of Gage \cite[\S 4]{Gage} and Grayson \cite[\S 7]{Grayson} for the usual CSF on surfaces.
By using \eqref{eq:fv}, we have
\begin{align}\label{eq:l1}
L_{\psi}(0)-L_{\psi}(t)=-\int_{0}^{t}\frac{dL_{\psi}}{dt}\,dt=\int_{0}^{t}\int_{S^{1}}\kappa_{\psi}^{2}\,ds_{\psi}(t)\,dt
\end{align}
and this shows that the integral of the $L^{2}$-norm over $[0,\infty)$ is bounded from above since $L_{\psi}(t)$ is a decreasing function, and it converges to a finite value.  In particular, for any $\epsilon>0$, there is a time $t_{1}=t_{1}(\epsilon)$ so that it holds that 
\begin{align}\label{eq:key}
\int_{S^{1}}\kappa_{\psi}^{2}\,ds_{\psi}(t_{1})<\frac{\epsilon}{2}\quad {\rm and}\quad \int_{t}^{\infty}\int_{S^{1}}\kappa_{\psi}^{2}\,ds_{\psi}(t)\,dt<\epsilon^{2}\ \textup{for any $t\in [t_{1},\infty)$}.
\end{align}

Next, we consider the time derivative of the $L^{2}$-norm.  By \eqref{eq:kappa}, \eqref{eq:fv} and \eqref{eq:kev}, we have
\begin{align*}
\frac{\p}{\p t}\int_{S^{1}}\kappa_{\psi}^{2}\,ds_{\psi}(t)&=\int_{S^{1}}2\kappa_{\psi}\Delta_{\psi}\kappa_{\psi}+2\kappa_{\psi}^{2}\{\kappa^{2}+{\rm Ric}_{\psi}(N,N)\}-\kappa_{\psi}^{4}\,ds_{\psi}(t)\\
&\leq \int_{S^{1}}-2|\p_{s}\kappa_{\psi}|^{2}+3\kappa_{\psi}^{4}+2\kappa_{\psi}^{2}\{2\langle \overline{\nabla}\psi, N\rangle^{2}+{\rm Ric}_{\psi}(N,N)\}\,ds_{\psi}(t),
\end{align*}
where we used the Stokes theorem for the weighted Laplacian
 \begin{align}\label{eq:stokes}
\int_{S^{1}} f_{1}\Delta_{\psi}f_{2}\,ds_{\psi}=\int_{S^{1}} -(\p_{s}f_{1})(\p_{s}f_{2})\,ds_{\psi}
 \end{align}
and the fact that $\kappa^{2}\leq 2\kappa_{\psi}^{2}+2\langle \overline{\nabla}\psi, N\rangle^{2}$.
Moreover, by using the assumption \eqref{as:1}, we obtain
\begin{align}\label{eq:l2}
\frac{\p}{\p t}\int_{S^{1}}\kappa_{\psi}^{2}\,ds_{\psi}(t)
&\leq  \int_{S^{1}}-2|\p_{s}\kappa_{\psi}|^{2}\,ds_{\psi}(t)+\Big(3\cdot {\rm sup}_{x\in S^{1}}\kappa_{\psi}^{2}(x,t)+a_{1}\Big)\int_{S^{1}}\kappa_{\psi}^{2}\,ds_{\psi}(t) 
\end{align}
for some positive constant $a_{1}$ independent of $t$. Here, we claim that there are positive constants $b_{1},b_{2}$ independent of $t$ such that 
\begin{align}\label{eq:supk0}
3\cdot {\rm sup}_{x\in S^{1}} \kappa_{\psi}^{2}(x,t)\leq b_{1}\int_{S^{1}}\kappa_{\psi}^{2}\,ds_{\psi}(t)+b_{2}\int_{S^{1}}|\p_{s}\kappa_{\psi}|^{2}\,ds_{\psi}(t).
\end{align}
In fact, for any fixed $t\in [0,\infty)$, we take an $s_{0}$ so that $\kappa_{\psi}^{2}(s_{0},t)={\rm min}_{s}\kappa_{\psi}^{2}(s,t)$, and then we see
\begin{align}\label{eq:supk1}
\kappa_{\psi}^{2}(s,t)&=\Big(\kappa_{\psi}(s_{0},t)+\int_{s_{0}}^{s} \frac{\p \kappa_{\psi}}{\p s}\,ds(t)\Big)^{2}
\leq\Big( |\kappa_{\psi}(s_{0},t)|+\Big|\int_{s_{0}}^{s} \frac{\p \kappa_{\psi}}{\p s}\,ds(t)\Big|\Big)^{2}\\
&\leq 2|\kappa_{\psi}(s_{0},t)|^{2}+2\Big|\int_{s_{0}}^{s} \frac{\p \kappa_{\psi}}{\p s}\,ds(t)\Big|^{2} \nonumber\\
&\leq 2|\kappa_{\psi}(s_{0},t)|^{2}+2\Big(\int_{s_{0}}^{s} \Big|\frac{\p \kappa_{\psi}}{\p s}\Big|^{2}\,ds(t)\Big)\Big|\int_{s_{0}}^{s} \,ds(t)\Big|\nonumber
\end{align}
for any $s$, where we parametrise $\gamma_{t}$ by the arc length parameter $s=s(t)$. 
Since $\kappa_{\psi}^{2}(s_{0},t)={\rm min}_{s}\kappa_{\psi}^{2}(s,t)$, we have
\begin{align}\label{eq:supk2}
|\kappa_{\psi}(s_{0},t)|^{2}\leq \frac{1}{L_{\psi}(t)}\int_{S^{1}}\kappa_{\psi}^{2}\,ds_{\psi}(t)\leq \frac{1}{Ec}\int_{S^{1}}\kappa_{\psi}^{2}\,ds_{\psi}(t),
\end{align}
where we used \eqref{as:1} and \eqref{as:3}. Also, by using \eqref{as:1}, we see
\begin{align}\label{eq:supk3}
\Big(\int_{s_{0}}^{s} \Big|\frac{\p \kappa_{\psi}}{\p s}\Big|^{2}\,ds(t)\Big)\Big|\int_{s_{0}}^{s} \,ds(t)\Big|
&\leq \frac{1}{E^{2}}\Big(\int_{S^{1}} \Big|\frac{\p \kappa_{\psi}}{\p s}\Big|^{2}\,ds_{\psi}(t)\Big)\Big(\int_{S^{1}} \,ds_{\psi}(t)\Big)\\
&\leq \frac{L_{\psi}(t)}{E^{2}}\int_{S^{1}}|\p_{s}\kappa_{\psi}|^{2}\,ds_{\psi}(t)\leq \frac{L_{\psi}(0)}{E^{2}}\int_{S^{1}}|\p_{s}\kappa_{\psi}|^{2}\,ds_{\psi}(t),\nonumber
\end{align}
where we used the fact that $L_{\psi}(t)\leq L_{\psi}(0)$. Substituting \eqref{eq:supk2} and \eqref{eq:supk3} to \eqref{eq:supk1}, we obtain \eqref{eq:supk0}. 

Thus, inserting \eqref{eq:supk0} to \eqref{eq:l2}, we obtain
\begin{align}\label{eq:l22}
\frac{\p}{\p t}\int_{S^{1}}\kappa_{\psi}^{2}\,ds_{\psi}(t)
&\leq  \int_{S^{1}}-2|\p_{s}\kappa_{\psi}|^{2}\,ds_{\psi}(t)+b_{2}\Big(\int_{S^{1}}\kappa_{\psi}^{2}\,ds_{\psi}(t) \Big)\Big(\int_{S^{1}}|\p_{s}\kappa_{\psi}|^{2}\,ds_{\psi}(t)\Big)\\
&\quad +b_{1}\Big(\int_{S^{1}}\kappa_{\psi}^{2}\,ds_{\psi}(t)\Big)^{2}+a_{1}\int_{S^{1}}\kappa_{\psi}^{2}\,ds_{\psi}(t). \nonumber
\end{align}

Now, we take an  $\epsilon>0$ such that $\epsilon<1/b_{2}$, and let $t_{1}=t_{1}(\epsilon)$ be the time satisfying \eqref{eq:key}. We claim that if $\epsilon$ is sufficiently small, then
\begin{align}\label{eq:cl1}
\int_{S^{1}}\kappa_{\psi}^{2}\,ds_{\psi}(t)<\epsilon\quad \forall t\in [t_{1},\infty),
\end{align}
which implies the desired conclusion \eqref{cl:1}. Suppose the contrary of \eqref{eq:cl1} were true. Namely, there is a time $t_{2}'>t_{1}$ so that 
\[
\int_{S^{1}}\kappa_{\psi}^{2}ds_{\psi}(t_{1})<\frac{\epsilon}{2}<\epsilon\leq \int_{S^{1}}\kappa_{\psi}^{2}ds_{\psi}(t_{2}').
\]
By the continuity of the $L^{2}$-norm, we may assume that there is a time $t_{2}\in (t_{1},t_{2}']$ so that 
\begin{align}\label{eq:key2}
\int_{S^{1}}\kappa_{\psi}^{2}\,ds_{\psi}(t_{2})=\epsilon\quad {\rm and}\quad \int_{S^{1}} \kappa_{\psi}^{2}\,ds_{\psi}(t)\leq \epsilon\quad \forall t\in [t_{1},t_{2}].
\end{align}
Notice that, since $\epsilon<1/b_{2}$, the estimate \eqref{eq:l22} implies that 
\begin{align*}
\frac{\p}{\p t}\int_{S^{1}}\kappa_{\psi}^{2}\,ds_{\psi}(t)
&\leq  b_{1}\Big(\int_{S^{1}}\kappa_{\psi}^{2}\,ds_{\psi}(t)\Big)^{2}+a_{1}\int_{S^{1}}\kappa_{\psi}^{2}\,ds_{\psi}(t)
\end{align*}
on the interval $[t_{1},t_{2}]$. Integrating this over $[t_{1},t_{2}]$, we see
\begin{align}\label{eq:key3}
\epsilon=\int_{S^{1}}\kappa_{\psi}^{2}\,ds_{\psi}(t_{2})&\leq \int_{S^{1}}\kappa_{\psi}^{2}\,ds_{\psi}(t_{1})+b_{1}\int_{t_{1}}^{t_{2}}\Big(\int_{S^{1}}\kappa_{\psi}^{2}\,ds_{\psi}(t)\Big)^{2}\,dt+a_{1}\int_{t_{1}}^{t_{2}}\Big(\int_{S^{1}}\kappa_{\psi}^{2}\,ds_{\psi}(t)\Big)\,dt\\
&<\frac{\epsilon}{2}+b_{1}\epsilon^{3}+a_{1}\epsilon^{2}, \nonumber
\end{align}
where we used \eqref{eq:key}, \eqref{eq:key2} and their consequence
\[
\int_{t_{1}}^{t_{2}}\Big(\int_{S^{1}}\kappa_{\psi}^{2}\,ds_{\psi}(t)\Big)^{2}\,dt\leq \int_{t_{1}}^{t_{2}}\epsilon\cdot \Big(\int_{S^{1}}\kappa_{\psi}^{2}\,ds_{\psi}(t)\Big)\,dt\leq \epsilon^{3}.
\]
The inequality \eqref{eq:key3} shows that $\frac{1}{2}<b_{1}\epsilon^{2}+a_{1}\epsilon$ which yields a contradiction if we take $\epsilon$ sufficiently small. This completes the proof.
\end{proof}

From Lemma \ref{lem:l2decay}, we can deduce the following decay of the higher derivatives of the $\psi$-curvature.

\begin{lemma}[$W^{m,2}$-decay of the $\psi$-curvature]\label{lem:w2ndecay} Under the assumptions of this subsection,
    \begin{align}\label{cl:2}
 \lim_{t\to \infty}\int_{S^{1}}|\partial_{s}^{m}\kappa_{\psi}|^{2}\,ds_{\psi}(t)=0,\quad \forall m=0,1,2,\cdots.
 \end{align}
\end{lemma}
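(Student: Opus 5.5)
We argue by induction on $m$, the case $m=0$ being Lemma \ref{lem:l2decay}. We follow the same scheme as its proof: an integrability-in-time estimate plus a differential inequality for $\int_{S^1}|\p_s^m\kappa_\psi|^2\,ds_\psi$. Since $\kappa_\psi^2\to 0$ in $L^2$ and $L(t)\geq c$ by Lemma \ref{lem:leng}, the interpolation \eqref{eq:supk0} together with $L^2$-control of $\p_s\kappa_\psi$ will give uniform control of $\sup\kappa_\psi$. The main work is to bound the higher-order norms and show that they too decay.

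\textbf{Step 1: evolution of $\p_s^m\kappa_\psi$.} Along $\psi$-CSF we have $\p_t\p_s = \p_s\p_t + (\kappa\kappa_\psi)\p_s$, since $\p_t ds = -\kappa\kappa_\psi\,ds$. Inductively this gives
\[ \p_t\p_s^m\kappa_\psi = \Delta_\psi \p_s^m\kappa_\psi + \mathcal{P}_m, \]
where $\mathcal{P}_m$ is a polynomial in $\p_s^j\kappa_\psi$ ($j\le m$), in $\p_s^j\kappa$, and in ambient quantities ($\onab^j K$, $\onab^j\psi$ evaluated along $\gamma$ and contracted with $\p_s,N$). Writing $\kappa=\kappa_\psi+\langle\onab\psi,N\rangle$ and using $\p_s N=-\kappa\p_s$, every $\p_s^j\kappa$ is expressed through $\p_s^i\kappa_\psi$ ($i\le j$) and bounded ambient terms, by \eqref{as:1}. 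Each term of $\mathcal{P}_m$ is then either linear in the $\kappa_\psi$-derivatives with bounded coefficients, or of higher degree with total derivative count at most $m+2$ in a scale-invariant sense. Because $\Delta_\psi$ contains a first-order drift, we integrate by parts with \eqref{eq:stokes}, which gives
\[ \frac{d}{dt}\int|\p_s^m\kappa_\psi|^2ds_\psi \le -2\int|\p_s^{m+1}\kappa_\psi|^2ds_\psi + \int |\p_s^m\kappa_\psi|\,|\mathcal{P}_m|\,ds_\psi + \int|\p_s^m\kappa_\psi|^2\kappa_\psi^2\,ds_\psi. \]

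\textbf{Step 2: interpolation.} This is the expected obstacle. The lengths $L(t),L_\psi(t)$ lie in $[c,L_\psi(0)/E]$, so we have the standard Gagliardo--Nirenberg inequalities on circles of comparable length: for $0\le j\le m$,
\[ \|\p_s^j\kappa_\psi\|_{L^\infty}\le C\|\kappa_\psi\|_{L^2}^{1-\theta}\|\kappa_\psi\|_{W^{m+1,2}}^{\theta}, \qquad \theta<1 \]
suitable. With these, the nonlinear terms are bounded by $\eta\|\p_s^{m+1}\kappa_\psi\|^2_{L^2}+C_\eta(\|\kappa_\psi\|_{L^2}^{p}+\|\kappa_\psi\|_{L^2}^2)$, and the linear terms by $\eta\|\p_s^{m+1}\kappa_\psi\|^2+C\sum_{j\le m}\|\p_s^j\kappa_\psi\|^2$. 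The lower-order norms are in turn interpolated between $\|\kappa_\psi\|_{L^2}$ and $\|\p_s^{m+1}\kappa_\psi\|_{L^2}$. Care is needed to track exponents so that the top-order term is absorbed, in the same manner as $\epsilon<1/b_2$ was used before. Once $\|\kappa_\psi\|_{L^2}\le\epsilon$, we obtain
\[ \frac{d}{dt}\int|\p_s^m\kappa_\psi|^2 ds_\psi\le -\int|\p_s^{m+1}\kappa_\psi|^2ds_\psi + C\int\kappa_\psi^2\,ds_\psi. \]

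\textbf{Step 3: integrability and conclusion.} Integrating the $(m-1)$ inequality in time and using $\int_0^\infty\!\int\kappa_\psi^2\,ds_\psi\,dt<\infty$ from \eqref{eq:l1}, we get $\int_{t_1}^\infty\!\int|\p_s^m\kappa_\psi|^2ds_\psi\,dt<\infty$ for every $m$ inductively. Then, for $f(t)=\int|\p_s^m\kappa_\psi|^2ds_\psi$, we know $f\in L^1([t_1,\infty))$ and $f'\le C g$ with $g=\int\kappa_\psi^2\,ds_\psi\in L^1$. A function with these two properties tends to $0$: given $\delta$, choose $T$ with $\int_T^\infty (f+Cg)<\delta$; for $t>T+1$, pick $\tau\in[t-1,t]$ with $f(\tau)<\delta$, so that $f(t)\le f(\tau)+C\int_\tau^t g<2\delta$. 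This yields \eqref{cl:2}.
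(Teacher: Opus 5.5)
Your proposal is correct and follows essentially the induction scheme that the paper only cites from \cite{MV} (Section 3): an energy inequality for $\int_{S^1}|\partial_s^m\kappa_\psi|^2\,ds_\psi$, with the top-order term absorbed by interpolation whose constants are uniform thanks to \eqref{as:1} and $c\le L(t)\le L_\psi(0)/E$; time-integrability inherited from the $(m-1)$ inequality; and the elementary fact that $f\in L^1$ with $f'\le Cg$, $g\in L^1$, forces $f\to 0$. The only difference is minor: your subcritical Gagliardo--Nirenberg bookkeeping (using that every term of $\mathcal{P}_m$ carries at least one factor $\partial_s^j\kappa_\psi$) removes the need for the smallness/continuity argument used for $m=0$ in Lemma~\ref{lem:l2decay}.
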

 
\begin{proof}
The case \(m=0\) is Lemma 3.5. For \(m\geq1\), the induction argument is identical to that of \cite[Section 3]{MV}, and the details will be omitted for brevity. The hypotheses used there are precisely the uniform bounds \eqref{as:1}, the area bound \eqref{as:2}, and the length lower bound \eqref{as:3}. Since the flow is contained in the smooth surface $\mathcal{O}^{{\rm reg}}$, the argument is local and therefore applies unchanged.
\end{proof}

Finally, we have the following $C^m$-estimates on the flow.

\begin{lemma}[Uniform curvature decay and derivative bounds]\label{lem:nabbd}
Under the assumptions of this subsection:
\begin{itemize}
    \item[(i)] $\lim_{t\to \infty} \|\partial_s^m\kappa_\psi(s,t)\|_{L^\infty} \, = \, 0 \quad \forall m=0,1,2\ldots$
    \item[(ii)] Defining $\onab^{m}_{\p_{s_{t}}}\gamma_{t}:=\onab^{m-1}_{\p_{s_{t}}\gamma_{t}}\p_{s_{t}}\gamma_{t}$ for $m=1,2,\cdots$, there exists $C_{m}>0$ such that $|\onab^{m}_{\p_{s_{t}}}\gamma_{t}|\leq C_{m}$ 
for any $t\in [0,\infty)$.
\end{itemize}
\end{lemma}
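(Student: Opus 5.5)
For (i), I will combine Lemma \ref{lem:w2ndecay} with a one-dimensional Sobolev (Agmon-type) inequality on $\gamma_t$, exactly as in the derivation of \eqref{eq:supk0}. For each $m$ and $t$, I pick $s_0$ minimising $|\partial_s^m\kappa_\psi|^2(\cdot,t)$ and write $\partial_s^m\kappa_\psi(s)=\partial_s^m\kappa_\psi(s_0)+\int_{s_0}^s\partial_s^{m+1}\kappa_\psi\,ds$. This yields
\[
\sup_{S^1}|\partial_s^m\kappa_\psi|^2\leq \frac{2}{Ec}\int_{S^1}|\partial_s^m\kappa_\psi|^2\,ds_\psi+\frac{2L_\psi(0)}{E^2}\int_{S^1}|\partial_s^{m+1}\kappa_\psi|^2\,ds_\psi .
\]
Here I use the bounds $E\le e^\psi\le D$ from \eqref{as:1}, the lower length bound \eqref{as:3} from Lemma \ref{lem:leng}, and the monotonicity $L_\psi(t)\le L_\psi(0)$. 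Both integrals on the right tend to $0$ by Lemma \ref{lem:w2ndecay} (applied with $m$ and $m+1$), so $\|\partial_s^m\kappa_\psi\|_{L^\infty}\to0$. In particular each $\|\partial_s^m\kappa_\psi\|_{L^\infty}$ is uniformly bounded on $[0,\infty)$, since it is continuous in $t$ and tends to $0$.

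For (ii), I will argue by induction on $m$. For $m=1$, $|\overline\nabla^1_{\partial_s}\gamma_t|=|\partial_s\gamma_t|=1$. For $m=2$, $\overline\nabla_{\partial_s}\partial_s\gamma=\kappa N$ with $\kappa=\kappa_\psi+\langle\overline\nabla\psi,N\rangle$, so $|\kappa|\le \|\kappa_\psi\|_{L^\infty}+P_1$, which is bounded by (i). For general $m$, I use the Frenet relations $\overline\nabla_{\partial_s}\partial_s\gamma=\kappa N$ and $\overline\nabla_{\partial_s}N=-\kappa\,\partial_s\gamma$. These show by induction that $\overline\nabla^{m}_{\partial_s}\gamma=p_m\,\partial_s\gamma+q_m N$, where $p_m,q_m$ are universal polynomials in $\kappa,\partial_s\kappa,\dots,\partial_s^{m-2}\kappa$.

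It therefore suffices to bound $\partial_s^j\kappa$ uniformly for $j\le m-2$. I write $\partial_s^j\kappa=\partial_s^j\kappa_\psi+\partial_s^j\big(\langle\overline\nabla\psi,N\rangle\big)$ and expand the second term by the chain rule. Differentiating $\langle\overline\nabla\psi,N\rangle$ along $\gamma$ produces terms built from $\overline\nabla^{i}\psi$ evaluated on $\partial_s\gamma$ and $N$, together with covariant derivatives of $N$. These derivatives of $N$ are again expressed via Frenet through $\partial_s^{i}\kappa$ with $i\le j-1$. Hence $|\partial_s^j\kappa|\le |\partial_s^j\kappa_\psi|+Q_j\big(P_1,\dots,P_{j+1},|\kappa|,\dots,|\partial_s^{j-1}\kappa|\big)$ for some polynomial $Q_j$. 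Combined with (i) and the bounds $P_i$ from \eqref{as:1}, a secondary induction on $j$ gives uniform bounds on all $\partial_s^j\kappa$, and hence the required constants $C_m$.

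The only slightly delicate point is the bookkeeping in the mixed induction: derivatives of $\kappa$ and derivatives of the weight term $\langle\overline\nabla\psi,N\rangle$ feed into each other, so I must check that differentiating $\langle\overline\nabla\psi,N\rangle$ $j$ times involves only $\partial_s^i\kappa$ with $i<j$. This holds because each derivative of $N$ costs one derivative of $\kappa$ at lower order.

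No orbifold issues arise. The flow stays in $\mathcal{O}^{\rm reg}$ by assumption, and the ambient bounds \eqref{as:1} hold globally on the compact orbifold, so all computations are local on a smooth surface.
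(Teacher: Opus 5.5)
Your proposal is correct and follows essentially the same route as the paper. For (i) you use the Agmon-type sup bound from \eqref{eq:supk1}--\eqref{eq:supk3} together with Lemma \ref{lem:w2ndecay}; for (ii) you use the Frenet induction for $\overline\nabla^m_{\partial_s}\gamma$, with $\partial_s^j\kappa$ bounded through $\kappa=\kappa_\psi+\langle\overline\nabla\psi,N\rangle$ and \eqref{as:1}. Your bookkeeping of the weight term simply spells out the computation the paper delegates to \cite[eq.~(51)]{MV}.
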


\begin{proof}
By the same argument given in \eqref{eq:supk1}--\eqref{eq:supk3}, we observe that 
\[
|\partial_{s}^{m}\kappa_{\psi}(s,t)|^{2}\leq c_{1}\int_{S^{1}} |\p_{s}^{m}\kappa_{\psi}|^{2}\,ds_{\psi}(t)+c_{2}\int_{S^{1}}|\partial_{s}^{m+1}\kappa_{\psi}|^{2}\,ds_{\psi}(t)
\]
for any $m=0,1,2,\cdots$, where $c_{1},c_{2}$ are some positive constants independent of $t$. Thus, as a consequence of  Lemma \ref{lem:w2ndecay}, we see that $\p_{s}^{m}\kappa_{\psi}(t)$ uniformly converges to 0 as $t\to \infty$ for any $m=0,1,2,\cdots$. This proves (i).

Note that $\onab_{\p_{s}}^{1}\gamma=\p_{s}\gamma$ and $\onab_{\p_{s}}^{2}\gamma=\kappa N$.  Moreover, we have $\onab_{\p_{s}}N=-\kappa\p_{s}\gamma$. Therefore, by induction, we see (see also  \cite[eq.(79)]{MV}), 
\[
\onab^{m}_{\p s_{t}}\gamma_{t}=\alpha_{m}(\kappa,\partial_{s}\kappa,\ldots, \partial^{m-3}_{s}\kappa)\cdot \partial_{s}\gamma+\beta_{m}(\kappa,\partial_{s}\kappa,\ldots, \partial^{m-2}_{s}\kappa)\cdot N,
\]
where $\alpha_{m}$ and $\beta_{m}$ are some polynomials depending only on $m$.
Since $\p_{s}^{m}\kappa_{\psi}(t)$ uniformly converges to 0 as $t\to \infty$, we have $|\p_{s}^{m}\kappa_{\psi}(t)|\leq C_{m}'$ for some $C_{m}'>0$. Moreover, by using the assumption \eqref{as:1},  a direct computation  (see \cite[eq. (51)]{MV}) shows that $|\partial_{s}^{m}\kappa|=|\partial_{s}^{m}(\kappa_{\psi}+\langle\overline{\nabla}\psi, N\rangle)|$ is also bounded by a constant independent of $t$. Therefore, we see 
\[
|\onab^{m}_{\p s_{t}}\gamma_{t}|^{2}={\alpha_{m}(\kappa,\partial_{s}\kappa,\ldots, \partial^{m-3}_{s}\kappa)^{2}+\beta_{m}(\kappa,\partial_{s}\kappa,\ldots, \partial^{m-2}_{s}\kappa)^{2}}\leq C_{m}
\] for some $C_{m}>0$.
This proves (ii).
\end{proof}

\subsection{Subconvergence of unparametrised solutions}\label{subsec:subcon}

In this subsection, we aim to prove the following main result of this section.

\begin{theorem}\label{thm:subcon}
Let $(\mathcal{O},g, {\psi})$ be a compact, orientable $2$-dimensional weighted Riemannian orbifold, and $\gamma:S^1\times[0,\infty)  \to \mathcal{O}^{{\rm reg}}$ be a long-time embedded solution of $\psi$-CSF contained in the regular part of $\mathcal{O}$.  Then $\kappa_{\psi}(t)$ uniformly converges to $0$, and there exists a subsequence $\{\gamma_{t_{i}}\}_{i=1}^{\infty}$ which converges to a continuous map $\gamma_{\infty}: S^{1}\to \mathcal{O}$ in $C^{0}(S^{1},\mathcal{O})$. Moreover, we have the following.
\begin{enumerate}
 \item  $\gamma_{\infty}$ does not pass through any cone point of order $\geq 3$. 
\item  If $\gamma_{\infty}$ passes through a cone point $x_0$ of order $2$, then there exists a cone point $y_0$ (where possibly $y_0=x_0$) of order $2$ and $\gamma_\infty$ goes back and forth on a geodesic path between $x_0$ and $y_0$.
\item If $\gamma_{\infty}$ does not pass through any singular point (i.e.\ $\gamma_{\infty}(S^{1})\subset \mathcal{O}^{{\rm reg}}$), then there exists a subsequence of $\{\widehat{\gamma}_{t_{i}}\}_{i=1}^{\infty}$ of an  unparametrised solution \eqref{eq:unpCSF}
such that $\widehat{\gamma}_{t_{i}}: S^{1}\to \mathcal{O}^{{\rm reg}}$ smoothly converges to a $\psi$-minimal embedding $\widehat{\gamma}_{\infty}: S^{1}\to \mathcal{O}^{{\rm reg}}$ whose image coincides with that of $\gamma_{\infty}$. 
As a consequence, the image of $\gamma_{\infty}$ coincides with a $\psi$-geodesic contained in $\mathcal{O}^{{\rm reg}}$. 
%More precisely, there exists a parameter $s\in [0,l]$ of $S^1$ so that $\gamma_\infty(s)=\overline{\gamma}_\infty(s)$ for any $s$, where $\overline{\gamma}_\infty(s):=\gamma_\infty(l-s)$ is the reversed curve of $\gamma_\infty$.
\end{enumerate}
\end{theorem}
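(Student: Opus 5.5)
The uniform convergence $\kappa_\psi(t)\to 0$ is Lemma \ref{lem:nabbd}(i) with $m=0$. For the $C^0$ subconvergence, I would parametrise each $\gamma_t$ proportionally to arclength over a fixed $S^1$. Since $L(t)\le L_\psi(0)/E$ and $L(t)\geq c$ (Lemma \ref{lem:leng}), the maps $\gamma_t:S^1\to\mathcal{O}$ are uniformly Lipschitz with respect to the orbifold distance on the compact metric space $|\mathcal{O}|$. Arzel\`a--Ascoli then gives a sequence $t_i\to\infty$ with $\gamma_{t_i}\to\gamma_\infty$ in $C^0$, and after a further subsequence $L(t_i)\to L_\infty\in[c,L_\psi(0)/E]$.

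For (3), suppose $\gamma_\infty(S^1)\subset\mathcal{O}^{\rm reg}$. Then all $\gamma_{t_i}$ for large $i$ lie in a fixed compact subset of $\mathcal{O}^{\rm reg}$. There, Lemma \ref{lem:nabbd}(ii) gives uniform bounds on $\onab^m_{\p_s}\gamma_t$, so in the constant-speed parametrisation all derivatives are uniformly bounded. Arzel\`a--Ascoli in every $C^m$ yields smooth subconvergence to $\widehat\gamma_\infty$, which is an immersion (unit speed up to the factor $L_\infty>0$) with $\kappa_\psi=0$ by (i). The constant-speed reparametrisation of the flow is an unparametrised solution in the sense of \eqref{eq:unpCSF}. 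Embeddedness of the limit is the delicate point. A smooth limit of embedded curves with bounded curvature can only fail to be embedded by touching itself tangentially, and then two arcs of the limit would both be $\psi$-geodesics tangent at a point. By ODE uniqueness these arcs coincide, so the limit is a multiple cover of a closed $\psi$-geodesic. For an embedded approximating sequence, the only possibility is an even-fold cover obtained by a curve folding back on itself. This cannot happen smoothly for closed curves without a turning point, which would force unbounded curvature. Alternatively, I would argue via rotation index/Gauss--Bonnet that the degree is $1$.

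For (1) and (2), suppose $\gamma_\infty(\theta_0)=x_0$ is a cone point of order $k$. I would work in the orbifold chart $\widetilde B(0,r)$ and look at the arc of $\gamma_{t_i}$ near $\theta_0$. Since $\kappa_\psi\to0$ and $\psi$ has bounded gradient, the lifted arcs in $\widetilde B(0,r)$ have uniformly bounded curvature. Their $k$ lifts under the $\mathbb{Z}_k$-action are pairwise disjoint, because $\gamma_{t_i}$ is embedded and avoids $x_0$. The lifts converge in $C^1$ (and smoothly) to $\widetilde g$-geodesic segments through or near $0$, with unit speed, and they are disjoint from their rotations by $2\pi j/k$. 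The lifts of $\gamma_\infty$ near $0$ are therefore limits of curves approaching $0$. Each lift's limit is a geodesic germ passing through $0$, and its rotations must not cross it transversally, because transversal crossing is stable under $C^1$ convergence and would contradict disjointness. A geodesic through $0$ in direction $v$ crosses its rotation by $2\pi/k$ transversally unless the rotation maps the line $\mathbb{R}v$ to itself, which requires $k\le 2$. This proves (1). For $k=2$, the rotation by $\pi$ reverses the lifted geodesic, so downstairs $\gamma_\infty$ arrives at $x_0$ and leaves along the same geodesic in reverse. Following $\gamma_\infty$ as a $\psi$-geodesic (smooth away from cone points by the argument of (3) applied locally), it must turn back again somewhere, since it is a closed loop of finite length. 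Away from cone points the limit is a smooth immersed $\psi$-geodesic, so a reversal can only occur at another order-$2$ cone point $y_0$. The curve therefore goes back and forth between $x_0$ and $y_0$ along a $\psi$-geodesic path. Here "geodesic" should be read as $\psi$-geodesic.

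The main obstacle is the cone point analysis in (1)–(2): making the local $C^1$ convergence rigorous near $0$, where the curves pass arbitrarily close to the fixed point, and ruling out degenerate tangential configurations. I would first try to obtain uniform $C^{1,1}$ control of the lifted arcs from the curvature bound. Then the limit of the lifts would be a $C^{1,1}$ (hence geodesic) curve, and a turning-angle argument in the chart would show that the $k$ rotated copies must intersect unless $k\le2$.
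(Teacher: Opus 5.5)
Your overall architecture is the paper's: constant-speed reparametrisation, derivative bounds from Lemma \ref{lem:nabbd} with Arzel\`a--Ascoli, local lifts at cone points, transversality of the rotated lift when $k\ge 3$, and geodesic uniqueness giving reflection when $k=2$. There is, however, a genuine gap in the embeddedness step of (iii). You correctly reduce to the case that $\widehat\gamma_\infty$ is a multiple cover of an embedded closed $\psi$-geodesic $\Gamma$. Since $\widehat\gamma_\infty$ has constant nonzero speed, it is then a covering $S^1\to\Gamma$ of some degree $m$ with fixed orientation. A ``fold back'' would need zero velocity, so that case never occurs and is not what must be excluded. What must be excluded is an orientation-preserving $m$-fold cover with $m\ge 2$, and you only assert that embedded approximants cannot converge to one. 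The missing ingredient is the graph lemma, Lemma \ref{lem:graph} (after Gage). For large $i$, the embedded curve $\widehat\gamma_i$ lies in a thin Fermi collar of $\Gamma$, has length bounded by a fixed multiple of $L_\psi(\Gamma)$, and has small curvature. It is therefore a graph over $\Gamma$, i.e.\ a degree-one cover, and smooth convergence then forces $m=1$. A direct version of the same argument: $C^1$-closeness makes $\widehat\gamma_i$ transverse to the normal geodesics, so in Fermi coordinates it has the form $\theta\mapsto(m\theta,h(\theta))$. Embeddedness forces $h(\theta+2\pi/m)-h(\theta)$ to have a fixed sign, which contradicts the fact that its sum over $j=0,\dots,m-1$ telescopes to $0$. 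Your fallback via rotation index or Gauss--Bonnet is not available, since the theorem covers arbitrary compact orbifolds (e.g.\ $T^2$, hyperbolic ones) where $\Gamma$ need not bound a disc and there is no global rotation index.

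A smaller issue: the theorem asserts $C^0$ subconvergence of the given solution $\gamma_{t_i}$ in its own parametrisation. Item (iii) then claims that the image of $\widehat\gamma_\infty$ equals that of this $\gamma_\infty$. You only treat the constant-speed curves and implicitly identify the two limits. The paper fills this in two steps.
\begin{enumerate}
\item By \eqref{eq:fv}, $e^{\psi}|\partial_\theta\gamma_t|$ is non-increasing, which makes the original maps equicontinuous.
\item The reparametrisations $\varphi_t$ satisfy $|\partial_\theta\varphi_t|\le 2\pi C/c$ by Lemma \ref{lem:leng}. A further subsequence then gives $\gamma_\infty=\widehat\gamma_\infty\circ\varphi_\infty$ with $\varphi_\infty$ surjective, hence equal images.
\end{enumerate}
Conversely, the cone-point convergence you flag as the main obstacle is not really one. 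In the chart $\widetilde U$ the lifted metric is smooth at $0$ and the lifts are locally isometric copies, so the bounds of Lemma \ref{lem:nabbd}(ii) and the Arzel\`a--Ascoli argument of (iii) apply verbatim; no separate $C^{1,1}$ analysis is needed. Finally, in (ii) the second turning point is forced by propagating the symmetry $\widehat\gamma_\infty(\hat s)=\widehat\gamma_\infty(\hat l-\hat s)$ by geodesic uniqueness, which would give zero velocity at $\hat s=\hat l/2$ unless an order-$2$ cone point is hit first. ``Finite length'' alone does not supply this.
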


\begin{remark}
We remark briefly on the geometric idea behind the conclusion of the theorem. Away from singularities, the compactness gives subsequential smooth convergence to a weighted geodesic. If the flow reached a cone point of order $k \geq 3$, the rotated lifts in the orbifold chart would have to cross transversely; this cannot happen by the embeddedness assumption. In contrast, at a cone point of order 2, the lifted geodesic is left invariant under a rotation by $\pi$ which reverses the orientation. The limiting curve is therefore `reflected' at this point, and thus  by periodicity it links one or two cone points of order two.

{\rm This idea was already observed by Lange in \cite[Lemma 4.4, Proposition 4.5]{Lange} for the (usual) CSF in a simply-connected spindle orbifold. While his proof is based on a geometric insight, some of the analytic details are omitted. Our proof below provides in particular an alternative and detailed proof of his specialised result.}
\end{remark}

Our proof of the item (iii) below is a generalisation of the proofs given in \cite{Gage, Grayson} and \cite[\S 3]{MV}. We especially follow the argument given by Miquel-Vi\~nado-Lereu  \cite[\S 3]{MV}, in which a similar result is proved for the weighted CSF in a smooth surface. On the other hand, both (i) and (ii) are phenomena specific to orbifolds and require additional arguments.

\begin{proof}
The proof proceeds in stages.\\

\noindent\textbf{
{1. Construct the limiting curve.}
} 
Let $\gamma: S^{1}\times [0,\infty)\to \mathcal{O}^{\rm reg}$ be a solution of $\psi$-CSF contained in the regular part of a compact orbifold $\mathcal{O}$. By the avoidance principle (Proposition \ref{prop:ap}), we may assume that $\gamma_{t}$ is embedding for all $t\in [0,\infty)$. 

We fix a parameter of $S^{1}$ by $\theta\mapsto (\cos \theta, \sin\theta)$. Using the relation  $ds_{\psi}(t)=e^{\psi}|\partial_{\theta}\gamma_{t}|\,d\theta$, \eqref{eq:fv} yields  that
$
\p_{t}(\log (e^{\psi}|\p_{\theta}\gamma_{t}|))=-\kappa_{\psi}^{2}(t)\leq 0
$
and hence, $e^{\psi}|\partial_{\theta}\gamma_{t}|$ is not increasing. Since $e^{\psi}$ is bounded from below, this shows $|\partial_{\theta}\gamma_{t}|\leq C$ for some positive constant $C$, and this implies that $\{\gamma_{t}\}_{t\in [0,\infty)}$ is equicontinuous. Thus, using the Arzel\`a-Ascoli theorem, there exists a convergent subsequence $\{\gamma_{t_{i}}\}_{i=1}^{\infty}$ with respect to the compact open topology of the set of continuous maps $C^{0}(S^1,\mathcal{O})$, where we note that $\mathcal{O}$ can be regarded as a metric space equipped with the distance $d_{\mathcal{O}}$ induced from the orbifold Riemannian metric $g$.  We denote the limiting continuous function by $\gamma_{\infty}: S^{1}\to \mathcal{O}$, and set $\gamma_{i}:=\gamma_{t_{i}}$.  Note that we have $\kappa_{\psi}(t)\to 0$ uniformly as shown in Lemma \ref{lem:nabbd}. However, strictly speaking,  these results do not show that  $\gamma_{\infty}$ is a $\psi$-minimal immersion (or a $\psi$-geodesic). \\

\noindent\textbf{
{2. Construct an unparametrised solution.}
} To prove the items (i)--(iii),  we consider a reparametrisation of $S^{1}$  used in \cite{MV}. For each $t$, we define a diffeomorphism $\varphi_{t}: S^{1}\to S^{1}$ by
\[
\varphi_{t}(\cos\theta,\sin\theta):= \Big(\cos \frac{2\pi s_{t}(\theta)}{L_{t}}, \sin \frac{2\pi s_{t}(\theta)}{L_{t}}\Big)
\]
where $s_{t}$ is the arclength parameter of $\gamma_{t}$ and $L_{t}=L(\gamma_{t})$ is the length of $\gamma_{t}$. 
 Note that $\varphi_{t}$ is defined by a composition of the following coordinate transformations
\[
(0,2\pi) \xrightarrow{s_{t}} (0,L_{t}) \xrightarrow{l_{t}} (0,2\pi),
\]
where $l_{t}(s_{t}):=2\pi s_{t}/L_{t}$. We denote the parameter of the last interval $(0,2\pi)$ by $\eta$ distinguishing from the parameter $\theta$ of the first interval $(0,2\pi)$. 

Now, we define  a family of embeddings by
\[
\widehat{\gamma}_{t}: S^{1}\to \mathcal{O}^{{\rm reg}}\quad \widehat{\gamma}_{t}:=\gamma_{t}\circ \varphi_{t}^{-1}.
\]
By definition, $\{\widehat{\gamma}_{t}\}_{t}$ is an unparametrised solution of the weighted CSF. 
We consider the subsequence $\{\widehat{\gamma}_{i}\}_{i=1}^{\infty}$ of the unparametrised solution  defined by $\widehat{\gamma}_{i}:=\gamma_{t_{i}}\circ \varphi_{t_{i}}^{-1}$.
 Since $|\p_{\eta}\widehat{\gamma}_{i}|=|\p_{s_{i}}{\gamma}_{i}\cdot \p_{\eta} l_{i}^{-1}|=L_{i}/2\pi\leq L_{\psi}(t_{i})/2\pi E\leq L_{\psi}(0)/2\pi E$, there exists a subsequence $\{\widehat{\gamma}_{i_{j}}\}_{j=1}^{\infty}$ which converges to a continuous map $\widehat{\gamma}_{\infty}: S^{1} \to \mathcal{O}$.
 On the other hand, since $|\p_{\theta}\gamma_{t}|\leq C$ and we have $L_{t}\geq c$  by Lemma \ref{lem:leng}, we see $|\p_{\theta}\varphi_{i}|=(2\pi/L_{i})|\p_{\theta} s_{i}|=(2\pi/L_{i})|\p_{\theta}\gamma_{i}|\leq 2\pi C/c$ for all $i$, and hence, there exists a subsequence $\{\varphi_{i_{j_{k}}}\}_{k=1}^{\infty}$ of $\{\varphi_{i_{j}}\}_{j=1}^{\infty}$ such that it also converges to a continuous map $\varphi_{\infty}: S^{1}\to S^{1}$. We remark that since $\varphi_{i}: S^{1}\to S^{1}$ is a homeomorphism, $\varphi_{\infty}:S^{1}\to S^{1}$ is  surjective. 
 Now, we consider the following subsequence of $\{\widehat{\gamma}_{i}\}_{i=1}^{\infty}$:
\[
\widehat{\gamma}_{i_{k}}:=\gamma_{i_{j_{k}}}\circ \varphi_{i_{j_{k}}}^{-1}.
\]
Equivalently, we put $\gamma_{i_{j_{k}}}=\widehat{\gamma}_{i_{k}}\circ \varphi_{i_{j_{k}}}$. By taking $k\to \infty$, we obtain that 
$
\gamma_{\infty}=\widehat{\gamma}_{\infty}\circ \varphi_{\infty}
$
and this shows that the image of $\widehat\gamma_{\infty}$ coincides with that of $\gamma_{\infty}$. Thus, in the following, we may assume that the subsequence $\{\widehat{\gamma}_{i}\}_{i=1}^{\infty}$ converges to a continuous map $\widehat{\gamma}_{\infty}: S^{1}\to \mathcal{O}$ whose image coincides with $\gamma_{\infty}$. \\

\noindent\textbf{
3. Proof of (iii).
} We first consider the item (iii), namely, the case  when $\gamma_{\infty}$ (or $\widehat{\gamma}_{\infty}$) does not pass through any singular point of $\mathcal{O}$. Our claim is that $\widehat{\gamma}_{\infty}: S^{1}\to \mathcal{O}^{{\rm reg}}$ is a smooth $\psi$-minimal embedding and there is a subsequence of $\{\widehat{\gamma}_i\}_{i=1}^\infty$ such that it smoothly converges to $\widehat{\gamma}_\infty$. To prove this, we take a finite open covering $\bigcup_{\lambda=1}^N I_{\lambda}$ of $S^1$ so that each $I_\lambda$ is homeomorphic to a bounded connected interval and there exists an isothermal coordinate $(U_{\lambda},\psi_{\lambda})$ of $\mathcal{O}^{{\rm reg}}$ such that $\widehat{\gamma}_{\infty}(I_{\lambda})\subset U_{\lambda}$. Since $\widehat{\gamma}_{i}$ converges to $\widehat{\gamma}_{\infty}$ in $C^{0}$-topology,  we see that $\widehat{\gamma}_{i}(I_{\lambda})\subset U_{\lambda}$ for any $\lambda=1,\ldots, N$ and any large $i$. Because we are interested in the limiting behaviour,  we may assume that  $\widehat{\gamma}_{i}(I_{\lambda})\subset U_{\lambda}$ for any $\lambda=1,\ldots, N$ and any $i=1,2,\ldots$ in the following.

Fix arbitrary $\lambda$ and set 
\[
\widehat{\gamma}_{i}^{\lambda}:=\psi_{\lambda}\circ \widehat{\gamma}_{i}|_{I_{\lambda}}: I_{\lambda}\to \R^{2},
\quad \widehat{\gamma}_{i}^{\lambda}=(\widehat{\gamma}_{i}^{\lambda 1}, \widehat{\gamma}_{i}^{\lambda 2}).
\]
Obviously, the sequence $\{\widehat{\gamma}_{i}^{\lambda}\}_{i=1}^{\infty}$ converges to a continuous map $\widehat{\gamma}_{\infty}^{\lambda}:=\psi_{\lambda}\circ \widehat{\gamma}_{\infty}|_{I_{\lambda}}$.  We shall show that there exists a subsequence of  $\{\widehat{\gamma}_{i}^{\lambda}\}_{i=1}^{\infty}$ which smoothly converges to $\widehat{\gamma}_{\infty}^{\lambda}$. To show this, it is sufficient to prove that, for any $m$ there exists a positive constant $C_{m}$ such that 
\begin{align}\label{eq:keybd}
\Big|\frac{\p ^{m} \widehat{\gamma}_{i}^{\lambda}}{\p \eta^{m}}\Big|^{2}\leq C_{m}\quad \forall i=1,2,\ldots,
\end{align}
Indeed, if \eqref{eq:keybd} holds, then the sequence of each coordinate function $\{\widehat{\gamma}_{i}^{\lambda l}\}_{i=1}^{\infty}$ $(l=1,2)$ is a bounded sequence in the Sobolev space $W^{m,2}(I_{\lambda})$ and hence, there is a weakly convergent subsequence $\{\widehat{\gamma}_{i_{j}}^{\lambda l}\}_{j=1}^{\infty}$ in $W^{m,2}(I_{\lambda})$. By the Sobolev embedding theorem, $\{\widehat{\gamma}_{i_{j}}^{\lambda l}\}_{j=1}^{\infty}$ is a sequence in a compact set of $C^{m-1,\alpha}(\overline{I_{\lambda}})$, and hence, there is a (strongly) convergent subsequence in $C^{m-1,\alpha}(\overline{I_{\lambda}})$. Since the limit coincides with $\widehat{\gamma}_{\infty}^{\lambda l}$, this shows that $\widehat{\gamma}_{\infty}^{\lambda l}$ is $C^{m-1}$ for each $l=1,2$, and this implies that there is a subsequence of $\{\widehat{\gamma}_{i}^{\lambda}\}_{i=1}^{\infty}$ which converges to $\widehat{\gamma}_{\infty}^{\lambda}$ in $C^{m-1}$.
 Because $n$ is arbitrary, by the standard diagonal argument, we conclude the existence of  a smoothly convergent subsequence $\widehat{\gamma}_{i_{j}}^{\lambda}\to \widehat{\gamma}_{\infty}^{\lambda}$.

The inequality \eqref{eq:keybd} is proved as follows. Since $\widehat{\gamma}_{i}^{\lambda}$ is regarded as  $\widehat{\gamma}_{i}^{\lambda}={\gamma}_{i}^{\lambda}\circ (s_{i}^{-1}\circ l_{i}^{-1})$, where $\gamma_{i}^{\lambda}=\psi_{\lambda}\circ \gamma_{i}|_{I_{\lambda}}$. Then, we compute 
\begin{align*}
\frac{\p \widehat{\gamma}_{i}^{\lambda}}{\p \eta}&=\frac{\p (\gamma_{i}^{\lambda}\circ s_{i}^{-1})}{\p s_{i}}\cdot \frac{\p l_{i}^{-1}}{\p \eta}=\frac{L_{i}}{2\pi} \frac{\p \gamma_{i}^{\lambda}}{\p s_{i}}\\
\frac{\p^{2} \widehat{\gamma}_{i}^{\lambda}}{\p \eta^{2}}&=\frac{L_{i}}{2\pi}\frac{\p^{2} (\gamma_{i}^{\lambda}\circ s_{i}^{-1})}{\p s_{i}^{2}}\cdot \frac{\p l_{i}^{-1}}{\p \eta}=\Big(\frac{L_{i}}{2\pi}\Big)^{2} \frac{\p^{2} \gamma_{i}^{\lambda}}{\p s_{i}^{2}}.
\end{align*}
since $l_{i}^{-1}(\eta)=(L_{i}\eta)/2\pi$. 
Thus, by induction, we obtain $\p^{m}_{\eta} \widehat{\gamma}_{i}^{\lambda}=(L_{i}/2\pi)^{m}\p^{m}_{s_{i}}\gamma_{i}^{\lambda}$. In particular, we see
\[
\Big|\frac{\p ^{m} \widehat{\gamma}_{i}^{\lambda}}{\p \eta^{m}}\Big|^{2}= \Big(\frac{L_{i}}{2\pi}\Big)^{2m} \Big|\frac{\p^{m} \gamma_{i}^{\lambda}}{\p s_{i}^{m}}\Big|^{2}\leq \Big(\frac{L_{\psi}(0)}{2\pi E}\Big)^{2m} \Big|\frac{\p^{m} \gamma_{i}^{\lambda}}{\p s_{i}^{m}}\Big|^{2}.
\]
Thus it suffices to show $|\p_{s_{i}}^{m}\gamma_{i}^{\lambda}|$ is bounded for any $n$, but this follows from Lemma \ref{lem:nabbd}. Indeed, using the chart $(U_{\lambda},\psi_{\lambda}=(x_{\lambda}^{1},x_{\lambda}^{2}))$, we have 
\begin{align*}
\onab_{\p_{s_{i}}}\gamma_{i}&=\frac{\p \gamma_{i}}{\p s_{i}}=\sum_{l=1}^{2}\frac{\p \gamma_{i}^{\lambda l}}{\p s_{i}}\frac{\p }{\p x_{\lambda}^{l}},\\
\onab_{\p_{s_{i}}}^{2}\gamma_{i}&=\onab_{\p_{s_{i}}}\frac{\p \gamma_{i}}{\p s_{i}}
=\sum_{l=1}^{2}\Big(\frac{\p^{2} \gamma_{i}^{\lambda l}}{\p s_{i}^{2}}\frac{\p }{\p x_{\lambda}^{l}}
+\frac{\p \gamma_{i}^{\lambda l}}{\p s_{i}}\onab_{\p_{s_{i}}}\frac{\p }{\p x_{\lambda}^{l}}\Big)
=\sum_{l=1}^{2}\Big{\{}\frac{\p^{2} \gamma_{i}^{\lambda l}}{\p s_{i}^{2}}
+P_{2}^{l}\Big(\frac{\p \gamma_{i}^{\lambda 1}}{\p s_{i}}, \frac{\p \gamma_{i}^{\lambda 2}}{\p s_{i}}, \overline{\Gamma}\Big)\Big{\}}\frac{\p }{\p x_{\lambda}^{l}},
\end{align*}
where $P_{2}^{l}$ is some polynomial, and $\overline{\Gamma}$ means the Christoffel symbols of $\onab$. By induction, we obtain
\begin{align*}
\onab_{\p_{s_{i}}}^{m}\gamma_{i}&=\sum_{l=1}^{2}\Big{(}\frac{\p^{m} \gamma_{i}^{\lambda l}}{\p s_{i}^{m}}
+\widetilde{P}_{m}^{l}\Big{)}\frac{\p }{\p x_{\lambda}^{l}},\\
\widetilde{P}_{m}^{l}&=P_{m}^{l}\Big(\frac{\p^{m-1} \gamma_{i}^{\lambda 1}}{\p s_{i}^{m-1}}, \frac{\p^{m-1} \gamma_{i}^{\lambda 2}}{\p s_{i}^{m-1}},\ldots, \frac{\p \gamma_{i}^{\lambda 1}}{\p s_{i}}, \frac{\p \gamma_{i}^{\lambda 2}}{\p s_{i}}, \frac{\p^{m-2} \overline{\Gamma}}{\p (x_{\lambda}^{1})^{m-2}}, \frac{\p^{m-2} \overline{\Gamma}}{\p (x_{\lambda}^{2})^{m-2}},\ldots, \overline{\Gamma}\Big)
\end{align*}
where $P_{m}^{l}$ is some polynomial depending only on $n$ and $l$. 

We may assume that $\widehat{\gamma}_{i}(I_{\lambda})\subset V_{\lambda}$ for some compact set $V_{\lambda}\subset U_{\lambda}$ so that $|\p/\p x_{\lambda}^{l}|$ and  $|\p^{k} \overline{\Gamma}/\p (x_{\lambda}^{l})^{k}|$ $(k=1,2,\ldots,)$ are all bounded by positive constants independent of $t$ (but depend on $\lambda$).
Since $(U_{\lambda},\psi_{\lambda})$ is isothermal, we see
\begin{align*}
\Big|\frac{\p \gamma_{i}^{\lambda}}{\p s_{i}}\Big|^{2}=|\onab_{\p_{s_{i}}}\gamma_{i}|^2\cdot e^{-2\rho}\leq C_{1}(\lambda)
\end{align*}
where $e^{\rho}=|\p/\p x_{\lambda}^{1}|=|\p/\p x_{\lambda}^{2}|$. Hence, by induction and using Lemma \ref{lem:nabbd}, we obtain 
\begin{align*}
\Big|\frac{\p^{m} \gamma_{i}^{\lambda}}{\p s_{i}^{m}}\Big|^{2}=\Big|\onab_{\p_{s_{i}}}^{m}\gamma_{i}-\sum_{l=1}^{2}\widetilde{P}_{m}^{l}\frac{\p}{\p x_{\lambda}^{l}}\Big|^{2}\cdot e^{-2\rho}\leq |\onab_{\p_{s_{i}}}^{m}\gamma_{i}|^{2}\cdot e^{-2\rho}+\sum_{l=1}^{2}(\widetilde{P}_{m}^{l})^{2}\leq C_{m}(\lambda).
\end{align*}
Now, we take $C_{m}:={\rm max}\{C_{m}(1),\ldots, C_{m}(N)\}$, then we obtain $|\p_{s_{i}}^{m} \gamma_{i}^{\lambda}|\leq C_{m}$ for any $m$ and $\lambda$, as required. This proves \eqref{eq:keybd}.

We have proved that there exists a subsequence of $\{\widehat{\gamma}_{i}\}_{i=1}^{\infty}$ so that $\widehat{\gamma}_{i_{j}}^{\lambda}\to \widehat{\gamma}_{\infty}^{\lambda}$ smoothly for the fixed $\lambda$.
By taking a subsequence in order from $\lambda=1$ to $N$, we obtain a subsequence $\{\widehat{\gamma}_{i_{k}}\}_{k=1}^{\infty}$ such that $\widehat{\gamma}_{i_{k}}^{\lambda}\to \widehat{\gamma}_{\infty}^{\lambda}$ smoothly for any $\lambda=1,\ldots, N$. This implies that $\widehat{\gamma}_{\infty}$ is a smooth map and the convergence $\widehat{\gamma}_{i_{k}}\to \widehat{\gamma}_{\infty}$ is also smooth.  Moreover, we see
\[
\Big|\frac{\p \widehat{\gamma_{i}}}{\p \eta}\Big|=\Big|\frac{\p \gamma_{i}}{\p s_{i}}\cdot \frac{\p l_{i}^{-1}}{\p \eta}\Big|=\frac{L_{i}}{2\pi}\geq \frac{c}{2\pi}>0
\]
by Lemma \ref{lem:leng}. This implies that $|\p_{\eta}\widehat{\gamma}_{\infty}|>0$ and hence, $\widehat{\gamma}_{\infty}$ is an immersion. Furthermore, the immersion $\widehat{\gamma}_{\infty}$ must be $\psi$-minimal since the weighted mean curvature vector $K_{\psi}=\kappa_{\psi}N$ of $\widehat{\gamma}_{i_{k}}$ coincides with that of $\gamma_{i_{k}}$, and $\kappa_{\psi}(t)$ uniformly converges to $0$. 

It remains to show that the immersion  $\widehat{\gamma}_{\infty}$ is actually an embedding. This fact follows from the same argument given in \cite[Theorem 4.12]{Gage}, however,  we give a detailed proof for the convenience of the reader. Without loss of generality, we may assume that $\{\widehat{\gamma}_{i}\}_{i=1}^{\infty}$ smoothly converges to $\widehat{\gamma}_{\infty}$, and we parametrise $\widehat{\gamma}_{\infty}: [0,\widehat{l}]\to \mathcal{O}^{{\rm reg}}$  by the arclength parameter $\widehat{s}$ with respect to the weighted metric $g_\psi=e^{2\psi}g$, where $\widehat{l}$ is the (weighted) length of $\widehat{\gamma}_{\infty}:S^{1}\to \mathcal{O}^{{\rm reg}}$.
 
  If $\widehat{\gamma}_{\infty}$ transversally intersects itself, then so does $\widehat{\gamma}_{i}$ for sufficiently large $i$, and this contradicts the fact that $\widehat{\gamma}_{i}$ is an embedding. Thus, if $\widehat{\gamma}_{\infty}$ admits self-intersection points, then $\widehat{\gamma}_{\infty}$ tangentially intersects at every intersection point. On the other hand, since $\widehat{\gamma}_{\infty}$ is $\psi$-minimal, by taking the arclength parameter $\widehat{s}$, it satisfies the geodesic equation (w.r.t.\  $g_\psi$). Thus, the uniqueness of geodesic implies that the image $\Gamma:=\widehat{\gamma}_{\infty}(S^{1})$ is an embedded closed geodesic and $\widehat{\gamma}_{\infty}: S^{1}\to \Gamma$ must be a finite covering. In particular, we have $L_{\psi}(\widehat{\gamma}_{\infty})=mL_{\psi}(\Gamma)$ for some positive integer $m\geq 1$. 

Since $L_{\psi}(\widehat{\gamma}_{i})=L_{\psi}(\gamma_{i})$ is non-increasing, we may assume that $L_{\psi}(\widehat{\gamma}_{i})\leq aL_{\psi}(\widehat{\gamma}_{\infty})=amL_{\psi}(\Gamma)$ for any sufficiently large $i$, where $a\geq 2$ is arbitrary constant. Moreover, for any positive constant $\delta\leq \frac{\pi}{4amL_{\psi}(\Gamma)}$, we obtain that $|\widehat{\kappa}(t_{i})|\leq \delta$ for sufficiently large $i$ since $\kappa_{\psi}(t_{i})\to 0$ and $|e^{-\psi}|$ is bounded,  where $\widehat{\kappa}=e^{-\psi}{\kappa}_{\psi}$ is the geodesic curvature of $\widehat{\gamma}_{t_{i}}$ w.r.t.\ the weighted metric $g_{\psi}$. Then by Lemma \ref{lem:graph}, there exists an $\epsilon$-neighbourhood $U_{\epsilon}$ of $\Gamma$ such that  the embedding $\widehat{\gamma}_{i}$ is described by a graph over the embedded closed geodesic $\Gamma$ whenever $i$ is sufficiently large and  $\widehat{\gamma}_{i}$ is contained in $U_{\epsilon}$. Because $\widehat{\gamma}_{i}\to \widehat{\gamma}_{\infty}$, this implies that $\widehat{\gamma}_{i}$ is regarded as a single covering of $\Gamma$ for any sufficiently large $i$, and this shows that $\widehat{\gamma}_{\infty}: S^{1}\to \Gamma$ is also a single covering. Therefore, $\widehat{\gamma}_{\infty}: S^{1}\to \Gamma$ is bijective, and this proves that $\widehat{\gamma}_{\infty}:S^{1}\to \mathcal{O}^{{\rm reg}}$ is an embedding  since $\mathcal{O}^{{\rm reg}}$ is Hausdorff and $S^{1}$ is compact. This finishes the proof of (iii).\\

\noindent\textbf{
4. Proof of (i) and (ii).
}
Next, we consider the case where the continuous map $\gamma_{\infty}: S^{1}\to \mathcal{O}$ passes through some cone points  of $\mathcal{O}$. Then the map $\widehat{\gamma}_{\infty}$ does as well,  since its image coincides with that of $\gamma_{\infty}$.
We divide $S^{1}$ into a finite  closed subsets $S^{1}=\bigcup_{\lambda=1}^{N} S_{\lambda}$, where each $S_{\lambda}$ is homeomorphic to a connected closed interval, so that each $\widehat\gamma_{\infty}(S_{\lambda})$ contains at most one cone point and if there is a cone point in $\widehat\gamma_{\infty}(S_{\lambda})$, then the cone point belongs to the interior of $\widehat\gamma_{\infty}(S_{\lambda})$.
 Moreover, we define an $\epsilon$-neighbourhood $V_{\lambda}$ of the image $\widehat\gamma_{\infty}(S_{\lambda})$ by 
\[
V_{\lambda}:=\overline{\{x\in \mathcal{O}\mid \textup{$d_{\mathcal{O}}(x,y)<\epsilon$ for some $y\in \widehat\gamma_{\infty}(S_{\lambda})$}\}},
\]
where $d_{\mathcal{O}}$ is the distance function on $(\mathcal{O},g)$.
There are two possibilities:
\begin{itemize}
\item If $\widehat{\gamma}_{\infty}(S_{\lambda})$ does not contain any cone point,  then we  may assume that $V_{\lambda}\subset \mathcal{O}^{{\rm reg}}$. Moreover, by using a similar argument given in the proof of item (iii), we see that the restricted map $\widehat{\gamma}_{\infty}|_{S_{\lambda}}: S_{\lambda}\to \mathcal{O}^{{\rm reg}}$ is a smooth $\psi$-minimal immersion.

\item 
 If $\widehat{\gamma}_{\infty}(S_{\lambda})$ contains a cone point $x_{\lambda}\in \mathcal{O}$, we take an orbifold chart $(\widetilde{U}_{\lambda}, \mathbb{Z}_k, \varphi)$ around $x_\lambda$ on which a cyclic group $\mathbb{Z}_{k}$ acts  by a rotation centred at the origin $0\in \widetilde{U}\subset \R^{2}$.  By taking the partition of $S^{1}$ sufficiently fine and taking $\epsilon$ sufficiently small, we may assume that $\widehat\gamma_i(S_{\lambda})\subset V_{\lambda}\subset  \varphi(\widetilde{U}_{\lambda})$ for any large $i$ and $i=\infty$. 
Since $\widehat\gamma_{i}$ is an embedded curve contained in $\mathcal{O}^{\rm reg}$, the pre-image $\varphi^{-1}(\widehat\gamma_{i}(S_{\lambda}))$ consists of disjoint union of $k$ copies of $\widehat\gamma_{i}(S_{\lambda})$. More precisely, there exists a smooth embedded curve 
\[
\widetilde{\gamma}_{i\xi}: S_{\lambda}\to \widetilde{U}_{\lambda}\subset \R^{2},\quad  \textup{where $\xi=1,\ldots, k$}
\]
 such that each $\widetilde{\gamma}_{i\xi}$ is the lift of $\widehat\gamma_{i}|_{S_{\lambda}}$ and $\widetilde{\gamma}_{i\xi}\cap \widetilde{\gamma}_{i\xi'}=\emptyset$ if $\xi\neq \xi'$. Moreover,  the image of $\widetilde{\gamma}_{i\xi}$  is contained in the compact subset $\widetilde{V}_{\lambda}:=\varphi^{-1}(V_{\lambda})$ in $\widetilde{U}_{\lambda}$. 

We regard $\widetilde{U}_{\lambda}$ as a smooth Riemannian manifold equipped with a $\mathbb{Z}_{k}$-invariant Riemannian metric $\widetilde{g}$ associated with the orbifold Riemannian metric $g_{\mathcal{O}}$. 
Since $\widetilde{\gamma}_{i\xi}$ is a local lift of $\widehat\gamma_{i}$, it follows that there is a subsequence $\{\widetilde{\gamma}_{\alpha}\}_{\alpha=1}^{\infty}$ in the set $\{\widetilde{\gamma}_{i\xi}\}_{i=1,2\ldots, \xi=1,\ldots, k}$ which converges to a continuous map $\widetilde{\gamma}_{\infty}: S_{\lambda}\to  \widetilde{U}_{\lambda}$. Note that $\widetilde{\gamma}_{\infty}$ must be a lift of $\widehat\gamma_{\infty}|_{S_{\lambda}}$. Moreover, by using a similar argument given in the proof of (iii), we see that $\widetilde{\gamma}_{\infty}: S_{\lambda}\to  \widetilde{U}_{\lambda}$ is a $\psi$-minimal immersion.
\end{itemize}

Summarizing the argument, we see  that if $\widehat{\gamma}_{\infty}(S_{\lambda})\cap \mathcal{O}^{\rm sing}= \emptyset$ then $\widehat{\gamma}_{\infty}|_{S_{\lambda}}$ is a $\psi$-minimal immersion into $\mathcal{O}^{\rm reg}$ and if $\widehat{\gamma}_{\infty}(S_{\lambda})\cap \mathcal{O}^{\rm sing}\neq \emptyset$, then there exists a local lift $\widetilde{\gamma}_{\infty}: S_{\lambda}\to \widetilde{U}_{\lambda}$ of $\widehat{\gamma}_{\infty}$ around the singular point such that $\widetilde{\gamma}_{\infty}$ is a $\psi$-minimal immersion into $\widetilde{U}_{\lambda}$. 

Now, we give a proof of  items  (i) and (ii).  

(i) Suppose that $\gamma_{\infty}$ (or $\widehat{\gamma}_{\infty}$) passes through a cone point $x_{0}\in \mathcal{O}$ and the order $k$ of $x_{0}$ satisfies $k\geq 3$. We may assume that there exists a $\lambda$ such that $\widehat{\gamma}_{\infty}(\theta_{0})=x_{0}$ for some interior point $\theta_{0}$ in $S_{\lambda}$. Let $\widetilde{\gamma}_{\infty}: S_{\lambda}\to \widetilde{U}_{\lambda}$ be the local lift of $\widehat{\gamma}_{\infty}$  constructed above.
 Since $\widehat\gamma_{\infty}(\theta_0)=x_0$, we have $\widetilde{\gamma}_{\infty}(\theta_{0})=0\in \widetilde{U}_{\lambda}$, i.e.\ $\widetilde{\gamma}_\infty$ passes through the origin $0$.  Moreover, we note that $\p_{\theta}\widetilde{\gamma}_{\infty}(\theta_{0})\neq 0$ since $\widetilde{\gamma}_{\infty}$ is an immersion. Let $R\in \mathbb{Z}_{k}$ be a generator of $\mathbb{Z}_{k}$ represented by the $2\pi/k$-rotation centred at $0\in \widetilde{U}_{\lambda}$. Since $k\geq 3$, it follows that the rotated curve $R\circ \widetilde{\gamma}_{\infty}: S_{\lambda}\to \widetilde{U}_{\lambda}$  transversally intersects to $\widetilde{\gamma}_{\infty}$ at $0=\widetilde{\gamma}_{\infty}(\theta_{0})=R\circ \widetilde{\gamma}_{\infty}(\theta_{0})$. Note that the intersection point $0$ is not a boundary point of  $\widetilde{\gamma}_{\infty}(S_{\lambda})$ since $\theta_{0}$ is an interior point of $S_{\lambda}$. This implies that if we take a sufficiently large $\alpha$, then two embedded curves $\widetilde{\gamma}_{\alpha}$ and $R\circ \widetilde{\gamma}_{\alpha}$ also have an intersection since $\widetilde{\gamma}_{\alpha}$ (resp. $R\circ \widetilde{\gamma}_{\alpha}$) uniformly converges to $\widetilde{\gamma}_{\infty}$ (resp. $R\circ \widetilde{\gamma}_{\infty}$).  In particular, the lift $\widetilde{\gamma}_{\alpha}$ also intersects to $R\circ \widetilde{\gamma}_{\alpha}$ at some interior point. However, this contradicts the fact that the both  $\widetilde{\gamma}_{\alpha}$ and $R\circ \widetilde{\gamma}_{\alpha}$ are lifts of $\widehat{\gamma}_\alpha|_{S_{\lambda}}$ that means they must be disjoint to each other. Hence,  $\widehat\gamma_{\infty}$ can not pass through a cone point of order $\geq 3$. Since the image of $\gamma_\infty$ coincides with that of $\widehat{\gamma}_\infty$, this proves  (i).

(ii) Finally, we suppose that  $\gamma_{\infty}$ passes through a cone point $x_{0}\in \mathcal{O}$ of order $k=2$. Our first claim is that $\gamma_{\infty}$ is ``reflected'' at $x_{0}$, namely, the curve $\gamma_{\infty}$ turns back on the same curve when it reaches the point $x_{0}$. To show this, we consider the image of the local lift $\widetilde{\gamma}_{\infty}: S_{\lambda}\to \widetilde{U}_{\lambda}$ of $\widehat{\gamma}_{\infty}$. We shall show that the image of $\widetilde{\gamma}_\infty$ is $\mathbb{Z}_2$-invariant around the origin $0\in \widetilde{U}_\lambda$.

We denote the induced $\mathbb{Z}_{2}$-invariant weighted metric on $\widetilde{U}_{\lambda}$ by $\widetilde{g}_{\psi}=e^{2\widetilde{\psi}}\widetilde{g}$, where $\widetilde{\psi}$ is a $\mathbb{Z}_{2}$-invariant smooth function which is a lift of $\psi$. 
Since $\widetilde{\gamma}_{\infty}$ is an immersion, we can take the arclength parameter $\widetilde{s}\in [0, \widetilde{l}]$ of $\widetilde{\gamma}_{\infty}: S_{\lambda}\to \widetilde{U}_{\lambda}$ with respect to $\widetilde{g}_{\psi}$, where $\widetilde{l}$ is the length of $\widetilde{\gamma}_{\infty}$.
Since $\gamma_{\infty}$ passes through the cone point $x_{0}$, there exists an $s_{0}\in (0,\widetilde{l})$ such that $\widetilde{\gamma}_{\infty}(s_{0})=0$. 
Let $R$ be the generator of $\mathbb{Z}_{2}$, i.e.\ the $\pi$-rotation centred at $0$. Then, the rotated curve $R\circ \widetilde{\gamma}_{\infty}(\widetilde{s})$ is a smooth immersion such that 
\[R\circ \widetilde{\gamma}_{\infty}(s_{0})=0\quad {\rm and}\quad \p_{\widetilde{s}}(R\circ \widetilde{\gamma}_{\infty})(s_{0})=R(\p_{\widetilde{s}}\widetilde{\gamma}_{\infty}(s_{0}))=-\p_{\widetilde{s}}\widetilde{\gamma}_{\infty}(s_{0}).\]
 Since $\widetilde{\gamma}_{\infty}$ is a $\psi$-minimal immersion, $\widetilde{\gamma}_{\infty}(\widetilde{s})$ is a geodesic with respect to $\widetilde{g}_{\psi}$.  Moreover, $R\circ \widetilde{\gamma}_{\infty}(\widetilde{s})$ is also a geodesic because the metric $\widetilde{g}_{\psi}$ is $\mathbb{Z}_{2}$-invariant.  Thus, by the uniqueness of geodesic, it holds that $R\circ \widetilde{\gamma}_{\infty}(\widetilde{s})=\widetilde{\gamma}_{\infty}(2s_{0}-\widetilde{s})$ on the interval $[s_{0}-\epsilon, s_{0}+\epsilon]$ for some small $\epsilon>0$. In particular, the image $\widetilde{\gamma}_{\infty}([s_{0}-\epsilon, s_{0}+\epsilon])\subset \widetilde{U}_{\lambda}$ is $\mathbb{Z}_{2}$-invariant. Since $\widetilde{\gamma}_{\infty}$ is a lift of  $\widehat{\gamma}_{\infty}$,   this implies that the curve $\widehat{\gamma}_{\infty}: S^{1}\to \mathcal{O}$ is reflected at the cone point $x_{0}$, as required.
 
 Next,  let  $\overline{\gamma}_{\infty}(\theta):=\widehat\gamma_{\infty}(2\pi-\theta)$ be the reversed curve of $\widehat{\gamma}_{\infty}$. We can take the arclength parameter $\widehat{s}$ w.r.t.\  $g_{\psi}=e^{2\psi}g$ defined on some small interval $[0,\epsilon)$ so that $\widehat{\gamma}_{\infty}(0)=x_{0}$. Then it holds that $\widehat\gamma_{\infty}(\widehat{s})=\overline{\gamma}_{\infty}(\widehat{s})$ for any $\widehat{s}\in [0,\epsilon)$ since $\widehat{\gamma}_{\infty}$ reflects at $x_{0}$. Our second claim is that this can be extended to whole $[0,\widehat{l}]$, where $\widehat{l}$ is the length of $\widehat{\gamma}_{\infty}$ (in other words, two curves $\widehat\gamma_{\infty}$ and $\overline{\gamma}_{\infty}$ are never branched).
This also follows from the uniqueness of geodesics. Indeed, since both $\widehat{\gamma}_{\infty}$ and $\overline{\gamma}_{\infty}$ are $\psi$-minimal immersions near any regular point and $\widehat\gamma_{\infty}(\widehat{s})=\overline{\gamma}_{\infty}(\widehat{s})$ on $[0,\epsilon)$, the uniqueness of geodesics implies that we have $\widehat\gamma_{\infty}(\widehat{s})=\overline{\gamma}_{\infty}(\widehat{s})$ on $[0,l_{1})$ as long as  $\widehat\gamma_{\infty}((0,l_{1}))\subset \mathcal{O}^{\rm reg}$ holds. 
Moreover, if there were no singular point in the interval $(0,\widehat{l}]$, then the geodesic $\widehat\gamma_{\infty}$ could not go back via the same trajectory, and this would contradict the fact that $\widehat\gamma_\infty=\overline{\gamma}_\infty$ around $x_0$. Therefore, there must be a cone point $y_{0}$ (where possibly $y_0=x_0$) in the interval $(0,\widehat{l})$, and two curves $\widehat\gamma_{\infty}(\widehat{s})$ and $\overline{\gamma}_{\infty}(\widehat{s})$ reach the point $y_{0}$ at the same time. Let $l_{1}>0$ be the first arrival time. Then,  by (i), the order of $y_{0}$ must be equal to $2$, and thus both $\widehat\gamma_{\infty}$ and $\overline{\gamma}_{\infty}$ reflect again at $y_{0}$. Thus, it holds that $\widehat\gamma_{\infty}(\widehat{s})=\overline{\gamma}_{\infty}(\widehat{s})$ beyond the time $l_{1}$, and both curves go back along the same trajectories because of the uniqueness of geodesics. This shows that  they move back and forth on the geodesic path between $x_{0}$ and $y_{0}$. Since the image of $\widehat{\gamma}_{\infty}$ coincides with that of $\gamma_{\infty}$, this proves (ii), and the proof is complete.
 \end{proof}
 
 \section{Finite-time behaviour of weighted CSF in compact 2D orbifolds}\label{sec:csf2}
 
 The well-known  theorem of Grayson \cite{Grayson} asserts that the (usual) CSF on a closed oriented surface either shrinks to a point in finite time or exists for infinite time.  Furthermore, using the general theory developed by Angenent \cite{Ang}, Oaks \cite{Oaks} generalised Grayson's theorem  to more general flows, including the weighted CSF on a smooth surface. More precisely, we have the following theorem as a special case of Oaks's result.

\begin{theorem}[cf.\ Corollary 6.2 in \cite{Oaks}]\label{thm:Oaks}
Let $(M,g, \psi)$ be a $2$-dimensional smooth orientable complete weighted Riemannian manifold such that both $\overline{\nabla}\psi$ and $\overline{\nabla}^{2}\psi$ are bounded, and $\gamma_{0}: S^{1}\to M$ be a $C^{2}$-embedded curve. Then the solution of $\psi$-CSF \eqref{eq:CSF} starting from $\gamma_{0}$ either shrinks to a point in finite time or exists for infinite time.
\end{theorem}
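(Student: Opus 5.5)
Since this statement is quoted from Oaks \cite{Oaks}, the plan is to derive it from Angenent's general theory \cite{Ang} of parabolic curve flows on surfaces rather than from scratch. The first step is to recast \eqref{eq:CSF} in Angenent's form. The normal velocity is $V=F(x,T,\kappa):=\kappa-\langle\overline\nabla\psi(x),JT\rangle$, where $T$ is the unit tangent and $x\in M$. We check that $F$ satisfies the hypotheses of \cite{Ang}:
\begin{itemize}
\item[(a)] uniform parabolicity, since $\partial F/\partial\kappa\equiv 1$;
\item[(b)] the symmetry $F(x,-T,-\kappa)=-F(x,T,\kappa)$, so the flow does not depend on the orientation;
\item[(c)] growth bounds $|F(x,T,0)|\le C$ and $|\partial_x F|,|\partial_T F|\le C(1+|\kappa|)$.
\end{itemize}
The bounds in (c) come exactly from the assumed bounds on $\overline\nabla\psi$ and $\overline\nabla^2\psi$. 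Completeness of $M$, together with the fact that $|V-\kappa|$ is bounded, keeps the curve inside a compact set on any finite time interval. This lets us work in a region of bounded geometry.

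Next we invoke Angenent's results. We get short-time existence for $C^2$ initial data, and continuation of the flow for as long as $\sup|\kappa|$ stays bounded. We also get the Sturmian property: the number of intersections of two solutions does not increase. Applied to the curve and its own nearby translates, this gives preservation of embeddedness, as in Proposition \ref{prop:ap}. So if $T_{\max}<\infty$, then $\sup|\kappa|\to\infty$ as $t\to T_{\max}$. We then blow up at a singular point, taking normal coordinates and parabolically rescaling $\lambda(\gamma-x_0)$, $\lambda^2 t$. Under this rescaling the ambient curvature and the drift term $\langle\overline\nabla\psi,N\rangle$ carry a factor $\lambda^{-1}$, so they disappear in the limit. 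The limiting flow is ordinary CSF of embedded curves in $\mathbb{R}^2$.

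The main obstacle is to rule out singularities in which only part of the curve degenerates, for example a neck pinch or a grim-reaper-type Type II singularity, while the diameter stays positive. For this I would follow the Grayson/Huisken approach. First prove a local distance comparison estimate: a lower bound on the ratio of extrinsic distance to intrinsic distance, with a correction term that absorbs the bounded drift and curvature terms over finite time. Since $T_{\max}<\infty$, these error terms contribute only $O(T_{\max})$. The estimate passes to blow-up limits and forces every blow-up limit to be embedded with chord–arc ratio bounded below. Among planar CSF singularity models, this rules out the grim reaper and multiplicity cusps, leaving only the shrinking round circle. A circle blow-up at some point means the whole connected curve lies in a shrinking ball. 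So the curve shrinks to a point, and by the Gauss–Bonnet/isoperimetric argument of Lemma \ref{lem:leng} it bounds a small disk whose area decays linearly to $0$.

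If any step fails, the fallback is Angenent's classification of singularities for embedded curves: either a point or a loop collapses. For embedded curves a loop collapse is impossible, because it would require a self-intersection.
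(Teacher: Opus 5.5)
The paper does not prove this statement. It imports it from Oaks~\cite{Oaks}, whose argument rests on Angenent's theory~\cite{Ang}. So your first step, writing the flow as $V=F(x,T,\kappa)$ and checking Angenent's structural conditions, is the only part that corresponds to what the paper relies on. Everything after that is an attempt to reprove Oaks's theorem, and its decisive step is missing.

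\textbf{The fallback is circular.} In Angenent's description, a singularity other than collapse to a point is a parameter arc $[a,b]\subset S^1$ that $\gamma_{T_{\max}}$ maps to a single point. For an embedded flow this only says that two strands approach each other as $t\to T_{\max}$. It forces no self-intersection at any $t<T_{\max}$. Excluding such collapsing arcs for embedded curves is exactly the content of Grayson's theorem and of Oaks's generalisation, so embeddedness cannot be used to dismiss them. This is the configuration Lemma~\ref{lem:key1} leaves open (connected preimages when $L_\psi(T_{\max})>0$). On the smooth part the paper still needs Oaks's theorem to rule it out.

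\textbf{The main route asserts its key steps rather than proving them.} It is Huisken's distance-comparison proof of Grayson's theorem transplanted to a weighted surface. In principle that is a legitimate alternative to Oaks's Angenent-based argument, but three points need repair.
\begin{enumerate}
\item The comparison estimate has more than bounded drift and curvature errors. The second derivatives of the sine term in $\mathcal{L}_\pm$ produce $(e_1\mp e_2)^2\pi^2L_\psi^{-2}$ times the sine term itself (Lemma~\ref{lem:LS}). An $e^{-\mathcal{K}t}$ correction absorbs this only when $L_\psi$ is bounded below. So your contradiction hypothesis must be $\lim_{t\to T_{\max}}L_\psi(t)>0$; your ``diameter stays positive'' implies this. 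With that hypothesis, the argument of Proposition~\ref{prop:distcomp} applies.
\item ``Among planar CSF singularity models only the circle survives'' presupposes a classification of the blow-up limits of this flow, which you do not supply.
\begin{itemize}
\item At Type~I points you need a monotonicity formula with error terms on the weighted surface, to make the limits self-similar, followed by Abresch--Langer.
\item At Type~II points you need Hamilton's point-picking, then convexity of the eternal limit (Altschuler's argument, using the Sturmian bound on the zeros of $\kappa_\psi$). Then Hamilton's Harnack inequality identifies the grim reaper, which your chord--arc bound excludes.
\end{itemize}
\item Completeness plus bounded drift does not keep the curve in a compact set on finite time intervals. Take a complete surface with an end $d\sigma^2+e^{-2\sigma^3}d\theta^2$ and $\psi\equiv 0$. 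Its parallels evolve by $\dot\sigma=3\sigma^2$ and escape to infinity in finite time. You need bounded geometry. This is automatic in the paper's use of the theorem, where $M$ is closed.
\end{enumerate}
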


It should be noted that the regular part of an orbifold $\mathcal{O}^{{\rm reg}}$ may not be a complete manifold in general, therefore we {\it cannot} apply Theorem \ref{thm:Oaks} directly to the weighted CSF contained in $\mathcal{O}^{{\rm reg}}$. In this section, we aim to prove the following Grayson type theorem for the weighted CSF contained in $\mathcal{O}^{{\rm reg}}$,  which says that at a finite-time singularity of a $\psi$-CSF in a $2$-orbifold, the flow must disappear at a point.

\begin{theorem}\label{thm:beh}
Let $(\mathcal{O},g, {\psi})$ be a closed, orientable $2$-dimensional weighted Riemannian orbifold and $\gamma: S^{1}\to \mathcal{O}^{{\rm reg}}$ be a smooth embedding into the regular part of $\mathcal{O}$. We denote the maximal existence time of the $\psi$-CSF $\gamma_{t}$ starting from $\gamma=\gamma_{0}$ in $\mathcal{O}^{{\rm reg}}$ by $T_{\max}$.
If $T_{\max}<\infty$, then $\lim_{t\to T_{\max}}L_\psi(\gamma_t)=0$.
\end{theorem}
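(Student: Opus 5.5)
We argue by contradiction: assume $T_{\max}<\infty$ but $L_\psi(\gamma_t)\not\to 0$. Since $L_\psi(\gamma_t)$ is non-increasing by \eqref{eq:fv}, this gives a uniform lower bound $L_\psi(\gamma_t)\geq \ell>0$ on $[0,T_{\max})$. By \eqref{as:1} this also bounds the $g$-length from below and above. The goal is to show that the flow stays a definite distance $r>0$ from every cone point on $[0,T_{\max})$. Granting this, we localise to the complete setting, where Theorem \ref{thm:Oaks} applies, as follows. Remove small balls $B(x_i,r/2)$ around the cone points. Modify the metric and weight inside these balls to obtain a smooth complete (indeed closed) weighted surface $(M',g',\psi')$ that agrees with $(\mathcal{O},g,\psi)$ outside $\sqcup B(x_i,r/2)$. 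For instance, cap each removed disc with a smooth disc; the underlying space is $S^2$ or a closed surface, and the orientability is kept. The $\psi$-CSF in $\mathcal{O}^{\rm reg}$ is then also a $\psi'$-CSF in $M'$ on $[0,T_{\max})$, with bounded $\overline\nabla\psi',\overline\nabla^2\psi'$. By Theorem \ref{thm:Oaks}, since it cannot exist for infinite time in $M'$ (its maximal time there equals $T_{\max}$, because a continuation in $M'$ stays away from the caps for a short time and hence would extend the flow in $\mathcal{O}^{\rm reg}$), it must shrink to a point. This contradicts $L_\psi\geq \ell$.

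The main step is therefore a non-approach lemma near cone points. Fix a cone point $x_0$ of order $k\geq 2$ with chart $\widetilde U$, on which $\mathbb{Z}_k$ acts by rotations. Suppose $\gamma_{t_j}$ comes arbitrarily close to $x_0$ as $t_j\to T_{\max}$. The idea is to look at the preimage $\varphi^{-1}(\gamma_t)$ in $\widetilde U$. Near $0$ it consists of several lifted arcs that are mutually disjoint, because $\gamma_t$ is embedded in $\mathcal{O}^{\rm reg}$ (Proposition \ref{prop:ap}), and that are permuted by $\mathbb{Z}_k$. The lifted flow is a $\widetilde\psi$-CSF in a smooth surface. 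An arc $\sigma$ together with its rotation $R\sigma$ are two disjoint pieces of the same flow. If $\gamma_t$ reaches $x_0$, then $\sigma$ and $R\sigma$ must come together at $0$, so the distance $d(\sigma_t,R\sigma_t)\to 0$. We then run a maximum-principle argument on the function $(p,q)\mapsto d(\sigma_t(p),R\sigma_t(q))$ over a suitable parameter interval $I\subset S^1$. This shows that the minimum distance is bounded below by $c\,e^{-Ct}$ as long as its minimum is not attained at the boundary of $I$.

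To make the boundary behaviour controllable, we first establish a Huisken-type distance comparison estimate: along the flow, the ratio of extrinsic distance to intrinsic ($g$- or $g_\psi$-) arc distance between points is bounded below on sufficiently short scales. Combined with $L_\psi\geq \ell$, this should guarantee that we can choose an interval $I$ whose endpoints are mapped at a distance at least $\rho>0$ from $0$ in the chart, uniformly in $t$, while the relevant piece of $\gamma_t$ remains inside $\varphi(\widetilde B(0,r_{x_0}))$. We expect this to be the hardest step, since the estimate must be localised. The plan is to prove it directly for the lifted flow in the convex ball $\widetilde B(0,r_{x_0})$, using the standard computation for the evolution of the distance-to-arclength ratio with the lower-order terms from $\psi$ and the curvature $K$ absorbed via \eqref{as:1} and the finite time horizon $T_{\max}<\infty$. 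Once the boundary distances $\geq\rho$ and the interior minimum bound are in hand, the minimum of $d(\sigma_t,R\sigma_t)$ stays at least $\min(\rho, c e^{-CT_{\max}})>0$. This forbids approach to $0$, giving the uniform $r>0$ and completing the proof via the reduction described above.
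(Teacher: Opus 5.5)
Your architecture matches the paper's: a global distance comparison (valid because $L_\psi\ge \ell>0$ lets you choose $\mathcal{K}$), a maximum principle for $e^{Bt}$ times the distance between two distinct lifts of one arc in a cone-point chart, and then capping off and applying Theorem \ref{thm:Oaks}. The cap-off step is correct as you describe it. The gap is exactly the step you flag: producing a \emph{fixed} parameter interval $I$ and a time window ending at $T_{\max}$ on which $\gamma_t(I)$ stays in the chart and the lifted distance on $\partial(I\times I)$ stays uniformly positive. The distance comparison is a statement about each time slice separately, namely a chord--arc bound. It says nothing about how the material points $\gamma_t(u)$ move as $t\uparrow T_{\max}$, where the normal speed $|\kappa_\psi|$ may be unbounded. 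Knowing only that $\gamma_{t_j}$ approaches $x_0$ along a sequence, you cannot infer that one fixed arc $\gamma_t(I)$ stays near $x_0$ for all later $t$, nor that $\gamma_t(\partial I)$ stays $\rho$-away from $x_0$. So the maximum principle has neither a fixed domain nor controlled boundary values. Your phrase ``if $\gamma_t$ reaches $x_0$'' already presupposes a limit at $T_{\max}$.

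The missing ingredient is continuity in time up to the singular time. The paper first shows, via an Angenent-type argument \cite[Theorem 6.2]{Ang}, that under $L_\psi(T_{\max})>0$ the map $\gamma$ extends continuously to $S^1\times[0,T_{\max}]$. Only then does it build the interval:
\begin{itemize}
\item the distance comparison gives connected point-preimages of $\gamma_{T_{\max}}$ (Lemma \ref{lem:key1});
\item this yields $[a,b]\ni u_*$ with $\gamma_t(a)\notin\gamma_t((a,b])$, $\gamma_t(b)\notin\gamma_t([a,b))$ and $\gamma_t([a,b])$ inside the chart, for all $t\in[T_{\max}-\delta,T_{\max}]$;
\item this includes $t=T_{\max}$, where the limit curve need not be embedded (Lemma \ref{lem:key2});
\item compactness then gives a positive minimum of $e^{Bt}d$ on the parabolic boundary.
\end{itemize}
Once the continuous extension is available, your quantitative route would also give the boundary bound, as an alternative to Lemma \ref{lem:key2}: take endpoints $\rho$-away from $x_0$ and use the uniform chord--arc bound for $t<T_{\max}$. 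Finally, localising the distance comparison to $\widetilde B(0,r_{x_0})$ reintroduces the same boundary problem. The paper instead proves it globally on $\mathcal{O}$. At a first violation the two points are $\widehat d$-close, so they lie in one strongly convex chart (Lemma \ref{lem:c0}). There $d_\psi$ is replaced by the lifted distance $\widetilde{d_\psi}\ge d_\psi$, which touches it at that point.
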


To prove this theorem, we establish the following result.

\begin{proposition}\label{prop:key}
Suppose the same assumptions given in Theorem \ref{thm:beh} with $T_{\max}<\infty$. Furthermore, we assume that the following two conditions hold: 
\begin{itemize}
\item[(a)] $\gamma:S^{1} \times [0,T_{\max})\to \mathcal{O}^{{\rm reg}}$ may be continuously extended to a map $\gamma:  S^{1}\times [0,T_{\max}]\to \mathcal{O}$.
\item[(b)] $\gamma_{T_{\max}}(S^1)\cap \mathcal{O}^{{\rm sing}}\neq \emptyset$.
\end{itemize}
Then, we have $\lim_{t\to T_{\max}}L_\psi(\gamma_t)=0$.
\end{proposition}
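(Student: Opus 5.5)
We argue by contradiction: assume (a) and (b) hold but $L_\psi(\gamma_t)\not\to 0$. Since $L_\psi$ is non-increasing, $L_\psi(\gamma_t)\ge \ell_0>0$ for all $t<T_{\max}$, and since $e^\psi\ge E$ this gives $L(t)\ge \ell_0/D>0$. By (b) there are $\theta_0\in S^1$ and a cone point $x_0$ of order $k\ge 2$ with $\gamma_{T_{\max}}(\theta_0)=x_0$. Fix an orbifold chart $(\widetilde U,\mathbb{Z}_k,\varphi)$ around $x_0$, with $\mathbb{Z}_k$ acting by rotations about $0=\varphi^{-1}(x_0)$ and $\widetilde g$ the $\mathbb{Z}_k$-invariant lifted metric. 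By continuity of the extension in (a), there is a closed arc $I=[\theta_0-\delta,\theta_0+\delta]$ and a time $t_0<T_{\max}$ such that $\gamma_t(I)\subset\varphi(\widetilde U)$ for all $t\in[t_0,T_{\max})$. As in the proof of Theorem \ref{thm:subcon}, for such $t$ the preimage $\varphi^{-1}(\gamma_t(I))$ is a union of $k$ pairwise disjoint embedded arcs $\widetilde\gamma_{t,\xi}=R^{\xi}\circ\widetilde\gamma_{t,0}$, where $R$ is the rotation by $2\pi/k$. These lifts vary smoothly in $t$ and each solves $\widetilde\psi$-CSF in the smooth surface $(\widetilde U,\widetilde g)$. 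Moreover $\widetilde\gamma_{t,0}(\theta_0)\to 0$ as $t\to T_{\max}$, so the distance between the disjoint arcs $\widetilde\gamma_{t,0}$ and $R\circ\widetilde\gamma_{t,0}$ tends to $0$.

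The aim is to contradict this collapse with a maximum-principle argument for the two-point distance function
\[
d(t):=\min\{\widetilde d(\widetilde\gamma_{t,0}(\theta),R\circ\widetilde\gamma_{t,0}(\theta')) : \theta,\theta'\in I'\}
\]
on a suitable subinterval $I'\subset I$. Away from the boundary of $I'$, the standard computation for two disjoint solutions of a curvature flow with a bounded drift term gives $d'(t)\ge -C\,d(t)$ wherever the minimum is interior. Here $C$ depends on the curvature bound of $\widetilde g$ and the bounds on $\overline\nabla\psi$ and $\overline\nabla^2\psi$ in \eqref{as:1}, and the estimate is valid as long as $d$ stays below the convexity radius of the chart. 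This yields $d(t)\ge d(t_1)e^{-C(t-t_1)}$, which stays positive up to $T_{\max}<\infty$ and contradicts $d(t)\to 0$.

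The main obstacle is controlling the boundary of $I'$: the minimum may be attained at an endpoint of $I'$, where the maximum principle gives no information. To handle this, I would first prove a Huisken-type distance comparison estimate. Concretely, the ratio of the extrinsic distance $\widetilde d$ in the chart to the intrinsic (arc-length) distance along the lifted curve should stay bounded below by a positive constant on $[t_0,T_{\max})$, with a constant that degenerates only if the length degenerates. The estimate would be proved via the maximum principle for the two-point function $\widetilde d/\ell$, using the lower length bound $L(t)\ge\ell_0/D$ and $T_{\max}<\infty$ to absorb the error terms. With this estimate, choose $I'$ so that its endpoints stay a definite intrinsic distance away from the points where the two lifts come closest. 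Then the comparison estimate forces the pairwise distances from the endpoints of $\widetilde\gamma_{t,0}(I')$ to $R\circ\widetilde\gamma_{t,0}(I')$ to stay uniformly bounded below: nearby points of distinct lifts have large intrinsic separation, because they correspond to points on a single embedded curve in $\mathcal{O}$ that wind around $x_0$. Hence the minimum of $d$ is attained in the interior once $d$ is small, and the ODE argument applies. The delicate part is setting up this comparison in the chart and showing that the chosen interval $I'$ persists up to $T_{\max}$; I expect to defer the technical details to an appendix.
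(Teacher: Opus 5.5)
The skeleton of your proposal matches the paper's. You lift a neighbourhood of the collapse point to a chart, compare two distinct lifts, and run a two-point maximum principle giving $d'\ge -Cd$. You also correctly identify control of the boundary of $I'$ as the crux. However, the step that is supposed to resolve it does not work, for three reasons.

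\begin{enumerate}
\item \textbf{The comparison estimate is circular as formulated.} It is stated for the lifted arc $\widetilde\gamma_{t,0}$, which is only defined on $I$. A maximum principle for $\widetilde d/\ell$ on $I\times I$ has exactly the boundary problem you are trying to avoid. It also says nothing about $\widetilde d(\widetilde\gamma_{t,0}(\theta),R\circ\widetilde\gamma_{t,0}(\theta'))$, which is a distance between \emph{different} lifts; to control that you have to descend to $\mathcal{O}$.
\item \textbf{The heuristic ``nearby points of distinct lifts have large intrinsic separation'' is backwards.} At the collapse point, the pair $(\theta_0,\theta_0)$ has distinct lifts at distance $\to 0$ and zero intrinsic separation. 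Moreover, nothing forces the curve to wind around $x_0$. What a comparison estimate actually yields is the converse: points close in $\mathcal{O}$ are intrinsically close. Lifting the short arc then shows that a near-collision of distinct lifts can only occur near $0$.
\item \textbf{The endpoint condition has no uniformity up to $T_{\max}$.} ``Endpoints a definite intrinsic distance from the closest points'' is a time-dependent condition. You give no reason why a fixed parameter interval satisfies it uniformly up to $T_{\max}$. That uniformity must come from the limit curve $\gamma_{T_{\max}}$ in hypothesis (a), which you only use to obtain the chart.
\end{enumerate}

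Here is how the paper closes the gap.
\begin{enumerate}
\item Proposition \ref{prop:distcomp} is a Huisken--Ma comparison for the \emph{whole closed curve} in $\mathcal{O}$, using the orbifold distance $d_\psi$ and the factor $\sin(\pi l_\psi/L_\psi)$, so there is no boundary. The distance $d_\psi$ is not differentiable near cone points. This is handled with Lemma \ref{lem:c0}: one lifts to a convex chart and touches $Z_N$ from above by a smooth $\widetilde Z_N\ge Z_N$. The assumption $L_\psi(T_{\max})>0$ is what allows $\mathcal{K}$ to be chosen.
\item From this, Lemma \ref{lem:key1} shows that every fibre $\gamma_{T_{\max}}^{-1}(p)$ is connected.
\item The topological Lemma \ref{lem:key2} then gives an interval $[a,b]$ containing $\theta_0$ in its interior with $\gamma_{T_{\max}}(a)\notin\gamma_{T_{\max}}((a,b])$ and $\gamma_{T_{\max}}(b)\notin\gamma_{T_{\max}}([a,b))$.
\item Now the distance between two distinct lifts is positive on $\partial([a,b]^2)\times[T_{\max}-\delta,T_{\max}]$, by continuity of the extended lifts and compactness. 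A zero at time $T_{\max}$ would force either $\gamma_{T_{\max}}(a)=\gamma_{T_{\max}}(u)$ for some $u\in(a,b]$, or $\gamma_{T_{\max}}(a)=x_0$. Both are excluded by the choice of $[a,b]$.
\end{enumerate}

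Only after this endpoint selection at the limit time does your interior ODE argument give the contradiction. Your proposal needs some version of it.
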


In other words, the weighted CSF starting from a regular curve on a compact oriented $2$-dimensional orbifold $\mathcal{O}$ does not reach continuously to any singular point in finite time, unless the weighted length converges to $0$.

Theorem \ref{thm:beh} follows from Proposition \ref{prop:key} together with Theorem \ref{thm:Oaks}, as follows:

\begin{proof}[Proof of Theorem \ref{thm:beh}]
Suppose that $T_{\max}<\infty$. Since $L_{\psi}(t)$ is a decreasing function by \eqref{eq:fv} and $L_{\psi}(t)>0$ for any $t\in [0,T_{\max})$, there exists a limit $L_{\psi}(T_{\max}):=\lim_{t\to T_{\max}}L_{\psi}(t)$. Our claim is that $L_{\psi}(T_{\max})=0$. 

Suppose that the contrary were true, i.e.\ $L_{\psi}(T_{\max})>0$. Then a similar argument given in \cite[Theorem 6.2]{Ang} shows that $\gamma_{t}: S^{1}\to \mathcal{O}^{{\rm reg}}$ converges to a continuous curve $\gamma_{T_{\max}}: S^{1}\to \mathcal{O}$ in the $C^{0}$-topology (Notice that the limiting curve is not necessarily contained in the regular part $\mathcal{O}^{{\rm reg}}$). In particular,  the $\psi$-CSF contained in $\mathcal{O}^{{\rm reg}}$ may be continuously extended to a map $\gamma:S^{1}\times [0,T_{\max}] \to \mathcal{O}$. Moreover, we have $L_{\psi}(T_{\max})\neq 0$ by assumption.  Therefore, by Proposition \ref{prop:key}, it must hold that $\gamma_{T_{\max}}(S^1)\cap \mathcal{O}^{{\rm sing}}=\emptyset$.  Namely, the flow $\gamma_{t}$ is contained in the regular part $\mathcal{O}^{{\rm reg}}$ for every $t\in [0,T_{\max}]$, including the maximal existence time $T_{\max}$. This implies that there exists a compact subset $D\subset \mathcal{O}^{{\rm reg}}$ such that $\gamma(S^{1}\times [0,T_{\max}])$ is contained in the interior $D^{int}$ of $D$.  

 Since $\mathcal{O}^{{\rm sing}}$ consists of finitely many cone points, we can remove small neighbourhoods of these singular points (each homeomorphic to an open disk) and smoothly cap off the resulting boundary components to obtain a closed orientable surface $M$ so that $D\subset M$ (cf.\ \cite[Theorem 9.29]{Lee}). Moreover,  the weighted Riemannian metric on $D$ can be smoothly extended to $M$.
Let $\widetilde{\gamma}_{t}: S^{1}\times [0,\widetilde{T}_{\max})\to M$ be the maximal solution of the weighted CSF  in $M$ with $\widetilde{\gamma}_{0}=\gamma_{0}$. By the uniqueness of solution, it must hold that $\widetilde{\gamma}_{t}=\gamma_{t}$ for any $t\in [0,T_{\max})$ since $\gamma(S^{1}\times [0,T_{\max}))\subset D^{int}$. If $\widetilde{T}_{\max}>T_{\max}$, then $\gamma_{t}$ can be extended beyond $T_{\max}$ and this is a contradiction. Therefore, we have $\widetilde{T}_{\max}\leq T_{\max}<\infty$.  However,  Theorem \ref{thm:Oaks} shows that when $\widetilde{T}_{\max}<\infty$, then $L_{\psi}({\gamma}_{t})=L_{\psi}(\widetilde{\gamma}_{t})\to 0$  as $t\to \widetilde{T}_{\max}$ (see also Remark \ref{rem:key1}), and this contradicts the assumption that $L_{\psi}(T_{\max})>0$. Therefore, we obtain $L_{\psi}(T_{\max})=0$. This proves Theorem \ref{thm:beh}.
\end{proof}

In the rest of this section, we prove Proposition \ref{prop:key}.

 \subsection{Distance function}\label{ssec:dist}
Let $(\mathcal{O},g,\psi)$ be a compact, oriented $2$-dimensional weighted Riemannian orbifold. In this subsection, we summarize preliminary results concerning the distance function between two $\psi$-CSFs, most of which were proved in \cite[Section 3, 4]{Ma} for the usual CSF on a Riemann surface. 

Throughout this section, we set $\widehat{g}:=g_\psi$ and use the hat accent $\widehat{\,\,}$ to denote the geometric quantities with respect to the weighted metric $\widehat g$, e.g.\ $\widehat{d}(x,y)$ denotes the distance between two points measured by $g_{\psi}$.

Let $\Gamma^1, \Gamma^2\colon S^1 \times [0,T) \to \mathcal{O}^{{\rm reg}}$  be two $\psi$-CSFs contained in $\mathcal{O}^{{\rm reg}}$. We define the {\it distance function between $\Gamma^{1}$ and $\Gamma^{2}$} by 
\[
d_\psi: S^1 \times S^1 \times [0,T) \to \mathbb{R}_{\geq 0}, \quad d_\psi(u_1, u_2, t) := \widehat{d}(\Gamma^1(u_1, t), \Gamma^2(u_2, t)).
\]
 In this section, we collect some formulas and properties related to distance function, which will be used in the next subsection. 

For the remainder of the section, we assume that there exists a neighbourhood $\widehat U \subset \mathcal{O}^{\rm reg}$ which is strongly convex with respect to $\widehat g$, a constant $0 < T_0 < T$ and fixed closed intervals $I_i$ ($i=1,2$) such that the image of the restricted maps $\Gamma^i: I_i\times [T_0,T)\to \mathcal{O}^{{\rm reg}}$ are contained in $\widehat{U}$ with respect to $g_{\psi}$. Here, $\widehat{U}$ is {\it strongly convex} if for any $p,q\in \widehat{U}$ there exists a unique minimizing geodesic (with unit speed) between $p$ and $q$. It is known that we can take such a strongly convex neighbourhood around any point on a smooth Riemannian manifold (see \cite[Ch.\ IV, Theorem 5.3]{Sakai}).

\begin{lemma}\label{lem:derd}
The square of the distance function $\widehat{d}$ is smooth on $\widehat{U}\times \widehat{U}$. In particular, for any $x,y\in \widehat{U}$ with $x\neq y$,   the function $\widehat{d}=\sqrt{\widehat{d}^2}$ is smooth around $(x,y)$. 
\end{lemma}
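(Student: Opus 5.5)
The argument is the standard one for strongly convex neighbourhoods, applied to $(\mathcal{O}^{\rm reg},\widehat g)$, which is a smooth Riemannian surface near $\widehat U$.

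First, for $p\in\widehat U$ consider the exponential map $\widehat{\exp}_p$ of $\widehat g$. Strong convexity gives, for each $q\in\widehat U$, a unique minimizing unit-speed geodesic from $p$ to $q$. It is realised as $s\mapsto\widehat{\exp}_p(sv)$ with $v=\widehat{\exp}_p^{-1}(q)/|\cdot|$. I would set up the map
\[
E\colon \mathcal{V}\to \widehat U\times\widehat U,\qquad E(p,v):=(p,\widehat{\exp}_p v),
\]
defined on the open set $\mathcal{V}\subset T\mathcal{O}^{\rm reg}$ of pairs $(p,v)$ with $p\in\widehat U$ and $\widehat{\exp}_p(sv)\in\widehat U$ for $s\in[0,1]$, with $v$ the initial velocity of the minimizing geodesic. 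I will arrange, by shrinking $\widehat U$ inside a totally normal neighbourhood if necessary, that $\widehat U$ lies within the injectivity radius of each of its points. This is part of the construction in \cite[Ch.\ IV, Theorem 5.3]{Sakai}, where strongly convex balls are chosen with radius smaller than the convexity and injectivity radii.

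Then $E$ is a bijection onto $\widehat U\times\widehat U$. Its differential at $(p,v)$ is block triangular with diagonal blocks $\mathrm{id}$ and $(d\widehat{\exp}_p)_v$. The latter is invertible because $\widehat{\exp}_p(sv)$ has no conjugate point to $p$ before the cut locus, and we stay inside the injectivity radius. Hence $E$ is a diffeomorphism by the inverse function theorem, and in particular $(p,q)\mapsto E^{-1}(p,q)=(p,\widehat{\exp}_p^{-1}q)$ is smooth.

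Finally, $\widehat d(p,q)^2=\widehat g_p\big(\widehat{\exp}_p^{-1}q,\widehat{\exp}_p^{-1}q\big)$, which is a composition of smooth maps and hence smooth on $\widehat U\times\widehat U$. Away from the diagonal $\widehat d^2>0$, and $t\mapsto\sqrt t$ is smooth on $(0,\infty)$, so $\widehat d$ is smooth near any $(x,y)$ with $x\neq y$.

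The only delicate point is the invertibility of $d\widehat{\exp}_p$ along minimizing geodesics, i.e.\ excluding conjugate points. If strong convexity as defined in the paper (uniqueness only) does not directly supply this, I would shrink $\widehat U$ to a strongly convex ball of radius below the injectivity radius, which is all that is used later.
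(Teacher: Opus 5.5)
Your proof is correct and is essentially the standard argument behind the reference the paper cites instead of giving a proof (\cite[Ch.\ IV, Theorem 3.6]{KN}): the map $(p,v)\mapsto(p,\exp_p v)$ is a diffeomorphism onto $\widehat U\times\widehat U$, so $\widehat d^{\,2}(p,q)=\widehat g_p(\exp_p^{-1}q,\exp_p^{-1}q)$ is smooth. Shrinking $\widehat U$ to lie within the injectivity radius of each of its points does no harm, since the paper's strongly convex sets come from the small-ball construction of \cite{Sakai} and are shrunk further right after Lemma~\ref{lem:J} anyway.
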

\begin{proof}
See \cite[Ch.\ IV, Theorem 3.6]{KN}.
\end{proof}

\begin{remark}\label{rem:derd}
Lemma \ref{lem:derd} does not hold in general near a singular point of an orbifold. Indeed, a singular point need not admit a strongly convex neighbourhood with respect to the orbifold distance function $d$, and $d$ itself may fail to be differentiable. This difference requires some additional arguments in the proof of Proposition \ref{prop:distcomp} (given in Appendix \ref{app:distcomp}). 
\end{remark}

We denote the unit tangent vector field (resp. normal vector field) along $\Gamma^{i}_{t}$ with respect to the weighted metric $\widehat{g}$ by $\widehat{T}_i(t)$ (resp. $\widehat{N}_{i}(t)$), and the geodesic curvature of $\Gamma^{i}_{t}$ with respect to $g_{\psi}$ is given  by  $\widehat{\kappa}_{i}(t)$. Also, the arclength parameter of $\Gamma^{i}_{t}$ w.r.t.\ $\widehat{g}$ is denoted by $\widehat{s}_{i}(t)$. Note that by Remark \ref{rem:geocurv}, $\widehat{\kappa}_{i}(t)$ is related to the $\psi$-geodesic curvature of $\Gamma^{i}_{t}$ by $\widehat \kappa = e^{-\psi}\kappa_\psi$.

For $(u_1, u_2, t)\in I_1\times I_2\times [T_0,T)$, let 
\begin{equation*}
\Phi(\cdot; u_1, u_2, t) \colon [0,1] \to \widehat{U}
\end{equation*}
be the minimizing geodesic from $\Gamma^1_t(u_1)$ to $\Gamma^2_t(u_2)$ with constant speed $|\dot\Phi(\alpha; u_1, u_2, t)|_\psi=d_\psi(u_1,u_2,t)$, where $|,|_\psi$ is the norm with respect to $\widehat{g}=g_{\psi}$. 
For our purpose, we suppose that $\Gamma^1_t(u_1)\neq \Gamma^2_t(u_2)$ so that $d_\psi(u_1,u_2,t)>0$. Note that if this is the case, the  function $d_\psi$ is differentiable at $(u_1,u_2,t)$ by Lemma \ref{lem:derd}.

 We define an orthonormal frame $\{\tau(\alpha), \nu(\alpha)\}$ along the geodesic $\Phi(\alpha)=\Phi(\alpha; u_1,u_2,t)$ by 
\begin{equation*}
\tau (\alpha) = \frac{  \dot{\Phi}(\alpha)  }{  \sqrt{\widehat{g} \left( \dot{\Phi}(\alpha), \dot{\Phi}(\alpha) \right) }  } = \frac{ \dot{\Phi} (\alpha) }{d_\psi(u_1, u_2, t) }
\end{equation*}
and $\nu(\alpha)$ is equal to a unit normal vector at $\Phi(\alpha)$.
Note that, since $\mathcal{O}$ is $2$-dimensional, $\nu(\alpha)$ is a parallel normal vector field along $\Phi$. We define
$
\tau_1:=\tau(0), \nu_1:=\nu(0) 
$
and $\tau_2:=\tau(1), \nu_2:=\nu(1)$.

We also consider the (normal) Jacobi field $Y(\alpha)$ along $\Phi$. Namely, we consider a function $J: [0,1]\to \R$ satisfying the following second order ordinary differential equation:
\begin{align}\label{eq:Jacobi}
J''(\alpha) + d^2 \widehat{K}(\Phi(\alpha)) J(\alpha) = 0,
\end{align}
where $d:=d_\psi(u_1,u_2,t)$ and $\widehat{K}$ is the Gaussian curvature of $(\widehat{U}, \widehat g)$. Note that \eqref{eq:Jacobi} is equivalent to the Jacobi equation of the form $Y(\alpha)=J(\alpha)\nu(\alpha)$.

The following lemma is a refinement of \cite[Lemma 3.2]{Ma}. 

\begin{lemma}\label{lem:J}
Suppose that $\Gamma^1(u_1, t)$ and $\Gamma^2(u_2, t)$ are $\psi$-CSFs in $\mathcal{O}^{\rm reg}$, and $\widehat U \subset \mathcal{O}^{\rm reg}$ is a $\widehat g$-strongly convex neighbourhood such that $\Gamma^1(u_1, t), \Gamma^2(u_2,t)$ are contained in $\widehat U$ for all $(u_1,u_2,t) \in I_1 \times I_2 \times [T_0,T)$. Fix a point $(u_1, u_2, t)\in I_1\times I_2\times [T_0,T)$ with $\Gamma^1_t(u_1) \neq \Gamma^2_t(u_2)$, and let $\Phi$ be the geodesic as above.
 
Let $J_i:[0,1]\to \R$ $(i=1,2)$ be the function satisfying \eqref{eq:Jacobi} with initial conditions
\begin{align*}
 J_1(1) = 0,\ J'_{1}(1) = - d,\quad J_{2}(0)=0,\ J'_2(0) = - d,
\end{align*} 
We put $\widehat{K}_{{\mathcal{O}}}:={\rm sup}_{\mathcal{O}}|\widehat{K}|>0$. 
Then if $0<d<\frac{1}{2\sqrt{\widehat{K}}_{\mathcal{O}}}$, we have
\begin{equation}\label{eq:J}
J_i'(\alpha) \leq -\frac{d}{2}<0,\quad  J_1(0)  \geq \frac{d}{2}>0, \quad J_2(1)  \leq -\frac{d}{2}<0.
\end{equation}

\end{lemma}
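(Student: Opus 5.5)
The statement is a Sturm-type comparison for the scalar Jacobi ODE \eqref{eq:Jacobi}. The key point is that the potential $d^2\widehat K(\Phi(\alpha))$ is uniformly small when $d$ is small. Throughout I write $\beta:=d^2\sup|\widehat K|\le d^2\widehat K_{\mathcal O}<\tfrac14$. I treat $J_2$ in detail; $J_1$ follows by the substitution $\alpha\mapsto 1-\alpha$, which preserves the form of \eqref{eq:Jacobi}, since the equation is invariant under reversing the geodesic $\Phi$.

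\textbf{Step 1: an a priori bound on $|J_2|$.} Integrating the equation twice with the data $J_2(0)=0$, $J_2'(0)=-d$ gives
\[
J_2'(\alpha)=-d-\int_0^\alpha d^2\widehat K(\Phi(\sigma))J_2(\sigma)\,d\sigma,
\qquad
J_2(\alpha)=-d\alpha+\int_0^\alpha J_2'(\sigma)+d\,d\sigma.
\]
Set $M:=\max_{[0,1]}|J_2|$. The first identity yields $|J_2'(\alpha)+d|\le \beta M$ for all $\alpha\in[0,1]$. Integrating this bound in the second identity gives $|J_2(\alpha)+d\alpha|\le\beta M$. Hence $M\le d+\beta M$, so $M\le d/(1-\beta)\le \tfrac43 d$.

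\textbf{Step 2: the derivative bound.} Feeding the bound on $M$ back into Step 1 gives
\[
|J_2'(\alpha)+d|\le \beta M\le \frac{d}{4}\cdot\frac43=\frac d3<\frac d2 .
\]
Therefore $J_2'(\alpha)\le -d+\tfrac d3\le -\tfrac d2$ on $[0,1]$.

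\textbf{Step 3: the endpoint bound.} Integrating Step 2 from $0$ to $1$ gives $J_2(1)=\int_0^1 J_2'\le -d/2$.

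\textbf{The case of $J_1$.} For $J_1$, the conditions $J_1(1)=0$ and $J_1'(1)=-d$ run backwards in $\alpha$. The same integral identities, now written from $\alpha=1$, give $|J_1'(\alpha)+d|\le d/3$ on $[0,1]$, so $J_1'\le -d/2$. Consequently
\[
J_1(0)=-\int_0^1 J_1'\,d\alpha\ge \frac d2 .
\]

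I expect the only real obstacle to be closing the bound on $M$ in Step 1. One has to check that the smallness condition $d<\frac{1}{2\sqrt{\widehat K_{\mathcal O}}}$ gives $\beta<1$, and in fact $\beta<\tfrac14$, so that $M\le d/(1-\beta)$ holds without assuming any a priori control of $J$. This is immediate from the definition of $\beta$.

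No geometry of the orbifold enters the argument. The only inputs are the boundedness of $\widehat K$ on the compact set in question and the fact that $\Phi$ stays in $\widehat U\subset\mathcal O^{\rm reg}$, so that $\widehat K(\Phi(\alpha))$ is defined along the whole geodesic.
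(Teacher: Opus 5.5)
Your proof is correct. It differs from the paper's only in how you obtain the a priori bound on $\sup|J_i|$; the rest is the same.

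\textbf{The paper's route.} It treats $J_i\nu$ as a Jacobi field that vanishes at one end and has initial speed $d$. It applies the Rauch comparison theorem against the hyperbolic plane of curvature $-\widehat K_{\mathcal O}$, which gives $|J_i|\le \sinh(d\sqrt{\widehat K_{\mathcal O}})/\sqrt{\widehat K_{\mathcal O}}$. A Taylor expansion of $\sinh$ then reduces this to $|J_i|\le d+d^3\widehat K_{\mathcal O}$. From that point it integrates $J_i''$ exactly as in your Steps 2--3, reaching $J_i'\le -d+d^3\widehat K_{\mathcal O}+d^5\widehat K_{\mathcal O}^2\le -d/2$.

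\textbf{Your route.} You replace the Rauch/Taylor step with the self-consistent inequality $M\le d+\beta M$, read off from the Volterra integral form of the ODE. This needs only $\beta<1$ and boundedness of $\widehat K$ along $\Phi$. What it buys:
\begin{itemize}
\item It is purely ODE-theoretic and self-contained.
\item You never have to verify the hypotheses of the comparison theorem, such as the absence of conjugate points along $\Phi$ or the interpretation via the reversed geodesic for $J_1$.
\item It gives the slightly sharper bound $J_i'\le -2d/3$.
\end{itemize}
The paper's Rauch bound on $|J_i|$ holds without any smallness of $d$. This gains nothing here, because smallness of $d^2\widehat K_{\mathcal O}$ is needed anyway for the derivative estimate.

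\textbf{A minor slip.} Your opening remark says $J_1$ follows from $J_2$ by the substitution $\alpha\mapsto 1-\alpha$ alone. That substitution turns the data $J_1(1)=0$, $J_1'(1)=-d$ into slope $+d$ at $0$. To reduce literally to the $J_2$ case you must also replace $J_1$ by $-J_1$, using linearity. Your separate treatment of $J_1$, with the integral identities written from $\alpha=1$, is correct as written, so nothing breaks.
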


\begin{proof}
We set $Y_i(\alpha)=J_i(\alpha)\nu(\alpha)$. Since $Y_2$ is the Jacobi field with $Y_2(0)=0$ and $|\nabla Y_2|(0)=|J_2'(0)|=d$, the Rauch comparison theorem (see \cite[Theorem 2.3]{Sakai}) shows that we have
\[
\abs{J_2(\alpha)} \leq \frac{1}{\sqrt{\widehat{K}_{\mathcal{O}}}} \sinh \Big(d\sqrt{\widehat{K}_{\mathcal{O}}}\cdot  \alpha \Big ),
\]
where the right-hand side is the norm of the comparison Jacobi field in the $2$-dimensional hyperbolic space  of constant Gaussian curvature $-\widehat{K}_{\mathcal{O}}$. By taking $\widetilde{J}_1(\beta):=J_1(1-\beta)$, we see that $\widetilde{Y}_1(\beta)=\widetilde{J}_1(\beta)\nu(1-\beta)$ is a Jacobi field along the reversed geodesic with $\widetilde{Y}_1(0)=0$ and $|\nabla \widetilde{Y}_1|(0)=|J_1'(1)|=d$, and hence, $\widetilde{J}_1(\beta)$ satisfies the same inequality.  Therefore, in either case, we obtain the following bound:
\[
\abs{J_i(\alpha)} \leq \frac{1}{\sqrt{\widehat{K}_{\mathcal{O}}}} \sinh \Big(d\sqrt{\widehat{K}_{\mathcal{O}}} \Big ).
\]
Using Taylor's theorem for $\sinh$,  we see that there exists $\theta \in (0,1)$ for which
\begin{align*}
\abs{J_i(\alpha)} 
&\leq \frac{1}{\sqrt{\widehat{K}_{\mathcal{O}}}} \Big{\{} d \sqrt{\widehat{K}_{\mathcal{O}}} + \frac{1}{3!} \cosh\Big( \theta\cdot d \sqrt{\widehat{K}_{\mathcal{O}}}\Big) \Big(d\sqrt{\widehat{K}_{\mathcal{O}}}\Big)^3   \Big{\}} \\
 &\leq d+ \frac{1}{6} \exp\Big(d \sqrt{\widehat{K}_{\mathcal{O}}} \Big{)} \cdot d^3\widehat{K}_{\mathcal{O}}
 \leq d+d^3\widehat{K}_{\mathcal{O}},
\end{align*}
 where we use the assumption that $d<\frac{1}{2\sqrt{\widehat{K}}_{\mathcal{O}}}<\frac{1}{ \sqrt{\widehat{K}_{\mathcal{O}}}}\ln 6$ for the second line. Thus, we have
\begin{equation*}
\abs{J_i'(\alpha) - J_i'(\beta)} = \abs{\int_\alpha^\beta J''_i(t)\,dt }= d^2 \abs{\int_{\alpha}^\beta \widehat{K} J_i(t) d t  } \leq d^2 \widehat{K}_{\mathcal{O}} \left( d+ d^3\widehat{K}_{\mathcal{O}}  \right)
\end{equation*} 
for any $\alpha, \beta\in [0,1]$,
and hence,
\begin{equation*}
J_i'(\alpha) \leq - d + d^3 \widehat{K}_{\mathcal{O}} + d^5 \widehat{K}_{\mathcal{O}}^2 
\end{equation*}
since $J_1'(1) =J_2'(0)= - d$. Using $d<\frac{1}{2\sqrt{\widehat{K}_{\mathcal{O}}}}$, it is easy to see that we have $J_i'(\alpha)\leq -d/2$. Furthermore, we see
\begin{align*}
J_1(0) &= J_1(0) - J_1(1)= -{\int_0^1 J_1'(\alpha)\, d \alpha} \geq \frac{ d }{2}.
\end{align*}
Similarly, $J_2(1) = J_2(1)-J_2(0) \leq -\frac{ d }{2}$. 
This proves the lemma.
\end{proof}

We can take the convex set $\widehat{U}$ sufficiently small  so that the diameter of $\widehat{U}$ is smaller than the constant $1/(2\sqrt{\widehat{K}_\mathcal{O}})$.   Then we have $d_\psi(u_1,u_2,t)<1/(2\sqrt{\widehat{K}_\mathcal{O}})$ for any $(u_1,u_2,t)\in I_1\times I_2\times [T_0,T)$ and hence, we may assume that \eqref{eq:J} holds on the convex subset $\widehat{U}$.

In this situation, with the above notations, the derivatives of the distance function are computed as follows.
The following formulas are extensions of \cite[Lemma 3.1]{Ma}. 

\begin{lemma}\label{lem:formulas}
Assume we are in the setting of Lemma \ref{lem:J}, and $J_i \colon [0,1] \to \mathbb{R}$ ($i=1,2$) are functions satisfying the ODE \eqref{eq:Jacobi} with initial conditions
\begin{align*}
 J_1(1) = 0,\ J'_{1}(1) = - d,\quad J_{2}(0)=0,\ J'_2(0) = - d,
\end{align*}
where $d=d_\psi(u_1,u_2,t)$.

Then at the point $(u_1,u_2,t)$ we have the following formulas:
\begin{align*}
\frac{\p d_\psi}{\p t}
&=  \widehat{g} \left(  e^{2{\psi}(\Gamma_t^2(u_2))} \widehat{\kappa}_2(u_2, t) \widehat{N}_2,  \tau_2 \right) -  \widehat{g} \left(  e^{2 {\psi} (\Gamma_t^1(u_1))} \widehat{\kappa}_1(u_1, t) \widehat{N}_1,  \tau_1  \right), \\
\frac{\p d_\psi}{\p \widehat{s}_1} 
&=  - \widehat{g} \left(\widehat{T}_1, \tau_1 \right),\quad
\frac{\p  d_\psi}{\p \widehat{s}_2} 
= \widehat{g} \left(\widehat{T}_2, \tau_2 \right), \\
\frac{\p^{2} d_\psi}{\p \widehat{s}^{2}_1} 
&=  - \widehat{g} \left( \widehat{\kappa}_1(u_1, t) \widehat{N}_1 , \tau_1 \right) 
- \frac{1}{d_\psi}\widehat{g} \left(\widehat{T}_1, \nu_1 \right)^2 \frac{J'_1(0)}{J_1(0)}, \\
\frac{\p^{2} d_\psi}{\p \widehat{s}^{2}_1} 
&=  \widehat{g} \left( \widehat{\kappa}_2(u_2, t) \widehat{N}_2 , \tau_2 \right) 
+ \frac{1}{d_\psi} \widehat{g} \left(\widehat{T}_2, \nu_2 \right)^2 \frac{J'_2(1)}{J_2(1)},  \\
\frac{\p^{2} d_\psi}{\p \widehat{s}_1\p\widehat{s}_{2}} 
&= -  \frac{1}{d_\psi}  \widehat{g} \left(\widehat{T}_1, \nu_1 \right) \widehat{g} \left(\widehat{T}_2, \nu_2 \right) \frac{J'_2(0)}{J_2(1)}
=  \frac{1}{d_\psi} \widehat{g} \left(\widehat{T}_1, \nu_1 \right) \widehat{g} \left(\widehat{T}_2, \nu_2 \right) \frac{J'_1(1)}{J_1(0)}.
\end{align*}
\end{lemma}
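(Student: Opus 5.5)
We compute the derivatives of $d_\psi$ via the first and second variation formulas for the length of the minimizing $\widehat g$-geodesic $\Phi(\cdot;u_1,u_2,t)$. Since $\Gamma^1_t(u_1)\neq\Gamma^2_t(u_2)$ and both lie in the strongly convex set $\widehat U$, Lemma \ref{lem:derd} makes $d_\psi$ smooth near $(u_1,u_2,t)$. Moreover $\Phi$ depends smoothly on its endpoints, so every derivative is the derivative of $\widehat L(\Phi)=\int_0^1|\dot\Phi|_\psi\,d\alpha$ along a smooth variation of geodesics whose endpoints move along the curves.

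\textbf{First derivatives.} For an endpoint variation with velocities $V_1$ at $\alpha=0$ and $V_2$ at $\alpha=1$, the first variation formula for a geodesic gives $\delta d_\psi=\widehat g(V_2,\tau_2)-\widehat g(V_1,\tau_1)$. For the $\widehat s_i$-derivatives take $V_i=\widehat T_i$, which yields the stated formulas. For the time derivative take $V_i=\partial_t\Gamma^i=\kappa_\psi N_i$ and convert to $\widehat g$-quantities. Since $\widehat g=e^{2\psi}g$, we have $N=e^{\psi}\widehat N$ and $\kappa_\psi=e^{\psi}\widehat\kappa$ (Remark \ref{rem:geocurv}), so $\kappa_\psi N=e^{2\psi}\widehat\kappa\widehat N$. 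This gives the $\partial_t d_\psi$ formula.

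\textbf{Second derivatives.} Use the second variation formula for the length of a geodesic with free endpoints. Varying only the first endpoint along $\Gamma^1$ with unit speed $\widehat T_1$, the second derivative splits into two parts.
\begin{itemize}
\item A boundary acceleration term $-\widehat g(\widehat\nabla_{\widehat T_1}\widehat T_1,\tau_1)=-\widehat g(\widehat\kappa_1\widehat N_1,\tau_1)$.
\item An index form term $\frac{1}{d}\int_0^1\big(|Y'|^2-d^2\widehat K\,|Y|^2\big)\,d\alpha$, where $Y$ is the normal component of the variation field.
\end{itemize}
Tangential components of the variation contribute nothing beyond what is already counted, because the length is reparametrisation-invariant along a geodesic.

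The variation field is the Jacobi field $Y=\phi\,\nu$ along $\Phi$ with boundary values $\phi(0)=\widehat g(\widehat T_1,\nu_1)$ and $\phi(1)=0$. Since $J_1(1)=0$, it equals $\phi=\widehat g(\widehat T_1,\nu_1)\,J_1/J_1(0)$; this requires $J_1(0)\neq0$, which Lemma \ref{lem:J} guarantees. Integrating the index form by parts using \eqref{eq:Jacobi} reduces it to the boundary term $\frac{1}{d}[\phi\phi']_0^1=-\frac1d\phi(0)\phi'(0)$. This gives $-\frac{1}{d}\widehat g(\widehat T_1,\nu_1)^2J_1'(0)/J_1(0)$.

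The second endpoint is handled symmetrically, using $J_2$ normalised by $J_2(1)$; the boundary term there is $+\frac1d\phi(1)\phi'(1)$. The stated formula labelled $\partial^2/\partial\widehat s_1^2$ for this case should read $\partial^2/\partial\widehat s_2^2$.

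For the mixed derivative, move both endpoints, one with parameter $\widehat s_1$ and one with $\widehat s_2$. The mixed second variation is the index form pairing $\frac1d I(Y_a,Y_b)$, where $Y_a=\widehat g(\widehat T_1,\nu_1)J_1/J_1(0)\,\nu$ and $Y_b=\widehat g(\widehat T_2,\nu_2)J_2/J_2(1)\,\nu$. There is no acceleration term, because each endpoint moves in only one parameter. Integrating by parts with $Y_b$ Jacobi and $Y_a(1)=0$ leaves $-\frac1d Y_a(0)Y_b'(0)$, which is the first expression. The equality with the second expression follows from the Wronskian identity $J_1J_2'-J_1'J_2=\text{const}$. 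Evaluating at $\alpha=0$ and at $\alpha=1$ gives $J_1(0)J_2'(0)=-J_1'(1)J_2(1)$, which converts one form into the other.

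The main obstacle is bookkeeping: signs, the orientation of $\nu$, the conformal factors $e^{2\psi}$ in the time derivative, and checking that tangential parts of the variation fields drop out. Everything else is the standard Riemannian computation, as in \cite[Lemma 3.1]{Ma}, transplanted to $\widehat g$ on $\widehat U\subset\mathcal O^{\rm reg}$.
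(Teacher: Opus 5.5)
Your proposal is correct and follows essentially the paper's route. The paper omits the computation and defers to the first/second variation argument of \cite[Lemma 3.1]{Ma} and \cite[Theorem 3.1]{Gage}, carried out for $\widehat g$; its only new ingredient is the time derivative, which you handle correctly via $\kappa_\psi N=e^{2\psi}\widehat\kappa\,\widehat N$. You are also right that the second formula labelled $\partial^2 d_\psi/\partial\widehat s_1^2$ is a typo for $\partial^2 d_\psi/\partial\widehat s_2^2$.
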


\begin{proof}
The derivatives with respect to $\widehat{s}_i$ ($i=1,2$) are completely the same ones given in \cite[Lemma 3.1]{Ma} (see also \cite[Theorem 3.1]{Gage}), where we take the Riemannian metric on a surface by $g_\psi$. On the other hand, the derivative with respect to $t$ (i.e.\ in the direction of the flow) is different from \cite{Ma} because we deal with {\it weighted} CSF, whereas in \cite{Ma} the derivative is computed under the usual CSF. However, because the computation is similar to that of \cite[Section 3, pp.5--6]{Ma}, we omit the proof.
\end{proof}

\begin{remark}\label{rem:wronskian}
The equivalence of the alternative expressions for $\frac{\partial^2 d_\psi}{\partial \widehat s_1 \widehat s_2}$ comes from equality of mixed partials, but also follows from the following observation. Since $J_1, J_2$ are both solutions to the ODE \eqref{eq:Jacobi} which has no first-order term, the Wronskian $W(\alpha) = J_1(\alpha)J_2'(\alpha) - J_2(\alpha)J_1'(\alpha)$ is constant. In particular, $W(0) = W(1)$, which together with \eqref{eq:Jacobi} gives
\begin{equation}
    J_1(0)J_2'(0) = -J_2(1)J_1'(1).\label{eq:Jidentity}
\end{equation}
\end{remark}

 The following proposition is proved by using Lemma \ref{lem:formulas}.

\begin{proposition}\label{prop:distcomp}
Let $\gamma: S^1\times [0,T_{\max})\to \mathcal{O}^{{\rm reg}}$ be a $\psi$-CSF starting from an embedded curve $\gamma_0:S^1\to \mathcal{O}^{{\rm reg}}$, where $T_{\max}$ is the maximal existence time of the $\psi$-CSF in $\mathcal{O}^{{\rm reg}}$. We consider the following quantity:
\begin{equation}\label{def:R}
R(t) := \sup_{x,y\in S^{1};\, x \neq y} R(x,y,t) := \sup_{x,y\in S^{1};\, x \neq y} \left[ \frac{L_{\psi}(t) e ^{-\mathcal{K} t} }{\pi {d}_{\psi}(x,y,t)} \sin \left( \frac{\pi l_{\psi}(x,y,t)}{L_{\psi}(t)} \right) \right],
\end{equation}
where  $\mathcal{K}$ is a non-negative constant, $d_{\psi}(x,y,t):=d_{{\psi}}(\gamma_{t}(x), \gamma_{t}(y))$, $l_\psi(x,y,t)$ is an intrinsic distance between $x,y\in S^1$ with respect to the induced metric by $\gamma_t$, and $L_\psi$ is the length functional of \eqref{eq:lengthfunctional}.

Suppose $L_{\psi}(T_{\max}):=\lim_{t\to T_{\max}}L_\psi(t) \geq \delta_0 > 0$ for a positive constant $\delta_0 > 0$.
If we choose a constant $\mathcal{K}$ sufficiently large, 
then we have
\begin{equation}\label{eq:R}
\sup_{t \in [0, T_{\max})} R(t) < \infty.
\end{equation}
\end{proposition}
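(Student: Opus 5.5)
We follow the Huisken-type distance comparison argument, as adapted to surfaces by Ma \cite{Ma}, working throughout in the weighted metric $\widehat g=g_\psi$. We write $R(x,y,t)=\frac{L_\psi e^{-\mathcal K t}}{\pi d_\psi}\sin(\pi l_\psi/L_\psi)$ and argue by a maximum principle. Suppose $R$ is unbounded on $[0,T_{\max})$. Since $\gamma_0$ is embedded, $R(0)<\infty$. Pick a large threshold $\Lambda>\max\{R(0),\Lambda_0\}$ and let $t_0$ be the first time at which $R(t_0)=\Lambda$. We then need to show that the supremum is attained at a pair $(x_0,y_0)$ with $x_0\neq y_0$.

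\textbf{Step 1: local control near the diagonal.} As $y\to x$ we have $R(x,y,t)\to e^{-\mathcal K t}$, because $d_\psi/l_\psi\to 1$ and the ratio of $\sin(\pi l/L)$ to $\pi l/L$ tends to $1$. This convergence is uniform on $[0,t_0]$, since the curvature is bounded there. Hence $R\le 1<\Lambda$ near the diagonal, and the sup is attained at an interior pair. At $t_0$ the image is compact and lies in $\mathcal{O}^{\rm reg}$, but $d_\psi$ is the orbifold distance, which may be non-smooth, as noted in Remark~\ref{rem:derd}. We handle this with a separate case analysis.

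\textbf{Step 2: the far case.} If $d_\psi(x_0,y_0,t_0)\ge c_0$ for a fixed $c_0$, then
\[
R\le \frac{L_\psi(0)}{\pi c_0},
\]
so choosing $\Lambda$ larger than this bound gives a contradiction outright. We may therefore assume $d_\psi<c_0$, with $c_0$ smaller than $1/(2\sqrt{\widehat K_{\mathcal O}})$ and smaller than the strongly convex radius along the compact set $\gamma_{t_0}(S^1)\subset\mathcal{O}^{\rm reg}$.

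The genuine difficulty is that the minimizing geodesic between $\gamma_{t_0}(x_0)$ and $\gamma_{t_0}(y_0)$ might pass near a cone point. In that case no strongly convex $\widehat U\subset\mathcal O^{\rm reg}$ contains both points. To deal with this, we lift to an orbifold chart $\widetilde U$ around the cone point. There the metric is smooth, the lifted curves are smooth $\psi$-CSFs, and the orbifold distance equals the minimum over the finitely many $\mathbb Z_k$-translates of the lifted distance. At the maximum point we replace $d_\psi$ by the smooth lifted distance realizing this minimum. This is a lower-support type function: it is $\ge d_\psi$ and agrees with it at the point. Consequently $R$ computed with this replacement is $\le R$ everywhere and touches it at $(x_0,y_0,t_0)$, so the first and second derivative conditions of a maximum still hold for the smooth replacement. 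Lemmas~\ref{lem:J} and~\ref{lem:formulas} apply verbatim in $\widetilde U$.

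\textbf{Step 3: the computation.} At the (smoothed) maximum we have $\partial_{\widehat s_1}R=\partial_{\widehat s_2}R=0$. This forces $\widehat g(\widehat T_1,\tau_1)$ and $\widehat g(\widehat T_2,\tau_2)$ to be equal in a suitable sense and determines $\widehat g(\widehat T_i,\nu_i)$, as in Huisken/Ma. The second-order condition is taken along the direction $\partial_{\widehat s_1}\pm\partial_{\widehat s_2}$. Using the formulas of Lemma~\ref{lem:formulas}, the Jacobi bounds~\eqref{eq:J}, and the Wronskian identity~\eqref{eq:Jidentity}, we combine this with $\partial_t R\ge 0$. Several things appear in the evolution of $R$:
\begin{itemize}
\item the curvature terms $e^{2\psi}\widehat\kappa_i\widehat N_i$ in $\partial_t d_\psi$;
\item the evolution $\partial_t L_\psi=-\int\kappa_\psi^2ds_\psi$;
\item the evolution of $l_\psi$, which involves $\int\kappa_\psi^2$ over an arc.
\end{itemize}
The curvature terms cancel against the second-derivative terms up to errors. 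These errors are bounded by $C(|\nabla\psi|,\widehat K_{\mathcal O})$ times $R$; the $\psi$-terms arise because $\partial_t\gamma=e^{2\psi}\widehat\kappa\widehat N$ differs from the Laplacian in the weighted metric. The $\int\kappa_\psi^2$ terms have a sign as in Huisken's argument, using concavity of $\sin$ together with $L_\psi\ge\delta_0$, which is exactly where the hypothesis $L_\psi(T_{\max})\ge\delta_0$ enters. Choosing $\mathcal K$ larger than these constants yields $\partial_t R<0$ at $(x_0,y_0,t_0)$, contradicting the choice of $t_0$.

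The main obstacle is bounding the error terms uniformly, so that $\mathcal K$ depends only on $(\mathcal O,g,\psi,\delta_0)$. The Jacobi ratios $J_i'/J_i$ are controlled by Lemma~\ref{lem:J} only when $d<1/(2\sqrt{\widehat K_{\mathcal O}})$, which is why Step 2 is needed. The other delicate point is the lifting near cone points in Step 2.
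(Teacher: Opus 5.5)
Your overall strategy is the paper's: a first-time argument for the Huisken--Ma quantity with the factor $e^{-\mathcal K t}$, a far-case bound $L_\psi(0)/(\pi c_0)$, replacement of $d_\psi$ by a smooth lifted distance touching it from above, Lemmas \ref{lem:J} and \ref{lem:formulas}, and absorption of the errors into $\mathcal K$. However, two steps do not work as written.

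The first is the choice of $c_0$. The far case forces $\Lambda>L_\psi(0)/(\pi c_0)$, so $c_0$ must be fixed \emph{before} $\Lambda$. You instead tie $c_0$ to the convexity radius of $\mathcal O^{\rm reg}$ along $\gamma_{t_0}(S^1)$, which depends on $t_0$ and hence on $\Lambda$. This is circular, and that radius can genuinely degenerate. It is at most the distance from $\gamma_{t_0}(S^1)$ to the cone points, and this distance may tend to $0$ as $t_0\to T_{\max}$; that is exactly the situation in which the proposition is used in Proposition \ref{prop:key}. What is missing is a constant depending only on $(\mathcal O,\widehat g)$ such that any two regular points at distance $<c_0$ lie in one strongly convex orbifold chart. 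This is Lemma \ref{lem:c0}. Take disjoint strongly convex balls around the finitely many cone points in their charts. Let $c_1$ be the minimal distance between their images, and $c_2$ a uniform convexity radius on the compact complement, which lies in $\mathcal O^{\rm reg}$. Then set $c_0=\min\{c_1,c_2\}$. Only with this uniform constant, and $d_0=\min\{c_0,1/(2\sqrt{\widehat K_{\mathcal O}})\}$, does your lifting step close up.

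The second problem is the direction of the second variation. Since $\partial_t\gamma=e^{2\psi}\widehat\kappa\widehat N$, the formula for $\partial_t d_\psi$ in Lemma \ref{lem:formulas} carries the curvature terms with weights $e_i^2=e^{2\widetilde\psi(p_i)}$. By contrast, $\partial^2_{\widehat s_i}d_\psi$ carries them unweighted. Along $\partial_{\widehat s_1}\pm\partial_{\widehat s_2}$ you are left with $(e_2^2-1)\widehat\kappa_2\,\widehat g(\widehat N_2,\tau_2)-(e_1^2-1)\widehat\kappa_1\,\widehat g(\widehat N_1,\tau_1)$. At the critical point $|\widehat g(\widehat N_i,\tau_i)|=|\widehat g(\widehat T_i,\nu_i)|$ is close to $1$, and $\widehat\kappa_i$ is unbounded as $t\to T_{\max}$. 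So this is not an error of size $C(|\nabla\psi|,\widehat K_{\mathcal O})R$, and no choice of $\mathcal K$ absorbs it. You must take the second variation along $e_1\partial_{\widehat s_1}\pm e_2\partial_{\widehat s_2}$, i.e.\ use the operators $\mathcal L_\pm$ of \eqref{def:Lpm}. Then the curvature terms cancel exactly, and the extra error coming from $\psi$ is $\frac1d(e_1-e_2)^2\frac{J_1'(1)}{J_1(0)}\widehat g(\widehat T_1,\nu_1)^2\ge-2D^2E^2d$, with $D,E$ as in the proof of Proposition \ref{prop:key}.

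You also misattribute the role of $\delta_0$. The $\int\kappa_\psi^2$ terms in $\partial_t\bigl(\frac{L_\psi}{\pi}\sin(\pi l_\psi/L_\psi)e^{-\mathcal K t}\bigr)$ have a favourable sign using only $0<l_\psi/L_\psi\le\frac12$ and $\sin x\ge x\cos x$. The hypothesis $L_\psi\ge\delta_0$ is needed for a term your proposal never mentions. The second derivatives of this same quantity contribute $-(e_1\mp e_2)^2\frac{\pi^2}{L_\psi^2}$ times it, with the wrong sign and of size up to $4D^2\pi^2/L_\psi^2$ in the $\mathcal L_-$ case. The paper absorbs this term by taking $\mathcal K>2D^2\widehat K_{\mathcal O}+2D^2E^2+4D^2\pi^2/\delta_0^2$.
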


\begin{remark}
{\rm 
The quantity $R(t)$ is a generalisation of {\it Huisken's comparison function} (\cite{Hui}), first introduced by Ma \cite{Ma}. Huisken proved that, for the CSF in $\R^2$,  the quantity $R(t)$ (with $\mathcal{K}=0$) is a non-increasing function; this gives \eqref{eq:R} for the CSF in $\R^2$. Moreover, by using the comparison function, Huisken provided a new proof of Grayson's theorem for the CSF in $\R^2$  (\cite[Theorem 2.4]{Hui}, see also Theorem \ref{thm:beh}).  
Later, Johnson-Muraleetharan \cite{JM}, Edelen \cite{Edelen} and Ma \cite{Ma} generalised the distance comparison argument to the CSF in a surface. The definition of our comparison function is based on the one introduced by Ma.

 We remark that, in Proposition \ref{prop:distcomp}, we assume that $L_\psi(T_{\max})>0$ to obtain \eqref{eq:R}, whereas this assumption is not required in either of the previous results  in \cite{Hui, Ma}. In this sense, Proposition \ref{prop:distcomp} provides only a weaker form of extension of those results. However, imposing the assumption $L_\psi(T_{\max})>0$ slightly simplifies the argument (see Appendix \ref{app:distcomp}). Proposition \ref{prop:distcomp} will be used in the proof of Lemma \ref{lem:key1}, where the assumption $L_\psi(T_{\max})>0$ is automatically satisfied.
}
\end{remark}

The proof of Proposition \ref{prop:distcomp} proceeds by a similar argument used in \cite[Section 4]{Ma} in the case of usual CSF in a Riemann surface. However, we require some additional argument in order to extend it to our orbifold setting. We provide a proof in Appendix \ref{app:distcomp} for the sake of completeness.

\subsection{Proof of Proposition \ref{prop:key}}
We begin with some lemmas. The first lemma is an extension of \cite[Lemma 6.1]{Grayson}. However, our proof below is different from the original one, but it is based on the distance comparison estimate, Proposition \ref{prop:distcomp}.

\begin{lemma}\label{lem:key1}
Suppose $T_{\max}<\infty$, $\gamma_{t}: S^{1}\to \mathcal{O}^{{\rm reg}}$ is continuously extended to $\gamma_{T_{\max}}:S^{1}\to \mathcal{O}$ and $L_{\psi}(T_{\max})>0$.
Then the preimage of a point under $\gamma_{T_{\max}}$ is connected.
\end{lemma}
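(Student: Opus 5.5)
We argue by contradiction using Proposition \ref{prop:distcomp}. Since $L_\psi(T_{\max})>0$, the proposition applies with a suitable constant $\mathcal{K}$, giving a uniform bound $R(t)\le R_0<\infty$ on $[0,T_{\max})$. Unwinding the definition \eqref{def:R}, this yields, for all $x\neq y$ and $t<T_{\max}$,
\[
d_\psi(x,y,t)\;\ge\; \frac{L_\psi(t)e^{-\mathcal{K}T_{\max}}}{\pi R_0}\,\sin\Big(\frac{\pi l_\psi(x,y,t)}{L_\psi(t)}\Big).
\]
Here $L_\psi(t)\in[L_\psi(T_{\max}),L_\psi(0)]$ is bounded above and below by positive constants. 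Hence the extrinsic weighted distance between two points of $\gamma_t$ is bounded below by a fixed multiple of $\min\{l_\psi,L_\psi-l_\psi\}$, the intrinsic distance on the circle of length $L_\psi(t)$, uniformly in $t$. One technical point must be settled first: $d_\psi$ here should be the orbifold distance $\widehat d$ on $\mathcal{O}$ (with respect to $g_\psi$), since the flow may approach cone points. I would use the version established in Appendix \ref{app:distcomp}, where $d_\psi$ is the distance in $\mathcal{O}$.

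Now suppose the preimage $\gamma_{T_{\max}}^{-1}(p)$ is disconnected. Then there exist $x,y\in S^1$ with $\gamma_{T_{\max}}(x)=\gamma_{T_{\max}}(y)=p$ such that both arcs of $S^1$ joining $x$ and $y$ contain a point $z$ with $\gamma_{T_{\max}}(z)\neq p$. The key step is to show that the intrinsic distance $\min\{l_\psi(x,y,t),\,L_\psi(t)-l_\psi(x,y,t)\}$ stays bounded below as $t\to T_{\max}$. Suppose instead that along a sequence $t_j\to T_{\max}$ one of the two arcs had weighted length tending to $0$. That arc, which contains a point $z$ in its interior, would have $\gamma_{t_j}$-image of diameter tending to $0$. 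Since $\gamma_{t_j}\to\gamma_{T_{\max}}$ uniformly in $C^0$, and $x,z$ are fixed parameter points, this would force $\widehat d(\gamma_{T_{\max}}(x),\gamma_{T_{\max}}(z))=0$, a contradiction. The parametrisation is fixed, so uniform convergence makes this comparison legitimate.

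Consequently $\sin(\pi l_\psi/L_\psi)\ge c>0$ for all $t$ near $T_{\max}$, and the displayed inequality gives $d_\psi(x,y,t)\ge c'>0$. On the other hand, $C^0$ convergence gives $d_\psi(x,y,t)\to\widehat d(p,p)=0$, a contradiction. The same inequality shows each component of the preimage is an interval, so the preimage is connected.

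I expect the main obstacle to be entirely in Proposition \ref{prop:distcomp}, specifically its validity when the curves approach cone points where $\widehat d$ is non-smooth; given that proposition, this lemma reduces to the compactness argument above. The only delicate point is making sure the lower bound on intrinsic distance between the fixed parameters $x$ and $y$ comes from the choice of separating points $z$ on both arcs, not merely from $x\neq y$.
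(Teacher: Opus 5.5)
Your proof is correct and follows the paper's argument. Both use the uniform bound on $R(t)$ from Proposition \ref{prop:distcomp} to force $\sin(\pi l_\psi/L_\psi)\to 0$ whenever $d_\psi\to 0$, and then use uniform $C^0$ convergence to collapse an arc between two points of the preimage onto $p$. The paper argues directly, fixing the short arc along a subsequence and showing it lies in $\gamma_{T_{\max}}^{-1}(p)$, whereas you argue contrapositively via separating points $z$ on both arcs. Your closing sentence about components being intervals is superfluous, since your contradiction already shows any two points of the preimage are joined by an arc contained in it.
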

\begin{proof}
For simplicity, we put $T = T_{\max}$ in the following proofs. 
Suppose that $L_{\psi}(T) > 0$. If this is the case, Proposition \ref{prop:distcomp}  shows that, by taking a sufficiently large constant $\mathcal{K}$, we have 
\begin{equation}\label{eq:supR}
\sup_{t \in [0,T)}R(t) < \infty,
\end{equation}
where $R(t)$ is given by \eqref{def:R}.

Let $J := \gamma_{T}^{-1} (p) \neq \emptyset$. We may assume $\# J\geq 2$ and take arbitrary $u, v \in J$ with $u\neq v$. Then,
\begin{equation*}
d_{\psi}(u, v, t) = \widehat{d}(\gamma_t (u), \gamma_t (v) ) \to 0 \quad (t \to T),
\end{equation*}
and hence, \eqref{eq:supR} implies that
\begin{equation*}
\sin \left( \frac{\pi l_{\psi}(u,v,t)}{L_{\psi}(t)} \right) \to 0,
\end{equation*}
and thus,  $l_{\psi}(u,v,t) \to 0$ as $t\to T$.

Let $I(t)\subset S^{1}$ be the unique connected closed subset for which $\partial I=\{u,v\}$ and $l_{\psi}(u,v,t)=\int_{I}\, ds_{\psi}(t)$.
 Note that $I(t)$ is one of only two possible intervals with boundary $\{u,v\}$; therefore there exists a sequence of times $t_i \to T$ for which the interval $I(t_i) =: I$ is constant. 
Then, for any $x\in I$, we see
\begin{align*}
\widehat{d} (\gamma_{t_i} (x), p ) 
&\leq \widehat{d} (\gamma_{t_i} (x), \gamma_{t_i} (v) ) + \widehat{d} (\gamma_{t_i} (v), p ) \\
&\leq l_{\psi}(x,v,t_i)+ \widehat{d} (\gamma_{t_i} (v), p )
\leq l_{\psi}(u,v,{t_i})+ \widehat{d}(\gamma_{t_i} (v), p )  \to 0,
\end{align*}
therefore, $\gamma_{T}(x) = p$ and $I\subset J$. 
Since $u,v\in J$ is arbitrary, this proves the lemma.
\end{proof}

\begin{remark}\label{rem:key1}
Proposition \ref{prop:distcomp} also implies that  if $L_{\psi}(T_{\max}) > 0$, the image $\gamma_{T_{\max}}(S^{1})$ has more than one point. In fact, if $\gamma_{T_{\max}}(S^{1})$ consists of only one point, then $d_{\psi}(u,v, t) \to 0$ as $t \to T_{\max}$ for any $u, v \in S^{1}$.
For every $t < T_{\max}$, we can take points $u_t, v_t \in S^{1}$ such that 
$
2 l_{\psi}(u_t, v_t, t) =L_{\psi}(t).
$
Then, by the same argument of the proof of Lemma \ref{lem:key1}, we conclude
$
l_{\psi}(u_t, v_t, t) \to 0.
$
This, however, contradicts $L_{\psi}(T_{\max}) > 0$. 
\end{remark}

\begin{lemma}\label{lem:key2}
Suppose $T_{\max}<\infty$ and $\gamma_{t}$ is continuously extended to $\gamma_{T_{\max}}$. We fix an arbitrary point $u_{0}\in S^{1}$ so that $\gamma_{T_{\max}}(u_{0})=p_{0}\in \mathcal{O}$, and let $U_{0}=\varphi(\widetilde{U}_0)$ be an open subset obtained by an orbifold chart $(\widetilde{U}_0, G,\varphi)$ around $p_{0}$. If $L_{\psi}(T_{\max})>0$, then there exists an interval $[a,b] \subset S^{1} $ and $\delta > 0$ such that 
$u_0 \in (a, b)$ and
\begin{equation*}
\begin{cases}
\gamma_t (a) \notin \gamma_t ( (a, b] ) \\
\gamma_t (b) \notin \gamma_t ([a, b) )
\end{cases}
\quad {\rm and}\quad 
\gamma_t ( [a,b] ) \subset U_0
\end{equation*}
for any $t \in [T_{\max} - \delta, T_{\max}]$.   
\end{lemma}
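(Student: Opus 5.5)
The plan is to use Lemma \ref{lem:key1} to select the interval $[a,b]$ by connectedness, and then use the distance comparison bound \eqref{eq:supR} of Proposition \ref{prop:distcomp} to get the endpoint non-intersection uniformly near $T_{\max}$. Write $T=T_{\max}$.

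\textbf{Step 1 (choice of the interval).} By Lemma \ref{lem:key1}, $J:=\gamma_T^{-1}(p_0)$ is a connected closed subset of $S^1$ containing $u_0$. It is not all of $S^1$: otherwise $\gamma_T$ would be constant, contradicting Remark \ref{rem:key1} since $L_\psi(T)>0$. Hence $J$ is either the point $\{u_0\}$ or a closed arc. Since $U_0$ is an open neighbourhood of $p_0$ and $\gamma_T$ is continuous, I choose $a,b$ slightly outside $J$, with $u_0\in J\subset(a,b)$, such that $\gamma_T([a,b])\subset U_0$. By construction $\gamma_T(a)\neq p_0\neq\gamma_T(b)$.

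\textbf{Step 2 (containment).} The flow extends continuously to the compact set $S^1\times[0,T]$, so it is uniformly continuous there. Therefore there is $\delta_1>0$ with $\gamma_t([a,b])\subset U_0$ for all $t\in[T-\delta_1,T]$.

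\textbf{Step 3 (endpoint condition).} I treat the endpoint $a$; the endpoint $b$ is symmetric. For $t<T$, $\gamma_t$ is an embedding by Proposition \ref{prop:ap}, so $\gamma_t(a)\notin\gamma_t((a,b])$ holds automatically. The only time where the condition can fail is $t=T$, where we need $\gamma_T(a)\notin\gamma_T((a,b])$.

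Suppose $\gamma_T(a)=\gamma_T(x)$ for some $x\in(a,b]$. Then $d_\psi(a,x,t)\to0$ as $t\to T$. Exactly as in the proof of Lemma \ref{lem:key1}, the bound \eqref{eq:supR} forces $l_\psi(a,x,t)\to0$. This implies that $\gamma_T$ is constant on one of the two arcs joining $a$ and $x$.

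\begin{itemize}
\item If that arc is $[a,x]$, then $\gamma_T$ is constant on $[a,x]$. This arc either contains $u_0$, which would give $\gamma_T(a)=p_0$, or is disjoint from $J$. In the second case I shrink the choice of $a$ so that it lies in the connected fibre adjacent to $J$.
\item If that arc is the complementary one, it has length close to the full length. The constancy would then force most of the curve to collapse, which contradicts $L_\psi(T)>0$.
\end{itemize}

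\textbf{Main obstacle.} The key difficulty is arranging the choice of $a$ and $b$ to avoid the first case. My first attempt is to pick $a$ and $b$ so that their $\gamma_T$-fibres are single points. This should be possible because $\gamma_T$ is nonconstant and the fibres of $\gamma_T$ are arcs. Then, if $\gamma_T$ were constant on $[a,x]$, the fibre of $a$ would contain an arc, which is a contradiction. With such $a$ and $b$, the remaining case is the long complementary arc, which is excluded by $L_\psi(T)>0$ as above. Taking $\delta\le\delta_1$ then gives the statement of Lemma \ref{lem:key2}.
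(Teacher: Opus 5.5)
\textbf{There is a genuine gap in Step~3.} Your Steps~1--2 are fine, and you correctly reduce the endpoint condition to the single time $t=T$.

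\emph{The second bullet of Step~3 is wrong.} In the argument you borrow from Lemma~\ref{lem:key1}, the arc on which $\gamma_T$ is constant is the one realizing $l_\psi(a,x,t_i)$ along a subsequence. That is the arc whose $\psi$-length tends to $0$. So if this arc is $S^1\setminus(a,x)$, its length goes to zero rather than being ``close to the full length''. All of $L_\psi(T)$ is then carried by $(a,x)$, and nothing contradicts $L_\psi(T)>0$.

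This case genuinely occurs topologically. Write $J=[a_0,b_0]$. Suppose $\gamma_T\equiv q$ on $S^1\setminus(a,x)$, runs from $q$ to $p_0$ along $[a,a_0]$, sits at $p_0$ on $J$, and returns to $q$ along a different path on $[b_0,x]$. Every fibre is connected, and $\gamma_T(a)=\gamma_T(x)$ really holds. So no contradiction can be derived from that equality; the case can only be avoided by choosing $a$ differently, not by a length argument. Also, ``the connected fibre adjacent to $J$'' is not meaningful: distinct fibres are disjoint closed sets and may accumulate at $a_0$.

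\emph{The singleton-fibre fallback is unproved.} If $a,b$ had singleton fibres, you would be done at once, since then $\gamma_T(a)\notin\gamma_T(S^1\setminus\{a\})$ and both bullets disappear. But you need such points inside an arc $[a_1,b_1]$, with $a_1<a_0\le b_0<b_1$ and $\gamma_t([a_1,b_1])\subset U_0$ for $t$ near $T$. That existence is essentially the whole content of the lemma. ``$\gamma_T$ is nonconstant with arc fibres'' does not give it: a priori $(a_1,a_0)$ might be filled by countably many disjoint nondegenerate fibres. Ruling that out needs Sierpi\'nski's theorem or a Baire-category argument.

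\emph{A cheaper fix avoids singletons.}
\begin{itemize}
\item Take any $c\in(a_1,a_0)$. Its fibre is a closed arc (possibly a point) in $S^1\setminus J$, and its endpoint $\beta$ on the side of $J$ lies in $[c,a_0)$.
\item Pick $c'\in(\beta,a_0)$. The fibre of $c'$ is connected and misses both $\beta$ and $a_0$, so it lies in $(\beta,a_0)$.
\item Let $a$ be the endpoint of this fibre on the side of $J$. Then $\gamma_T^{-1}(\gamma_T(a))\cap(a,b_1]=\emptyset$.
\item Choose $b$ symmetrically.
\end{itemize}

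\emph{The paper's route.} It sets $A=\{x\in[a_1,a_0):\gamma_T(x)\notin\gamma_T((x,b_1])\}$ and $B$ symmetrically, and shows $A\neq\emptyset$ by contradiction. If $A=\emptyset$, it produces $x_n\to a_0$ with partners $y_n\in(b_0,b_1]$, $y_n\to b_0$. Connectedness of fibres then forces $\gamma_T$ to be constant on $S^1\setminus J$, contradicting continuity at $a_0$.
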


\begin{proof}
We put $T=T_{\max}$. First, we note that there is a point $x_0$ such that $\gamma_{T}(x_0) \neq p_0$ by Remark \ref{rem:key1}. 
By Lemma \ref{lem:key1}, we can write $J(p_0) := \gamma_{T}^{-1} (p_0)  = [a_0, b_0] \subsetneq S^{1} \simeq [0,1]/\{0,1\}$ such that $x_0$ corresponds to $\{0,1\}$ and $0 < a_0 \leq b_0 < 1$.
By the continuity of $\gamma \colon S^{1} \times [0, T] \to \mathcal{O}$, 
there exist $\delta > 0$ and an interval $[a_1, b_1] \subset S^{1}$ such that $0 < a_1 < a_0 \leq u_0 \leq b_0 < b_1 < 1$ and 
\begin{equation*}
\gamma_t ( [a_1, b_1] ) \subset U_0\quad \textup{for $t \in [T - \delta, T]$. }
\end{equation*}

Since the curve $\gamma_t$ is an embedding for any $t \in [0, T)$, the required statement automatically holds for any $t\in [0,T)$. Thus, it is sufficient to consider the case when $t=T$. We set 
\begin{align*}
A := \{ x \in [a_1, a_0) \mid \gamma_{T} (x) \notin \gamma_{T} ( (x, b_1] ) \},\quad  B:= \{ y \in (b_0, b_1] \mid \gamma_{T} (y) \notin \gamma_{T} ( [a_1,y) ) \}.
\end{align*}
If both sets $A$ and $B$ are not empty, then we can choose $a \in A$ and $b \in B$ to obtain an interval $[a,b]$ satisfying the required conditions.

We shall prove that $A$ is not empty by contradiction. 
Suppose $A = \emptyset$. Then, for every $x \in [a_1,a_0)$, we have $x \notin J(p_0)$, so
\begin{equation*}
\gamma_T(x) \in \gamma_T( (x,b_1]  \setminus J(p_0)) = \gamma_T( (x,a_0) \sqcup
(b_0,b_1))
\end{equation*}
Take any point $x_1 \in [a_1, a_0)$ and we first prove 
\begin{align}\label{eq:key21}
\gamma_{T}(x_1) \in \gamma_{T}( (b_0, b_1] ).
\end{align}
If not, suppose 
\begin{equation*}
\gamma_{T}(x_1) \in \gamma_{T}( (x_1, a_0) ) 
\text{ and } 
\gamma_{T}(x_1) \notin \gamma_{T}( (b_0, b_1] ).
\end{equation*}
Then, it follows that there exists $x \in (x_1,a_0)$ such that $ \gamma_T (x) = \gamma_t(x_1)$.
Let 
\begin{equation*}
x_2 := \sup \{ x \in (x_1, a_0) \mid \gamma_{T}(x) = \gamma_{T}(x_1) \}.
\end{equation*}
By the continuity of $\gamma_{T}$, we have
$
\gamma_{T}(x_2) =  \gamma_{T}(x_1)  \notin \gamma_{T} ( (b_0, b_1] ).
$
We also have $\gamma_{T}(x_2) =  \gamma_{T}(x_1) \neq p_0$, thus $x_2 \notin J(p_0) = [a_0, b_0]$.
Since $x_2 \neq a_0$ and $x_2 \in (x_1, a_0]$, we have
\begin{equation*}
\gamma_{T} (x_2) \in \gamma_{T} ( (x_2, a_0) ).
\end{equation*}
Thus, for some point $x_3 \in  (x_2, a_0)$, we have
$
\gamma_{T}(x_3) = \gamma_{T}(x_2) = \gamma_{T}(x_1).
$
However, this contradicts the choice of $x_2$.  This proves \eqref{eq:key21} for any $x_1 \in [a_1, a_0)$. 

Next, take a convergent sequence $x_n \in (a_1, a_0)$ such that $x_n \to a_0$. 
Then, there exists a point $y_n \in (b_0, b_1]$ such that $\gamma_{T}(x_n) = \gamma_{T}(y_n)$ for all $n \in \mathbb{N}$. 
If needed, we take a convergent subsequence $\{ y_n' \}$ and we write it again as $\{ y_n \}$.  
This sequence $\{ y_n \}$ converges to the point $b_0$ since 
\begin{equation*}
\gamma_{T}(y_n) = \gamma_{T}(x_n) \to \gamma_{T}(a_0) = p_0 \text{ as } n \to \infty
\end{equation*}
and $\gamma_{T}(y) \neq p_0$ for $y \in (b_0, b_1]$. 
Let $I_n := S^1 \setminus (x_n, y_n)$, we have $\bigcup_{n \in \mathbb{N}} I_n = S^{1} \setminus [a_0, b_0] = S^{1} \setminus J(p_0)$.

Put $p_n := \gamma_T(x_n) = \gamma_T(y_n)$, then $p_n \neq p_0$ since $x_n, y_n \notin J(p_0) = [a_0, b_0]$. 
The preimage $\gamma_T^{-1}(p_n)$ is connected by Lemma \ref{lem:key1} and contains $x_n$ and $y_n$, so we have $\gamma_T(I_n) = \{ p_n \}$ since the interval $[x_n, y_n]$ contains $J(p_0)$.
In particular, $a_1 \in I_n$ for every $n \in \mathbb{N}$, we have $\gamma_{T}(I_n) = \{ \gamma_T(a_1) \}$.

We then obtain 
\begin{equation*}
\gamma_{T}( S^{1}\setminus J(p_0) ) = \bigcup_{n \in \mathbb{N}} \gamma_{T} (I_n) = \{ \gamma_{T}(a_1) \}.
\end{equation*}
However, we have $\gamma_{T} ( J(p_0) ) = \{ p_0 \} \neq \{ \gamma_{T}(a_1) \}$ and this contradicts the continuity of $\gamma_{T}$.
Thus, $A \neq \emptyset$ is obtained, and by the same argument, $B \neq \emptyset$ also follows.
Therefore, we can choose an interval $[a,b]$ as in the statement.
\end{proof}

Now, we give a proof of Proposition \ref{prop:key}. The proof below is inspired by the argument due to Gage \cite[Theorem 3.1]{Gage}.

\begin{proof}[Proof of Proposition \ref{prop:key}]
The proof proceeds by contradiction. Thus, we suppose  the following three assumptions; (a) $\gamma_{t}$ continuously extends to $\gamma_{T}$, (b) $\gamma_{T}(S^{1})\cap \mathcal{O}^{{\rm sing}}\neq \emptyset$ and (c) $L_{\psi}(T)=\lim_{t\to T}L_{\psi}(t)>0$ with $T=T_{\max}$.

Suppose $\gamma_{T} (u_*) = p_0 \in \gamma_{T} (S^{1}) \cap  \mathcal{O}^{{\rm sing}}$.  Recall that since $\mathcal{O}$ is oriented and without boundary, $p_0$ is a cone singularity. 
Since $L_{\psi}(T)>0$,  Lemma \ref{lem:key2} shows that there exists an interval $[a,b] \subset S^{1}$ and $\delta > 0$ such that 
$u_* \in (a, b)$ and
\begin{equation*}
\begin{cases}
\gamma_t (a) \notin \gamma_t ( (a, b] ) \\
\gamma_t (b) \notin \gamma_t ([a, b) )
\end{cases}
\quad \textup{and}\quad \gamma_t ( [a,b] ) \subset U
\end{equation*}
for any $t \in [T - \delta, T]$, where $U=\varphi(\widetilde{U})$ is an open subset obtained by an orbifold chart $(\widetilde{U}, G, \varphi)$ around the cone point $p_0$ of order $k\geq 2$. 
We define the $G$-invariant Riemannian metric $\widetilde{g} := \varphi^*g$ and the $G$-invariant function $\widetilde \psi := \varphi^*(\psi)$ on $\widetilde{U}$, so that $\widehat{g}:=\widetilde g_{\widetilde \psi} =e^{2\widetilde{\psi}}\widetilde{g} = \varphi^*(g_\psi)$.
As mentioned in the previous subsection, we may assume that $\widetilde{U}$ is $\widehat g$-strongly convex and the diameter of $\widetilde{U}$ is sufficiently small so that we can apply Lemmas \ref{lem:J} and \ref{lem:formulas}.

If $t\in [T-\delta, T)$, then $\gamma_t([a,b])\subset \mathcal{O}^{{\rm reg}}\cap U=U\setminus \{p_0\}$. Since $p_0$ is a cone point of order $k$, the restricted map $\varphi: \widetilde{U}\setminus \{0\}\to  U\setminus \{p_0\}$ is a $k$-fold covering map. Therefore, there exists a lift $\widetilde{\gamma}: [a,b]\times [T-\delta,T)\to \widetilde{U}\setminus \{0\}$ of $\gamma$, that is a smooth map such that $\varphi\circ \widetilde{\gamma}=\gamma$. The lift is uniquely determined if we choose the initial point $\widetilde{\gamma}_{T-\delta}(a)\in \varphi^{-1}({\gamma}_{T-\delta}(a))$. Because $\varphi: \widetilde{U}\to U$ is a local isometry, each lift is a $\widetilde{\psi}$-CSF on $\widetilde{U}$. Moreover, by assumption (a), $\widetilde{\gamma}_t$ can be continuously extended to a lift $\widetilde{\gamma}_T$ of $\gamma_T$.
 
 Since $k\geq 2$, we can choose two distinct initial points, and thus, we obtain two lifts $\Gamma^1, \Gamma^2 \colon [a,b] \times [T - \delta, T] \to \widetilde{U}$ of $\gamma: [a,b]\times [T-\delta,T]\to U$.
We consider the distance function $d_{\widetilde{\psi}} \colon [a,b] \times [a,b] \times [T - \delta, T] \to [0, \infty)$ between $\Gamma^{1}$ and $\Gamma^{2}$ with respect to $\widehat{g}$. By the choice of the interval $[a,b]$, the function $d_{\widetilde{\psi}}$ has the following properties:
\begin{equation*}
\begin{cases}
d_{\widetilde{\psi}} (u_*, u_*, T) = 0 \\
d_{\widetilde{\psi}} (u_1, u_2, t) > 0  \quad \text{ for $t \in  [T - \delta, T)$ } \\
d_{\widetilde{\psi}}(u_1, u_2, t) > 0  \quad \text{ for $(u_1, u_2, t) \in \partial([a,b] \times [a,b]) \times [T - \delta, T]$ }.
\end{cases}
\end{equation*}
The third condition is derived as follows. 
Suppose $d_{\widetilde{\psi}} (u_1, u_2, t) = 0$ for some point $(u_1, u_2, t) \in \partial([a,b] \times [a,b]) \times [T - \delta, T]$. 
We may assume $u_1 = a$ and $u_2 \in (a,b]$ since $\Gamma^1_t$ and $\Gamma^2_t$ are the distinct lifts of $\gamma_t$, and $\gamma_T(a)\neq p_0$ implies $\Gamma^1_t(a) \neq \Gamma^2_t(a)$ ($\gamma_T(b) \neq p_0$ and $\Gamma^1_t(b) \neq \Gamma^2_t(b)$ also hold). 
Since $\Gamma^1_t(a) = \Gamma^2_t(u_2)$, we have $\gamma_t(a) = \gamma_t(u_2)$ for $u_2 \in (a, b]$. This contradicts the choice of the interval $[a, b]$ by Lemma \ref{lem:key2}.

Let $B$ be a positive constant to be determined later and 
\begin{equation*}
c := \min \{ e^{B t} d_{\widetilde{\psi}} (u_1, u_2, t) \mid  (u_1, u_2, t) \in \partial([a,b] \times [a,b]) \times [T - \delta, T]  \} > 0,
\end{equation*}

Since $d_{\widetilde{\psi}}(u_*, u_*, T) = 0$, there exists a time $t$ and $u_1, u_2 \in (a,b)$ such that $e^{B t} d_{\widetilde{\psi}} (u_1, u_2, t) = c/2$. Let $\overline t$ be the first such time, and $(\overline x, \overline y)$ points satisfying 
\[\frac{c}{2} = e^{B \bar{t}} d_{\widetilde{\psi}} (\bar{x},\bar{y},\bar{t}) = \min_{ [a,b] \times [a,b]\times [T - \delta, \bar{t}] } e^{B t} d_{\widetilde{\psi}}(u_1, u_2, t),\]  
we have
\begin{align*}
& \frac{\partial ( e^{B t} d_{\widetilde{\psi}} ) }{\partial \widehat{s}_1} (\bar{x},\bar{y},\bar{t}) = \frac{\partial ( e^{B t} d_{\widetilde{\psi}} )}{\partial \widehat{s}_2} (\bar{x},\bar{y},\bar{t}) = 0 =  \frac{\partial d_{\widetilde{\psi}} }{\partial \widehat{s}_1} (\bar{x},\bar{y},\bar{t}) = \frac{\partial d_{\widetilde{\psi}}}{\partial \widehat{s}_2} (\bar{x},\bar{y},\bar{t}), 
\end{align*}
where $\widehat{s}_i$ denotes the arclength parameter of $\Gamma^i(t)$ with respect to $\widehat{g}$. Moreover,  we obtain
\begin{align}
\label{eq:dt}&\frac{\partial ( e^{B t} d_{\widetilde{\psi}})}{\partial t}(\bar{x},\bar{y},\bar{t}) \leq 0, \quad 
\frac{\partial^2 e^{Bt}d_{\widetilde \psi}}{\partial \widehat s_1^2}(\overline x, \overline y, \overline t) \geq 0, \quad \frac{\partial^2 e^{Bt}d_{\widetilde \psi}}{\partial \widehat s_1^2}(\overline x, \overline y, \overline t) \geq 0,
\\
\label{eq:ds2}&\frac{\partial^2 ( e^{B t} d_{\widetilde{\psi}} )}{\partial \widehat{s}^2_1}(\bar{x},\bar{y},\bar{t}) \frac{\partial^2 ( e^{B t} d_{\widetilde{\psi}} )}{\partial \widehat{s}^2_2} (\bar{x},\bar{y},\bar{t}) -  \left( \frac{\partial^2 ( e^{B t} d_{\widetilde{\psi}} )}{\partial \widehat{s}_1 \partial \widehat{s}_2} (\bar{x},\bar{y},\bar{t})  \right)^2 \geq 0.
\end{align}
Let $e_1 := e^{\widetilde{\psi} (\Gamma_{\overline t}^1(\bar{x}))}$, $e_2 := e^{\widetilde{\psi} (\Gamma_{\overline t}^2(\bar{y}))}$, and 
\begin{align}\label{def:Lpm}
\mathcal{L}_{+} 
&:= \frac{\partial}{\partial t} - \left( e_1^2 \frac{\partial^2}{\partial \widehat{s}_1^2} + e_2^2 \frac{\partial^2}{\partial \widehat{s}_2^2} + 2 e_1 e_2 \frac{\partial^2}{\partial \widehat{s}_1 \partial \widehat{s}_2} \right), 
\quad 
\mathcal{L}_{-} 
:= \frac{\partial}{\partial t} - \left( e_1^2 \frac{\partial^2}{\partial \widehat{s}_1^2} + e_2^2 \frac{\partial^2}{\partial \widehat{s}_2^2} - 2 e_1 e_2 \frac{\partial^2}{\partial \widehat{s}_1 \partial \widehat{s}_2} \right).
\end{align}
Then, at $(\bar{x},\bar{y},\bar{t})$, 
we have by \eqref{eq:ds2} that
\begin{equation*}
e_1^2 \frac{\partial^2 ( e^{B t} d_{\widetilde{\psi}} )}{\partial \widehat{s}_1^2} + e_2^2 \frac{\partial^2 ( e^{B t} d_{\widetilde{\psi}} )}{\partial \widehat{s}_2^2} 
\geq 2 e_1 e_2 \sqrt{ \Big{|}\frac{\partial^2 ( e^{B t} d_{\widetilde{\psi}})}{\partial \widehat{s}^2_1} \frac{\partial^2 ( e^{B t} d_{\widetilde{\psi}} )}{\partial \widehat{s}^2_2} }\Big{|}
\geq  2 e_1 e_2 \Big{|}\frac{\partial^2 ( e^{B t} d_{\widetilde{\psi}})}{\partial \widehat{s}_1 \partial \widehat{s}_2} \Big{|},
\end{equation*}
and from this and \eqref{eq:dt} it follows that
\begin{equation}\label{eq:Lpm}
\mathcal{L}_{\pm}( e^{B t} d_{\widetilde{\psi}})\biggm\vert_{(\bar{x},\bar{y},\bar{t})} 
= \frac{\partial ( e^{B t} d_{\widetilde{\psi}} )}{\partial t} - \left( e_1^2 \frac{\partial^2 ( e^{B t} d_{\widetilde{\psi}})}{\partial \widehat{s}_1^2} + e_2^2 \frac{\partial^2 ( e^{B t} d_{\widetilde{\psi}} )}{\partial \widehat{s}_2^2} \pm 2 e_1 e_2 \frac{\partial^2 ( e^{B t} d_{\widetilde{\psi}} )}{\partial \widehat{s}_1 \partial \widehat{s}_2} \right) \leq 0.
\end{equation}

We shall compute $\mathcal{L}_{\pm}( e^{B t} d_{\widetilde{\psi}} )|_{(\bar{x},\bar{y},\bar{t})}$ by using 
the formulas on derivatives of the distance function
given in Lemma \ref{lem:formulas} with $(u_1,u_2,t)=(\bar{x},\bar{y},\bar{t})$. Then we show that the result leads to a contradiction.
We use the same notation given in subsection \ref{ssec:dist}.

Since  
$
\p d_{\widetilde{\psi}}/\p \widehat{s}_1=\p d_{\widetilde{\psi}}/\p \widehat{s}_2=0
$
at $(\bar{x},\bar{y},\bar{t})$, Lemma \ref{lem:formulas} shows that $\widehat{g}(\widehat{T}_1,\tau_1)=\widehat{g}(\widehat{T}_2,\tau_2)=0$. Because $\{\tau_i,\nu_i\}$ ($i=1,2$) are  orthonormal bases of $\mathcal{O}^{{\rm reg}}$ and $\widehat T_i$ are unit vector fields, it follows that 
$\widehat{g} \left( \widehat{T}_1, \nu_1 \right)$ must be equal to either $\widehat{g} \left( \widehat{T}_2, \nu_2 \right)$ or $-\widehat{g} \left( \widehat{T}_2, \nu_2 \right)$ at $(\bar{x},\bar{y},\bar{t})$.
Suppose first $\widehat{g} \left( \widehat{T}_1, \nu_1 \right) = \widehat{g} \left( \widehat{T}_2, \nu_2 \right). $
Then, setting $d := d_{\widetilde \psi}(\overline x, \overline y, \overline t)$, we use Lemma \ref{lem:formulas} and Remark \ref{rem:wronskian} to calculate:
\begin{align*}
\mathcal{L}_{+}d_{\widetilde{\psi}} \biggm\vert_{(\bar{x},\bar{y},\bar{t})} 
&= \frac{\partial d_{\widetilde{\psi}}}{\partial t} - \left( e_1^2 \frac{\partial^2 d_{\widetilde{\psi}}}{\partial \widehat{s}_1^2} + e_2^2 \frac{\partial^2 d_{\widetilde{\psi}}}{\partial \widehat{s}_2^2} + 2 e_1 e_2 \frac{\partial^2 d_{\widetilde{\psi}}}{\partial \widehat{s}_1 \partial \widehat{s}_2} \right) \\
&= e^2_2 \widehat{g} \left(  \widehat{\kappa}_2(\bar{y}, \bar{t}) \widehat{N}_2,  \tau_2 \right) -  e^2_1 \widehat{g} \left( \widehat{\kappa}_1(\bar{x}, \bar{t}) \widehat{N}_1,  \tau_1  \right) \\
&- \biggl\{  - e^2_1 \widehat{g} \left( \widehat{\kappa}_1(\bar{x}, \bar{t}) \widehat{N}_1,  \tau_1  \right) - \frac{e^2_1}{d} \widehat{g} \left( \widehat{T}_1, \nu_1 \right)^2 \frac{J'_1(0)}{J_1(0)} +  e^2_2 \widehat{g} \left( \widehat{\kappa}_2(\bar{y}, \bar{t}) \widehat{N}_2,  \tau_2  \right) + \frac{e^2_2}{d} \widehat{g} \left( \widehat{T}_2, \nu_2 \right)^2 \frac{J'_2(1)}{J_2(1)}  \\
&\quad - \frac{e_1 e_2}{d}  \widehat{g} \left( \widehat{T}_1, \nu_1 \right) \widehat{g} \left( \widehat{T}_2, \nu_2 \right) \frac{J'_2(0)}{J_2(1)}
+ \frac{e_1 e_2}{d} \widehat{g} \left( \widehat{T}_1, \nu_1 \right) \widehat{g} \left( \widehat{T}_2, \nu_2 \right) \frac{J'_1(1)}{J_1(0)} \biggr\} \\
&= \frac{1}{d} \left( e^2_1 \frac{J'_1(0)}{J_1(0)} - e^2_2 \frac{J'_2(1)}{J_2(1)}  - e_1 e_2  \frac{J'_1(1)}{J_1(0)} + e_1 e_2 \frac{J'_2(0)}{J_2(1)} \right) \widehat{g} \left( \widehat{T}_1, \nu_1 \right)^2 \\
&= \frac{1}{d} \left( e_1^2 \frac{J'_1(0) - J'_1(1)}{J_1(0)} - e_2^2 \frac{J'_2(1) - J'_2(0)}{J_2(1)}  + e_1 (e_1 - e_2) \frac{J'_1(1)}{J_1(0)}  + e_2 (e_1 - e_2) \frac{J'_2(0)}{J_2(1)}   \right) \widehat{g} \left( \widehat{T}_1, \nu_1 \right)^2\\
&=\frac{1}{d} \left( e_1^2 \frac{J'_1(0) - J'_1(1)}{J_1(0)} - e_2^2 \frac{J'_2(1) - J'_2(0)}{J_2(1)}  + (e_1 - e_2)^2 \frac{J'_1(1)}{J_1(0)}    \right) \widehat{g} \left( \widehat{T}_1, \nu_1 \right)^2
\end{align*}

We recall that $J_1$ is a function satisfying \eqref{eq:Jacobi}. Since we take the convex subset $\widehat{U}$ sufficiently small,  we may assume that $J'_1(\alpha)<-d/2<0$ by Lemma \ref{lem:J}. 
In particular, we see $J_1(\alpha)\leq J_1(0)$ and 
\begin{align}\label{eq:L1}
\abs{ \frac{J'_1(0) - J'_1(1)}{J_1(0)} } 
&= \abs{ \frac{1}{J_1(0) } \int_0^1 J''_1(\alpha)\, d \alpha } = \abs{ \frac{1}{J_1(0) } \int_0^1 - d^2 \widehat{K} J_1(\alpha)\, d \alpha } \\
&\leq d^2 \sup \abs{\widehat{K}} \int_0^1 \abs{\frac{J_1(\alpha)}{J_1(0)} }\, d \alpha \leq d^2 \widehat{K}_{\mathcal{O}}. \nonumber
\end{align}
By similar arguments, we also have
\begin{align}\label{eq:L2}
\abs{ \frac{J'_2(1) - J'_2(0)}{J_2(1)} } \leq  d^2 \widehat{K}_{\mathcal{O}}.
\end{align}

On the other hand, we see
\begin{align*}
e_1-e_2&=
e^{\widetilde{\psi} (\Gamma_{\overline t}^1(\bar{x}))}-e^{\widetilde{\psi} (\Gamma_{\overline t}^2(\bar{y}))}
=e^{\widetilde{\psi} (\Phi(0))}-e^{\widetilde{\psi} (\Phi(1))}\\
&=-\int_0^1\frac{d}{d\alpha}e^{\widetilde{\psi} (\Phi(\alpha))}\,d\alpha
=-\int_0^1e^{\widetilde{\psi} (\Phi(\alpha))}\cdot d\widetilde{\psi}(\dot{\Phi}(\alpha))\,d\alpha.
\end{align*}
where
$\Phi(\alpha)=\Phi(\alpha;\bar{x},\bar{y},\bar{t}):[0,1]\to \widetilde{U}$ be the minimizing geodesic from 
$\Gamma_{\overline t}^1(\bar{x})$ to $\Gamma_{\overline t}^2(\bar{y})$
(see subsection \ref{ssec:dist}). Note that
\[
d\widetilde{\psi}(\dot{\Phi})=\widehat{g}(\widehat{\nabla}\widetilde{\psi}, \dot{\Phi})\leq |\widehat{\nabla}\widetilde{\psi}|_{\widetilde{\psi}}|\dot{\Phi}|_{\widetilde{\psi}}= |\widehat{\nabla}\widetilde{\psi}|_{\widetilde{\psi}}\cdot d.
\]
Thus, by letting $D:={\rm sup}_{\widetilde{U}}e^{\widetilde{\psi}}$ and $E:={\rm sup}_{\widetilde{U}}|\widehat{\nabla}{\widetilde{\psi}}|_{\widetilde{\psi}}$, we see
\[
(e_1-e_2)^2=\Big(\int_0^1e^{\widetilde{\psi} (\Phi(\alpha))}\cdot d\widetilde{\psi}(\dot{\Phi}(\alpha))\,d\alpha\Big)^2\leq D^2E^2d^2.
\]

Since $J_1'(1) = -d$ and $J_1(0)>d/2>0$ by Lemma \ref{lem:J}, we obtain
\begin{align}\label{eq:L3}
(e_1-e_2)^2\frac{J_1'(1)}{J_1(0)} \geq D^2E^2d^2\cdot (-d) \cdot \frac{2}{d} = -2 D^2E^2d^2 .
\end{align}

Thus, using the above estimates \eqref{eq:L1}--\eqref{eq:L3} with $\widehat{g}( \widehat{T}_1, \nu_1 )^2\leq 1$, we see
\begin{equation}\label{eq:Lp}
\mathcal{L}_{+} d_{\widetilde{\psi}} \biggm\vert_{(\bar{x},\bar{y},\bar{t})} \geq  \{-(e^2_1 + e^2_2) \widehat{K}_{\mathcal{O}}- 2D^2E^2 \}\cdot d\geq \{-2D^2\widehat{K}_{\mathcal{O}}- 2 D^2E^2 \}\cdot d.
\end{equation}
Therefore, if we take the positive constant $B$ so that
\begin{equation*}
B >2D^2\widehat{K}_{\mathcal{O}}+ 2 D^2E^2,
\end{equation*}
 then, we see
\begin{equation*}
\mathcal{L}_{+} \left( e^{B t} d_{\widetilde{\psi}} \right)  \biggm\vert_{(\bar{x},\bar{y},\bar{t})} 
=( B e^{B \bar{t}} d_{\widetilde{\psi}} + e^{B \bar{t}} \mathcal{L}_{+} d_{\widetilde{\psi}} )\biggm\vert_{(\bar{x},\bar{y},\bar{t})} \geq \left\{  B -2D^2\widehat{K}_{\mathcal{O}}-2 D^2E^2 \right\}  e^{B \bar{t}} d > 0
\end{equation*}
since $e^{B \bar{t}} d = c/2 > 0$. 
This contradicts \eqref{eq:Lpm}.
The same calculation can be applied in the case that $\widehat{g} \left( \widehat{T}_1, \nu_1 \right) = - \widehat{g} \left( \widehat{T}_2, \nu_2 \right)$ for the operator $\mathcal{L}_{-}$. This completes the proof of Proposition \ref{prop:key}.
\end{proof}

\section{Generalised Lagrangian Mean Curvature Flow with Symmetries}\label{sec:glmcf}

In this section, we move to our main topic: generalised Lagrangian mean curvature flow in almost-Einstein K\"ahler manifolds. We cover fundamental definitions and results relating to these topics, and describe how the \(G\)-equivariant GLMCF reduces to a weighted curve shortening flow on the $2$-dimensional orbifold quotient as studied in the previous sections.

\subsection{$f$-MCF and Generalised Lagrangian MCF} 
Let $(M^{n+k},g)$ be an $(n+k)$-dimensional Riemannian manifold and  $\varphi:\Sigma^n\to M^{n+k}$ be a smooth immersion of a closed $n$-dimensional manifold $\Sigma$. We consider a density function $f\in C^{\infty}(M)$ and 
define the \textit{canonical conformal change} of $g$ (with respect to $f$) to be $g_{f}:=e^{2f}g$.

The {\it second fundamental form} $A$ of $\varphi$ is  the tensor field on $\Sigma$ defined by $A(X,Y):=(\onab_{d\varphi(X)}d\varphi(Y))^{\perp}$, where $\overline{\nabla}$ is the Levi-Civita connection on $M$ and $\perp$ means the orthogonal projection onto the normal space of $\varphi$. 
The {\it weighted mean curvature vector} of $\varphi$ is defined by
\[
K:=H-n(\overline{\nabla}f)^{\perp},
\]
where $H:={\rm tr}A$ is the usual mean curvature vector of $\varphi$.  An immersion $\varphi$ will be called an {\it $f$-minimal} (or $f$-stationary) if $K=0$ along $\varphi$.  It is known that $\varphi$ is $f$-minimal if and only  if  $\varphi$ is a critical point of the {\it weighted volume functional} 
\[
{\rm Vol}_f(\varphi):=\int_{\Sigma} dv_f,\quad {\rm where}\quad dv_f:=e^{nf}dv_\Sigma.
\]
Indeed, for any smooth deformations $\{\varphi_s\}_{s\in (-\epsilon,\epsilon)}$ of $\varphi_0=\varphi$, it is known that 
\begin{align}\label{eq:fvv}
\frac{d}{ds}{\rm Vol}_f(\varphi_s)\Big{|}_{s=0}=-\int_\Sigma g(K,V)\,dv_f,
\end{align}
where $V=d\varphi_s/ds|_{s=0}$.
Note that ${\rm Vol}_{f}$ coincides with the Riemannian volume with respect to the weighted Riemannian metric $g_{f}:=e^{2f}g$, and the mean curvature vector $H_{f}$ of $\varphi$ w.r.t.\  $g_{f}$ is given by $H_{f}=e^{-2f}K$.  We remark that $K$ is different from the mean curvature vector $H_{f}$.

A family of smooth immersions $F_t:\Sigma \times [0,T) \to M$ is called an \textit{$f$-mean curvature flow starting from $\varphi$} if it satisfies
\begin{align*}
\frac{\partial F_t}{\partial t}=K_t,\quad F_0=\varphi,
\end{align*}
where $K_t$ is the weighted mean curvature vector of $F_t$.

\begin{remark}
 If $n=k=1$ and there is a globally defined unit normal vector field $N$ along $\varphi$, we see that 
$
K=(\kappa-\langle \overline{\nabla}f, N\rangle)N=\kappa_{f}N, 
$
where $\kappa=\langle H,N\rangle$ is the geodesic curvature of $\varphi: \Sigma^{1}\to M^{2}$. Thus, in this case, the $f$-MCF coincides with the $f$-CSF discussed in section \ref{sec:csf}
and \ref{sec:csf2}.
\end{remark}

We now fix a connected compact Lie subgroup $G\subset {\rm Isom}(M,g)$ and a $G$-action on $\Sigma$
such that the map $\varphi:\Sigma \to M$ is $G$-equivariant.
Furthermore, we assume $f$ is a $G$-invariant function (This is obviously satisfied if $f\equiv 0$ as in the case of usual MCF).
In this situation, we have the following basic fact.
 
\begin{lemma}\label{lem:equi}
Suppose the initial immersion  $\varphi:\Sigma \to M$ is $G$-equivariant and $f: M\to \R$ is a $G$-invariant function for a  Lie subgroup $G\subset {\rm Isom}(M,g)$. Then, the $f$-MCF $F_{t}$ starting from $F_{0}=\varphi$ is $G$-equivariant for any $t\in [0,T)$. 
\end{lemma}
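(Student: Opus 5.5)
The argument uses uniqueness of solutions to the $f$-mean curvature flow with closed initial data, together with the invariance of the equation under ambient isometries that preserve $f$. Fix $a\in G$. Denote by $a_M$ the isometry of $M$ and by $a_\Sigma$ the diffeomorphism of $\Sigma$ through which $a$ acts. Define a new family of immersions
\[
\widetilde F_t := a_M^{-1}\circ F_t\circ a_\Sigma : \Sigma\to M .
\]
We will show that $\widetilde F_t$ is again an $f$-MCF starting from $\varphi$. Uniqueness then gives $\widetilde F_t=F_t$ for all $t\in[0,T)$, that is, $F_t\circ a_\Sigma = a_M\circ F_t$, which is the claimed $G$-equivariance.

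The initial condition holds because $\varphi$ is equivariant: $\widetilde F_0 = a_M^{-1}\circ\varphi\circ a_\Sigma = \varphi$. For the evolution equation, two observations suffice.

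First, the mean curvature vector is natural under isometries and reparametrisations. Precomposing with the diffeomorphism $a_\Sigma$ only relabels points, so the mean curvature of $F_t\circ a_\Sigma$ at $p$ equals $H_{F_t}(a_\Sigma p)$. Postcomposing with the isometry $a_M^{-1}$ pushes it forward by $d a_M^{-1}$, giving
\[
H_{\widetilde F_t}(p) = d a_M^{-1}\bigl(H_{F_t}(a_\Sigma p)\bigr).
\]

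Second, the gradient of $f$ transforms the same way. Since $f\circ a_M = f$ and $a_M$ is an isometry, $da_M(\overline\nabla f)=\overline\nabla f\circ a_M$. Normal projections are also carried to normal projections, since $da_M^{-1}$ maps the normal space of $F_t$ at $a_\Sigma p$ onto the normal space of $\widetilde F_t$ at $p$. Together these give
\[
K_{\widetilde F_t}(p) = da_M^{-1}\bigl(K_{F_t}(a_\Sigma p)\bigr).
\]

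On the other side of the equation, $\partial_t \widetilde F_t(p) = da_M^{-1}\bigl(\partial_t F_t(a_\Sigma p)\bigr) = da_M^{-1}\bigl(K_{F_t}(a_\Sigma p)\bigr)$. Comparing the two expressions, $\widetilde F_t$ solves the $f$-MCF with initial data $\varphi$ on the same interval $[0,T)$.

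The main point needing justification is uniqueness. The $f$-MCF is $H$ plus a lower-order term, $-n(\overline\nabla f)^\perp$, so it is a weakly parabolic system that is degenerate only in tangential directions. I would invoke the standard DeTurck-trick or harmonic-map-heat-flow argument for mean curvature flow in a compact ambient manifold with $\Sigma$ closed. That argument is insensitive to lower-order terms and gives uniqueness of smooth solutions, which is the same fact used implicitly when the paper speaks of \emph{the} $f$-MCF starting from $\varphi$. Once uniqueness is granted, the remaining steps are formal. Connectedness and compactness of $G$ play no role beyond $G$ acting by isometries that preserve $f$.
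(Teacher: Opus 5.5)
Your proof is correct and is essentially the paper's argument. The paper shows, via uniqueness, that the flow starting from $g\circ\varphi=\varphi\circ g$ equals both $g\circ F_t$ and $F_t\circ g$. You fold these two comparisons into the single conjugated flow $a_M^{-1}\circ F_t\circ a_\Sigma$, and you state explicitly the reparametrisation invariance that the paper uses tacitly.
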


\begin{proof}
Fix arbitrary $g\in G$.  Let $\widetilde{F}_t$ be the $f$-MCF starting from $\widetilde{\varphi}=g\circ\varphi$. 
Since $g$ is an isometry on $M$ and $f$ is a $G$-invariant function, it turns out that $g\circ F_t$ is  a solution of $f$-MCF  with initial immersion $\widetilde{\varphi}$.  Therefore,  by the uniqueness of solution of $f$-MCF, it holds that $\widetilde{F}_t=g\circ F_t$ for any $t\in [0,T)$.  

On the other hand, because the initial immersion $\varphi$ is $G$-equivariant (i.e.\ $g\circ \varphi=\varphi\circ g$), we have $\widetilde{F}_0=F_0\circ g$ and hence, $\widetilde{F}_t=F_t\circ g$.  As a consequence, we obtain $g\circ F_t=F_t\circ g$ for any $t\in [0,T)$ and $g\in G$. This proves the lemma.
\end{proof}

We now assume $(M^{2n},\omega, J,g)$ is a connected K\"ahler manifold of real dimension $2n$, where $J$ is the complex structure on $M$ and $\omega$ is the symplectic form compatible with $J$. Note that we define the Riemannian metric $g$ by the relation $\omega(\cdot,\cdot)=g(J\cdot,\cdot)$.
The \textit{Ricci form} on $M$ is the $2$-form $\rho \in \Omega^2(M)$ given by
\begin{equation}\label{def-ricciform}
    \rho(X,Y) \, := \, \mathrm{Ric}(JX,Y).
\end{equation}
The manifold $M$ is called  {\it almost-Einstein} if the Ricci form $\rho$ satisfies 
\begin{align}\label{eq:Ricci}
\rho=C\omega+ndd^{c}f
\end{align}
 for some constant $C$ and $f\in C^{\infty}(M)$, where $d^{c}f$ is a $1$-form defined by $d^{c}f(X):=-df(JX)$. 
 We note the following uniqueness result for the function $f$ in the case where $M$ is compact:
\begin{lemma}\label{lem-funiqueness}
    If $(M,J,\omega,g)$ is a compact K\"ahler manifold such that \eqref{eq:Ricci} holds for two different functions $f,\tilde f\in C^{\infty}(M)$, then $f - \tilde f$ is constant.
\end{lemma}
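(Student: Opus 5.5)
Subtracting the two instances of \eqref{eq:Ricci} isolates the difference. Since $C$ and $\tilde C$ are a priori different constants, I allow $\rho = C\omega + n\,dd^c f = \tilde C\omega + n\,dd^c\tilde f$, and set $u := f-\tilde f$. Then
\[
n\,dd^c u = (\tilde C - C)\,\omega .
\]

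The first step is to show that the constants coincide. I take the wedge product of both sides with $\omega^{n-1}$ and integrate over the closed manifold $M$. The left-hand side gives $\int_M n\,d(d^c u\wedge\omega^{n-1})$, because $\omega$ is closed, and this vanishes by Stokes. The right-hand side is $(\tilde C - C)\int_M\omega^n$, and $\int_M\omega^n$ is a nonzero multiple of the volume. Hence $C=\tilde C$, and therefore $dd^c u = 0$.

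The second step turns $dd^c u=0$ into an elliptic equation. With the conventions of the paper, $d^c u(X) = -du(JX)$, and the standard identity gives $dd^c u = 2i\partial\bar\partial u$ up to sign and normalization. Tracing against $\omega$, i.e. computing $dd^c u\wedge\omega^{n-1}$, yields
\[
dd^c u\wedge\omega^{n-1} = c_n\,(\Delta u)\,\omega^n
\]
for a nonzero dimensional constant $c_n$, where $\Delta$ is the Laplacian of $g$. (On a Kähler manifold the complex Laplacian is a constant multiple of the Riemannian one.) So $\Delta u = 0$ on $M$.

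Since $M$ is compact and connected, a harmonic function is constant: multiply by $u$, integrate, and use $\int_M u\Delta u = -\int_M|\nabla u|^2$; equivalently, apply the maximum principle. Thus $f-\tilde f$ is constant.

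The only point needing care is the trace identity relating $dd^c u\wedge\omega^{n-1}$ to $\Delta u\,\omega^n$ with a nonzero constant. I would check it in local holomorphic normal coordinates at a point, where $\omega$ is standard and $dd^c u$ has coefficients $\partial^2 u/\partial z_j\partial\bar z_k$. The exact value of the constant, and its sign, is irrelevant; all that matters is that it is nonzero.
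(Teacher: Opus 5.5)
Your proof is correct and follows the same route as the paper: $dd^c(f-\tilde f)=0$, so $f-\tilde f$ is harmonic and therefore constant on the compact connected manifold. Your preliminary Stokes argument showing $C=\tilde C$ is a harmless addition; the paper simply takes the constant $C$ in \eqref{eq:Ricci} to be the same for both functions.
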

\begin{proof}
    The equation \eqref{eq:Ricci} for $f,\tilde f$ implies that $dd^c(f - \tilde f) = 0$, i.e.\ $f-\widetilde f$ is a pluriharmonic function. In particular, $f-\tilde f$ is harmonic, i.e.\ $\Delta(f - \tilde f) = 0$. By the strong maximum principle, it follows that $f = \tilde f + C$ for a constant $C$.
\end{proof}
In particular, for a compact almost-Einstein K\"ahler manifold $(M,J,\omega,g)$, the derivatives $\nabla f$, $df$, $d^c f$ do not depend on the choice of $f$.

 If $f$ is a constant function, $M$ is called {\it K\"ahler-Einstein}. Note that if $M$ is compact and has positive first Chern class,  then any K\"ahler metric $\omega$ in the K\"ahler class $[\rho]$ satisfies the condition \eqref{eq:Ricci}. 

An immersion $\varphi: L^{n}\to M^{2n}$ of an $n$-dimensional manifold $L$ is called {\it Lagrangian} if $\varphi^{*}\omega\equiv 0$.  In this case, we have the following well-known result:
\begin{theorem}[Smoczyk \cite{Smo1}, Behrndt \cite{Behrndt}]
 Let $(M,\omega, J)$ be an almost-Einstein K\"ahler manifold with $\rho=C\omega+ndd^{c}f$.  Then, the $f$-MCF $F_{t}$ in $M$ starting from a Lagrangian immersion $\varphi$ preserves the Lagrangian condition.
\end{theorem}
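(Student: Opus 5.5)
We prove the statement by computing the evolution of $F_t^*\omega$ along the flow and showing that it satisfies a linear parabolic-type equation to which the maximum principle applies. Since $\partial_t F_t = K_t$, Cartan's formula together with $d\omega=0$ gives
\[
\frac{\partial}{\partial t}F_t^*\omega \;=\; F_t^*\bigl(\mathcal{L}_{K}\omega\bigr)\;=\; d\bigl(F_t^*(i_{K}\omega)\bigr).
\]
The quantity to control is therefore the one-form $\alpha_{K}:=F_t^*(i_K\omega)=\omega(K,\cdot)$, restricted to $L$.

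The first step is a generalised Dazord identity on an almost Lagrangian immersion, i.e.\ one where $F_t^*\omega$ need not vanish. On an exactly Lagrangian immersion one has $d\alpha_H = F^*\rho$. With $\rho=C\omega+ndd^cf$ and $\omega(n(\overline\nabla f)^\perp,\cdot)=n\,d^cf$ along $L$ (up to sign conventions), this yields $d\alpha_K = C\,F^*\omega$, which vanishes on Lagrangians. In general we expect the error terms to be linear in $\eta:=F_t^*\omega$ and its first covariant derivative, with coefficients bounded by the geometry of $F_t$ and $M$ on compact time intervals. The outcome should be
\[
\partial_t \eta = \Delta \eta + \mathcal{A}(\eta) + \mathcal{B}(\nabla\eta),
\]
where $\Delta$ is the Hodge (or rough) Laplacian of the induced metric $F_t^*g$ acting on two-forms, and $\mathcal{A},\mathcal{B}$ are smooth linear maps.

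To obtain this, we follow Smoczyk's computation. We write $K = H - n(\overline\nabla f)^\perp$ and decompose the tangent bundle of $M$ along $F_t$ into tangential and normal parts. We then express $\omega(H,\cdot)$ via the second fundamental form, using $\overline\nabla J=0$, and $d$ of it via the Codazzi equation and the Ricci form identity $\rho(X,Y)=\mathrm{Ric}(JX,Y)$. Every term that would vanish if $J$ mapped $TL$ onto the normal bundle is collected into the $\eta$-dependent remainder. The $f$-term contributes $-n\,d(F^*d^cf)=-nF^*dd^cf$ exactly, plus a correction from the non-Lagrangian projection that is again linear in $\eta$. This cancels the $ndd^cf$ part of $\rho$, leaving $d\alpha_K = C\eta + \Delta\eta + {}$lower-order terms.

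The main obstacle is this bookkeeping. Concretely, it requires verifying that all terms in $\partial_t\eta$ not of the form $\Delta\eta$ are genuinely linear in $(\eta,\nabla\eta)$ with no inhomogeneous remainder. Once that holds, we consider $|\eta|^2$ with respect to $F_t^*g$. We obtain $\partial_t|\eta|^2 \le \Delta|\eta|^2 + c(t)|\eta|^2$, with $c$ bounded on $[0,T']$ for any $T'<T$ because the flow is smooth there, after absorbing $\nabla\eta$ terms via $-2|\nabla\eta|^2$ and Cauchy–Schwarz. Since $L$ is closed and $\eta\equiv 0$ at $t=0$, the maximum principle gives $|\eta|^2\le e^{\int c}\cdot 0=0$. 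Hence $F_t$ remains Lagrangian for all $t<T$.
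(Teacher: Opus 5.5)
Your proposal is correct and is essentially the Smoczyk--Behrndt argument that the paper relies on by citation; the paper gives no proof of its own. The argument runs: $\partial_t F_t^*\omega = d\alpha_K$, the almost-Einstein condition removes the only inhomogeneous term $F_t^*\rho$, and the maximum principle on the closed manifold $L$ then forces $F_t^*\omega\equiv 0$. The bookkeeping you leave as ``expected'' does close with the sign you need: $d\alpha_H = \Delta(F_t^*\omega) + F_t^*\rho + E$ with $\Delta$ the rough Laplacian. Here $E$ consists of three pieces: the term $\omega|_{NL}(A,A)$, bounded by $C\lvert A\rvert^2\lvert F_t^*\omega\rvert$ because $\lvert\omega|_{NL}\rvert=\lvert\omega|_{TL}\rvert$; intrinsic curvature contracted with $F_t^*\omega$; and the difference between the partial trace $\sum_k \overline{R}(\cdot,\cdot,e_k,Je_k)$ over $TL$ and the full trace defining $\rho$, bounded by $C\lvert\overline{R}\rvert\lvert F_t^*\omega\rvert$. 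The only $\nabla(F_t^*\omega)$ terms come from $n\,d\bigl(\iota_{(\overline{\nabla} f)^{\top}}F_t^*\omega\bigr)$.
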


Because of this property, the $f$-MCF $F_{t}$  starting from a Lagrangian immersion in an almost-Einstein K\"ahler manifold  is called the {\it generalised Lagrangian mean curvature flow} ({\it GLMCF} for short). 
We note that by Lemma \ref{lem-funiqueness}, if $M$ is compact then GLMCF is independent of the choice of function $f$, and is therefore a canonical flow associated to the K\"ahler manifold.

For a vector field $V$ along $\varphi$, we define an associated $1$-form $\alpha_{V}$ by
\[
\alpha_{V}(X_{p}):=\omega_{\varphi(p)}(V,\varphi_{*}X)=g_{\varphi(p)}(JV,\varphi_{*}X)\quad \textup{for $X\in T_{p}L$}.
\]
Then, the {\it generalised mean curvature form} of a Lagrangian immersion $\varphi: L\to M$  is defined by $\alpha_{K}$, where $K=H-n(\onab f)^{\perp}$ is the weighted mean curvature vector. Equivalently, we put
\[
\alpha_{K}=\alpha_{H}-n\varphi^{*}d^{c}f,
\]
where $H$ is the usual mean curvature vector of $\varphi$.

Note that $\alpha_{K}$ is a closed $1$-form. Indeed, in a K\"ahler manifold, it holds that $d\alpha_{H}=\varphi^{*}\rho$ (known as Dazord's formula), and combining with the almost-Einstein condition, we have 
\[
d\alpha_{K}=\varphi^{*}(\rho-ndd^{c}f)=\varphi^{*}(C\omega)=0.
\]
  In particular, the generalised mean curvature form defines a cohomology class $[\alpha_{K}]\in H^1(L,\R)$.
The following property of GLMCF is also fundamental. 

\begin{proposition}[\protect{\cite[Prop.\ 4.3]{KK}}]\label{prop:exact}
Suppose the generalised mean curvature form $\alpha_K$ of the initial Lagrangian immersion $\varphi: L\to M$ is an exact $1$-form. Then, the GLMCF $F_{t}$ with $F_{0}=\varphi$ preserves the exactness of generalised mean curvature form, i.e.\ $[\alpha_{K_{t}}]=0$ for any $t\in [0, T)$.
\end{proposition}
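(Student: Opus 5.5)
The plan is to compute the time derivative of the cohomology class $[\alpha_{K_t}]\in H^1(L,\R)$ and show that it satisfies a linear ODE, $\frac{d}{dt}[\alpha_{K_t}] = C[\alpha_{K_t}]$. Since $L$ is closed, $H^1(L,\R)$ is finite dimensional, so the class must then equal $e^{Ct}[\alpha_{K_0}]$, which vanishes because $\alpha_{K_0}$ is exact.

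The key step is a variational formula for $\alpha_K$ along the flow. Write $F_t^*$ for pull-back, and recall that $F_t$ is Lagrangian for all $t$ by the Smoczyk--Behrndt theorem.

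First, fix a closed loop $c$ in $L$ and differentiate $\int_c \alpha_{K_t}$. A cleaner route is to use that for any Lagrangian immersion and closed 2-form $\beta$ on $M$, the variation of $F_t^*$ of a 1-form is governed by Cartan's formula.

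Concretely, I would express $\alpha_{K_t}$ through a 1-form on $M$ that is globally defined along the image. Working on $L\times[0,T)$ with the map $F(p,t)=F_t(p)$, note that $\alpha_{K_t} = F_t^*(\iota_{\partial_t F}\omega)$ (restricted to $T L$), since $\partial_t F = K_t$. Apply $d$ on $L\times[0,T)$ to $F^*\omega$. This 2-form is closed, and its $L$-part vanishes (Lagrangian), so $F^*\omega = dt\wedge \beta_t$ with $\beta_t = -\alpha_{K_t}$ up to sign.

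Closedness then gives $d_L\beta_t=0$, which is Dazord-type closedness and not yet the evolution. So instead I would use the intrinsic identity: for a one-parameter family of Lagrangians with variation $V$, one has $\frac{d}{dt}\alpha_{H} = d(\cdots) + F^*(\iota_V\rho)$. This follows from Dazord's formula $d\alpha_H=F^*\rho$ applied on $L\times[0,T)$ to the extended mean curvature form, since the Ricci form of the ambient plays the role of curvature of the canonical bundle. Similarly, $\frac{d}{dt}F_t^*(d^cf) = d(F_t^*\iota_V d^cf) + F_t^*(\iota_V dd^cf)$ by Cartan's formula.

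Combining these with $V=K$ and $\rho-ndd^cf=C\omega$ gives
\[ \frac{d}{dt}\alpha_{K_t} = d h_t + C F_t^*(\iota_{K_t}\omega) = dh_t + C\alpha_{K_t} \]
for some function $h_t$. Passing to cohomology yields the ODE. Its solution is $[\alpha_{K_t}]=e^{Ct}[\alpha_{K_0}]=0$.

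The main obstacle is justifying $\frac{d}{dt}[\alpha_{H_t}]=[F_t^*\iota_{V}\rho]$ for a general (non-normal, but Lagrangian) variation. I would do this by realizing $\alpha_H$ as the connection form of the Lagrangian angle on the pulled-back canonical bundle $F^*K_M^{-1}$ over $L\times[0,T)$. Its curvature is $F^*\rho$, and the $dt\wedge$ component of this curvature identity gives exactly the desired time derivative up to an exact term.
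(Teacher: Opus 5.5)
Your argument is correct and is essentially the mechanism behind the cited result. The paper does not reprove this proposition but quotes \cite[Prop.~4.3]{KK}, and its introduction states exactly what you derive: $\partial_t\alpha_{K_t}$ equals $C\alpha_{K_t}$ plus an exact form, so the class evolves as $[\alpha_{K_t}]=e^{Ct}[\alpha_{K_0}]$. Your way of obtaining that evolution identity is valid: take the $dt$-component of the curvature identity for the pulled-back canonical bundle on $L\times[0,T)$, add the homotopy formula for $F_t^*d^cf$, and use $\rho-ndd^cf=C\omega$ (the detour through $F^*\omega=dt\wedge\alpha_{K_t}$ is not needed).
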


Geometrically, this proposition means that if the initial Lagrangian immersion $\varphi$ has an exact generalised mean curvature form, then the GLMCF generates a  Hamiltonian deformation of $\varphi$.

\subsection{Hamiltonian Group Actions}\label{subsec:redu}

Let $(M,\omega)$ be a symplectic manifold, and suppose there exists a connected Lie group $G$ with Lie algebra $\fg$ such that $G$ acts on $M$ preserving the symplectic form, i.e.\ $g^{*}\omega=\omega$ for each $g\in G$. The action of $G$ on $M$ is called {\it Hamiltonian} if there exists a smooth map $\mu: M\to \fg^{*}$ satisfying the following two conditions: 
\begin{itemize}
    \item[(i)] $d\mu^{X}=\omega(X^{\sharp},\cdot)$ for any $X\in \fg$, where $\mu^{X}(p):=\langle\mu(p), X\rangle$ and $X^{\sharp}_p:= \frac{d}{dt}\Big|_{t=0}\exp(tX)\cdot p$ is the fundamental vector field on $M$ generated by $X$.
    \item[(ii)] $\mu(g \cdot p) = \mathrm{Ad}^*_g (\mu(p))$ for any $g \in G$, $p \in M$, where ${\rm Ad}^{*}: G\to GL(\fg^{*})$ is the coadjoint representation.
\end{itemize}

The map $\mu: M\to \fg^{*}$ is called the {\it moment map} associated with the Hamiltonian $G$-action. Note that in general, the moment map associated with the $G$-action is not unique.
However, if $M$ is connected then any two moment maps must differ by a constant $c \in Z(\mathfrak{g}^*) :=\{c\in \fg^{*}\mid {\rm Ad}^{*}(g)c=c\ \forall g\in G\}$.

We now assume that the Lie group $G$ acts on $M$ in a Hamiltonian way with moment map $\mu: M\to \fg^{*}$. The following properties are fundamental (See \cite[Ch.III]{Audin} for the proof).

\begin{lemma}\label{lem:mo}
The moment map $\mu: M\to \fg^{*}$ satisfies the following.
\begin{enumerate}
\item The $G$-action preserves the non-empty level set $\mu^{-1}(c)$ if and only if $c\in Z(\fg^{*})$. 
\item $p\in M$ is a regular point of $\mu$ if and only if the stabiliser subgroup $G_{p}$ is discrete. 
\end{enumerate}
In particular, if $c\in Z(\fg^{*})$ and $c$ is a regular value of $\mu$, then $G$ acts on $\mu^{-1}(c)$ and the action is locally free.
\end{lemma}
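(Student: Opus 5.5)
We prove the three assertions of Lemma \ref{lem:mo} in turn, using only the two defining properties (i) and (ii) of the moment map together with nondegeneracy of $\omega$.

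For (1), differentiate the equivariance (ii). If $c\in Z(\fg^*)$ and $p\in\mu^{-1}(c)$, then $\mu(g\cdot p)=\mathrm{Ad}^*_g c=c$, so $G$ preserves $\mu^{-1}(c)$. Conversely, suppose $\mu^{-1}(c)$ is nonempty and $G$-invariant, and pick $p$ in it. Then $c=\mu(g\cdot p)=\mathrm{Ad}^*_g c$ for all $g\in G$, so $c\in Z(\fg^*)$.

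For (2), the approach is to identify the annihilator of the image of $d\mu_p$ with the Lie algebra of the stabiliser $G_p$. For $X\in\fg$ and $v\in T_pM$, property (i) gives
\[
\langle d\mu_p(v),X\rangle = d\mu^X_p(v)=\omega_p(X^\sharp_p,v).
\]
Hence $X$ annihilates $\mathrm{Im}(d\mu_p)$ exactly when $\omega_p(X^\sharp_p,\cdot)=0$. By nondegeneracy of $\omega_p$, this holds if and only if $X^\sharp_p=0$, that is, $X\in\fg_p=\mathrm{Lie}(G_p)$.

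It follows that $d\mu_p$ is surjective onto $\fg^*$ if and only if $\fg_p=0$. Since $G_p$ is a closed subgroup, $\fg_p=0$ is equivalent to $G_p$ being discrete. This proves (2).

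The final assertion combines the two. Let $c\in Z(\fg^*)$ be a regular value. By (1), $G$ acts on $\mu^{-1}(c)$. Every point of $\mu^{-1}(c)$ is a regular point, so by (2) all stabilisers there are discrete. When $G$ is compact, as in our setting, discrete stabilisers are finite, so the action on $\mu^{-1}(c)$ is locally free.

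The only step requiring any care is the identification $(\mathrm{Im}\,d\mu_p)^\circ=\fg_p$, and it follows at once from (i) and nondegeneracy. No real obstacle arises.
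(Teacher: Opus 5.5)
Your proof is correct. It is the standard argument, identifying $(\mathrm{Im}\,d\mu_p)^\circ$ with $\fg_p$ via $\langle d\mu_p(v),X\rangle=\omega_p(X^\sharp_p,v)$ and nondegeneracy of $\omega$; the paper gives no proof of its own and simply cites Audin, where this is the argument used. As a minor remark, the paper defines a locally free action as one with discrete stabilisers (Appendix \ref{app-orbifold}), so the final assertion follows directly from (2) without invoking compactness of $G$, which is only assumed after this lemma; also, in (1) you use equivariance directly rather than differentiating it.
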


In the following, we assume that $G$ is compact. Suppose furthermore that $c\in Z(\fg^{*})$ and $c$ is a regular value of $\mu$, so that $G$ acts locally freely on the smooth manifold $\mu^{-1}(c)$. 
We denote the set of principal $G$-orbits in $\mu^{-1}(c)$ by $\mu^{-1}(c)^{{\rm pri}}$. Note that $\mu^{-1}(c)^{{\rm pri}}$ is an open submanifold of $\mu^{-1}(c)$ by the principal orbit theorem.
If $G$ acts on  $\mu^{-1}(c)^{{\rm pri}}$ freely,  we have the following by the well-known Marsden-Weinstein symplectic reduction.

\begin{proposition}[cf.\ \cite{Futaki2, K}]\label{prop:dime}
Suppose  $c\in Z(\fg^{*})$ is a regular value and the compact Lie group  $G$ acts on $\mu^{-1}(c)^{{\rm pri}}$ freely.   Then we have
\begin{enumerate}
\item $\mu^{-1}(c)$ is a submanifold of ${\rm dim}\mu^{-1}(c)=2n-k$, where $k:={\rm dim}G$.
\item $\overline{M}_{c}^{{\rm pri}}:=\mu^{-1}(c)^{{\rm pri}}/G$ is a  $2(n-k)$-dimensional smooth manifold. Moreover, $\overline{M}_{c}^{{\rm pri}}$ admits a symplectic structure $\overline{\omega}$ such that $\pi^{*}\overline{\omega}=\iota^{*}\omega$, where $\pi: \mu^{-1}(c)^{{\rm pri}}\to \overline{M}_{c}^{{\rm pri}}$ is the natural projection and $\iota: \mu^{-1}(c)^{{\rm pri}}\to M$ is an inclusion. 
\end{enumerate}
If furthermore $(M,J,\omega)$ is a K\"ahler manifold and $G \subset \mathrm{Aut}(M,J,\omega)$, then 
\begin{enumerate}
\item[(iii)] there exists a complex structure $\overline{J}$ on $\overline{M}_c^{{\rm pri}}$ so that $(\overline{M}_c^{{\rm pri}}, \overline{J}, \overline{\omega})$ is a K\"ahler manifold. 
\item[(iv)] The natural projection $\pi: \mu^{-1}(c)^{{\rm pri}}\to \overline{M}_{c}^{{\rm pri}}$ becomes a Riemannian submersion with respect to the induced metric $g$ on $\mu^{-1}(c)^{{\rm pri}}$ and the K\"ahler metric $\overline{g}$ on $\overline{M}_{c}^{{\rm pri}}$.
\end{enumerate}
In this case, $(\overline{M}_{c}^{{\rm pri}}, \overline{J},\overline{\omega})$ will be called a \emph{K\"ahler quotient} of $M$.  

\end{proposition}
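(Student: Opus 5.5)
We prove Proposition~\ref{prop:dime} by the standard Marsden--Weinstein argument, adding the holomorphic/Riemannian structure at the end. For (i), since $c$ is a regular value of $\mu$, the implicit function theorem makes $\mu^{-1}(c)$ an embedded submanifold of codimension $\dim\fg^*=k$, i.e.\ of dimension $2n-k$. Because $c\in Z(\fg^*)$, Lemma~\ref{lem:mo}(i) shows $G$ preserves $\mu^{-1}(c)$. Lemma~\ref{lem:mo}(ii) shows the action there is locally free, so all orbits are $k$-dimensional. The principal part $\mu^{-1}(c)^{\rm pri}$ is open and $G$-invariant.

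For (ii), the action of the compact group $G$ on $\mu^{-1}(c)^{\rm pri}$ is free and proper. The quotient manifold theorem then makes $\overline M^{\rm pri}_c$ a smooth manifold of dimension $(2n-k)-k=2(n-k)$, and $\pi$ a principal $G$-bundle. To define $\overline\omega$, the key linear-algebra fact is
\[
T_p\mu^{-1}(c)=\ker d\mu_p=(\fg\cdot p)^{\omega},
\]
which follows from $d\mu^X=\omega(X^\sharp,\cdot)$. Hence the kernel of $\iota^*\omega$ at $p$ is $\fg\cdot p\cap(\fg\cdot p)^\omega$. Since $\mu$ is constant on the level set, $\omega(X^\sharp,Y^\sharp)=d\mu^X(Y^\sharp)=0$, so $\fg\cdot p$ is isotropic and the kernel is exactly the tangent space to the orbit. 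The form $\iota^*\omega$ is $G$-invariant and annihilates vertical vectors, so it is basic and descends to a unique $\overline\omega$ with $\pi^*\overline\omega=\iota^*\omega$. This $\overline\omega$ is closed because $\pi^*d\overline\omega=d\iota^*\omega=0$ and $\pi$ is a submersion. It is nondegenerate because the kernel of $\iota^*\omega$ is precisely the vertical space.

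For (iii) and (iv), let $H_p$ be the $g$-orthogonal complement of $\fg\cdot p$ inside $T_p\mu^{-1}(c)$. By the identity $\omega=g(J\cdot,\cdot)$,
\[
(\fg\cdot p)^\omega=(J\fg\cdot p)^{\perp_g},
\]
so
\[
H_p=(\fg\cdot p\oplus J\fg\cdot p)^{\perp_g}.
\]
This subspace is $J$-invariant. The isotropy of $\fg\cdot p$ makes the sum direct, and it has dimension $2(n-k)$. Identify $T_{\pi(p)}\overline M^{\rm pri}_c\cong H_p$ via $d\pi$, and set $\overline g$ and $\overline J$ to be the restrictions of $g$ and $J$ to $H_p$. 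These are well defined, independent of the point in the fibre, since $G$ acts by holomorphic isometries preserving $\mu^{-1}(c)$ and hence $H$. By construction $\pi$ is a Riemannian submersion, and $\overline\omega=\overline g(\overline J\cdot,\cdot)$ because $\pi^*\overline\omega=\iota^*\omega$ on $H$.

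\textbf{Main obstacle.} The substantive point is integrability of $\overline J$; compatibility, and hence the Kähler condition once $\overline J$ is integrable (given $d\overline\omega=0$), is immediate. To prove integrability, I would realise the quotient locally as a complex quotient: the complexified infinitesimal action, spanned by $X^\sharp$ and $JX^\sharp$, is holomorphic and involutive. Then $\mu^{-1}(c)^{\rm pri}\times$ a local slice transverse to $J\fg\cdot p$ is an open set in $M$ foliated by holomorphic leaves of $\fg^{\C}$. The leaf space is locally a complex manifold biholomorphic to $\overline M^{\rm pri}_c$ with its complex structure $\overline J$. Alternatively, one can compute directly that the Nijenhuis tensor of $\overline J$ is the horizontal projection of that of $J$, using O'Neill-type formulas for the horizontal lift of $[\overline X,\overline Y]$. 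Both routes are standard (cf.\ \cite{Futaki2, K}), and I would cite them for this step.
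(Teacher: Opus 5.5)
Your proposal is correct and is the standard Marsden--Weinstein plus K\"ahler-reduction argument, which is exactly what the paper relies on. The paper states this proposition without proof, citing \cite{Futaki2, K}, and so also defers the one nontrivial step, integrability of $\overline J$, to those references. Your second route already settles that step directly: using $J\fg\cdot p\perp H_p$, one gets $N_{\overline J}(\overline U,\overline V)=d\pi\bigl(\mathcal H\,N_J(U^h,V^h)\bigr)=0$ for horizontal lifts $U^h,V^h$.
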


We now consider an almost-Einstein K\"ahler manifold with $\rho=C\omega+ndd^{c}f$ and a connected closed subgroup $G$ of ${\rm Aut}(M,J,\omega)$. We note that if $M$ is compact, then $f$ must be $G$-invariant:

\begin{lemma}\label{lem-fisGinvariant}
    If $(M^{2n},J,\omega,g)$ is a closed almost-Einstein K\"ahler manifold, and $G \subset \mathrm{Aut}(M,J,\omega)$ is compact and connected, then any $f$ satisfying $\eqref{eq:Ricci}$ is $G$-invariant.
\end{lemma}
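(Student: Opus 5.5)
We will show that every element of $G$ preserves $f$ up to an additive constant, and then use connectedness and compactness of $G$ to force that constant to vanish.

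Fix $h\in G$. Since $h$ is a holomorphic isometry, it preserves $J$, $\omega$ and $\mathrm{Ric}$, hence also the Ricci form: $h^*\rho=\rho$. It also commutes with $d$ and with $d^c$, because $d^c$ is built from $d$ and $J$. Pulling back \eqref{eq:Ricci} by $h$ therefore gives
\[
\rho=C\omega+n\,dd^c(h^*f),
\]
so $h^*f$ is another solution of \eqref{eq:Ricci}. By Lemma \ref{lem-funiqueness}, there is a constant $c(h)$ with $f\circ h=f+c(h)$.

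The map $c:G\to\R$ is a homomorphism:
\[
c(h_1h_2)=f\circ h_1\circ h_2-f=\bigl(f+c(h_1)\bigr)\circ h_2-f=c(h_1)+c(h_2).
\]
It is also continuous, since $c(h)=f(h\cdot p_0)-f(p_0)$ for any fixed $p_0\in M$. Its image is therefore a compact subgroup of $(\R,+)$, which must be $\{0\}$. Hence $f\circ h=f$ for all $h\in G$.

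We can avoid the homomorphism argument by a more direct route. Since $M$ is compact, $f\circ h$ and $f$ have the same maximum, which forces $c(h)=0$ immediately. Connectedness of $G$ is not even needed for this.

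The only point requiring care is the invariance $h^*\rho=\rho$ together with the identity $h^*d^cf=d^c(h^*f)$. Both follow directly from $h_*J=Jh_*$ and from $h$ being an isometry, so we expect no serious obstacle.
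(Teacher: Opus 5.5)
Your proof is correct, and its main line is the paper's. You pull back \eqref{eq:Ricci} by $h$ and conclude that $f\circ h-f$ is a constant $c(h)$; you cite Lemma \ref{lem-funiqueness} here, whereas the paper repeats the pluriharmonic/maximum-principle argument inline. You then kill $c$ because it is a homomorphism from the compact group $G$ to $\R$, and your remark that $c$ is continuous fills in a step the paper leaves implicit.

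Your alternative closing argument, via $\max_M(f\circ h)=\max_M f$, is also valid and more elementary than the paper's. It handles each automorphism $h$ on its own, so it shows that the compactness and connectedness hypotheses on $G$ are not needed for this lemma.
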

\begin{proof}
    Since $G \subset \mathrm{Aut}(M,J,\omega)$, $G$ also preserves the metric $g$ and the Ricci form $\rho$. Equation \eqref{eq:Ricci} therefore implies that $dd^cf$ is a $G$-invariant $2$-form.

    Now for every $g \in G$, we have
    \begin{align*}
        g^*(dd^cf) = dd^c f \, \implies \, dd^c(g^* f - f) \, = \, 0,
    \end{align*}
    so $g^*f-  f$ is a pluriharmonic function for each $g \in G$; in particular it is harmonic. Since $M$ is compact, by the maximum principle we have that $g^*f - f = c_g \in \mathbb{R}$ for each $g \in G$. Furthermore, the map $g \mapsto c_g$ is a homomorphism from $G$ to $\mathbb{R}$, since
    \begin{align*}
    (g\circ h)^*f \, &= \, h^* (f + c_g) \, = \, f + c_h + c_g.
    \end{align*}
    Since $G$ is compact, this implies $c_g = 0$ for all $g$, as required.
\end{proof}

If $C\neq 0$, it is known that there exists a moment map of the $G$-action $\mu_{{\rm can}}$ as follows, which we will refer to as the \textit{canonical moment map}.
\begin{proposition}{\cite[Prop. 3]{K}}\label{prop-mucan}
    If $(M^{2n},J,\omega,g)$ is a closed almost-Einstein K\"ahler manifold satisfying \eqref{eq:Ricci} with $C\neq 0$, and $G \subset \mathrm{Aut}(M,J,\omega)$ is compact and connected, then the $G$-action is Hamiltonian with a moment map given by
    \begin{equation}\label{eq-mucan}
        \mu_{{\rm can}}^X(p) \, = \, \langle \mu_{{\rm can}}(p), X\rangle=\frac{1}{C}\Big{\{}-\frac{1}{2}{\rm div}JX^{\sharp}+nd^{c}f(X^{\sharp})\Big{\}}(p).
    \end{equation}
    % Furthermore, $\mu_{{\rm can}}$ satisfies the following properties for any $X \in \mathfrak{g}$:
    % \begin{align*}%\label{eq-mucanproperties}
    % \Delta_{f}\mu_{{\rm can}}^{X}=2C\mu_{{\rm can}}^{X},\quad \int_{M}\mu_{{\rm can}}^{X}\,dv_{f}=0,
    % \end{align*}
    % where $\Delta_{f}$ is the weighted Laplacian defined by $\Delta_{f}u=\Delta u-2n\cdot g(du,df)$ for $u \in C^{\infty}(M)$ and $dv_{f}:=e^{2nf}dv$ is the weighted volume measure on $M$.
\end{proposition}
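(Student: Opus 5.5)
The plan is to verify directly that the function $\mu^X := \mu_{\rm can}^X$ defined by \eqref{eq-mucan} satisfies the two defining conditions of a moment map: (i) $d\mu^X=\omega(X^\sharp,\cdot)$ and (ii) equivariance. Since $G$ is connected, equivariance will follow from the infinitesimal version, and both conditions rest on the fact that $X^\sharp$ is a real holomorphic Killing field preserving $\omega$.

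For condition (i), I would start from the invariance of the Ricci form. Since $G\subset \mathrm{Aut}(M,J,\omega)$, the vector field $X^\sharp$ preserves $g$, $J$ and hence $\rho$, so $L_{X^\sharp}\rho=0$. Cartan's formula then gives $d(i_{X^\sharp}\rho)=0$. The key classical identity for a holomorphic Killing field $Y$ on a K\"ahler manifold is
\[
i_Y\rho=\tfrac12\, d(\operatorname{div}JY)
\]
up to a sign convention to be checked. It comes from $\operatorname{Ric}(Y)=-\Delta$-type Bochner formulas for Killing fields, combined with $JY$ being a gradient-like holomorphic field, and I would derive it by a short computation from $\onab Y$ commuting with $J$. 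Next, $f$ is $G$-invariant by Lemma \ref{lem-fisGinvariant}, so $L_{X^\sharp}d^cf=0$ because $X^\sharp$ also preserves $J$. Cartan's formula then yields
\[
i_{X^\sharp}dd^cf=-d\bigl(d^cf(X^\sharp)\bigr).
\]
Contracting \eqref{eq:Ricci} with $X^\sharp$ gives
\[
C\,i_{X^\sharp}\omega=i_{X^\sharp}\rho-n\,i_{X^\sharp}dd^cf=d\Bigl(\tfrac12\operatorname{div}JX^\sharp+n\,d^cf(X^\sharp)\Bigr),
\]
modulo signs. With the convention $d\mu^X=\omega(X^\sharp,\cdot)=i_{X^\sharp}\omega$, dividing by $C\neq 0$ produces exactly \eqref{eq-mucan}. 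The main obstacle is getting the signs in the Killing-field identity for $i_Y\rho$ consistent with $\rho(X,Y)=\mathrm{Ric}(JX,Y)$ and $d^cf=-df\circ J$. I would fix them by testing on $\mathbb{CP}^1$ with the rotation field.

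For condition (ii), the formula is manifestly natural. For $h\in G$, pulling back by $h$ and using that $h$ preserves $g$, $J$ and $f$ gives $\operatorname{div}(J X^\sharp)\circ h=\operatorname{div}(J(h^{-1}_*X^\sharp))$. Moreover, $h^{-1}_*X^\sharp=(\mathrm{Ad}_{h^{-1}}X)^\sharp$, and similarly for the $d^cf$ term. Hence $\mu^X(h\cdot p)=\mu^{\mathrm{Ad}_{h^{-1}}X}(p)$, which is precisely $\mu(h\cdot p)=\mathrm{Ad}^*_h\mu(p)$. The same naturality shows that $X\mapsto\mu^X$ is linear, so $\mu_{\rm can}$ is a well-defined $\fg^*$-valued map.

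Finally, I note that no normalisation constant is needed. The right-hand side is an explicit function, not merely a primitive determined up to constants, so the proof is complete once the identity in (i) is established.
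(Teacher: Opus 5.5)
Your argument is correct. The paper gives no proof of its own and simply cites \cite[Prop.~3]{K}, and your route is the standard one behind that result: contract $\rho=C\omega+n\,dd^cf$ with $X^\sharp$, use $L_{X^\sharp}d^cf=0$ (from the $G$-invariance of $f$ in Lemma~\ref{lem-fisGinvariant}) to get $i_{X^\sharp}dd^cf=-d\bigl(d^cf(X^\sharp)\bigr)$, and check equivariance by naturality under $h\in G$.

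The sign you left open comes out as $i_Y\rho=-\tfrac12\,d(\operatorname{div}JY)$ under the convention $\rho(X,Y)=\mathrm{Ric}(JX,Y)$. Weitzenb\"ock applied to the closed $1$-form $(JY)^\flat$, together with $\nabla^*\nabla Y=\mathrm{Ric}(Y)$, gives $-d\operatorname{div}JY=2\,\mathrm{Ric}(JY,\cdot)=2\,i_Y\rho$. This yields exactly the coefficient $-\tfrac12$ in \eqref{eq-mucan}, rather than the $+\tfrac12$ in your displayed formula.
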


In general, even if $(M,J,\omega)$ is K\"ahler-Einstein, the K\"ahler quotient $(\overline{M}^{\rm pri}_c,\overline{J},\overline{\omega})$ is not necessarily Einstein as well. However, in our setting, the almost-Einstein property is inherited by the K\"ahler quotient at the zero level set, as follows:

\begin{proposition}[\cite{Futaki, K}]\label{prop:Ricci}
Let $(M^{2n},J,\omega,g)$
be a closed almost-Einstein K\"ahler manifold such that
$\rho=C\omega+ndd^{c}f$ with $C\neq 0$. Suppose that 
$0\in \fg^{*}$ is a regular value (with non-empty level set) and
that $G \subset \mathrm{Aut}(M,J,\omega)$ is a connected, compact subgroup acting on $\mu_{{\rm can}}^{-1}(0)^{{\rm pri}}$ freely.
 
Then, the K\"ahler quotient $\overline{M}^{{\rm pri}}=(\mu_{{\rm can}}^{-1}(0)^{{\rm pri}}/G, \overline J,\overline\omega)$
of the $0$-level set is almost-Einstein. The Ricci form $\overline{\rho}$ of $\overline{M}^{\rm pri}$ is given by
$
\overline{\rho}=C\overline{\omega}+(n-k)dd^{c}\overline{f},
$
where  $k={\rm dim}(G)$ and 
$\overline f$ is the function induced by the $G$-invariant function
\begin{align}\label{def:fbar}
\widetilde{f}:\mu_{{\rm can}}^{-1}(0)\to \R, \quad \widetilde{f}=\log V^{\frac{1}{n-k}}+\frac{n}{n-k} f
%\overline{f}(x):= \frac{1}{n-k}\Big(\log{\rm Vol}(\pi^{-1}(x))+n \check{f}(x)\Big),
\end{align}
where $V: \mu^{-1}_{{\rm can}}(0)\to \R$ is the orbit volume function defined by \eqref{def:vol} (see Appendix \ref{app-orbifold}), i.e.\ $\overline f \circ \pi = \widetilde f$.

\end{proposition}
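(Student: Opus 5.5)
The plan is to compute the Ricci form of the quotient by combining O'Neill-type submersion formulas with a Dazord/Futaki-type identity linking the orbit volume to $\mu_{\rm can}$. First, recall the standard formula for the Ricci form of a Kähler quotient (Futaki \cite{Futaki2}, see also \cite{K}). For $\pi:\mu^{-1}(c)^{\rm pri}\to\overline M^{\rm pri}$ with orbit volume function $V$, one has on $\mu^{-1}(c)^{\rm pri}$
\[
\pi^*\overline\rho \, = \, \iota^*\rho \, - \, \iota^*\!\Big(\sum_{a} d\big(\tfrac12\,{\rm div}(JX_a^\sharp)\big)\wedge\theta^a\Big)\text{-type terms} \, + \, dd^c\log V ,
\]
where $\{X_a\}$ is a basis of $\fg$ and $\theta^a$ the connection forms of the principal bundle. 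The cleanest way to get this is to use the holomorphic volume/canonical bundle picture. Locally, a holomorphic $n$-form on $M$ contracted with the holomorphic fields $X_a^\sharp - iJX_a^\sharp$ descends to a holomorphic $(n-k)$-form on the quotient. Its pointwise norm squared differs from the pulled-back one by a factor $V^{2}$ (the determinant of the Gram matrix of the $X_a^\sharp$). Hence $\overline\rho = -dd^c\log|\overline\Omega|$ picks up $dd^c\log V$, together with the horizontal part of the curvature of $K_M$, which equals $\iota^*\rho$ corrected by terms involving $d({\rm div}\,JX^\sharp)$ paired with the connection.

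Second, I would substitute the almost-Einstein condition $\rho = C\omega + n\,dd^cf$ and the explicit canonical moment map \eqref{eq-mucan}. The correction terms are built from $-\frac12{\rm div}JX^\sharp$. By \eqref{eq-mucan}, these equal $C\mu^X_{\rm can} - n\,d^cf(X^\sharp)$. On the zero level set the $C\mu^X_{\rm can}$ contributions (and their differentials along $\mu^{-1}(0)$) vanish, which is exactly why $c=0$ is needed. The remaining $n\,d^cf(X^\sharp)$ pieces combine with $\iota^*(n\,dd^cf)$ to give the basic form $n\,dd^c(f|_{\mu^{-1}(0)})$ computed with the quotient $d^c$. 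Here one uses that $f$ is $G$-invariant (Lemma \ref{lem-fisGinvariant}), so $f$ descends. Also $\iota^*\omega = \pi^*\overline\omega$.

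Collecting terms, I expect
\[
\pi^*\overline\rho \, = \, C\pi^*\overline\omega + \pi^*dd^c\big(\log V + n f\big) \, = \, C\pi^*\overline\omega + (n-k)\,\pi^* dd^c\widetilde f ,
\]
with $\widetilde f$ as in \eqref{def:fbar}. Dividing by $n-k$ gives the stated normalisation. Injectivity of $\pi^*$ on forms then yields $\overline\rho = C\overline\omega + (n-k)dd^c\overline f$.

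The main obstacle is the first step: getting the quotient Ricci form formula with correct signs and constants, particularly verifying that the vertical/connection cross-terms are exactly the $d({\rm div}JX^\sharp)$ terms that cancel against the moment-map identity on $\mu^{-1}(0)$. I would check it first in the case $G=S^1$, where the Gram determinant is $|X^\sharp|^2$ and $V = 2\pi|X^\sharp|$ up to period normalisation. I would then extend by working in a local holomorphic slice.
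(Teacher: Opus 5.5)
The paper does not prove this proposition; it quotes it from \cite{Futaki, K}. Your canonical-bundle outline is a correct proof and is essentially the computation behind the cited formula. The step you flag as the main obstacle closes in a few lines once the correction terms are written out precisely.

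There is one sign slip, and two inputs should be made explicit. With the paper's convention $d^cu(X)=-du(JX)$ one has $dd^c=2i\partial\bar\partial$. So for a local holomorphic volume form, $\rho=i\partial\bar\partial\log|\Omega|^2=+dd^c\log|\Omega|$, and likewise $\overline\rho=+dd^c\log|\overline\Omega|$. With your minus sign the orbit-volume term would enter as $-dd^c\log V$, contradicting your next step.

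The identity $|\overline\Omega|=c\,V|\Omega|$ on $\mu_{\rm can}^{-1}(0)$ rests on two facts:
\begin{itemize}
\item The Hermitian Gram matrix $h(X_a^{1,0},X_b^{1,0})=\tfrac12\bigl(g(X_a^\sharp,X_b^\sharp)-i\,\omega(X_a^\sharp,X_b^\sharp)\bigr)$ is real there. This holds because $\omega(X_a^\sharp,X_b^\sharp)=\pm\mu_{\rm can}^{[X_a,X_b]}$, which vanishes on the zero level.
\item $\Omega$ must be chosen $G$-invariant, for instance from a local $G^{\C}$-slice chart. Then $\log|\Omega|$ descends. Moreover $L_{X^\sharp}\Omega=0$ forces $L_{JX^\sharp}\Omega=0$, which gives $d\log|\Omega|(JX^\sharp)=-\tfrac12{\rm div}(JX^\sharp)$.
\end{itemize}

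For the correction terms, let $\theta^a$ be the connection forms, with $\theta^a(X_b^\sharp)=\delta^a_b$ and $\theta^a|_{\mathcal H}=0$. We have $T\mu_{\rm can}^{-1}(0)=\mathcal H\oplus\fg\cdot p$, where $\mathcal H$ is $J$-invariant and $J(\fg\cdot p)\perp T\mu_{\rm can}^{-1}(0)$. Hence for any $G$-invariant $u$,
\[
\iota^*d^cu=\pi^*\overline{d}^{\,c}\,\overline u-\sum_a du(JX_a^\sharp)\,\theta^a .
\]
So the corrections to $\iota^*dd^cu$ are exactly $d\bigl(du(JX_a^\sharp)\bigr)\wedge\theta^a$ and $du(JX_a^\sharp)\,d\theta^a$.

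Apply this to $\phi:=\log|\Omega|-nf$. This function is $G$-invariant by Lemma \ref{lem-fisGinvariant} and satisfies $dd^c\phi=C\omega$. By the above and \eqref{eq-mucan},
\[
d\phi(JX_a^\sharp)=-\tfrac12{\rm div}(JX_a^\sharp)+n\,d^cf(X_a^\sharp)=C\mu_{\rm can}^{X_a}.
\]
This vanishes identically on $\mu_{\rm can}^{-1}(0)$, together with its tangential derivatives. Hence $\pi^*\overline{d}\,\overline{d}^{\,c}\overline\phi=C\iota^*\omega=C\pi^*\overline\omega$. Therefore
\[
\pi^*\overline\rho=C\pi^*\overline\omega+\pi^*\overline{d}\,\overline{d}^{\,c}\,\overline{(\log V+nf)}=C\pi^*\overline\omega+(n-k)\,\pi^*\overline{d}\,\overline{d}^{\,c}\overline f ,
\]
and no division by $n-k$ is needed.

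This is the same cancellation you describe. Grouping $\log|\Omega|$ with $-nf$ avoids tracking the $n\,d^cf(X^\sharp)$ pieces separately. It also shows that the zero level is used in exactly two places: realness of the Gram matrix, and vanishing of $d\phi(JX_a^\sharp)$.
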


\begin{remark}
Note that in general, if $M$ is connected and compact, and $G$ is a compact connected Lie group, then the level set $\mu^{-1}(c)$ of the moment map of Hamiltonian $G$-action is connected and compact for any $c\in \mu(M)$. It follows then that the K\"ahler quotient as in Proposition \ref{prop:Ricci} is also connected. The connectedness of $\mu^{-1}(c)$ is non-trivial, and is known as the Kirwan connectedness theorem (see Atiyah-Guillemin-Sternberg's work for the toric case \cite[Theorem 5.5.1]{MS}, and Kirwan's work for the general case \cite{kirwan_convexity_1984}).
\end{remark}

Recall that even if $G$ freely acts on the set of principal orbits $\mu_{{\rm can}}^{-1}(0)^{{\rm pri}}$,  the entire quotient space $\overline{M}=\mu_{{\rm can}}^{-1}(0)/G$ is not necessarily a smooth manifold, but rather an orbifold, and the orbifold structure $\mathcal{O}$ is as described in subsection \ref{subs:lco}. Since $\widetilde{f}: \mu_{{\rm can}}^{-1}(0)\to \R$
is a $G$-invariant smooth function, we obtain a well-defined orbifold smooth function $\overline{f}:\mathcal{O}\to \R$ 
such that $\overline f \circ \pi = \widetilde f$ and $\mathcal{O}$ is equipped with a weighted orbifold Riemannian metric $\overline{g}_{\overline{f}}$.  In fact,
the weighted Riemannian metric $\overline{g}_{\overline{f}}$ on $\overline{M}=\mu_{{\rm can}}^{-1}(0)/G$  coincides with the Hsiang-Lawson metric defined in subsection \ref{subs:lco} with respect to the weighted metric $g_{f}=e^{2f}g$ (See \cite[Section 2]{K} for details).

\subsection{$G$-equivariant Lagrangian immersions.}

Next, we recall some facts regarding Lagrangian immersions which are equivariant with respect to a Hamiltonian action.
We refer the reader to \cite{Audin, BCO, Futaki, Futaki2, K} for further details.

For a symplectic manifold $(M,\omega)$, an immersion $\varphi: S^{l}\to M^{2n}$ of an $l$-dimensional connected manifold $S^{l}$ is called {\it isotropic} if $\varphi$ satisfies $\varphi^{*}\omega=0$; in this case, it must hold that $l\leq n$, and $\varphi$ is Lagrangian if $l=n$. We have the following important result for equivariant isotropic immersions:

\begin{lemma}\label{lem:isotro}
Suppose that we have a Hamiltonian action of a connected Lie group $G$ on $(M,\omega)$, with moment map $\mu:M\to\mathfrak{g}^*$.
Suppose that $G$ also acts on $S^{l}$ and let $\varphi:S^l \to (M^{2n},\omega)$ be a $G$-equivariant isotropic immersion.

Then, there exists an element $c\in Z(\fg^{*})$ such that $\varphi(S)\subseteq \mu^{-1}(c)$. 
\end{lemma}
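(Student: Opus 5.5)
The plan is to show that $d(\mu^X\circ\varphi)=0$ on $S$ for every $X\in\fg$, so that $\mu\circ\varphi$ is locally constant, and then to use equivariance to see that the constant value lies in $Z(\fg^*)$. I will assume $S$ is connected, as in the setup, and that $G$ is connected.

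\textbf{Step 1 (vanishing differential).} Fix $X\in\fg$ and $p\in S$. Since $\varphi$ is $G$-equivariant, differentiating $\varphi(\exp(tX)\cdot p)=\exp(tX)\cdot\varphi(p)$ at $t=0$ gives
\[
X^\sharp_{\varphi(p)}=d\varphi_p(X^\sharp_p),
\]
where the right-hand $X^\sharp$ is the fundamental vector field of the $G$-action on $S$. Hence, for any $v\in T_pS$, property (i) of the moment map yields
\[
d(\mu^X\circ\varphi)_p(v)=\omega(X^\sharp_{\varphi(p)},d\varphi_p v)=(\varphi^*\omega)_p(X^\sharp_p,v)=0,
\]
by isotropy. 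Thus $\mu^X\circ\varphi$ is constant on the connected manifold $S$ for every $X$. It follows that $\mu\circ\varphi\equiv c$ for some $c\in\fg^*$.

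\textbf{Step 2 ($c$ is central).} For $g\in G$ and $p\in S$, equivariance together with property (ii) gives
\[
c=\mu(\varphi(g\cdot p))=\mu(g\cdot\varphi(p))=\mathrm{Ad}^*_g\mu(\varphi(p))=\mathrm{Ad}^*_g c.
\]
Hence $c\in Z(\fg^*)$, and $\varphi(S)\subseteq\mu^{-1}(c)$.

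The only delicate point is the identification of fundamental vector fields under equivariance in Step 1, which uses that the actions are smooth. Everything else is immediate, so I expect no real obstacle. Note also that Step 2 does not even need $G$ to be connected.
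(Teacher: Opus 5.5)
Your proposal is correct and follows the paper's argument. Equivariance gives $X^\sharp_{\varphi(p)}\in d\varphi_p(T_pS)$, and isotropy then makes $d(\mu^X\circ\varphi)$ vanish, so $\mu\circ\varphi$ is constant on the connected $S$; equivariance of $\mu$ then places this constant in $Z(\fg^*)$.
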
 

\begin{proof}
For $p \in S$, $X \in \mathfrak{g}$ and $Y \in T_pS$,
\begin{equation}
d(\mu^X \circ \varphi)_p(Y) \, = \, d\mu^X_{\varphi(p)}\circ d\varphi_p(Y) \, = \, \omega(X^\sharp_{\varphi(p)}, d\varphi_{p}(Y)).
\end{equation}
Since $\varphi$ is $G$-equivariant, $X^\sharp_{\varphi(p)} \in d\varphi_p(T_p S)$, and since $\varphi(S)$ is isotropic, the right hand side vanishes. Therefore, $\mu \circ \varphi$ is constant. Equivariance of $\mu$ implies that this constant lies in $Z(\mathfrak{g}^*)$.
\end{proof}

We now assume that $(M,J,\omega,g)$ is a closed almost-Einstein K\"ahler manifold satisfying \eqref{eq:Ricci}, and consider a connected closed subgroup $G \subset \mathrm{Aut}(M,J,\omega)$ with canonical moment map $\mu_{\rm can}$.
Let $\varphi: L^{n}\to M^{2n}$ be a Lagrangian immersion. By Lemma \ref{lem:isotro}, if $\varphi$ is $G$-equivariant,  then $\varphi(L)\subset \mu_{{\rm can}}^{-1}(c)$ for some $c\in Z(\fg^{*})$ .  If furthermore $G$ acts on $L$ freely, then $\overline{L}:=L/G$ is a smooth manifold and we obtain an immersion $\overline{\varphi}: \overline{L}^{n-k}\to \overline{M}_{c}^{{\rm pri}}=\mu_{{\rm can}}^{-1}(c)^{{\rm pri}}/G$ such that $\overline{\varphi}\circ \pi_{L}=\pi\circ \varphi$, where $\pi: \mu_{{\rm can}}^{-1}(c)\to \overline{M}_{c}$ (resp. $\pi_{L}: L\to \overline{L}$) is the natural projection.
 It is easy to see that $\overline{\varphi}$ is also a Lagrangian immersion into $\overline{M}_{c}^{{\rm pri}}$ (see also the  following figure). 
\[
\xymatrix@C=25pt@R=25pt{
L\  \ar[r] \ar@/^16pt/[rr]^{\varphi}  \ar[d]_-{\pi_{L}} &\ \mu_{{\rm can}}^{-1}(c)^{{\rm pri}}\  \ar[r]_-{} \ar[d]_-{\pi}&\  M\\
\overline{L}\  \ar[r]^-{\overline{\varphi}} &\ \overline{M}_{c}^{{\rm pri}} &  
}
\]
 
Moreover, the following  fact is proved in \cite[Theorem 1]{K} by the second author. Note that, since 
the statement concerns a local property, it can be generalised to (Lagrangian) immersions.

\begin{theorem}[\cite{K}]\label{thm:K}
Assume we are in the setting of Proposition \ref{prop:Ricci}, and let $\varphi:L \to M$ be a $G$-equivariant Lagrangian immersion such that $G$ acts on $L$ freely.
Then, we have the following.
\begin{enumerate}
\item If the generalised mean curvature form $\alpha_{K}$ of $\varphi$ is an exact $1$-form, then $\varphi(L)$ is contained in the $0$-level set of the canonical moment map $\mu_{{\rm can}}:M\to \fg^{*}$.
\item If $\varphi(L)$ is contained in $\mu_{{\rm can}}^{-1}(0)^{{\rm pri}}$, then the generalised mean curvature form $\alpha_{K}$ of $\varphi: L\to M$ and $\overline \alpha_{\overline K}$
of $\overline{\varphi}: \overline{L}\to \overline{M}^{{\rm pri}}$ satisfy the relation 
$\pi^{*}\overline\alpha_{\overline{K}}=\alpha_{K}$. 

In particular, $K$ is tangent to $\mu_{\rm can}^{-1}(0)$ and $K$ coincides with the horizontal lift of $\overline{K}$ with respect to the Riemannian submersion $\pi: \mu_{\rm can}^{-1}(0)^{\rm pri}\to \overline{M}^{\rm pri}$.
\end{enumerate}
\end{theorem}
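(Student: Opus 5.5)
The plan for part (1) is to reduce to showing that the constant $c\in Z(\fg^*)$ supplied by Lemma \ref{lem:isotro} (so $\varphi(L)\subset\mu_{\rm can}^{-1}(c)$) must vanish. This reduces to two ingredients: a pointwise identity for $\alpha_K$ evaluated on fundamental vector fields, and a boundedness argument along orbits.

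\textbf{Pointwise identity.} Fix $X\in\fg$ and $p\in L$, and take an orthonormal frame $\{e_i\}$ of $T_pL$, so that $\{Je_i\}$ is an orthonormal normal frame. By equivariance $X^\sharp$ is tangent to $L$, hence $JX^\sharp$ is normal. I will split
\[
{\rm div}_M(JX^\sharp)=\sum_i g(\onab_{e_i}JX^\sharp,e_i)+\sum_i g(\onab_{Je_i}JX^\sharp,Je_i).
\]
\begin{itemize}
\item The first sum is minus $g(H,JX^\sharp)$, since $JX^\sharp$ is normal.
\item In the second sum, $J$ is parallel, so the terms become $g(\onab_{Je_i}X^\sharp,e_i)$. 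The Killing property then converts this into a term in $H$, using the Lagrangian symmetry of the second fundamental form $g(A(e_i,e_j),Je_k)$.
\end{itemize}
The outcome I expect is ${\rm div}_M(JX^\sharp)=-2\,g(H,JX^\sharp)=2\,\alpha_H(X^\sharp)$ up to a sign to be checked. Combined with \eqref{eq-mucan}, this gives
\[
\alpha_K(X^\sharp)=\pm C\,\mu^X_{\rm can}\circ\varphi .
\]

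\textbf{Boundedness argument.} The right-hand side above equals the constant $\pm C\langle c,X\rangle$. If $\alpha_K=dh$, then $h(\exp(tX)\cdot p)=h(p)\pm tC\langle c,X\rangle$ grows linearly in $t$. But $h$ is bounded on the compact manifold $L$, so $\langle c,X\rangle=0$ for every $X$. Since $C\neq0$, this forces $c=0$.

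For part (2), the approach is to compare mean curvatures through the Riemannian submersion $\pi:\mu_{\rm can}^{-1}(0)^{\rm pri}\to\overline M^{\rm pri}$ of Proposition \ref{prop:dime}. Write $L=\pi^{-1}(\overline L)$ as a union of $G$-orbits, and decompose $H$ into three components:
\begin{itemize}
\item the part normal to $\mu_{\rm can}^{-1}(0)$, spanned by the $JX^\sharp$;
\item the part tangent to the level set but normal to $L$;
\item the vertical directions.
\end{itemize}
The first component is $\sum g(H,JX^\sharp)$-type coefficients. By the identity from part (1) these are multiples of $\mu^X_{\rm can}$ minus the $d^cf(X^\sharp)$ term, so the corresponding components of $K=H-n\onab f^\perp$ vanish on $\mu_{\rm can}^{-1}(0)$. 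Hence $K$ is tangent to the level set. For the horizontal part, O'Neill's formulas for a submersion whose fibres are the $G$-orbits of volume $V$ give
\[
H^{\rm hor}=\widetilde{\overline H}-(\onab\log V)^{\perp},
\]
where the tilde denotes horizontal lift and the fibres contribute their mean curvature $-\onab\log V$.

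Combining these, $K=\widetilde{\overline H}-\big(\onab(\log V+nf)\big)^\perp$. Definition \eqref{def:fbar} gives $\log V+nf=(n-k)\widetilde f$, so $K$ is the horizontal lift of $\overline H-(n-k)\onab\overline f^\perp=\overline K$. Since $\pi^*\overline\omega=\iota^*\omega$, applying $\omega(\cdot,d\varphi\,\cdot)$ yields $\pi^*\overline\alpha_{\overline K}=\alpha_K$.

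I expect the main obstacle to be the O'Neill bookkeeping, namely checking that the vertical and mixed terms cancel exactly. In particular, the normal space of $L$ inside the level set must be exactly horizontal and $J$-related to the normal space of $\overline L$, and the $e^{nf}$ weight must combine with the orbit-volume term with precisely the coefficient $n/(n-k)$. I would first verify this in the abelian case $G=T^k$, where the frames are explicit, and then extend.
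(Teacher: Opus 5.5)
Your proposal is correct. It differs from the paper mainly in that it is self-contained and runs the deduction in (2) the other way.

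\textbf{What the paper does.} The paper proves neither (1) nor the identity $\pi^*\overline\alpha_{\overline K}=\alpha_K$; it defers both to \cite{K}. It only derives the final sentence of (2), taking the form identity as given:
\begin{itemize}
\item $\alpha_K$ vanishes on the vertical space $\fg_p$, so $JK\in\mathcal{H}_p$.
\item Hence $K\in J\mathcal{H}_p\subset T_p\mu_{\rm can}^{-1}(0)^{\rm pri}$, so $K$ is horizontal.
\item Nondegeneracy of $\overline\omega$ then gives $\pi_*K=\overline K$.
\end{itemize}

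\textbf{What you do instead.} You compute $K$ directly.
\begin{itemize}
\item Your pointwise identity is right: ${\rm div}_M(JX^\sharp)=-2g(H,JX^\sharp)=2\alpha_H(X^\sharp)$. Hence $\alpha_K(X^\sharp)=-C\,\mu^X_{\rm can}\circ\varphi$, so the $J\fg_p$-component of $K$ vanishes on the zero level set.
\item The submersion formula for the mean curvature of $L=\pi^{-1}(\overline L)$ inside $\mu_{\rm can}^{-1}(0)^{\rm pri}$, namely $\widetilde{\overline H}-(\nabla\log V)^{\perp}$, combines with $\log V+nf=(n-k)\widetilde f$ to show that $K$ is the horizontal lift of $\overline K$.
\item The form identity then follows from $\pi^*\overline\omega=\iota^*\omega$.
\end{itemize}
Your route gives a complete proof and explains why the potential $\widetilde f$ in \eqref{def:fbar} has exactly that form. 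The paper's route is shorter but rests entirely on the citation.

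\textbf{Two small corrections.}
\begin{itemize}
\item The statement does not assume $L$ is closed, so bound $h$ on the compact orbit $G\cdot p$ rather than on $L$. The curve $\exp(tX)\cdot p$ stays in that orbit, and $\mu^X_{\rm can}$ is constant along it because $X^\sharp(\mu^X)=\omega(X^\sharp,X^\sharp)=0$. In particular you do not need Lemma \ref{lem:isotro}, and freeness is not used in (1).
\item $H$ has no vertical component, since the orbits are tangent to $L$. So the normal space of $L$ in $M$ is just $J\mathcal{H}_p\oplus J\fg_p$, and the mixed O'Neill terms you worried about never appear. With a frame adapted to $T_pL=\mathcal{H}_p\oplus\fg_p$, the trace splits into two pieces. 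The horizontal piece gives $\widetilde{\overline H}$; the vertical part $\tfrac12\mathcal{V}[E,F]$ is tangent to $L$ and drops out under normal projection. The vertical piece gives the orbit mean curvature $-\nabla\log V$.
\end{itemize}
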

\begin{proof}
See \cite{K} for the proof. We only check that $K$ is the horizontal lift of $\overline{K}$. Since $\varphi$ is $G$-equivariant, for each $p\in L$, the tangent space $T_pL$ is decomposed into an orthogonal direct sum $T_pL=\mathcal{H}_p\oplus \fg_p$, where $\mathcal{H}_p$ is the horizontal subspace w.r.t.\  $\pi_L$ and $\fg_p$ is the tangent space of the $G$-orbit $G\cdot p$ at $p$. Using this notation, the tangent space of $\mu_{\rm can}^{-1}(0)^{\rm pri}$ is given by (see \cite{K})
\[
T_p\mu_{\rm can}^{-1}(0)^{\rm pri}=\mathcal{H}_p\oplus J\mathcal{H}_p\oplus \fg_p.
\]
Note that $\mathcal{H}_p\oplus J\mathcal{H}_p$ coincides with the horizontal subspace with respect to $\pi: \mu_{\rm can}^{-1}(0)^{\rm pri}\to \overline{M}^{\rm pri}$.\\
  Since  $K$ is a normal vector of the Lagrangian immersion $\varphi:L\to M$, we have $JK\in T_pL$.  By the relation $\pi^{*}\overline\alpha_{\overline{K}}=\alpha_{K}$, we see $g(JK,X)=\alpha_K(X)=\pi^{*}\overline\alpha_{\overline{K}}(X)=0$ for any $X\in \fg_p$ because $\pi_*X=0$, and hence, $JK\in \mathcal{H}_p$. This shows that $K\in J\mathcal{H}_p\subset T_p\mu_{\rm can}^{-1}(0)^{\rm pri}$ and $K$ is a horizontal vector with respect to $\pi$. Moreover, we have that 
\begin{align*}
    \overline\omega(\overline K, \pi_* X) \, = \, \overline\alpha_{\overline K}(\pi_* X) \, = \, (\pi^*\overline\alpha_{\overline K})(X) \, = \, \alpha_{K}(X) \, = \, \omega(K,X) \, = \, (\pi^*\overline\omega)(K,X) \, = \, \overline\omega(\pi_* K,\pi_* X).
\end{align*}
for any $X\in T_{p}L$. This shows that $\overline{K}=\pi_*K$, and hence, $K$ is the horizontal lift of $\overline{K}$.
\end{proof}

\begin{corollary}\label{cor:K}
If the $G$-equivariant Lagrangian immersion $\varphi: L\to M$ is $f$-minimal, then $\varphi(L)\subset \mu_{{\rm can}}^{-1}(0)$. Moreover, if $\varphi(L)\subset \mu_{{\rm can}}^{-1}(0)^{{\rm pri}}$, then $\varphi$ is $f$-minimal in $M$ if and only if $\overline{\varphi}$ is $\overline{f}$-minimal in $\overline{M}^{{\rm pri}}$.
\end{corollary}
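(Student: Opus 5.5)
This is a direct consequence of Theorem \ref{thm:K}, so the plan is to apply it in both directions rather than argue from scratch.

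For the first assertion, suppose $\varphi$ is $f$-minimal, so that $K\equiv 0$ along $\varphi$. Then the generalised mean curvature form $\alpha_K=\omega(K,\cdot)$ vanishes identically. In particular it is exact, being $d$ of the zero function. Since $G$ acts freely on $L$, Theorem \ref{thm:K}(1) applies and gives $\varphi(L)\subset\mu_{\rm can}^{-1}(0)$.

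For the equivalence, assume $\varphi(L)\subset\mu_{\rm can}^{-1}(0)^{\rm pri}$. Theorem \ref{thm:K}(2) then gives two facts: $\pi^*\overline\alpha_{\overline K}=\alpha_K$, and $K$ is the horizontal lift of $\overline K$ with respect to the Riemannian submersion $\pi$.

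\emph{($\Rightarrow$)} If $K=0$, then $\overline K=\pi_*K=0$ at every point of $\overline L$. This uses that $\pi_L$ is surjective, so every point of $\overline L$ is the image of some $p\in L$.

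\emph{($\Leftarrow$)} If $\overline K=0$, then its horizontal lift vanishes, so $K=0$. Alternatively, $\alpha_K=\pi^*\overline\alpha_{\overline K}=0$; since $\varphi$ is Lagrangian, $J$ maps the normal bundle isomorphically onto $T L$, so $\alpha_K=0$ forces $K=0$.

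The only point needing care is a consistency check. The weighted mean curvature $\overline K$ in the quotient must be taken with respect to the induced weight $\overline f$ of Proposition \ref{prop:Ricci}, with the factor $(n-k)$ in front of $\overline f$. This is precisely the normalisation under which \cite{K} establishes the relation in Theorem \ref{thm:K}(2), so no further computation is required.
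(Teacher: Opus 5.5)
Your proposal is correct and follows the paper's own argument: $f$-minimality gives $\alpha_K=0$, which is exact, so Theorem~\ref{thm:K}(i) places $\varphi(L)$ in $\mu_{\rm can}^{-1}(0)$. The equivalence then follows from Theorem~\ref{thm:K}(ii). You only spell out the horizontal-lift step (and the Lagrangian nondegeneracy alternative) that the paper leaves implicit.
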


\begin{proof}
If $\varphi$ is $f$-minimal, then $\alpha_K=0$, so Theorem \ref{thm:K}(i) gives $\varphi(L)\subset\mu^{-1}_{{\rm can}}(0)$. The second assertion then follows from Theorem \ref{thm:K}(ii).
\end{proof}

\begin{remark}
{\rm As a special situation, when the ($f$-)minimal Lagrangian submanifold $L$ is {\it homogeneous}, namely, if $L^{n}$ is a connected, compact $G$-orbit in $M$, then it must hold that  $L=\mu_{{\rm can}}^{-1}(0)$. If furthermore $G$ is compact and semi-simple, then $Z(\fg^{*})=\{0\}$ and in this case, the $G$-action admits at most one ($f$-)minimal Lagrangian $G$-orbit. These facts on homogeneous Lagrangian submanifolds were proved in \cite{BG, Pacini} under the assumption that $M$ is K\"ahler-Einstein. Theorem \ref{thm:K} is a  generalisation of the extremal case.
}
\end{remark}

We now consider the GLMCF $F_{t}$ starting from a $G$-equivariant Lagrangian immersion $F_{0}=\varphi: L\to M$. By Lemma \ref{lem:equi}, $F_{t}$ is $G$-equivariant for all time $t\in [0,T)$. Moreover, Theorem \ref{thm:K} implies the following. 

\begin{proposition}\label{prop:redu}
Assume we are in the setting of Proposition \ref{prop:Ricci}, and let $\varphi:L \to M$ be a $G$-equivariant Lagrangian embedding such that the generalised mean curvature form $\alpha_K$ is exact and $G$ acts on $L$ freely.

Then the $G$-equivariant GLMCF $F_t:L\times [0,T) \to M$ starting from $F_0 = \varphi$ satisfies the following:
\begin{enumerate}
\item  $F_t(L)\subset \mu_{{\rm can}}^{-1}(0)$ for any $t\in [0,T)$.
\item If furthermore $F_{t}(L)\subset \mu_{{\rm can}}^{-1}(0)^{{\rm pri}}$ for any $t\in [0,T)$, then $\overline{F}_{t}: 
\overline{L} \times [0,T) \to \overline{M}^{{\rm pri}}$ is a solution of GLMCF in the K\"ahler quotient space $\overline{M}^{{\rm pri}}=\mu_{{\rm can}}^{-1}(0)^{{\rm pri}}/G$ with $\overline{F}_{0}=\overline{\varphi}$.
\end{enumerate}
\end{proposition}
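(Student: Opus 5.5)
For part (i), I would use the exactness of $\alpha_K$ together with Proposition \ref{prop:exact}. That proposition shows $\alpha_{K_t}$ stays exact for every $t\in[0,T)$. By Lemma \ref{lem:equi}, applied with $f$ $G$-invariant by Lemma \ref{lem-fisGinvariant}, each $F_t$ is a $G$-equivariant Lagrangian immersion. The $G$-action on $L$ is the fixed free action, so it remains free. Theorem \ref{thm:K}(i), which as remarked is local and so valid for immersions, then applies at each time and gives $F_t(L)\subset\mu_{\rm can}^{-1}(0)$. An alternative that avoids invoking exactness at each $t$ separately is to note that $\mu_{\rm can}\circ F_t$ is constant in $L$ with value $c(t)\in Z(\fg^*)$ by Lemma \ref{lem:isotro}, and depends continuously on $t$. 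Theorem \ref{thm:K}(i) then forces $c(t)=0$ for each $t$. The first route already suffices.

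For part (ii), I would first define $\overline F_t$ by $\overline F_t\circ\pi_L=\pi\circ F_t$. This is well defined and smooth because $F_t$ is $G$-equivariant with image in $\mu_{\rm can}^{-1}(0)^{\rm pri}$, and $\pi_L:L\to\overline L$ is a smooth principal bundle since the action on $L$ is free. Each $\overline F_t$ is a Lagrangian immersion in $\overline M^{\rm pri}$, as observed before Theorem \ref{thm:K}. At $t=0$ we get $\overline F_0=\overline\varphi$.

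It remains to compute the velocity. Differentiating the defining relation gives $\partial_t\overline F_t\circ\pi_L=\pi_*(\partial_tF_t)=\pi_*K_t$. Here $\partial_t F_t=K_t$ is tangent to $\mu_{\rm can}^{-1}(0)$, since $F_t$ stays in this level set by (i) and, consistently, by Theorem \ref{thm:K}(ii). Theorem \ref{thm:K}(ii), applied to $F_t$, says that $K_t$ is the horizontal lift of $\overline K_t$, the generalised mean curvature of $\overline F_t$ in the almost-Einstein quotient of Proposition \ref{prop:Ricci} with potential $\overline f$. Hence $\pi_*K_t=\overline K_t$, and so $\partial_t\overline F_t=\overline K_t$, which is the GLMCF equation in $\overline M^{\rm pri}$.

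The main point needing care is that Theorem \ref{thm:K}(ii) identifies $\overline K$ with respect to the correct quotient data, namely the metric $\overline g$ and the function $\overline f$ from \eqref{def:fbar}. This is exactly what Proposition \ref{prop:Ricci} and \cite{K} supply, so it reduces to matching conventions rather than a new estimate. A secondary point is smoothness in $t$ of $\overline F_t$, which follows from local sections of $\pi_L$.
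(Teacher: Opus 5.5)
Your proposal is correct and follows essentially the same route as the paper: part (i) combines Proposition~\ref{prop:exact} with Theorem~\ref{thm:K}(i), and part (ii) uses Theorem~\ref{thm:K}(ii) to get $\pi_*K_t=\overline K_t$, hence $\partial_t\overline F_t=\overline K_t$. One small quibble: your ``alternative'' for (i) does not actually avoid using exactness at each time, because Theorem~\ref{thm:K}(i) still needs $[\alpha_{K_t}]=0$ to force $c(t)=0$; but as you note, the first route already suffices.
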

\begin{proof}
By Proposition \ref{prop:exact}, if $[\alpha_{K_{0}}]=0$, then we have $[\alpha_{K_{t}}]=0$ for any $t\in [0,T)$. Thus, Theorem \ref{thm:K}-(i) shows that $F_{t}(L)\subset \mu_{{\rm can}}^{-1}(0)$ for any $t$. This proves (i). 

%Since $F_{t}(L)$ is contained in $\mu_{{\rm can}}^{-1}(0)$, $K_{t}=\partial F_{t}/\partial t$ is tangent to $\mu_{{\rm can}}^{-1}(0)$ for any $t\in [0,T)$. 
% Note that in general,  $K$ is tangent to $\mu_{{\rm can}}^{-1}(0)$  if $\alpha_{K}$ is an exact $1$-form (see \cite{K}).
%Thus, by Theorem \ref{thm:K}-(ii), we have 
%\begin{align*}
%    \overline\omega(\overline K_t, \pi_* X) \, = \, \overline\alpha_{\overline K_t}(\pi_* X) \, = \, (\pi^*\overline\alpha_{\overline K_t})(X) \, = \, \alpha_{K_t}(X) \, = \, \omega(K_t,X) \, = \, (\pi^*\overline\omega)(K_t,X) \, = \, \overline\omega(\pi_* K_t,\pi_* X).
%\end{align*}
%for any $X\in T_{p}L$, where $\pi: \mu_{{\rm can}}^{-1}(0)^{{\rm pri}}\to \overline{M}^{\rm pri}$ is the natural projection. 

If $F_t(L)\subset \mu_{\rm can}^{-1}(0)^{\rm pri}$, then by Theorem \ref{thm:K}-(ii), we have $\overline{K}_{t}=\pi_{*}K_{t}$ for any $t$, and hence,
\[
\frac{\p \overline{F}_{t}}{\p t}=\frac{\p (\pi\circ F_{t})}{\p t}=\pi_{*}\Big(\frac{\p F_{t}}{\p t}\Big)=\pi_{*}K_{t}=\overline{K}_{t}.
\]
This proves (ii).
\end{proof}

 We also have a converse to the above proposition:

\begin{proposition}\label{prop:lift}
    Assume we are in the setting of Proposition \ref{prop:Ricci}, and let $\varphi: L \to M$ be a $G$-equivariant Lagrangian embedding such that $\alpha_K$ is exact and $G$ acts on $L$ freely. Assume furthermore that $\overline F_t:\overline L \to \overline M^{\rm pri}$ is a solution of GLMCF in the K\"ahler quotient space starting from $\overline\varphi = \pi \circ \varphi$.

    Then there exists a solution $F_t: L \times [0,T) \to \mu_{\rm can}^{-1}(0)^{\rm pri}$ to GLMCF starting from $\varphi$ with $\pi \circ F_t = \overline F_t$.
\end{proposition}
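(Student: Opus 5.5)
The plan is to build the lift by a horizontal-lift construction: first lift the velocity $\overline K_t$ horizontally, then correct with a $G$-valued gauge so that the lifted flow is the actual GLMCF. Write $\pi:\mu_{\rm can}^{-1}(0)^{\rm pri}\to\overline M^{\rm pri}$. Since $G$ acts freely on $\mu_{\rm can}^{-1}(0)^{\rm pri}$, this is a principal $G$-bundle, and $\pi$ is a Riemannian submersion with horizontal distribution $\mathcal H$ (Proposition \ref{prop:dime}).

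\textbf{Step 1: horizontal lift of the flow.} For each $p\in L$, define $F_t(p)$ as the solution of the ODE in $t$
\[
\frac{\partial F_t(p)}{\partial t}=\big(\overline K_t(\pi_L(p))\big)^{h}_{F_t(p)},\qquad F_0(p)=\varphi(p),
\]
where $(\cdot)^h$ denotes the horizontal lift at the point $F_t(p)$, which lies over $\overline F_t(\pi_L(p))$. Equivalently, $t\mapsto F_t(p)$ is the horizontal lift of the curve $t\mapsto\overline F_t(\pi_L(p))$ starting at $\varphi(p)$. Horizontal lifts of curves in a principal bundle exist for all $t\in[0,T)$ and depend smoothly on parameters, so $F$ is smooth on $L\times[0,T)$ and satisfies $\pi\circ F_t=\overline F_t\circ\pi_L$. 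Because $G$ acts by isometries preserving $\mathcal H$, the lift commutes with the action: $F_t(g p)=gF_t(p)$. So $F_t$ is $G$-equivariant.

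\textbf{Step 2: geometric properties of $F_t$.} The immersion property follows since $dF_t$ maps $\mathfrak g_p$ isomorphically onto orbit directions and projects onto $d\overline F_t$ on horizontal complements. For the Lagrangian property, $F_t$ lies in $\mu_{\rm can}^{-1}(0)$, so $F_t^*\omega=F_t^*\iota^*\omega=F_t^*\pi^*\overline\omega=\pi_L^*\overline F_t^*\overline\omega=0$. By Theorem \ref{thm:K}(ii), the generalised mean curvature $K_t$ of $F_t$ is the horizontal lift of $\overline K_t$. This uses that $F_t(L)\subset\mu_{\rm can}^{-1}(0)^{\rm pri}$ and $G$ acts freely on $L$; I note that part (ii) is local and does not itself require exactness. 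Hence $\partial_tF_t=K_t$, so $F_t$ is a GLMCF starting from $\varphi$.

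\textbf{Step 3 (main obstacle).} The delicate point is Step 1's identification of the horizontal lift of $\overline K_t(\pi_L(p))$ with $K_t$ in Step 2. Theorem \ref{thm:K}(ii) gives $K_t=(\overline K_t)^h$ along $F_t(L)$, which is exactly the vector prescribed by the ODE, so the two agree. I would check carefully that the hypotheses are preserved for every $t$: $F_t(L)\subset\mu_{\rm can}^{-1}(0)^{\rm pri}$ holds by construction, and the equivariance needed in Theorem \ref{thm:K} comes from Step 1.

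If one prefers to avoid relying on the exact pointwise identity, an alternative is to run the ambient GLMCF from $\varphi$ and project it via Proposition \ref{prop:redu}. Uniqueness of GLMCF in $\overline M^{\rm pri}$ then shows the projection equals $\overline F_t$, and the projection is valid as long as the ambient flow stays in the principal stratum. However, that route needs the ambient flow to exist up to time $T$, which is why I would use the ODE construction above.
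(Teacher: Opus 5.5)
Your argument is correct and is essentially the paper's proof. The paper extends $\overline K_t$ to a compactly supported time-dependent vector field on $\overline M^{\rm pri}$, flows its horizontal lift $\mathcal K_t$, and sets $F_t=\Phi_t\circ\varphi$; this gives exactly your horizontal lifts of the curves $t\mapsto\overline F_t(x)$, and it then obtains $\partial_tF_t=K_t$ from Theorem \ref{thm:K}(ii) just as you do. The difference is only technical: lifting curves pointwise avoids the extension step, which tacitly needs $\overline F_t(\overline L)$ to be embedded, while the paper's global flow gives $G$-equivariance and embeddedness of $F_t$ automatically rather than by hand.
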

\begin{proof}
    Consider the generalised mean curvature vector $\overline K_t$ of $\overline F_t$. Extend $\overline K_t$ arbitrarily to a smooth time-dependent vector field $\overline{\mathcal{K}}_t$ on $\overline M^{\rm pri}$ which is compactly supported for all $t \in [0,T)$, and consider the horizontal lift $\mathcal{K}_t$ of $\overline{\mathcal{K}}_t$ with respect to the Riemannian submersion $\pi:\mu_{\rm can}^{-1}(0)^{\rm pri}\to \overline{M}^{\rm pri}$. Note that the restriction of $\mathcal{K}_t$ to $\pi^{-1}(\overline F_t(\overline L)) \subset \mu_{\rm can}^{-1}(0)^{\rm pri}$ is the horizontal lift of $\overline K_t$, which is the generalised mean curvature $K_t$ of $\pi^{-1}(\overline F_t(\overline L))$ by Theorem \ref{thm:K}-(ii).

    Now consider the flow $\Phi_t$ of $\mathcal{K}_t$ which by construction is well-defined for all $t \in [0,T)$, and define the time-dependent map $F_t: L \times [0,T) \to \mu^{-1}_{\rm can}(0)^{\rm pri}$ by $F_t := \Phi_t \circ \varphi$. Note then that:
    \begin{align*}
        \frac{\partial}{\partial t} (\pi \circ F_t(x)) = \pi_* \left( \mathcal{K}_t(F_t(x))\right) = \overline{\mathcal{K}}_t(\pi\circ F_t(x)),
    \end{align*}
    which implies by uniqueness of the flow of a vector field that $\pi \circ F_t = \overline F_t$. In particular, note that $F_t(L) = \pi^{-1}(\overline F_t(\overline L))$, and therefore $\frac{\partial}{\partial t}F_t = K_t$ as required.
\end{proof}

\subsection{Curvature bounds for $G$-equivariant Lagrangian immersions} The following lemma relates the curvature of a $G$-equivariant immersed Lagrangian $L$ and its quotient $\overline L$. 

\begin{lemma}\label{lem:curvatureestimate}
Let $(P,g)$ be a Riemannian manifold. Suppose that a compact Lie group $G$ acts on $P$ freely so that $Q=P/G$ is a smooth manifold. We endow $Q$ a Riemannian metric $\overline{g}$ such that $\pi: P\to Q$ is a Riemannian submersion. Let $\widetilde{\varphi}: \Sigma \to P$ be a $G$-equivariant immersion of an $n$-dimensional manifold $\Sigma$ on which $G$ acts freely, and $\overline{\varphi}: \overline{\Sigma}\to Q$ is the immersion of $\overline{\Sigma}:=\Sigma/G$ into $Q$ such that $\pi\circ \widetilde{\varphi}=\overline{\varphi}\circ \pi_\Sigma$, where $\pi_\Sigma: \Sigma\to \overline{\Sigma}$ is the natural projection. 

    Then there exists a non-negative continuous function $\Psi: P\to \R$ depending only on $\pi: P\to Q$ such that for any $G$-equivariant immersion $\widetilde{\varphi}: \Sigma\to P$ on which $G$ acts freely, the norms of the second fundamental form $\widetilde{B}$ of $\widetilde{\varphi}$ and the second fundamental form $\overline B$ of the quotient map $\overline{\varphi}: \overline{\Sigma}\to Q$ are related by
    \begin{equation}\label{eq:bdB}
        \|\widetilde{B}\|^2(w) \leq \|\overline B\|^2 (\pi(w)) + \Psi(w)\quad \forall w\in \Sigma.
    \end{equation}
\end{lemma}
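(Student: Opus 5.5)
We split the tangent space of the immersion into orbit and horizontal directions and bound the second fundamental form block by block. Fix $w\in\Sigma$ and set $p=\widetilde\varphi(w)$. Since $\widetilde\varphi$ is $G$-equivariant, $d\widetilde\varphi(T_w\Sigma)$ contains the orbit tangent space $\mathcal V_p:=\{X^\sharp_p: X\in\fg\}$. Hence
\[
d\widetilde\varphi(T_w\Sigma)=\mathcal H^\Sigma_p\oplus\mathcal V_p,
\]
where $\mathcal H^\Sigma_p:=d\widetilde\varphi(T_w\Sigma)\cap\mathcal H_p$ and $\mathcal H_p$ is the horizontal space of $\pi$. The space $\mathcal H^\Sigma_p$ is mapped isometrically by $d\pi$ onto $d\overline\varphi(T_{\pi(w)}\overline\Sigma)$. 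Moreover, the normal space $\nu_p$ of $\widetilde\varphi$ lies inside $\mathcal H_p$, since it is orthogonal to $\mathcal V_p$. Consequently $d\pi|_{\nu_p}$ is an isometry onto the normal space $\overline\nu$ of $\overline\varphi$ at $\pi(w)$.

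We choose an orthonormal basis $e_1,\dots,e_m$ of $\mathcal H^\Sigma_p$ and an orthonormal basis $v_1,\dots,v_k$ of $\mathcal V_p$, where $k=\dim G$. Then
\[
\|\widetilde B\|^2=\sum_{a,b}|\widetilde B(e_a,e_b)|^2+2\sum_{a,j}|\widetilde B(e_a,v_j)|^2+\sum_{i,j}|\widetilde B(v_i,v_j)|^2,
\]
and we treat the three blocks separately.

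\emph{Horizontal--horizontal block.} Extend $e_a$ locally to horizontal lifts of vector fields tangent to $\overline\varphi$. O'Neill's formula gives $\onab_{e_a}e_b=\widetilde{\overline\nabla_{\bar e_a}\bar e_b}+\tfrac12[e_a,e_b]^{\mathcal V}$. Projecting onto $\nu_p\subset\mathcal H_p$ kills the vertical term, so $\widetilde B(e_a,e_b)$ is the horizontal lift of $\overline B(\bar e_a,\bar e_b)$. This block therefore contributes exactly $\|\overline B\|^2(\pi(w))$.

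\emph{Vertical--vertical block.} Here $\widetilde B(v_i,v_j)=(\onab_{v_i}v_j)^{\nu}$, which is the normal component of the second fundamental form of the orbit $G\cdot p$ in $P$. Its norm is bounded by $\|T\|(p)$, the norm of O'Neill's $T$-tensor, or equivalently the orbit second fundamental form, at $p$. This quantity depends only on $\pi$.

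\emph{Mixed block.} The tangency $\onab_{v}e-\onab_e v=[v,e]$ holds when $e$ is extended as a $G$-invariant tangent field along $\widetilde\varphi$. Such an extension makes $[v,e]=0$ for $v=X^\sharp$, so $\widetilde B(e_a,v_j)=(\onab_{e_a}v_j)^\nu$. Writing $v_j$ as a combination of fundamental fields $X^\sharp$, orthonormalised via a smooth frame depending only on the orbit metric, this normal component equals O'Neill's $A$-tensor term $(A_{e_a}v_j)^\nu$. Its norm is bounded by $\|A\|(p)$, again up to terms from differentiating the orthonormalising coefficients, but those are vertical and drop out under the normal projection.

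Setting $\Psi:=2\|A\|^2+\|T\|^2$, up to a dimensional constant, gives a continuous nonnegative function on $P$ depending only on the submersion, and the three block estimates combine to give \eqref{eq:bdB}.

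The main obstacle is the mixed block. One must check carefully that the normal projection of $\onab_{e_a}v_j$ involves no derivative of $\widetilde\varphi$ beyond first order. We will do this by evaluating the covariant derivative along $v_j$ rather than along $e_a$. Using the symmetry of $\widetilde B$, $\widetilde B(e_a,v_j)=(\onab_{v_j}\tilde e_a)^\nu$ with $\tilde e_a$ $G$-invariant, and then $(\onab_{X^\sharp}\tilde e)^\nu=(\onab_{\tilde e}X^\sharp)^\nu$, which is tensorial in $\tilde e$ and bounded by $|\onab X^\sharp|$. Choosing a basis of $\fg$ so that the orthonormalisation is controlled, we bound $\Psi$ by continuous quantities depending only on $P$ and the action.
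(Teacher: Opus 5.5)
Your proposal is correct and follows essentially the same route as the paper. You split $T\Sigma$ into the horizontal part $\mathcal{H}^\Sigma$ and the vertical part $\mathcal{V}$, identify the horizontal--horizontal block with the horizontal lift of $\overline{B}$ via O'Neill's formula, and bound the mixed and vertical--vertical blocks by O'Neill's $A$- and $T$-tensors, which gives $\Psi=C(\|A\|^2+\|T\|^2)$. The extra care you take with the mixed block is unnecessary. For $e$ horizontal and $v$ vertical, $\mathcal{H}\onab_e v=A_e v$ holds by definition, and the normal space lies in $\mathcal{H}$, so $\widetilde B(e,v)=(A_e v)^\perp$ follows immediately, exactly as in the paper.
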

\begin{proof}
%Let $L$ be as in the statement. Denoting by $B_L^U$ the second fundamental form of $L$ in $U$, and $B_U^M$ the second fundamental form of $U$ in $M$, we have
%$
%\|B\|^{2}\leq \|B_L^U\|^{2}+\|B_U^P\|^{2}.
%$
%Since $U$ is compact, the latter is bounded by a constant $C_1(M,g,\omega, c, U)$. Thus, it suffices  to consider $|B_L^U|^{2}$.

%For convenience, denote $P := \mu^{-1}_{\rm can}(c)^{\rm pri}$ and $Q := \overline M^{\rm pri}_c$. 
Since $\pi: P \to Q$ is a Riemannian submersion, according to O'Neill \cite{ONeill}, we may decompose the tangent space of $T_{w}P$ into a direct sum of vertical and horizontal subspaces, $T_{w}P=\mathcal{H}_{w}\oplus \mathcal{V}_{w}$. Note that $\pi_{*}: \mathcal{H}_{w}\to T_{\pi(w)}Q$ is an isomorphism. For a vector $X\in T_{\pi(w)}Q$, we denote the corresponding horizontal vector by $X^{\mathcal{H}}\in \mathcal{H}_{w}$, namely, $X^{\mathcal{H}}$ is defined by $\pi_{*}X^{\mathcal{H}}=X$.
If $w\in \Sigma$, any element in $\mathcal{V}_{w}$ is tangent to $\Sigma$, since $\Sigma$ is $G$-invariant and $\mathcal{V}_{w}$ consists of fundamental vector fields of the $G$-action. Thus, for each $w\in \Sigma$,  we have an orthogonal decomposition $T_{w}P=\mathcal{N}_{w}^\Sigma\oplus\mathcal{H}_{w}^\Sigma\oplus \mathcal{V}_{w}$ so that $T_{w}\Sigma=\mathcal{H}_{w}^\Sigma\oplus \mathcal{V}_{w}$ and $\mathcal{N}_{w}^\Sigma$ is the normal subspace of $\Sigma$ in $P$. 

Now, assume that the dimension of the group orbits is $m \leq n$. Let $\{U_1, \ldots, U_{n-m}, V_{1},\ldots, V_{m}\}$ be a local orthonormal frame of $T\Sigma$, where $U_i\in \mathcal{H}^\Sigma$ and $V_j\in \mathcal{V}$.  
Using O'Neill's formula \cite[Lemma 3]{ONeill}:
\begin{align*}
\widetilde{B}(U_i,U_j)&=(\nabla^P_{U_i}U_j)^{\perp}=\{(\nabla^Q_{(\pi_{*}U_i)}(\pi_{*}U_j))^{\mathcal{H}}\}^{\perp}=(\overline{B}(\pi_{*}U_i,\pi_{*}U_j))^{\mathcal{H}},\\
\widetilde{B}(U_i,V_{j})&=(\nabla^P_{U_i}V_{j})^{\perp}=(A_{U_i}V_{j})^{\perp},\quad 
\widetilde{B}(V_{i},V_{j})=(\nabla^P_{V_{i}}V_{j})^{\perp}=(T_{V_{i}}V_{j})^{\perp},
\end{align*}
where ${\nabla^P}$ (resp. $\nabla^Q$) is the Levi-Civita connection of $P$ (resp. $Q$), $\perp$ denotes the orthogonal projection onto $\mathcal{N}^{\Sigma}$, and $A$, $T$ are some tensor fields on $P$ defined purely by the Riemannian submersion $\pi: P\to Q$ (see \cite{ONeill} for details). Therefore, we have
\begin{align*}
 \|\widetilde{B}\|^{2}&= \sum_{i,j = 1}^{n-m} |(\overline{B}(\pi_{*}U_i,\pi_{*}U_j))^{\mathcal{H}}|^{2}+2\sum_{i=1}^{n-m}\sum_{j=1}^{m}|(A_{U_i}V_{j})^{\perp}|^{2}+\sum_{i,j=1}^{m}|(T_{V_{i}}V_{j})^{\perp}|^{2}\\
&\leq \|\overline{B}\|^{2}+C(\|A\|^{2}+\|T\|^{2}).
\end{align*}
We thus put $\Psi(w):=C(\|A(w)\|^{2}+\|T(w)\|^{2})$, then we obtain \eqref{eq:bdB}.
\end{proof}

It follows that if $F_t$ is a $G$-equivariant GLMCF and we have curvature bounds on the quotient flow $\overline F_t \subset \overline M^{\rm pri}$ of Proposition \ref{prop:redu}, we can deduce higher curvature bounds on the original flow:

\begin{proposition}\label{prop:bddB}
    Assume we are in the setting of Proposition \ref{prop:redu}, and let $T \in (0,\infty]$ be the maximal existence time for the GLMCF $F_t:L \times [0,T) \to M$. Let $\overline F_t: \overline L \times [0,T)\to \overline M^{\rm pri}$ be the quotient GLMCF as in the conclusion of that proposition. Assume further that:
    \begin{itemize}
        \item There exists a compact set $U \subset \mu_{\rm can}^{-1}(0)^{\rm pri}$ and $0 < T_0 < T$ for which $F_t(L) \subset U$ for $t \in [T_0,T)$,
        \item There exists a constant $\overline C_0$ for which $\|\overline B_t\| < \overline C_0$ for $t \in [0,T)$,
    \end{itemize}
    where $\overline B$ denotes the second fundamental form of $\overline F_t: \overline L \to \overline M^{\rm pri}$.
    
    Then for all $k \in \mathbb{Z}_{\geq 0}$ and $\varepsilon > 0$ there exist $C_{k, \varepsilon} = C_{k, \varepsilon}(M,g,\omega, U, \overline C_0)> 0$ such that 
    \begin{equation}\label{eq:bdB2}
        \|\nabla^kB_t\| \leq C_{k, \varepsilon} \text{ for all }t \in [T_0 + \varepsilon,T),
    \end{equation}
    where $B_t$ denotes the second fundamental form of $F_t$.
\end{proposition}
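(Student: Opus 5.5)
The plan is to obtain a uniform bound on $\|B_t\|$ itself from Lemma \ref{lem:curvatureestimate}, and then pass to all higher derivatives by standard interior estimates for mean curvature flow with bounded second fundamental form.

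\textbf{Zeroth order.} Apply Lemma \ref{lem:curvatureestimate} with $P=\mu_{\rm can}^{-1}(0)^{\rm pri}$, $Q=\overline M^{\rm pri}$ and $\Sigma=L$. Proposition \ref{prop:dime}(iv) makes $\pi$ a Riemannian submersion, and $G$ acts freely on $L$. This gives $\|\widetilde B_t\|^2\le \overline C_0^2+\sup_U\Psi$ for $t\in[T_0,T)$. The right-hand side is finite because $\Psi$ is continuous and $U$ is compact. Here $\widetilde B_t$ is the second fundamental form of $F_t(L)$ inside $P$. The second fundamental form in $M$ then satisfies
\[
\|B_t\|^2=\|\widetilde B_t\|^2+\|B^{M}_{P}|_{TL}\|^2 .
\]
The last term is bounded on the compact set $U$ by the second fundamental form of the submanifold $\mu_{\rm can}^{-1}(0)$, which is smooth since $0$ is a regular value. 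Hence $\|B_t\|\le C_0$ on $[T_0,T)$.

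\textbf{Higher order.} On $[T_0,T)$, $F_t$ is an $f$-MCF, so its velocity is $H-n(\onab f)^\perp$. The ambient data $g$, $f$ and their derivatives are bounded on the compact $M$, so the extra term is a smooth lower-order perturbation. With $\|B\|$ bounded, I will use the standard evolution inequalities
\[
\partial_t|\nabla^kB|^2\le \Delta|\nabla^kB|^2-2|\nabla^{k+1}B|^2+C\sum_{i+j+l=k}|\nabla^iB||\nabla^jB||\nabla^lB||\nabla^kB|+C\bigl(1+\textstyle\sum_{j\le k}|\nabla^jB|\bigr)|\nabla^kB|
\]
as in Huisken's interior estimates for MCF in Riemannian manifolds. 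The constants depend on bounds for $\overline\nabla^j\mathrm{Rm}$ and $\overline\nabla^j f$. The $f$-term contributes extra terms of the form $\nabla^k$ applied to $\overline\nabla^2 f$ and $\overline\nabla f*B$, which are of the same lower-order type.

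\textbf{Induction.} Proceed by induction on $k$. At step $k$, use the Bernstein–Shi type quantity
\[
(t-t_0)|\nabla^kB|^2+a|\nabla^{k-1}B|^2
\]
on the time interval $[t_0,t_0+\varepsilon]$ with $t_0\ge T_0$. The maximum principle then bounds $|\nabla^kB|$ by $C_{k,\varepsilon}$ on $[T_0+\varepsilon,T)$. Alternatively one can cite Chen--Yin local estimates. Since $L$ is closed and the estimates depend only on $C_0$ and the ambient geometry, the constants are uniform in $t$; this needs $T_0+\varepsilon<T$, and for $T=\infty$ it uses time translation.

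\textbf{Main obstacle.} The main difficulty is the zeroth-order step, namely relating $\|B\|$ in $M$ to $\|\widetilde B\|$ in $P$ and controlling the O'Neill tensors. Both are handled by compactness of $U$ inside the principal part, which keeps the flow away from the exceptional orbits where $\Psi$ could blow up. The higher-order step is routine bookkeeping with the extra $f$-terms.
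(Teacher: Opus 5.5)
Your proposal is correct and follows the paper's proof. The zeroth-order bound is obtained exactly as there, from Lemma~\ref{lem:curvatureestimate} on $[T_0,T)$ together with a bound on the second fundamental form of $\mu_{\rm can}^{-1}(0)\subset M$ over the compact set $U$; your $\overline C_0^2$ is the right constant where the paper writes $\overline C_0$. For $k\ge 1$ the paper simply cites standard higher-derivative estimates for ($f$-)mean curvature flow (Kajigaya--Kunikawa, Smoczyk), and your Shi-type maximum-principle induction on windows $[t_0,t_0+\varepsilon]$ spells out that same argument.
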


\begin{proof}
We put $P=\mu_{\rm can}^{-1}(0)^{\rm pri}$ and $Q=\overline{M}^{\rm pri}$. We first note that 
we have
$
\|B_t\|^{2}\leq \|\widetilde{B}_t\|^{2}+\|B^P\|^{2},
$
where $\widetilde{B}_t$ (resp. $B^P$) is the second fundamental form of $\widetilde{F}_t: L\to P$ (resp. $P\to M$).
By assumption, $\|B^P\|^{2}$ is bounded by a constant $C_1(M,g, U)$ on $[T_0,T)$. On the other hand, Lemma \ref{lem:curvatureestimate} and assumptions shows that $\|\widetilde{B}_t\|^{2}\leq \overline{C}_0+C_2(P,g,Q, U)$ on $[T_0,T)$.  Therefore, we obtain \eqref{eq:bdB2} for $k=0$.

 The case $k \geq 1$ now follows from standard higher curvature estimates for mean curvature flow (see for example \cite[Lemma 5.5]{KK}, \cite[Proposition 1.1]{Smo2}).
\end{proof}

\subsection{Topology of quotient orbifold}

Concerning  the topology of the K\"ahler quotient, we have the following result, which assumes positivity of the constant $C$.

\begin{proposition}\label{prop:topo}
Let $(M^{2n},\omega, J)$ be a compact almost-Einstein K\"ahler manifold with $\rho=C\omega+ndd^{c}f$ and $C>0$.  Suppose that $G$ is a connected compact subgroup of ${\rm Aut}(M,\omega,J)$ with canonical moment map $\mu_{{\rm can}}:M\to \fg^{*}$. 
Furthermore, we assume that
\begin{itemize}
\item[(i)] $0$ is a regular value of $\mu_{{\rm can}}$ and the restricted action on $\mu_{{\rm can}}^{-1}(0)^{{\rm pri}}$ is free.
\item[(ii)] The action of $G$ on $\mu_{{\rm can}}^{-1}(0)$ is cohomogeneity-two, that is, the principal $G$-orbit  has codimension two in $\mu_{{\rm can}}^{-1}(0)$. 
% \item[(iii)] Every $G$-orbit contained in $\mu_{{\rm can}}^{-1}(0)$ is orientable.
\end{itemize}
Then, the quotient space $\overline{M}=\mu_{{\rm can}}^{-1}(0)/G$ is homeomorphic to $2$-sphere, and has the structure of an orbifold with at most three cone points. 
\end{proposition}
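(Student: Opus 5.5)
We would first establish the orbifold structure, then pin down the topology of the underlying space using positivity of $C$, and finally bound the number of cone points using the orbifold Gauss--Bonnet theorem together with the classification in Table \ref{tb:orbi}.

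\textbf{Step 1: orbifold structure.} Since $0$ is a regular value, Lemma \ref{lem:mo} shows that $G$ acts locally freely on the compact manifold $\mu_{\rm can}^{-1}(0)$. By the slice theorem, every point of the quotient has a neighbourhood modelled on $\R^{2}/G_p$, where the stabiliser $G_p$ is finite. By (ii), the slice is $2$-dimensional. By (i) and Proposition \ref{prop:dime}, the principal part is a smooth Kähler surface, so $\overline{M}$ is a compact $2$-dimensional orbifold (cf.\ subsection \ref{subs:lco}). Each $G_p$ preserves the induced Kähler structure on the slice, namely the symplectic form $\omega$ restricted to the symplectic normal slice. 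Hence $G_p$ acts by orientation-preserving isometries of the slice, i.e.\ by rotations, so it is cyclic and every singular point is a cone point. The quotient is compact and connected by the Kirwan connectedness remark. It is orientable because $\overline{\omega}$ extends as an orbifold symplectic form.

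\textbf{Step 2: positive curvature on average.} By Proposition \ref{prop:Ricci}, on $\overline M^{\rm pri}$ we have $\overline\rho = C\overline\omega + (n-k)dd^c\overline f$, and $\overline f$ extends as an orbifold smooth function on all of $\overline M$. In real dimension two the Ricci form equals $K\,\overline\omega$. The term $dd^c\overline f$ is exact as an orbifold form, since $\overline f$ is globally defined, so Stokes' theorem on orbifolds gives $\int dd^c\overline f = 0$. Applying Theorem \ref{thm:GB}(2), we obtain
\[
2\pi\chi^{\rm orb}(\overline M) = \int_{\overline M} K\,\overline\omega = C\int_{\overline M}\overline\omega > 0.
\]
By \eqref{eq-eulercharacteristicorbifold}, $\chi(|\overline M|) \geq \chi^{\rm orb}(\overline M) > 0$. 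The underlying space is a closed orientable surface, so it must be $S^{2}$.

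\textbf{Step 3: counting cone points.} We have $\chi^{\rm orb} = 2 - \sum_v (1 - 1/k_v) > 0$, and each term satisfies $1 - 1/k_v \geq 1/2$. Hence there are at most three cone points, consistent with the list of spherical and bad orbifolds in Table \ref{tb:orbi}.

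\textbf{Main obstacle.} The delicate step is justifying the global identity $\rho = K\overline\omega$ together with the vanishing of $\int dd^c\overline f$ across the cone points. We need $\overline f$, which is built from the orbit volume function $V$, to be a genuine orbifold-smooth function near singular orbits. We would verify this by observing that $V$ lifts to a $G_p$-invariant smooth function on the slice chart, because the orbit volume varies smoothly on $\mu_{\rm can}^{-1}(0)$. The positivity argument then goes through with the orbifold Stokes theorem \cite{Satake0}.
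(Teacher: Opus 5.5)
Your proposal is correct and has the same structure as the paper's proof: orbifold charts from normal slices, positivity of $\chi^{\rm orb}$ from $C>0$ via orbifold Gauss--Bonnet, and then $\chi(|\overline M|)\geq\chi^{\rm orb}>0$ forcing $S^2$. The differences are minor: you keep the quotient metric $\overline g$ and remove $\int dd^c\overline f$ by orbifold Stokes, where the paper passes to the Hsiang--Lawson metric $\overline g_{\overline f}=e^{2\overline f}\overline g$ with pointwise curvature $Ce^{-2\overline f}>0$; you also count cone points directly rather than citing Table \ref{tb:orbi}, and orient via $\overline\omega$ rather than via $JT_p(G\cdot p)$. Both versions rest on $\overline f$ being orbifold-smooth, which holds for the multiplicity-corrected $V$ of \eqref{def:vol} but not for the raw orbit volume, since that drops by the factor $|G_p|$ at an exceptional orbit; your phrase ``the orbit volume varies smoothly'' should be read with this correction.
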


\begin{proof}
By assumptions (i) and (ii) and Proposition \ref{prop-riemorbifoldquotient},
$\overline{M}$  gains the structure of a $2$-dimensional compact orbifold $\mathcal{O}$ 
with an orbifold atlas given by $\mathcal{A}=\{B_{p_i}^\perp(\epsilon_{i}), \widehat G_{p_{i}}, \pi\circ{\rm exp}_{p_{i}}\}_{i\in I}$ of $\mathcal{O}$, where  $B_{p}^\perp(\epsilon):=\{v\in T_{p}^{\perp}(G\cdot p)\mid \|v\|<\epsilon\}$ is the $\epsilon$-ball in the normal space $T^{\perp}_{p}(G\cdot p)$ of the $G$-orbit $G\cdot p$.
Since $G$ is a subgroup of ${\rm Aut}(M,\omega, J)$, any element in $G$ 
yields an orientation preserving diffeomorphism on $M$. 
If we fix an orientation of $\mathfrak g$, then since the action of $G$ on $\mu^{-1}_{{\rm can}}(0)$ is locally free and the stabiliser groups are all orientation preserving, the derivative of the group action gives an orientation on $T_p(G \cdot p)$ for every $p\in \mu_{{\rm can}}^{-1}(0)$ which is preserved by the \(G\)-action. Noting that $(T_p\mu_{{\rm can}}^{-1}(0))^\perp = J T_p(G \cdot p)$ (see e.g.\ \cite[Lem.\ 3.3]{MW}), we obtain a $G$-invariant orientation of the normal space $(T_p\mu_{{\rm can}}^{-1}(0))^\perp$ and therefore of $T_p \mu_{{\rm can}}^{-1}(0)$ and $T_p(G \cdot p)^\perp$ (where the latter is the perpendicular complement inside $T_p \mu^{-1}_{{\rm can}}(0)$). This gives a global orientation to our orbifold atlas $\mathcal{A}$, and so we have an oriented orbifold $\mathcal{O}$.
    
Next, we shall show that  the Euler characteristic $\chi^{{\rm orb}}(\mathcal{O})$ of the orbifold $\mathcal{O}$ is positive. 
% (see \cite{Satake, Thurston} for the definition of the Euler characteristic for orbifold)
We consider the smooth manifold $\overline{M}^{\rm pri}=\mu_{\rm can}^{-1}(0)^{\rm pri}/G$ which is a connected open subset in $\overline{M}$. Note that ${\rm dim}(\overline{M}^{{\rm pri}})=2$. By Proposition \ref{prop:Ricci}, the Ricci form $\overline{\rho}$ on $\overline{M}^{{\rm pri}}$  satisfies $\overline{\rho}=C\overline{\omega}+dd^{c}\overline{f}$, where $\overline{f}$ is the potential function on $\overline{M}^{{\rm pri}}$ induced from \eqref{def:fbar}. 
By using an isothermal coordinate $(x,y)$ such that $\frac{\p}{\p x}=\overline{J}\frac{\p}{\p y}$, it is easy to see that 
 the equation $\overline{\rho}=C\overline{\omega}+dd^{c}\overline{f}$ is equivalent to
\begin{align}\label{eq:Ric}
{\rm Ric}_{\overline{g}}=C\overline{g}+(\Delta \overline{f})\overline{g},
\end{align}
 where $\overline{g}:=\overline{\omega}(\cdot ,\overline{J}\cdot )$ and ${\rm Ric}_{\overline{g}}$ is the Ricci tensor of $\overline{g}$. 
Consider a conformal change of metric defined by $\overline{g}_{\overline{f}}:=e^{2\overline{f}}\overline{g}$. Then, by a general formula for the conformal change of metrics, we see that  the Ricci tensor ${\rm Ric}_{{\overline{f}}}$ of $\overline{g}_{\overline{f}}$ is given  by 
$
{\rm Ric}_{{\overline{f}}}={\rm Ric}_{\overline{g}}-(\Delta \overline{f})\overline{g}
$
since ${\rm dim}\overline{M}^{{\rm pri}}=2$, where ${\rm Ric}_{\overline{g}}$ is the Ricci curvature w.r.t.\  $\overline{g}$. Therefore, combining this with \eqref{eq:Ric}, we obtain
$
{\rm Ric}_{{\overline{f}}}=C\overline{g}=Ce^{-2\overline{f}}\overline{g}_{\overline{f}}.
$
This means the Gaussian curvature $K_{{\overline{f}}}$ with respect to the metric $\overline{g}_{\overline{f}}$ on the surface $\overline{M}^{{\rm pri}}$ is given by $Ce^{-2\overline{f}}$, which is positive since we assume $C>0$. 

Recall that,  as mentioned in Section \ref{subs:lco},  the metric $\overline{g}_{\overline{f}}$ on $\overline{M}^{{\rm pri}}$ extends to an orbifold Riemannian metric on $\mathcal{O}$.
Therefore,  the Gauss-Bonnet theorem for orbifold (Theorem \ref{thm:GB}) shows that
\[
\chi^{{\rm orb}}(\mathcal{O})=\frac{1}{2\pi}\int_{\mathcal{O}}K_{\overline{f}} \omega_{\overline{f}}>0
\]
as claimed, where $\omega_{\overline{f}}$ is the area-form w.r.t.\  $\overline{g}_{\overline{f}}$.

Since the singular sets of $\mathcal{O}$ consists of cone points, the Euler characteristic is given by equation \eqref{eq-eulercharacteristicorbifold}.
In particular, we  have $\chi(\overline{M})\geq \chi^{{\rm orb}}(\mathcal{O})>0$, and hence, $\overline{M}$
is homeomorphic to $2$-sphere. Moreover, %by using \eqref{eq-eulercharacteristicorbifold} and the fact that $\chi(\overline{M})=\chi(S^{2})=2$, one can easily show that
 if $\chi^{{\rm orb}}(\mathcal{O})>0$, then the classification of $2$-dimensional oriented compact orbifold shows that there exist at most three cone points (see Table \ref{tb:orbi} given in Section \ref{sec:2dorbifolds}).
\end{proof}

\subsection{Examples}\label{subsec:eg}
In this subsection, we provide some concrete examples of cohomogeneity-one Lagrangian submanifold (with exact generalised mean curvature form) by using a {\it torus action}.

Let $(M,\omega)$ be a $2n$-dimensional symplectic manifold, and assume that an $n$-dimensional torus $T^{n}$ acts on $M$  in a Hamiltonian way with moment map $\mu: M\to \R^{n}$.  Note that we have $\R^n\simeq \fg^{*}=Z(\fg^{*})$  since $G=T^{n}$ is abelian. Moreover,  every $T^{n}$-orbit is isotropic and $\mu$ is constant on each orbit (see \cite[Proposition III.2.12]{Audin}).  We may assume that the action of $T^{n}$ on $M$ is effective. In this case, it follows that $T^{n}$ freely acts on the set of principal orbits $M^{{\rm pri}}$. Indeed, for an element $p\in M^{{\rm pri}}$, 
it is easy to see that $G_{p}\subseteq G_{q}$ for any $q\in M$ since $G=T^{n}$ is abelian. Thus, if there were a 
non-trivial element $g\in G_{p}$, then $g$ fixes every point in $M$, namely the $T^{n}$-action is not effective. Therefore, if $T^{n}$ acts on $M$ effectively, then $G_{p}$ is trivial for any $p\in M^{{\rm pri}}$. 

In the following, we assume $T^{n}$ acts effectively on $M$. Since $T^n$ acts on $M^{\rm pri}$ freely, every principal $T^n$-orbits is Lagrangian. 
Moreover, we suppose that $(M,\omega)$ is equipped with a compatible $T^n$-invariant almost K\"ahler structure $(J,g)$.

\begin{proposition}\label{prop:mintorus}
With the above assumptions, we have the following. 
\begin{enumerate}
%\item Every principal $T^n$-orbit is Lagrangian. 
\item There exists no exceptional $T^n$-orbit in $M$. 
\item If $M$ is compact and $f\in C^{\infty}(M)$ is a $T^{n}$-invariant function, then there exists an $f$-minimal Lagrangian $T^n$-orbit contained in $M^{\rm pri}$.
\end{enumerate}
\end{proposition}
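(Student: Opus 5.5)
For (i), I would show that every non-principal orbit has a stabiliser of positive dimension, so no orbit can be exceptional. An exceptional orbit is one of maximal dimension $n$ whose stabiliser $G_p$ is finite but nontrivial. The plan is to use the slice theorem at a point $p$ with $\dim(T^n\cdot p)=n$. Since the orbit is isotropic of dimension $n$, it is Lagrangian. Its symplectic normal is then $0$, so the isotropy representation of $G_p$ on the normal space $N_p=T_pM/T_p(G\cdot p)$ is identified, via $\omega$, with its action on $(T_p(G\cdot p))^*\cong\mathfrak g^*$. This is the local normal form of Marle and Guillemin--Sternberg, where a neighbourhood of the orbit is $T^n\times_{G_p}\mathfrak g^*$. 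Because $T^n$ is abelian, $G_p$ acts trivially on $\mathfrak g^*$ through the coadjoint action. Hence $G_p$ acts trivially on the slice, and therefore fixes a whole neighbourhood of $p$. By effectiveness, combined with the argument already given before the proposition (stabilisers of nearby principal points are contained in $G_q$ for all $q$), this forces $G_p=\{e\}$. So every $n$-dimensional orbit is principal with trivial stabiliser, and exceptional orbits do not exist.

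For (ii), I would set up a variational problem on the orbit space. Let $M^{\rm pri}$ be the union of the $n$-dimensional orbits; by (i) the action there is free. By the Delzant/Atiyah--Guillemin--Sternberg picture, $\mu$ maps $M^{\rm pri}/T^n$ diffeomorphically onto the interior of the moment polytope $\Delta=\mu(M)$. Define $\mathcal V:\mathrm{int}\,\Delta\to\R_{>0}$ by $\mathcal V(c)={\rm Vol}_f(\mu^{-1}(c))=\int_{\mu^{-1}(c)}e^{nf}dv$. By the first variation formula \eqref{eq:fvv} applied to the $T^n$-equivariant deformations $c\mapsto\mu^{-1}(c)$, a critical point of $\mathcal V$ is an orbit with $\int g(K,V)\,dv_f=0$ for all invariant normal variations $V$. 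The invariant normal fields along a principal Lagrangian orbit $L$ are exactly $J X^\sharp$ for $X\in\mathfrak g$. The field $K$ is itself $T^n$-invariant and normal, so $K=JY^\sharp$ for some $Y$. Testing the first variation against $V=K$ should then give $K=0$. This is a symmetric criticality argument, and it is cleaner than the general one because $\int g(K,K)\,dv_f=0$ forces $K\equiv0$. So interior critical points of $\mathcal V$ are $f$-minimal Lagrangian orbits.

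It remains to find such an interior critical point, and this is the main obstacle. The plan is to show that $\mathcal V$ is continuous on $\Delta$, extends by $0$ to $\partial\Delta$, and is positive inside; $\mathcal V$ would then attain an interior maximum. As $c\to\partial\Delta$, the orbits degenerate to lower-dimensional orbits (their stabilisers contain circles), so the $n$-dimensional volume should tend to $0$. To check this, I would use the local normal form near a point of a degenerate orbit: there the collapsing circle directions have length $O(\sqrt{\mathrm{dist}(c,\partial\Delta)})$, while $e^{nf}$ stays bounded by compactness. I expect this step to need the most care, namely making the volume estimate uniform near the faces of $\Delta$. Positivity in the interior is immediate, and the maximum then yields the desired orbit in $M^{\rm pri}$.
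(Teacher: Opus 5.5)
Your argument is correct. Part (i) is essentially the paper's proof. The paper shows $dg_p=\mathrm{id}$ on $T_pM=T_pL_{\rm ex}\oplus JT_pL_{\rm ex}$, using abelianness and $J$-invariance. It then reads off from the slice model $G\times_{G_p}T_p^\perp L_{\rm ex}$ that every nearby point has stabiliser $G_p\neq\{e\}$, which contradicts density of the free principal orbits. You instead get triviality of the slice representation from the $\omega$-duality $T_pM/T_pL\cong T_p^*L\cong\mathfrak g^*$, which does not use $J$.

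Part (ii) is where you genuinely diverge. The paper applies Pacini's general theorem (an isometric action of a compact connected group on a compact Riemannian manifold has a minimal regular orbit) to $(M,g_f)$, whose $n$-dimensional minimal submanifolds are exactly the $f$-minimal ones. It then uses (i) to discard the exceptional alternative. You instead reprove the torus case of that Hsiang--Pacini principle, by maximising the weighted orbit volume and using symmetric criticality. This makes the argument self-contained, but it leans on Atiyah--Guillemin--Sternberg convexity and Delzant-type structure (connectedness of $M$ and of the fibres of $\mu$), which the paper never needs. Your test with $V=K$ is legitimate: $K$ is $T^n$-invariant and normal, and the normal parts of the variation fields of $c\mapsto\mu^{-1}(c)$ exhaust the invariant normal fields, via $\omega(X^\sharp,V^\perp)=\langle c',X\rangle$.

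The step you flag as the main obstacle needs no normal-form or $O(\sqrt{\mathrm{dist}})$ estimates. Take $p$ on a free orbit and a lattice basis $X_1,\dots,X_n$ of $\mathfrak g$. The weighted orbit volume is then a fixed constant times $e^{nf(p)}\sqrt{\det\bigl(g(X_i^\sharp,X_j^\sharp)(p)\bigr)}$. This is a continuous $T^n$-invariant function on all of $M$, and it vanishes exactly on orbits whose stabiliser has positive dimension. So $\mathcal V$ is automatically continuous on $\Delta$ with zero boundary values. You could even bypass the polytope altogether: maximise this function directly on the compact manifold $M$, and run your first-variation argument at the maximising orbit, whose stabiliser is discrete and hence trivial by (i).
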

\begin{proof}
(i) We put $G=T^n$. Suppose that there exists an exceptional $G$-orbit $L_{\rm ex}=G\cdot p$. Then $L_{\rm ex}$ is Lagrangian and the isotropy subgroup $G_p\subset G$ must be finite, but $G_p\neq \{e\}$. Take arbitrary $g\in G_p$. For any $X\in T_p L_{\rm ex}\simeq \fg$, we see $dg_p(X)=\frac{d}{dt}g\exp(tX)p{|_{t=0}}=\frac{d}{dt}\exp(tX)gp{|_{t=0}}=\frac{d}{dt}\exp(tX)p{|_{t=0}}=X$, i.e.\ $dg_p|_{T_pL_{\rm ex}}=id$, where $\exp$ is the exponential map of the Lie group $G=T^n$ and we used the fact that $T^n$ is abelian. Moreover, we have $dg_p(JX)=Jdg_p(X)=JX$ for any $X\in T_pL_{\rm ex}$ since $J$ is $T^n$-invariant. Therefore, we obtain $dg_p=id$ on $T_pM$ because $L_{\rm ex}$ is Lagrangian which implies that we have an orthogonal decomposition $T_pM=T_pL_{\rm ex}\oplus JT_pL_{\rm ex}$. 

On the other hand, by the slice theorem, it follows that a $G$-invariant open neighbourhood $\mathcal{N}$ of $L_{\rm ex}$ is equivariantly diffeomorphic to $G\times_{G_p}T_p^\perp L_{\rm ex}$, where $T_p^\perp L_{\rm ex}$ is the normal space at $p$. We take arbitrary $[k,v]\in G\times_{G_p}T_p^\perp L_{\rm ex}$, and consider the isotropy subgroup $G_{[k,v]}\subset G$. If $h\in G_{[k,v]}$, then $[k,v]=h\cdot [k,v]=[hk,v]$, and the definition of equivalent relation implies that there exists $g\in G_p$ such that $(hk,v)=g\cdot (k,v)$. Since the action of $G_p$ on $G\times T_p^\perp L_{\rm ex}$ is defined by $g\cdot (k,v):=(kg^{-1}, dg_p(v))$, using the fact $dg_p=id$, we see $(hk,v)=(kg^{-1}, v)$ and hence, $h\in G_{[k,v]}$ if and only if $hk=kg^{-1}$ for some $g\in G_p$. However, since $G=T^n$ is abelian, this shows that $h\in G_{[k,v]}$ if and only if $h=g^{-1}$ for some $g\in G_p$. Namely, $G_{[k,v]}=G_p$ for any $[k,v]\in G\times_{G_p}T_p^\perp L_{\rm ex}$. This implies that for any $q\in \mathcal{N}$, we have $G_q=G_p\neq \{e\}$ however, this is a contradiction because the set of principal orbits $M^{\rm pri}$ is a dense subset of $M$ and hence, there exists $q\in \mathcal{N}\cap M^{\rm pri}$ with $G_q=\{e\}$. This proves (i). 

(ii) It is a general fact that for any connected compact Lie group $G$ acting by isometries on a compact Riemannian manifold $M$, there exists a regular minimal $G$-orbit (see \cite[Corollary 3]{Pacini}). Here a regular orbit means either a principal orbit or an exceptional orbit.  As shown (i), there is no exceptional $T^n$-orbit and hence, applying the general result for the $T^n$-action on the Riemannian manifold $(M,g_f)$, we see that there exists an $f$-minimal regular $T^n$-orbit contained in $M^{\rm pri}$.
\end{proof}

%Since $T^{n}$ acts on $M^{{\rm pri}}$ freely, the restricted action on $M^{{\rm reg}}=M^{{\rm pri}}\cup M^{{\rm ex}}$ is locally free, and each regular orbit is a Lagrangian torus.  

We shall consider a codimension-one subtorus $T^{n-1}$ of $T^{n}$ so that $T^{n-1}$ acts on each Lagrangian $T^{n}$-orbit as a cohomogeneity-one action. We denote the moment map of the $T^{n-1}$-action on $M$ by $\mu_{T^{n-1}}: M\to \R^{n-1}$. Since $T^{n-1}$ is also abelian, $T^{n-1}$ acts on each level set $\mu^{-1}_{T^{n-1}}(c)$ for any regular value $c$, and this action is  locally free by Lemma \ref{lem:mo}. Furthermore, we have the following.

%Notice that the action of $T^{n-1}$ on $\mu^{-1}_{T^{n-1}}(c)$ does not admit any singular $T^{n-1}$-orbit since $c$ is a regular value (and hence, the stabiliser subgroup at $p\in \mu^{-1}_{T^{n-1}}(c)$ is discrete as well as $T^{n}$-action). However, in general, the $T^{n-1}$-action may admit an exceptional orbit in $\mu^{-1}_{T^{n-1}}(c)$. More precisely, we see the following.

\begin{proposition}\label{prop:torus}
Suppose $c$ is a regular value of $\mu_{T^{n-1}}$ and $\mu^{-1}_{T^{n-1}}(c)$ is connected and compact. Then, the action of $T^{n-1}$ on $\mu^{-1}_{T^{n-1}}(c)$ admits at most two exceptional $T^{n-1}$-orbit. Moreover, if $p\in \mu^{-1}_{T^{n-1}}(c)$ is an exceptional point for the $T^{{n-1}}$-action, then $T^{n-1}\cdot p$ coincides with a singular $T^{n}$-orbit  in $M$.  
%In particular, any principal $T^{n}$-orbit contained in $\mu^{-1}_{T^{n-1}}(c)$ does not admit an exceptional point of the $T^{n-1}$-action.
\end{proposition}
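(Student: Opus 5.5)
Throughout, write $G=T^{n}$ and $H=T^{n-1}\subset G$, and let $N:=\mu^{-1}_{T^{n-1}}(c)$, a compact connected manifold of dimension $n+1$ on which $H$ acts locally freely. The strategy is to exploit the residual circle $S:=T^n/T^{n-1}\cong S^1$, which acts on $N$ and on the quotient $\overline{N}:=N/H$, a $2$-dimensional compact orbifold (its only singularities come from finite stabilisers of $H$). I would first identify where exceptional $H$-orbits can sit, and then count them using the structure of the circle action on $\overline{N}$.

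\textbf{Step 1 (exceptional $H$-points lie in singular $G$-orbits).} Let $p\in N$ with $H_p$ finite and nontrivial. If $G\cdot p$ were principal for $G$, then $G_p=\{e\}$ by the effectiveness discussion preceding Proposition \ref{prop:mintorus}, and hence $H_p\subset G_p$ would be trivial. So $G\cdot p$ is not principal. By Proposition \ref{prop:mintorus}(i) there are no exceptional $G$-orbits, so $G\cdot p$ is a singular $G$-orbit, i.e.\ $\dim G_p\ge 1$. Since $H_p$ is finite, $G_p^0$ is a circle transverse to $H$, and therefore $\dim G\cdot p=n-1=\dim H\cdot p$. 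As $H\cdot p\subset G\cdot p$ are compact connected manifolds of the same dimension, $H\cdot p=G\cdot p$. This proves the second assertion.

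\textbf{Step 2 (counting).} The circle $S$ acts on the compact connected surface-orbifold $\overline{N}$. Its fixed points are exactly the images of $H$-orbits that are also $G$-orbits, which by Step 1 include all exceptional $H$-orbits. I would check that the $S$-action is effective and nontrivial; it is nontrivial because principal $G$-orbits in $N$ are $n$-dimensional. Around a fixed point the action is locally a rotation, so fixed points are isolated. I would then use the classification of circle actions on compact connected $2$-orbifolds: the orbit space $\overline{N}/S$ is an interval whose endpoints are the only possible fixed points, so there are at most two of them. (If $\overline{N}/S$ were a circle there would be none, and $\overline{N}$ would be a torus or Klein bottle.) An alternative route is the moment map of $S$ on $\overline{N}$, namely the remaining component of $\mu$: it is a Morse–Bott function whose critical points are the fixed points, and on a connected compact $2$-orbifold it has only a maximum and a minimum, by the Atiyah–Guillemin–Sternberg connectedness of its fibres. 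This gives at most two fixed points, and hence at most two exceptional $H$-orbits.

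\textbf{Main obstacle.} The delicate point is Step 2: justifying that the induced $S$-action on the orbifold quotient $\overline{N}$ is Hamiltonian and Morse–Bott with connected level sets, so that the fixed set is exactly the maximum and minimum. I would handle this by lifting to $N$ and arguing with the function $\mu^{X}$, for $X$ transverse to $\mathfrak{h}$, restricted to $N$. Its critical points on $N$ are exactly the points whose $G$-stabiliser has positive dimension. Kirwan/AGS connectedness of $\mu$-fibres then shows that its critical set consists only of extremal levels, and each extremal level, being a single $G$-orbit, gives one orbit.
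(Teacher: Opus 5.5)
Your proposal is correct and is essentially the paper's argument. Step 1 is the same dimension count built on Proposition \ref{prop:mintorus}(i). In Step 2, since $\overline N/S=\mu^{-1}_{T^{n-1}}(c)/T^n$, your classification of circle actions on $\overline N$ is exactly Mostert's theorem for the cohomogeneity-one $T^n$-action on $N$; the paper invokes that theorem only after checking, via Lemma \ref{lem:isotro}, that $T^n$ preserves $N$, a fact you use without justification.

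Your Morse--Bott alternative also works, but the argument should be run differently. Apply the index-$0$/index-$2$ argument directly to $\mu^X|_N$ on the compact connected $N$. Do not cite connectedness of the fibres of $\mu$ on $M$, because $M$ is not assumed compact here.
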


%To prove this, we use the following facts on the cohomogeneity-one action . 
%
%\begin{theorem}[cf.\ \cite{Mos}]\label{thm:Mos}
%Suppose that a connected compact Lie group $G$ acts on a compact manifold $M$ as a cohomogeniety-one action, that is, the codimension of the principal $G$-orbit is equal to $1$.  Then, we have
%\begin{enumerate}
%\item The orbit space $M/G$ is homeomorphic to $S^{1}$ or the closed interval $[0,1]$. If $M/G\simeq S^{1}$, every orbit is principal. 
%\item Suppose $M/G\simeq [0,1]$ and there is an orientable principal $G$-orbit. Then, $M$ is non-orientable if and only if there exists an orientable exceptional $G$-orbit. 
%\end{enumerate}
%\end{theorem}
%We shall give a proof of Proposition \ref{prop:torus}.

\begin{proof}
First, we remark that  if $p\in \mu^{-1}_{T^{n-1}}(c)$, then $T^{n}\cdot p\subset \mu^{-1}_{T^{n-1}}(c)$ since $T^{n}\cdot p$ is $T^{n-1}$-invariant isotropic submanifold in $M$ (see Lemma \ref{lem:isotro}). This implies that  the $n$-dimensional torus $T^{n}$ also acts on $\mu^{-1}_{T^{n-1}}(c)$. Moreover, since ${\rm dim}\mu^{-1}_{T^{n-1}}(c)=n+1$, the action of $T^{n}$ on $\mu^{-1}_{T^{n-1}}(c)$ is  cohomogeneity-one.  Thus, by the result of \cite{Mos}, we see that the orbit space $\mu^{-1}_{T^{n-1}}(c)/T^{n}$ is homeomorphic to either $S^{1}$ or $[0,1]$ due to the compactness of $\mu^{-1}_{T^{n-1}}(c)$.  

If  $\mu^{-1}_{T^{n-1}}(c)/T^{n}\simeq S^{1}$, then  every $T^{n}$-orbit is principal in $\mu^{-1}_{T^{n-1}}(c)$ (see also Remark \ref{rem:trivialprincipalcase}), and hence, $T^{n}$ freely acts on $\mu^{-1}_{T^{n-1}}(c)$. This shows that the action of subtorus $T^{n-1}$ on $\mu^{-1}_{T^{n-1}}(c)$ is also free and thus, there is no exceptional $T^{n-1}$-orbit.  

We shall consider the case when  $\mu^{-1}_{T^{n-1}}(c)/T^{n}\simeq [0,1]$. Then there are exactly two non-principal $T^{n}$-orbits in  $\mu^{-1}_{T^{n-1}}(c)$ which correspond to boundaries of $[0,1]$. 
Since there is no exceptional $T^n$-orbit by Proposition \ref{prop:mintorus}-(i), the non-principal $T^{n}$-orbit must be a singular orbit.
%We first claim that the non-principal $T^{n}$-orbit must be a singular orbit, i.e.\ it is not an exceptional orbit. Indeed, if there were an exceptional point $p\in\mu^{-1}_{T^{n-1}}(c)$ with respect to the $T^{n}$-action, then the exceptional orbit $T^{n}\cdot p$ is orientable since $T^{n}\cdot p$ is homeomorphic to $T^{n}$, and  by \cite[Theorem 3]{Mos}, $\mu^{-1}_{T^{n-1}}(c)$ must be non-orientable (Note that the statement of \cite[Theorem 3]{Mos} does not hold if the orbit space is homeomorphic to $S^{1}$, see also the errata of \cite{Mos}. However, this is not our case).  This contradicts the general fact that any regular level set of a smooth map between oriented manifolds is also orientable. Therefore, there is no exceptional $T^{n}$-orbit in $\mu^{-1}_{T^{n-1}}(c)$.
If $p\in\mu^{-1}_{T^{n-1}}(c)$ is a principal point of  $T^{n}$-action (i.e.\ if $T^{n}\cdot p$ corresponds to an interior point of $[0,1]$), the stabiliser subgroup in $T^{n}$ at $p$ is trivial by assumption, and hence, so is the stabiliser subgroup in $T^{n-1}$ at $p$. Namely, $T^{n-1}$-orbit through $p$ is principal in $\mu^{-1}_{T^{n-1}}(c)$. 

Therefore, if $p$ is an exceptional point for $T^{n-1}$-action in $\mu^{-1}_{T^{n-1}}(c)$, then $p$ must be a singular point  of $T^{n}$-action.  If this is the case, we have ${\rm dim}(T^{n}\cdot p)\leq n-1$. On the other hand, we have ${\rm dim}(T^{n-1}\cdot p)=n-1$ and $T^{n-1}\cdot p\subseteq T^n\cdot p$, and hence, the exceptional orbit $T^{n-1}\cdot p$ coincides with the singular orbit $T^{n}\cdot p$.

Since there are exactly two singular $T^{n}$-orbits in $\mu^{-1}_{T^{n-1}}(c)$, this shows that there are at most two exceptional $T^{n-1}$-orbits in $\mu^{-1}_{T^{n-1}}(c)$. 
\end{proof}

%\begin{example}
%{\rm 
%Consider the standard  $T^{2}$-action on $\C^{2}$ defined by $(z_{1},z_{2})\mapsto (e^{\sqrt{-1}\theta_{1}}z_{1},e^{\sqrt{-1}\theta_{2}}z_{2})$. We take a subtorus $S^{1}\subset T^{2}$ so that $S^{1}$ acts on $\C^{2}$ by the Hopf action; $(z_{1},z_{2})\mapsto (e^{\sqrt{-1}\theta}z_{1},e^{\sqrt{-1}\theta}z_{2})$.  In this case, the regular level set $\mu^{-1}_{S^{1}}(c)$ coincides with the standard $3$-sphere $S^{3}(r)$ of some radius $r$, and $S^{1}$ acts on $\mu^{-1}_{S^{1}}(c)$ freely. Thus, there is no exceptional $S^{1}$-orbit in $\mu^{-1}_{S^{1}}(c)$, and the quotient space $\mu^{-1}_{S^1}(c)/S^1$ is nothing but the complex projective space $\C P^1$.
%}
%\end{example}

\begin{example}\label{eg:toric}
{\rm
Consider the standard $T^{n+1}$-action on the complex Euclidean space $\C^{n+1}$ defined by
\[
(z_1,z_2,\ldots, z_{n+1})\mapsto (e^{\sqrt{-1}\theta_1}z_1,e^{\sqrt{-1}\theta_2}z_2,\ldots, e^{\sqrt{-1}\theta_{n+1}}z_{n+1})
\]
This commutes with the Hopf $S^1$-action ${\bm z}\mapsto e^{\sqrt{-1}\theta}{\bm z}$ (${\bm z}\in \C^{n+1}$), and hence, the $T^{n+1}$-action naturally induces an action of $T^n=T^{n+1}/S^1$ on the complex projective space $\C P^n=S^{2n+1}/S^1$;
\[
[z_1:z_2:\cdots: z_{n+1}]\mapsto [e^{\sqrt{-1}\theta_1}z_1: e^{\sqrt{-1}\theta_2}z_2: \cdots :e^{\sqrt{-1}\theta_{n+1}}z_{n+1}],
\]
where  $T^n=T^{n+1}/S^1$ is identified with the subtorus $T^n=\{(e^{\sqrt{-1}\theta_1},\cdots, e^{\sqrt{-1}\theta_{n+1}})\in T^{n+1}\mid \theta_1+\theta_2+\cdots+\theta_{n+1}=0\}$. 

Identifying $\mathfrak{t}^{n+1} \cong \mathbb{R}^{n+1}$,
the moment map $\mu_0: \C^{n+1}\to \R^{n+1}$ of the standard $T^{n+1}$-action on $\C^{n+1}$ is given by
\[
\mu_0(z_1,\ldots,z_{n+1})=-\frac{1}{2}(|z_1|^2,\ldots, |z_{n+1}|^2),
\]
and the moment map ${\mu}_1: \C^{n+1}\to \R^n$ for the $T^n$-action   on $\C^{n+1}$ is obtained by 
\[
\mu_1(z_1,\ldots,z_{n+1})=-\frac{1}{2(n+1)}
\begin{pmatrix}
n|z_1|^2-|z_2|^2-\cdots-|z_{n+1}|^2\\
-|z_1|^2+n|z_2|^2-\cdots-|z_{n+1}|^2\\
\vdots\\
-|z_1|^2-\cdots -|z_n|^2+n|z_{n+1}|^2
\end{pmatrix}.
\] 
Then, 
the moment map $\mu: \C P^n\to \R^{n}$ for the $T^n$-action on $\C P^n$ is given by
\begin{align}\label{eq:cm}
\mu([z_1:\cdots: z_{n+1}])=\frac{1}{|{\bm z}|^2}\mu_1(z_1,\ldots, z_{n+1}),
\end{align}
where $|{\bm z}|^2=\sum_{i=1}^{n+1}|z_i|^2$.
We remark that $\mu$ is the same as the canonical moment map $\mu_{{\rm can}}$ of Proposition \ref{prop-mucan}.
Indeed, the Clifford torus $T^n_{C}=T^n\cdot [1:1:\cdots:1]$ is the unique minimal Lagrangian $T^n$-orbit, and it is easy to see that  $T^n_{C}=\mu^{-1}(0)$. 
Corollary \ref{cor:K} implies that $\mu_{{\rm can}}$ vanishes on the Clifford torus, and since moment maps differ only by a constant, they must coincide.

Let us consider the case when $n=2$ for simplicity.  We take positive integers $a,b\in \mathbb{Z}_{>0}$ such that $a$ and $b$ are relatively prime.   
We consider a subtorus $S^1_{a,b}$ of $T^2\subset T^3$ by 
\begin{align*}
&S^1_{a,b}=\{(e^{\sqrt{-1}\theta_1},e^{\sqrt{-1}\theta_{2}}, e^{\sqrt{-1}\theta_{3}})\in T^{3}\mid (\theta_1,\theta_2,\theta_3)=\theta{\bm v}_{a,b}, \theta\in \R\},\\
&{\rm where}\,\  {\bm v}_{a,b}:=a(1,0,-1)+b(0,1,-1)=(a,b,-(a+b)).
\end{align*}
% More precisely,
$S^1_{a,b}$ acts on $\C P^2$ by
\[
[z_1:z_2:z_3] \mapsto [e^{\sqrt{-1}a\theta}z_1: e^{\sqrt{-1}b\theta}z_2: e^{-\sqrt{-1}(a+b)\theta}z_3].
\]
 For simplicity, we restrict to the case where $a \not\equiv b \text{ (mod } 3)$ so that the $S^1_{a,b}$-action is effective; this ensures that the circle does not intersect the order $3$ kernel of the action of $T^2$ on $\mathbb{C}P^2$. Since the canonical moment map $\mu_{S^{1}_{a,b}}$ for the $S^1_{a,b}$-action
satisfies the relation $\langle \mu_{S^{1}_{a,b}}(p), {\bm v}_{a,b}\rangle=\langle \mu(p), {\bm v}_{a,b}\rangle$, where 
$\mu:\mathbb{CP}^2 \to \mathbb{R}^2$ is the canonical moment map for $T^{2}$-action given by \eqref{eq:cm}, 
the $0$-level set of $\mu_{S^1_{a,b}}: \C P^{2}\to \R$ is given by
\begin{align*}
\mu_{S^1_{a,b}}^{-1}(0)=\big\{[z_{1}:z_{2}:z_{3}]\in \C P^{2}\,\big|\, a|z_1|^2+b|z_2|^2-(a+b)|z_3|^2=0\big\}.
\end{align*}
It is easy to check that 
\begin{itemize}
\item  $[z_1:z_2:z_3]\in \mu_{S^1_{a,b}}^{-1}(0)$ is a principal point of the $S^1_{a,b}$-action\\
$\Longleftrightarrow$ $z_{1}z_{2}z_{3}\neq 0$.
\item $[z_1:z_2:z_3]\in \mu_{S^1_{a,b}}^{-1}(0)$ is an exceptional point of the $S^1_{a,b}$-action\\
$\Longleftrightarrow$ $z_{1}z_{2}= 0$ (Note that $z_3\neq 0$ on $\mu_{S^1_{a,b}}^{-1}(0)$).
\item $\mu_{a,b}^{-1}(0)$ has no singular point of the $S^1_{a,b}$-action (Indeed, $[z_1:z_2:z_3]\in \C P^2$ is a fixed point i.e.\ singular point of the $S^1_{a,b}$-action $\Longleftrightarrow$ $[z_1:z_2:z_3]=[1:0:0], [0:1:0]$ or $[0:0:1]$).
\end{itemize}

In particular,  the stabiliser subgroup $Z_p$ is discrete for any $p\in\mu_{a,b}^{-1}(0)$. This implies that $0$ is a regular value of $\mu_{S^1_{a,b}}$ by Lemma \ref{lem:mo}.
However, $S^1_{a,b}$-action has exactly two exceptional orbits in $\mu_{S^1_{a,b}}^{-1}(0)$; The orbit through $p_1=[\sqrt{\frac{a+b}{a}}:0:1]$ and $p_2=[0:\sqrt{\frac{a+b}{b}}:1]$.  Note that the stabiliser subgroup at $p_{1}$ (resp. $p_{2}$) is given by $Z_{p_{1}}=\mathbb{Z}_{2a+b}$ (resp. $Z_{p_{2}}=\mathbb{Z}_{a+2b}$), and the order of each stabiliser subgroup at exceptional point is greater than 2 since $a$ and $b$ are positive integers. In particular, the quotient space $\mu^{-1}_{S^1_{a,b}}(0)/S^1$ is an orbifold with two cone  points. 

\begin{remark}{\rm
One may consider the following type of $S^{1}$-action:
\[
[z_{1}:z_{2}:z_{3}]\mapsto [e^{\sqrt{-1}\theta}z_{1}:z_{2}:e^{-\sqrt{-1}\theta}z_{3}].
\]
In this case, $\mu^{-1}_{S^{1}}(0)=\{[z_{1}:z_{2}:z_{3}]\in \C P^{2}\mid |z_{1}|^{2}-|z_{3}|^{2}=0\}$ contains an exceptional $S^{1}$-orbit such that the order of the stabiliser subgroup is equal to $2$, namely,  the orbit through $[1:0:1]$.  However, $\mu^{-1}_{S^{1}}(0)$ contains also a fixed point $[0:1:0]$ of $S^{1}$-action, and this means $0$ is {\it not} a regular value of the moment map (see Lemma \ref{lem:mo}). Compare with the above examples and Proposition \ref{prop:torus}.
}
\end{remark}

We can easily generalise this example to the standard $T^{n}$-action on $\C P^{n}$ for arbitrary $n>2$, although the situation is somewhat more complicated than the case when $n=2$.  However, for any codimension-one subtorus $T^{n-1}\subset T^{n}$ such that $0$ is a regular value of $\mu_{T^{n-1}}$, the quotient space  $\overline{M}=\mu_{T^{n-1}}^{-1}(0)/T^{n-1}$ is a 2-dimensional orbifold with at most two singular points by Proposition \ref{prop:torus}. 
%If $\gamma$ is a closed curve embedded in the regular part of $\overline{M}$,
%then the pre-image $\pi^{-1}(\gamma)$ of the projection $\pi: \mu_{T^{n-1}}^{-1}(0)\to \overline{M}$ provides a $T^{n-1}$-invariant Lagrangian submanifold contained in $\mu_{T^{n-1}}^{-1}(0)^{{\rm pri}}\subset \C P^{n}$.
%We remark that, for the standard $T^{n}$-action on $\C P^{n}$ given above, the Clifford torus $T^n_{C}$ is always contained in $\mu_{T^{n-1}}^{-1}(0)^{{\rm pri}}$ since  there is no exceptional point of the $T^{n-1}$-action on $T^n_{C}$.
%Moreover, by Theorem \ref{thm:K}, $\gamma_{C}=T^{n}_{C}/T^{n-1}$ is a $\overline{f}$-minimal smooth curve ($\overline{f}$-geodesic) in $\overline{M}=\mu_{T^{n-1}}^{-1}(0)/T^{n-1}$ because the Clifford torus is a minimal Lagrangian submanifold in $\C P^n$.
}
\end{example}

More generally, one may find torus invariant Lagrangian submanifolds in {\it toric Fano manifolds}, that is, compact toric K\"ahler manifolds with positive first Chern class. If $M$ is toric Fano, then for any K\"ahler metric $\omega\in 2\pi c_{1}(M)$, the Ricci form $\rho$ satisfies $\rho=C\omega+ndd^{c}f$ for some $C>0$ and $f\in C^{\infty}(M)$ (Moreover, there exists a canonical K\"ahler metric in the class $2\pi c_{1}(M)$, namely the K\"ahler-Ricci soliton, see \cite{WZ}).  

In this case,  there exists an $f$-minimal Lagrangian $T^n$-orbits $L_{0}$ contained in $M^{\rm pri}$ by Proposition \ref{prop:mintorus}. Furthermore, if we take a codimension-one subtorus $T^{n-1}$ such that 
$0$ is a regular value of the canonical moment map $\mu_{T^{n-1}}$ of the $T^{n-1}$-action, then by Corollary \ref{cor:K}, $L_0$ is reduced to a weighted geodesic $\gamma_0=L_0/T^{n-1}$ contained in the regular part of the K\"ahler quotient $\overline{M}=\mu_{T^{n-1}}^{-1}(0)/T^{n-1}$. We remark that if $\gamma$ is a Hamiltonian deformation of $\gamma_0$, then the generalised mean curvature form $\alpha_{\overline{K}}$ of $\gamma$ is exact since the exactness is preserved under any Hamiltonian deformation (see \cite[Proposition 2.14]{KK}). Moreover, the preimage $L=\pi^{-1}(\gamma)$ is a $T^{n-1}$-invariant, cohomogeneity-one Lagrangian submanifold in $M$, where $\pi: \mu_{T^{n-1}}^{-1}(0)\to \overline{M}$ is the natural projection, and the generalised mean curvature form $\alpha_K$ of $L$ is also exact since $\alpha_K=\pi^*\alpha_{\overline{K}}$ by Theorem \ref{thm:K}. In this way, we obtain many examples of cohomogeneity-one Lagrangian submanifolds with exact generalised mean curvature form. 

In particular, all the results stated in the previous subsections, as well as the main results (Theorems \ref{thm:main1} and \ref{thm:main2}) to be presented in the next section, are applicable to the $T^{n-1}$-invariant Lagrangian submanifold $L^{n}$ in $M$ if $L^{n}$ satisfies appropriate assumptions described in each Theorem.

\section{Convergence of cohomogeneity-one LMCF}\label{sec:mainproof}

In this section, we prove our main results concerning a long-time behaviour of cohomogeneity-one Lagrangian mean curvature flow. Results are summarized as follows.

\begin{theorem}\label{thm:main1}
Let  $(M^{2n},J,\omega,g)$ be a closed almost-Einstein K\"ahler manifold such that
$\rho=C\omega+ndd^{c}f$ for some positive constant $C>0$ and $f\in C^{\infty}(M)$. Let $G$ be a connected closed Lie subgroup of ${\rm Aut}(M,\omega,J)$.
Suppose that $0$ is a regular value of the canonical moment map $\mu_{{\rm can}}:M\to \fg^*$ of the $G$-action.
% and $G$ acts on $\mu_{{\rm can}}^{-1}(0)$ locally freely and effectively and on $\mu_{{\rm can}}^{-1}(0)^{{\rm pri}}$ freely. 

Let  $\varphi: L^{n}\to M$ be a $G$-equivariant Lagrangian embedding of closed manifold $L^{n}$ such that the generalised mean curvature form $\alpha_{K}$ is exact. Suppose that the $G$-action on $L^{n}$ is free and cohomogeneity-one. Then we have the following:
\begin{enumerate}
\item[(i)] The $f$-LMCF $\{F_{t}\}_{t}$ starting from $\varphi=F_{0}$ exists for all  $t\in[0,\infty)$ and $F_{t}: L\to M$ is a Lagrangian embedding for every $t\in [0,\infty)$. Moreover, we have $L_{t}:=F_{t}(L)\subset \mu_{{\rm can}}^{-1}(0)$ for any $t\in [0,\infty)$.
\item[(ii)] There exists a $G$-invariant $f$-minimal Lagrangian submanifold $L_{\infty}$ contained in $\mu_{{\rm can}}^{-1}(0)$ and a subsequence $\{t_{i}\}_{i=1}^{\infty}$ with $t_{i}\to \infty$ such that each $L_{i}:=F_{t_{i}}(L)$ 
converges smoothly and graphically in $\mu_{{\rm can}}^{-1}(0)$ to $L_\infty$ as $i \to \infty$. 

More precisely, the convergence statement means the following:
there exists a diffeomorphism $\widetilde{\Phi}: L_{\infty}\times (-\epsilon,\epsilon)\to \widetilde{U}_{\mathcal{\epsilon}}$ onto an open neighbourhood $\widetilde{U}_{\epsilon}\subset \mu_{can}^{-1}(0)$ of $L_{\infty}$  such that:
\begin{itemize}
\item $L_{\infty}=\{\widetilde{\Phi}(p,0)\mid p\in L_{\infty}\}$.
\item For each $i$, there is a smooth function $\widetilde{h}_{i}: L_{\infty}\to \R$ so that 
$L_{i}=\{\widetilde\Phi(p,\widetilde{h}_{i}(p))\mid p\in L_{\infty}\}$, and $\widetilde{h}_{i}\to 0$ in $C^{\infty}$ as $i\to \infty$.
\end{itemize}
 % Moreover, we can take the subsequence $\{t_{i}\}_{i=1}^{\infty}$ so that $\widetilde{h}_{i}\to 0$ in $C^{\infty}$ as $i\to \infty$. In this sense, the sequence of Lagrangian submanifolds $\{L_{i}\}_{i=1}^{\infty}$ smoothly converges to $L_{\infty}$.
\end{enumerate}
\end{theorem}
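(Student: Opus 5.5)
The plan is to reduce everything to the $\overline{f}$-CSF on the quotient orbifold $\mathcal{O}=\mu_{\rm can}^{-1}(0)/G$ and then lift back.

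\textbf{Reduction.} By Theorem \ref{thm:K}(i) and exactness of $\alpha_K$, $\varphi(L)\subset\mu_{\rm can}^{-1}(0)$. Since $G$ acts freely on $L$, the image lies in the free part $\mu_{\rm can}^{-1}(0)^{\rm pri}$. By Proposition \ref{prop:topo}, $\mathcal{O}$ is an oriented orbifold homeomorphic to $S^2$ with at most three cone points. The quotient $\gamma_0:=\overline\varphi:\overline L\cong S^1\to\mathcal{O}^{\rm reg}$ is an embedded curve. Let $\gamma_t$ be the maximal $\overline f$-CSF in $\mathcal{O}^{\rm reg}$ on $[0,T_{\max})$; by Proposition \ref{prop:ap} it stays embedded. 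Proposition \ref{prop:lift} lifts it to a GLMCF $F_t$ with $\pi\circ F_t=\gamma_t$. By uniqueness this is the GLMCF from $\varphi$, and it is an embedding because it is the full preimage of an embedded curve with free orbits. Proposition \ref{prop:redu} gives $L_t\subset\mu_{\rm can}^{-1}(0)$.

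\textbf{Long-time existence.} Since $|\mathcal{O}|\cong S^2$, each $\gamma_t$ bounds a region $E_t$. By Theorem \ref{thm:K}(ii), $\pi^*\overline\alpha_{\overline K_t}=\alpha_{K_t}$, and this is exact by Proposition \ref{prop:exact}. Because $\pi_L$ has connected fibres, $\overline\alpha_{\overline K_t}$ is exact on $S^1$, so the deformation is Hamiltonian. Proposition \ref{prop:areapre} then shows that $A(E_t)=A(E_0)\in(0,A(\mathcal{O}))$ is constant; this holds equally for the complementary region.

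If $T_{\max}<\infty$, Theorem \ref{thm:beh} gives $L_{\overline f}(\gamma_t)\to0$. We then argue as in the proof of Lemma \ref{lem:leng}: once the curve is very short, it lies in a small convex ball or an orbifold ball around a single cone point, and the isoperimetric inequalities there show that one of the two complementary regions has area tending to $0$. This contradicts the constant, strictly positive areas. Hence $T_{\max}=\infty$ with $\gamma_t\subset\mathcal{O}^{\rm reg}$, and the lifted flow exists for all time. This gives (i).

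\textbf{Subconvergence.} Apply Theorem \ref{thm:subcon} to obtain a $C^0$ limit $\gamma_\infty$. Case (i) of that theorem excludes cone points of order $\ge3$. Case (ii) must also be excluded: I expect this to be the main obstacle. The plan is to show that area is continuous under the $C^0$ convergence, perhaps via the winding of lifts in orbifold charts. A back-and-forth geodesic segment encloses zero area (or all of $\mathcal{O}$), which contradicts $0<A(E_0)<A(\mathcal{O})$. What remains to check is that $C^0$ convergence of embedded curves to a degenerate segment forces one of the enclosed areas to $0$. I would attempt this by covering the segment with thin tubular neighbourhoods, lifted locally through the $\mathbb{Z}_2$ charts, and noting that a region of area bounded below cannot lie inside such a tube.

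So $\gamma_\infty$ avoids the singular set, and Theorem \ref{thm:subcon}(iii) yields smooth subconvergence to an embedded $\overline f$-geodesic $\widehat\gamma_\infty\subset\mathcal{O}^{\rm reg}$. Set $L_\infty:=\pi^{-1}(\widehat\gamma_\infty)$, which is $f$-minimal by Corollary \ref{cor:K}.

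\textbf{Graphicality.} For large $i$ the curves $\gamma_{t_i}$ lie in a compact neighbourhood of $\widehat\gamma_\infty$ inside $\mathcal{O}^{\rm reg}$. Lemma \ref{lem:graph}, applied with the metric $\overline g_{\overline f}$, writes them as graphs $h_i$ over $\widehat\gamma_\infty$ in Fermi coordinates $\Phi$, with $h_i\to0$ in $C^\infty$ by the smooth convergence. Define $\widetilde\Phi(p,y)$ as the horizontal lift of $\Phi$, i.e.\ flow from $p$ along the horizontal lift of the normal geodesic. Then $\widetilde\Phi$ is a $G$-equivariant diffeomorphism onto a neighbourhood of $L_\infty$, and $\widetilde h_i:=h_i\circ\pi$ exhibits $L_i$ as graphs with $\widetilde h_i\to0$ in $C^\infty$. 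Proposition \ref{prop:bddB} can serve as a consistency check on the uniform bounds.
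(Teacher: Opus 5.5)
Your architecture matches the paper's proof. You reduce to the $\overline f$-CSF on $\mathcal{O}$, get area preservation from exactness, obtain long-time existence from Theorem \ref{thm:beh} plus the isoperimetric argument of Lemma \ref{lem:leng}, and exclude cone points via Theorem \ref{thm:subcon} plus area. You then use Lemma \ref{lem:graph} and the horizontal lift of the Fermi chart, which coincides with the paper's $\exp^{\widetilde f}_p(r\widetilde N_p)$ because horizontal geodesics of a Riemannian submersion project to geodesics. Building the flow from below is legitimate and slightly cleaner than the paper's Claims 1 and 2: you take the maximal $\overline f$-CSF in $\mathcal{O}^{\rm reg}$, lift it by Proposition \ref{prop:lift}, and identify it with the GLMCF by uniqueness. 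Your connected-fibre argument (injectivity of $\pi_L^*$ on $H^1$) is a valid replacement for the paper's $G$-invariant primitive.

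The one step left open is the one you flag, and the paper disposes of it in a single sentence. Your tube sketch works when the geodesic path is an embedded arc. In that case its thin neighbourhood is a disk, so one complementary region of $\gamma_{t_i}$ lies inside it. However, Theorem \ref{thm:subcon}(ii) also allows $y_0=x_0$ (a geodesic loop at $x_0$) and self-crossing paths. Then the neighbourhood is an annulus or worse, and a Jordan curve in it (e.g.\ the core of the annulus) need not bound a region inside it. So ``a region of area bounded below cannot lie in the tube'' gives no contradiction by itself.

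The missing ingredient is homotopical; no lifting through $\mathbb{Z}_2$-charts is needed.
\begin{itemize}
\item Going back and forth means that, after reparametrisation, $\widehat\gamma_\infty$ has the form $\alpha*\bar\alpha$. Hence it, and therefore $\gamma_\infty=\widehat\gamma_\infty\circ\varphi_\infty$, is null-homotopic inside its own image $\Gamma$.
\item Sufficiently uniformly close maps into the surface $|\mathcal{O}|\cong S^2$ are homotopic inside any fixed open $U_\delta\supset\Gamma$. So $\gamma_{t_i}$ is null-homotopic in $U_\delta$ for large $i$.
\item A Jordan curve that is null-homotopic in an open $U\subset S^2$ has a complementary component contained in $U$. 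Otherwise it separates points $p,q\notin U$, and then it has winding number $\pm1$ in $S^2\setminus\{p,q\}\supset U$.
\item $\Gamma$ is a finite union of geodesic arcs, so it has zero area and $A(U_\delta)\to 0$ as $\delta\to0$.
\end{itemize}
Hence one region bounded by $\gamma_{t_i}$ has area tending to $0$. This contradicts the constant positive areas $A(E_0)$ and $A(\mathcal{O})-A(E_0)$.

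Your continuity-of-area idea can also be made to work. Write the area as $\int\gamma^*\eta$ for a primitive $\eta$ of the area form on $|\mathcal{O}|$ minus a regular point off $\Gamma$. Then pass to the limit using the uniform Lipschitz bound $|\partial_\theta\gamma_t|\le C$ from Step 1 of the proof of Theorem \ref{thm:subcon}, which makes the line integrals converge under uniform convergence.
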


%Finally, we use our curvature bounds to upgrade from smooth subsequential convergence to smooth convergence in time.
Under an additional assumption on $M$, we obtain a much stronger result.  Namely, we can upgrade from smooth subsequential convergence to smooth convergence in time.

\begin{theorem}\label{thm:main2}
    Assume we are in the setting of Theorem \ref{thm:main1}, and let $L_\infty$ be the limiting $f$-minimal Lagrangian submanifold from the conclusion of that theorem. Suppose furthermore that $(M,g, f)$ is real analytic. 

    Then $L_t$ converges smoothly and graphically to $L_\infty$ as $t \to \infty$.

    More precisely, the convergence statement means the following: Let $\widetilde \Phi: L_\infty \times (-\epsilon,\epsilon) \to \widetilde U_\epsilon$ be the diffeomorphism of Theorem \ref{thm:main1}. There exists $T_0>0$ such that for $t \in [T_0,\infty)$, there exists $u_t:L_\infty \to \mathbb{R}$ so that $L_t = \{\widetilde \Phi(p, u_t(p))\,:\,p \in L_\infty\}$ and $u_t\to 0$ in $C^\infty$ as $t \to \infty$.
\end{theorem}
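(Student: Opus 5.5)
The plan is to run a Łojasiewicz--Simon argument on the reduced $\psi$-curve shortening flow in the orbifold $\mathcal{O}=\mu_{\rm can}^{-1}(0)/G$, with $\psi=\overline f$, and then lift the result to $M$ using Proposition \ref{prop:bddB}.

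By Theorem \ref{thm:main1}, the quotient curves $\gamma_t=L_t/G$ form a $\psi$-CSF in $\mathcal{O}^{\rm reg}$ for all time. Along some sequence $t_i\to\infty$ they converge smoothly to a $\psi$-geodesic $\gamma_\infty=L_\infty/G$ lying in $\mathcal{O}^{\rm reg}$. Near $\gamma_\infty$ we use the Fermi coordinates $\Phi$ of Lemma \ref{lem:graph}, built with respect to $g_\psi$. Any curve in the tubular neighbourhood $U_\epsilon$ that is $C^1$-close to $\gamma_\infty$ is then a graph $x\mapsto\Phi(x,h(x))$. On graphs, the weighted length becomes an analytic functional
\[
\mathcal{E}(h)=\int_{S^1} e^{\psi(\Phi(x,h))}\sqrt{J(x,h)^2+h'^2}\,dx.
\]

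\textbf{Analyticity.} Since $(M,g,f)$ is real analytic and $G$ acts by analytic isometries, the level set $\mu_{\rm can}^{-1}(0)$ is analytic. The same then holds for $\overline g$ on $\mathcal{O}^{\rm reg}$ and for $\overline f$: the orbit volume $V$ is analytic, so \eqref{def:fbar} gives an analytic $\overline f$. The function $J$ is analytic as well. Consequently $\mathcal{E}$ is an analytic functional on $C^{2,\alpha}(S^1)$. Its Euler--Lagrange operator is elliptic, and its linearisation at $h=0$ is a Fredholm Jacobi operator.

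\textbf{Łojasiewicz--Simon inequality.} Simon's theorem then yields $\theta\in(0,\tfrac12]$, $\sigma>0$ and $c>0$ such that
\[
|\mathcal{E}(h)-\mathcal{E}(0)|^{1-\theta}\le c\,\|\mathcal{M}(h)\|_{L^2}
\quad\text{whenever } \|h\|_{C^{2,\alpha}}<\sigma,
\]
where $\mathcal{M}(h)$ is the gradient of $\mathcal{E}$, comparable to $\kappa_\psi$ up to a bounded factor.

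\textbf{Standard argument.} Set $H(t)=(L_\psi(t)-L_\psi(\infty))^\theta$, where $L_\psi(\infty)=L_\psi(\gamma_\infty)$; this limit exists because $L_\psi$ is monotone and converges along $t_i$. Whenever $\gamma_t$ is a graph $h_t$ with $\|h_t\|_{C^{2,\alpha}}<\sigma$, equation \eqref{eq:fv} gives
\[
-H'(t)\ge c'\|\kappa_\psi\|_{L^2(ds_\psi)}.
\]
The right-hand side controls the normal speed, so the $L^2$-distance travelled is bounded by $H$. Choose $t_i$ so large that $h_{t_i}$ is tiny and $H(t_i)$ is tiny, and let $t^*$ be the first later time at which $\|h_t\|_{C^{2,\alpha}}=\sigma$. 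On $[t_i,t^*)$, the travelled distance is at most $CH(t_i)$, so $\|h_t\|_{L^2}$ stays small. Lemma \ref{lem:nabbd} supplies uniform $C^k$ bounds on the curves for all time. Interpolating these against the small $L^2$ norm shows that $\|h_t\|_{C^{2,\alpha}}$ stays small, which contradicts the definition of $t^*$. Hence $\gamma_t$ remains in the graphical region for all $t\ge t_i$. Its total normal travel is finite, so the flow converges in $L^2$ to some limit, and that limit must be $\gamma_\infty$ since it agrees with the subsequential limit. Interpolation upgrades this to $C^\infty$ convergence of $h_t\to0$.

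\textbf{Lifting to $M$.} Define $u_t=\widetilde h_t$ through $\widetilde\Phi$, which is the $G$-invariant lift of $\Phi$ exactly as in Theorem \ref{thm:main1}. The reparametrisation issue has to be handled: the unparametrised graph function is what converges, and this is harmless because the statement concerns the images $L_t$. Proposition \ref{prop:bddB} provides uniform bounds on all derivatives of the second fundamental form of $L_t$, using the compact set $U=\pi^{-1}$ of a neighbourhood of $\gamma_\infty$. Combined with $C^0$ convergence of the images, these bounds give $u_t\to0$ in $C^\infty$.

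\textbf{Main obstacle.} The hard step is verifying the hypotheses of Simon's theorem: analyticity of the reduced data $(\overline g,\overline f)$ and the Fredholm property of the linearisation. For analyticity, I would check that $V$ is analytic, since $G$-orbits in an analytic $G$-action vary analytically in the principal part. The remaining risk is the bootstrap from $L^2$ closeness to $C^{2,\alpha}$ closeness. I would handle it using the interpolation inequalities together with the higher curvature bounds already established in Lemma \ref{lem:nabbd}.
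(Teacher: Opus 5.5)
Your argument is correct, but it runs the \L{}ojasiewicz--Simon argument downstairs, whereas the paper runs it upstairs.

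\textbf{The paper's route.} The paper applies Theorem~\ref{thm:LS} to $\mathcal{E}(u)=\mathrm{Vol}_f(L_u)$ on all $C^{2,\alpha}$ normal graphs over $L_\infty$ in $P=\mu_{\rm can}^{-1}(0)^{\rm pri}$, using Fermi coordinates of $g_{\widetilde f}$. It verifies the hypotheses in Lemma~\ref{lem:ana}. It gets uniform $C^k$ bounds on $u_t$ from Proposition~\ref{prop:bddB} (O'Neill-type estimates) together with the higher-dimensional normal-graph estimate, Lemma~\ref{lem:ba}.

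\textbf{Your route.} You work with the one-variable functional $L_{\overline f}$ on graphs over the $\overline f$-geodesic in $\mathcal{O}^{\rm reg}$. Since $\dim G=n-1$ and $e^{\widetilde f}=Ve^{nf}$, one has $\mathrm{Vol}_f(\pi^{-1}(\gamma))=L_{\overline f}(\gamma)$. So your functional is exactly the paper's, restricted to $G$-invariant graphs.

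\textbf{What your route buys.} The $C^k$ control comes straight from Lemma~\ref{lem:nabbd} plus an elementary one-dimensional graph estimate. The lift is then trivial: $u_t=h_t\circ\widehat\gamma_\infty^{-1}\circ\pi|_{L_\infty}$, where $\widehat\gamma_\infty^{-1}\circ\pi|_{L_\infty}$ is a fixed smooth submersion. Hence $h_t\to 0$ in $C^\infty$ gives $u_t\to 0$ in $C^\infty$ at once. Your detour through Proposition~\ref{prop:bddB} and $C^0$ convergence of the images is unnecessary. As phrased, it would itself need a graph estimate like Lemma~\ref{lem:ba}.

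\textbf{The analyticity check.} The cost of your route is analyticity of $(\overline g,\overline f)$ on $\mathcal{O}^{\rm reg}$, but this is not really extra. The paper's Fermi coordinates are built from $g_{\widetilde f}$, and $\widetilde f$ contains $\log V$, so Lemma~\ref{lem:ana}(ii) tacitly relies on the same fact. Your check does go through. For a free action, $V(p)$ is a constant multiple of $\sqrt{\det g_p(X_i^\sharp,X_j^\sharp)}$, where $X_i$ is a basis of $\mathfrak g$. This is analytic because the orbit map pulls $g$ back to a left-invariant metric on $G$, and the fields $X_i^\sharp$ are analytic.

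\textbf{A slip to fix.} Your Fermi chart $\Phi$ should be built from $g_\psi$, as you say. That is needed so that its $G$-invariant horizontal lift is the prescribed $\widetilde\Phi$, which projects onto the $g_{\overline f}$-Fermi chart. Then $g_\psi=J^2dx^2+dy^2$, with $J$ the Jacobi coefficient of $g_\psi$. The $\psi$-length of the graph of $h$ is therefore $\int_{S^1}\sqrt{J(x,h)^2+h'^2}\,dx$. Your formula carries an extra factor $e^{\psi(\Phi(x,h))}$, which counts the weight twice. With that factor, $h=0$ is not a critical point and $\mathcal{M}(0)\neq 0$, so drop it.
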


Note that the extra assumption on the real analyticity for $(M,g,f)$ is automatically satisfied when $M$ is a K\"ahler-Einstein manifold with $f=0$, since any Einstein metric is real analytic (cf.\ \cite{DK}).

\subsection{Proof of Theorem \ref{thm:main1}}
In this subsection, we give a proof of Theorem \ref{thm:main1}. The result is essentially
a consequence of Propositions \ref{prop:redu}, \ref{prop:topo} and Theorems \ref{thm:subcon}, \ref{thm:beh}.

\begin{proof}[Proof of  Theorem \ref{thm:main1}]
Let $T_{\max}\in [0,\infty) \cup \{\infty\}$ be the maximal existence time of the $f$-LMCF $F:L^{n}\times [0,T_{\max})\to M^{2n}$ starting from $\varphi=F_{0}$. Since the generalised mean curvature form $\alpha_{K}$ of $\varphi$ is exact, by Proposition \ref{prop:redu}-(i), we have that $F_{t}(L)$ is contained in the $0$-level set of the canonical moment map $\mu_{{\rm can}}: M\to \fg^*$ of the $G$-action  for any $t\in [0,T_{\max})$. 
Since $G$ acts on $L$ freely and $L \subset \mu_{\rm can}^{-1}(0)$, it follows that the principal orbits of the $G$-action on $\mu_{\rm can}^{-1}(0)$ have trivial stabiliser, therefore $L \subset \mu_{\rm can}^{-1}(0)^{\rm pri}$ and $G$ acts on $\mu_{\rm can}^{-1}(0)^{\rm pri}$ freely. 

By Proposition \ref{prop-riemorbifoldquotient} the quotient space $\overline{M}=\mu_{{\rm can}}^{-1}(0)/G$ becomes a compact orbifold $\mathcal{O}$,
and $\overline{M}^{{\rm pri}}:=\mu_{{\rm can}}^{-1}(0)^{{\rm pri}}/G$ inherits a (smooth) K\"ahler structure $(\overline{\omega}, \overline{J}, \overline{g})$ by Proposition \ref{prop:dime}. Since we assume that the $G$-action on $L^{n}$ is cohomogeneity-one, we have that ${\rm dim}G=n-1$. Thus, by part (i) of that Proposition, we see ${\rm dim}(\mu_{{\rm can}}^{-1}(0))=n+1$ and  ${\rm dim}(\mathcal{O})=2$. Moreover, $\overline{L}=L/G\simeq S^{1}$. Note that since $\mathcal{O}$ is K\"ahler, it is in particular oriented.

For ease of notation, we set $P:=\mu_{{\rm can}}^{-1}(0)^{{\rm pri}}$ and  denote the natural projections by $\pi: P\to \overline{M}^{{\rm pri}}$ and $\pi_{L}: L\to \overline{L}$. We proceed with the proof by dividing it into several claims.
\setcounter{claim}{0}
\begin{claim}\label{cl:m1}
$F_{t}(L)\subset P$ for any $t\in[0,T_{\max})$.
\end{claim}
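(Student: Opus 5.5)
Claim 1 asserts that every time-$t$ image $F_t(L)$ lies in the principal stratum $P=\mu_{\rm can}^{-1}(0)^{\rm pri}$. The plan is a connectedness/openness argument in $t$ that exploits freeness of the $G$-action on $L$.

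By Proposition~\ref{prop:redu}-(i), $F_t(L)\subset\mu_{\rm can}^{-1}(0)$ for all $t\in[0,T_{\max})$. By Lemma~\ref{lem:equi}, each $F_t$ is $G$-equivariant. For $x\in L$, equivariance gives $G_x\subset G_{F_t(x)}$. The reverse inclusion is what we need: we must show the stabiliser $G_{F_t(x)}$ is trivial, since $G$ acts freely on $P$ and principal points have trivial stabiliser.

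The key step is the following. Because $0$ is a regular value, Lemma~\ref{lem:mo} shows $G$ acts locally freely on $\mu_{\rm can}^{-1}(0)$, so every stabiliser $G_{F_t(x)}$ is finite. Suppose $h\in G_{F_t(x)}$. Then $F_t(hx)=hF_t(x)=F_t(x)$.

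First I would use embeddedness. As long as $F_t$ remains an embedding, this forces $hx=x$, hence $h\in G_x=\{e\}$, so $F_t(x)$ has trivial stabiliser and lies in $P$. Embeddedness of $F_t$ is therefore the crux.

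To obtain it, I would argue on $S=\{t\in[0,T_{\max}) : F_t \text{ is an embedding with } F_t(L)\subset P\}$ and show $S$ is all of $[0,T_{\max})$:
\begin{itemize}
\item $S$ contains $0$, by the argument above applied to the embedding $\varphi$.
\item $S$ is open, because embeddings of a compact manifold form an open set and $P$ is open in $\mu_{\rm can}^{-1}(0)$.
\item $S$ is closed, which is the main obstacle. On $[0,t_0)$ the quotient curves $\overline F_t=\pi\circ F_t$ form a $\psi$-CSF in $\overline M^{\rm pri}=\mathcal O^{\rm reg}$ (Proposition~\ref{prop:redu}-(ii)), and they stay embedded by Proposition~\ref{prop:ap}. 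At $t_0$ the limit $F_{t_0}$ is a smooth immersion, so $\overline F_{t_0}$ is a smooth immersed curve in $\mathcal O$.
\end{itemize}

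To close the argument I would show $\overline F_{t_0}$ avoids the cone points, reasoning by the local structure of the quotient. If $\overline F_{t_0}(y)$ were a cone point of order $k$, then $F_{t_0}$ is equivariant with $G$ acting freely on $L$. Hence the orbit through $F_{t_0}(x)$, which is a quotient $G/\mathbb Z_k$, would be covered $k$-to-one by the orbit $Gx\cong G$. Its preimage under $F_{t_0}$ would then have to contain a whole orbit $Gx$ mapping with degree $k$. Locally near $x$ the immersion $F_{t_0}$ is a product of the $G$-orbit direction and the curve direction. In the orbifold chart, the $k$ lifts of the nearby embedded curves $\overline F_t$, for $t<t_0$, would converge to curves all passing through the origin.

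Here I expect to adapt the cone-point argument of Step~4 in Theorem~\ref{thm:subcon} combined with the avoidance idea of Proposition~\ref{prop:key}. A cleaner alternative I would try first is to note directly that $\overline F_{t_0}$ is a smooth immersion lifting to an immersion, which is incompatible with passing through a cone point. Once cone points are excluded, $\overline F_{t_0}$ lies in $\mathcal O^{\rm reg}$. It is embedded as the smooth limit of an embedded $\psi$-CSF that continues past $t_0$, and its preimage $F_{t_0}(L)=\pi^{-1}(\overline F_{t_0})$ is then an embedded submanifold of $P$. So $t_0\in S$, and Claim~1 follows.
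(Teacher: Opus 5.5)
\textbf{There is a genuine gap at the closedness step.} Your open--closed skeleton is sound: embeddedness of $F_t$ together with freeness on $L$ forces trivial stabilisers, hence $F_t(L)\subset P$, and openness is clear. But closedness, i.e.\ showing that $\overline F_{t_0}$ avoids the cone points, is the whole content of the claim, and you do not establish it.

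Your ``cleaner alternative'' is false. Being a smooth equivariant immersion imposes no obstruction. In the slice model $G\times_{\mathbb{Z}_k}D$, the map $(g,s)\mapsto[g,sv]$ is a $G$-equivariant immersion of $G\times(-\epsilon,\epsilon)$. Its quotient is an immersed curve through the cone point. The only defect is that the orbit $G\cdot x$ covers $G\cdot p$ $k$ times, which is non-injectivity, exactly what you are trying to rule out.

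Your first idea, adapting Step 4 of Theorem \ref{thm:subcon}, works only for $k\ge 3$. There, $F_t\to F_{t_0}$ smoothly, so the chart lifts $d_t$ converge smoothly to a curve $d_{t_0}$ through the origin. The curves $d_{t_0}$ and $Rd_{t_0}$ cross transversally, which is stable under perturbation and contradicts the disjointness of $d_t$ and $Rd_t$ for $t<t_0$. For $k=2$, however, $d_{t_0}$ and $-d_{t_0}$ are tangent at $0$, and a limit of disjoint curves can be tangent. This is exactly the reflected configuration of Theorem \ref{thm:subcon}(ii), which genuinely occurs as $t\to\infty$. So compactness and transversality alone cannot close the case $k=2$.

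\textbf{What is missing is a parabolic argument at the finite time $t_0$.} One local fix is the following. Near the origin, write $d_t$ and $-d_t$ as graphs over the common tangent line of $d_{t_0}$. Both graph functions solve the same quasilinear weighted CSF equation in the smooth chart. Their difference has a fixed sign for $t<t_0$ and vanishes at an interior point at $t=t_0$, which contradicts the strong maximum principle.

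The paper instead invokes Theorem \ref{thm:beh}. On $[0,t_0)$ the quotient is a $\psi$-CSF in $\mathcal{O}^{\rm reg}$. If its maximal existence time in $\mathcal{O}^{\rm reg}$ exceeds $t_0$, then $\overline F_{t_0}(S^1)\subset\mathcal{O}^{\rm reg}$ directly. Otherwise $L_{\overline f}(\overline F_t)\to 0$, which is impossible because $F_t$ is smooth at $t_0<T_{\max}$. Theorem \ref{thm:beh} rests on Proposition \ref{prop:key}, whose distance-function maximum principle between two lifts is precisely the avoidance argument you gestured at but did not carry out.

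With either completion your route works. The local version has the merit of exploiting the smoothness of the upstairs flow at $t_0$, rather than the full Grayson-type theorem.
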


Suppose the contrary were true, and we put $t_{0}:={\rm inf}\{t\in [0,T_{\max})\mid F_{t}(L)\cap \mu_{{\rm can}}^{-1}(0)^{{\rm ex}}\neq \emptyset\}$, where  $\mu_{{\rm can}}^{-1}(0)^{{\rm ex}}$ denotes the set of exceptional $G$-orbits in $ \mu_{{\rm can}}^{-1}(0)$. Since  $F_{0}(L)\subset P$ by assumption and $P$ is an open subset in $\mu_{{\rm can}}^{-1}(0)$,  we may assume that $0<t_{0}<T_{\max}$. Then, $F_{t}(L)\subset P$ for any $t\in [0,t_{0})$, and hence, Proposition \ref{prop:redu}-(ii) shows that the reduced map $\overline{F}_{t}: S^{1}\times [0,t_{0})\to \overline{M}^{{\rm pri}}$ is an $\overline{f}$-LMCF. In our situation, $\overline{F}_{t}$ is nothing but an $\overline{f}$-CSF on the orbifold $\mathcal{O}$.  If $t_{0}<\infty$ is not a maximal existence time of $\overline{F}_{t}$, then Theorem \ref{thm:beh} shows that $\overline{F}_{t_{0}}(S^{1})\subset \mathcal{O}^{{\rm reg}}$. Therefore, $\pi^{-1}(\overline{F}_{t_{0}}(S^{1}))$ does not contain any exceptional $G$-orbit. However,  this contradicts the assumption that $F_{t_{0}}(L)\cap \mu_{{\rm can}}^{-1}(0)^{{\rm ex}}\neq \emptyset$ because it must hold that $\pi\circ F_{t_{0}}=\overline{F}_{t_{0}}\circ \pi_{L}$. If $t_{0}<\infty$ is the maximal existence time of $\overline{F}_{t}$, then Theorem \ref{thm:beh} again implies that $\overline{F}_{t}$ shrinks to a point, and this also yields a contradiction because the original flow $F_{t}$ collapses at the time $t_{0}<T_{\max}$. This proves Claim \ref{cl:m1}.\\

In particular, we obtain a solution of $\overline{f}$-CSF $\overline{F}:  S^{1}\times [0,T_{\max})\to \mathcal{O}^{{\rm reg}}$. According to  Section \ref{sec:csf}, we denote $\overline{F}_{t}$ by $\gamma_{t}$, and let $\overline{T}_{\max}(\geq T_{\max})$ be the maximal existence time of $\gamma_{t}$ in $\mathcal{O}^{{\rm reg}}$. 

\begin{claim}\label{cl:m2}
$F_{t}: L\to M$ is a smooth embedding for any $t\in[0,T_{\max})$.
\end{claim}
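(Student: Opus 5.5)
The plan is to reduce embeddedness of $F_t$ to embeddedness of the quotient curve $\gamma_t=\overline F_t$, and then lift back using freeness of the $G$-action. By Claim \ref{cl:m1}, $F_t(L)\subset P$ for all $t\in[0,T_{\max})$. Proposition \ref{prop:redu}-(ii) therefore shows that $\gamma_t:S^1\to\mathcal{O}^{\rm reg}$ is an $\overline f$-CSF. Its initial curve $\gamma_0=\overline\varphi$ is embedded. To see this, suppose $\overline\varphi([x])=\overline\varphi([y])$. Then $\varphi(x)$ and $\varphi(y)$ lie in the same $G$-orbit, so $\varphi(y)=g\varphi(x)=\varphi(gx)$ for some $g\in G$. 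Injectivity of $\varphi$ gives $y=gx$, hence $[x]=[y]$. Injectivity of $d\overline\varphi$ follows from the decomposition $T_pL=\mathcal H_p\oplus\mathfrak g_p$ and the fact that $\pi_*$ is injective on horizontal vectors. The avoidance principle, Proposition \ref{prop:ap}, then gives that $\gamma_t$ is an embedding for every $t\in[0,T_{\max})$, since the flow stays in $\mathcal{O}^{\rm reg}$.

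Next we lift. $F_t$ is an immersion: this holds for any smooth solution of the flow, or more directly because $F_t$ is $G$-equivariant by Lemma \ref{lem:equi} with $\pi\circ F_t=\gamma_t\circ\pi_L$. With this equation, $dF_t$ restricted to horizontal directions maps to vectors projecting to $d\gamma_t\neq0$, and on the orbit directions it is the injective orbit map, since $G$ acts freely on $P$. For injectivity, suppose $F_t(x)=F_t(y)$. Then $\gamma_t(\pi_L x)=\gamma_t(\pi_L y)$, so $y=gx$ for some $g\in G$ by embeddedness of $\gamma_t$. Equivariance then gives $F_t(x)=F_t(gx)=gF_t(x)$. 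Freeness of the $G$-action on $P$, which was established at the start of the proof, forces $g=e$, so $x=y$. Since $L$ is compact, an injective immersion is an embedding.

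The only delicate point is applying Proposition \ref{prop:ap}, which is stated for curves remaining in $\mathcal{O}^{\rm reg}$; Claim \ref{cl:m1} guarantees exactly this. The remaining steps are routine bookkeeping with the Riemannian submersion.
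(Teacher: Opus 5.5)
Your proposal is correct and follows essentially the same route as the paper. Embeddedness of the quotient curve $\gamma_t$ in $\mathcal{O}^{\rm reg}$, together with $G$-equivariance of $F_t$ and freeness of the $G$-action on $P$, gives injectivity of $F_t$, and compactness of $L$ then upgrades the injective immersion to an embedding; the only difference is that you explicitly check that $\gamma_0=\overline\varphi$ is embedded and that $F_t$ is an immersion, which the paper takes for granted.
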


We first recall that $\gamma_{t}:S^{1}\to \mathcal{O}^{{\rm reg}}$ is a smooth embedding for any $t\in [0,\overline{T}_{max})$.  If $F_{t}(x)=F_{t}(y)$, then $\overline{F}_{t}\circ \pi_{L}(x)=\overline{F}_{t}\circ \pi_{L}(y)$. This shows $\pi_{L}(x)=\pi_{L}(y)$ since $\overline{F}_{t}=\gamma_{t}$ is an embedding. Thus, both $x$ and $y$ belong to the same fibre in $L$, which implies that there exists an element $g_y\in G$ such that $y=g_{y}\cdot x$. Then, we see that $F_{t}(x)=F_{t}(y)=F_{t}(g_{y}\cdot x)=g_{y}\cdot F_{t}(x)$ by  the $G$-equivariantness of $F_{t}$.  Since  $F_{t}(x)\in P=\mu_{{\rm can}}^{-1}(0)^{{\rm pri}}$ and the $G$-action is free on $P$, the isotropy subgroup $G_{F_{t}(x)}$of the $G$-action at $F_{t}(x)$ is trivial. Therefore, $g_{y}=e$ and this shows $y=x$. This proves that  $F_{t}: L\to M$ is injective, and this implies that the continuous bijective map $F_{t}: L\to F_{t}(L)$ is a homeomorphism since $L$ is compact and $M$ is Hausdorff. Therefore, $F_{t}$ is an embedding. \\

In the following, we set $L_{t}:=F_{t}(L)\subset M$.

\begin{claim}\label{cl:m3}
$T_{\max}=\infty$.
\end{claim}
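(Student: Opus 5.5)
We argue by contradiction and assume $T_{\max}<\infty$. By Claim \ref{cl:m1} the quotient flow $\gamma_t=\overline{F}_t$ is an embedded $\overline{f}$-CSF in $\mathcal{O}^{\rm reg}$ on $[0,T_{\max})$. The plan has two parts. First, show that $\gamma_t$ cannot collapse, so by Theorem \ref{thm:beh} its maximal existence time satisfies $\overline{T}_{\max}>T_{\max}$. Second, lift the extended quotient flow back to $M$ via Proposition \ref{prop:lift} and obtain a uniform curvature bound via Proposition \ref{prop:bddB}. Together these contradict maximality of $T_{\max}$.

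\textbf{Step 1 (no collapse).} By Proposition \ref{prop:topo}, $|\mathcal{O}|\cong S^2$, so each embedded $\gamma_t$ in $\mathcal{O}^{\rm reg}$ bounds a region $E_t\subset\mathcal{O}$ (either complementary disc may be chosen; fix one continuously in $t$). Exactness of $\alpha_K$ is preserved by Proposition \ref{prop:exact}. Theorem \ref{thm:K}(ii) then gives $\pi^*\overline{\alpha}_{\overline{K}_t}=\alpha_{K_t}$; since $\pi_L$ is a fibration with connected fibres, this makes $\overline{\alpha}_{\overline K_t}=\gamma_t^*(i_{\kappa_\psi N}\overline\omega)$ exact on $S^1$. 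So the flow is a Hamiltonian deformation, and by Proposition \ref{prop:areapre} (applied on small time intervals) $A(E_t)=A(E_0)$ for all $t<T_{\max}$. The same holds for the complementary region, and both areas are positive.

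Suppose $\gamma_t$ reached its maximal time $\overline{T}_{\max}=T_{\max}$. Then Theorem \ref{thm:beh} gives $L_\psi(\gamma_t)\to0$, hence $L(\gamma_t)\to 0$. The isoperimetric argument from the proof of Lemma \ref{lem:leng}, cases (i), (ii)-(a) and (ii)-(b), shows that for small length one of the two enclosed regions has area at most $L^2/(2\pi c_1)$, which tends to zero. This contradicts area preservation. Hence $\overline{T}_{\max}>T_{\max}$, and $\gamma$ is smooth up to and beyond $T_{\max}$ inside $\mathcal{O}^{\rm reg}$. In particular $\gamma([0,T_{\max}]\times S^1)$ is a compact subset of $\mathcal{O}^{\rm reg}$ with $\|\overline B_t\|$ uniformly bounded on $[0,T_{\max}]$.

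\textbf{Step 2 (lift and extend).} Let $U:=\pi^{-1}(\gamma([0,T_{\max}]\times S^1))$; it is compact and contained in $P$, and $F_t(L)\subset U$. Proposition \ref{prop:bddB} then gives $\|\nabla^kB_t\|\le C_k$ on $[\varepsilon,T_{\max})$. Standard mean curvature flow theory (bounded second fundamental form and all its derivatives in a compact ambient region) lets $F_t$ extend smoothly past $T_{\max}$, contradicting maximality. Alternatively, Proposition \ref{prop:lift} applied to $\gamma$ on $[0,\overline{T}_{\max})$ produces a GLMCF lift on a longer interval, and uniqueness identifies it with $F_t$.

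The main obstacle I expect is Step 1, namely the rigorous area preservation and no-collapse argument. Two points need care there. One must check that exactness descends from $L$ to the quotient curve (connectedness of the fibres $G$ is used). One must also ensure that the enclosed region is well defined, with its cone points fixed, as $t$ varies; avoidance and the fact that the flow stays in $\mathcal{O}^{\rm reg}$ should handle this.
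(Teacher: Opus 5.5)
Your proposal is correct and follows essentially the same route as the paper. Exactness of $\alpha_K$ descends to the quotient curve, so the $\overline f$-CSF is a Hamiltonian, area-preserving deformation; together with Theorem \ref{thm:beh} and the isoperimetric argument of Lemma \ref{lem:leng} this rules out collapse, and the quotient flow is then lifted back via Proposition \ref{prop:lift} and uniqueness. The only difference is organisational: the paper proves $\overline{T}_{\max}=\infty$ directly, applying Proposition \ref{prop:exact} to the quotient flow, whereas you argue by contradiction at a finite $T_{\max}$, which has the small advantage that area preservation is only needed while the lifted flow in $M$ exists.
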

By assumption and Proposition \ref{prop:topo},  the orbifold $\mathcal{O}$ is homeomorphic to $2$-sphere, and hence, we see that each embedding $\gamma_{t}: S^{1}\to \mathcal{O}$ encloses a region. Moreover, since the generalised mean curvature form $\alpha_{K}$ of the initial Lagrangian submanifold $F_{0}=\varphi$ is a $G$-invariant exact $1$-form, the compactness of $G$ implies that there is a $G$-invariant function $h\in C^{\infty}(L)$ so that $\alpha_{K}=dh$. Then, we obtain a well-defined function $\overline{h}\in C^{\infty}(S^{1})$ satisfying $\overline{h}\circ \pi_{L}=h$ and $\pi^{*}_{L}d\overline{h}=dh=\alpha_{K}$, where $\pi_{L}:L\to L/G\simeq S^{1}$ is the projection. On the other hand,  the generalised mean curvature form $\alpha_{\overline{K}}$ of $\gamma_{0}=\overline{F}_{0}$ satisfies that $\pi_{L}^{*}\alpha_{\overline{K}}=\alpha_{K}$ by Theorem \ref{thm:K}. Therefore, we see that $\alpha_{\overline{K}}=d\overline{h}$, namely, $\alpha_{\overline{K}}$ is also exact. Thus, Proposition \ref{prop:exact} implies that the $\overline{f}$-CSF $\gamma_{t}$ starting from $\gamma_{0}$ generates a Hamiltonian deformation and by Proposition \ref{prop:areapre}, $\gamma_{t}$ preserves the enclosed area. 

It now follows that $\lim_{t\to T_{\max}}L_{\overline{f}}(\gamma_t)\neq 0$. Indeed,  if $\lim_{t\to T_{\max}}L_{\overline{f}}(\gamma_t)=0$, then the usual length $L(\gamma_t)$ also goes to $0$ since $\overline{f}$ is bounded.  Using a similar argument as in Lemma \ref{lem:leng}, we see that $A(E_t)\to 0$ by the isoperimetric inequality, where $E_t$ is the region enclosed by $\gamma_t$; this contradicts preservation of area.

Thus, by Theorem \ref{thm:beh}, the maximal existence time $\overline{T}_{\max}$ of $\gamma_{t}$ in $\mathcal{O}^{{\rm reg}}$ must be $\infty$. Now by Proposition \ref{prop:lift}, there exists an $f$-LMCF $F'_t:L \times [0,\infty) \to M$ starting at $\varphi$ such that $\pi \circ F'_t = \gamma_t$; by uniqueness of solutions to $f$-MCF this solution is an extension of $F_t$, and therefore $T_{\rm max} = \infty$. This proves Claim \ref{cl:m3}, and part (i) of the theorem.\\

We now consider the (sub-)convergence of the $f$-LMCF.
By Theorem \ref{thm:subcon}, there exists a subsequence $\{\gamma_{t_{i}}\}_{i=1}^{\infty}$ such that it converges to a continuous map $\gamma_{\infty}:S^{1}\to \mathcal{O}$. This limiting curve does not pass through any cone point of $\mathcal{O}$. Indeed, if $\gamma_{\infty}$ passes through a cone point, then by Theorem \ref{thm:subcon}-(i) and (ii), the order of cone point must be $2$, and $\gamma_{\infty}$ goes back and forth on a geodesic path. This implies that the area enclosed by $\gamma_{t_i}$ goes to $0$.
However, as mentioned above, $\gamma_{t}$ preserves the enclosed area, and  this is a contradiction. 
Therefore, $\gamma_{\infty}$ does not pass through any cone point.

Then by Theorem \ref{thm:subcon}-(iii), there exists  a subsequence $\{\widehat\gamma_{t_{i}}\}_{i=1}^{\infty}$ of an unparametrised solution $\{\widehat{\gamma}_{t}\}_{t\in [0,\infty)}$ corresponding to $\{\gamma_t\}_{t \in [0,\infty)}$
such that $\widehat\gamma_{i}:=\widehat\gamma_{t_{i}}$ smoothly converges to a smooth $\overline{f}$-minimal embedding $\widehat{\gamma}_{\infty}: S^{1}\to \mathcal{O}^{{\rm reg}}$.  
Now, for any positive constants $a\geq 2$ and $\delta\leq \frac{\pi}{8aL_{\overline{f}}(\widehat{\gamma}_{\infty})}$, we take the $\epsilon$-tubular neighbourhood $\overline{U}_{\epsilon}$  given in Lemma \ref{lem:graph} with respect to the weighted metric $g_{\overline{f}}=e^{2\overline{f}}\overline{g}$ of the $\overline{f}$-geodesic $\widehat{\Gamma}:=\widehat{\gamma}_{\infty}(S^{1})$. Since $\widehat{\gamma}_{i}\to \widehat{\gamma}_{\infty}$, 
it follows that
$\widehat{\gamma}_{i}(S^{1})\subset U_{\epsilon}$ for any sufficiently large $i$. Moreover,  since $L_{\overline{f}}(\widehat{\gamma}_{i})$ is non-increasing and $\kappa_{\overline{f}}(t_{i})\to 0$ (see Subsection \ref{subsec:subcon}), 
we have
that $L_{\overline{f}}(\widehat{\gamma}_{i})\leq aL_{\overline{f}}(\widehat{\gamma}_{\infty})$ and $|\widehat{\kappa}(t_{i})|\leq \delta$ for any sufficiently large $i$, where $\widehat{\kappa}=e^{-\overline{f}}\kappa_{\overline{f}}$ is the geodesic curvature w.r.t.\  the weighted metric $g_{\overline{f}}$. Therefore, by Lemma  \ref{lem:graph}, the embedding $\widehat{\gamma}_{i}$ is described by a graph over  $\widehat{\Gamma}$ for any sufficiently large $i$.
Namely, there exists a smooth function $h_{i}: S^{1}\to 
% \R
(-\epsilon,\epsilon)
$ such that 
\begin{align}\label{eq:gami}
\widehat{\gamma}_{i}(S^{1})=\{{\rm exp}^{\overline{f}}_{\widehat\gamma_{\infty}(x)}(h_{i}(x)\overline{N}_{\widehat\gamma_{\infty}(x)})\mid x\in S^{1}\},
\end{align}
where  ${\rm exp}^{\overline{f}}$ is the exponential map on $(\mathcal{O}^{{\rm reg}},g_{\overline{f}})$ and $\overline{N}$ is the unit normal vector field along $\widehat{\Gamma}$ w.r.t.\  $g_{\overline{f}}$. 

\begin{claim}\label{cl:m4}
$h_{i}\to 0$ in $C^{\infty}$ as $i\to \infty$.
\end{claim}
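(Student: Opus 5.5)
We need to show that the graph functions $h_i$ of \eqref{eq:gami} tend to $0$ in $C^\infty(S^1)$. The plan is to read $h_i$ off from the Fermi-coordinate description in the proof of Lemma~\ref{lem:graph}. Write $\Phi(x,y)=\exp^{\overline f}_{\widehat\gamma_\infty(x)}(y\overline N)$ for the Fermi coordinates around $\widehat\Gamma$ with respect to $g_{\overline f}$, and set $\widetilde c_i:=\Phi^{-1}\circ\widehat\gamma_i:S^1\to S^1\times(-\epsilon,\epsilon)$. By \eqref{def:h} we have $h_i=\pi_2\circ\widetilde c_i\circ(\pi_1\circ\widetilde c_i)^{-1}$, where $\pi_1,\pi_2$ are the two coordinate projections. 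Since $\Phi^{-1}$ is a fixed smooth diffeomorphism on $\overline U_\epsilon$ and $\widehat\gamma_i\to\widehat\gamma_\infty$ smoothly (Theorem~\ref{thm:subcon}(iii)), we get $\widetilde c_i\to\widetilde c_\infty:=\Phi^{-1}\circ\widehat\gamma_\infty$ in $C^\infty(S^1)$. Here $\widetilde c_\infty(\eta)=(\sigma(\eta),0)$, and $\sigma:=\pi_1\circ\widetilde c_\infty$ is a diffeomorphism of $S^1$, because $\widehat\gamma_\infty$ is an embedding parametrising $\widehat\Gamma$.

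Next we handle the maps $\sigma_i:=\pi_1\circ\widetilde c_i$. These converge to $\sigma$ in $C^\infty$, and $|\sigma'|\ge c>0$. Hence for large $i$ the derivative $\sigma_i'$ is bounded away from zero uniformly. Each $\sigma_i$ is already known to be a diffeomorphism by Lemma~\ref{lem:graph}; alternatively, this follows from degree considerations together with $C^1$-closeness to $\sigma$. We then need $\sigma_i^{-1}\to\sigma^{-1}$ in $C^\infty$. For $C^0$ convergence, apply the mean value theorem to $\sigma_i^{-1}$, which has derivative bounded by $1/c$, to see that $\sigma_i^{-1}\circ\sigma-\mathrm{id}$ is controlled by $\|\sigma_i-\sigma\|_{C^0}$. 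Higher derivatives follow inductively from the Faà di Bruno type formula $(\sigma_i^{-1})'=1/(\sigma_i'\circ\sigma_i^{-1})$, since each higher derivative of $\sigma_i^{-1}$ is a polynomial in derivatives of $\sigma_i$ composed with $\sigma_i^{-1}$, divided by powers of $\sigma_i'$.

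Finally, $h_i=\pi_2\circ\widetilde c_i\circ\sigma_i^{-1}$. Here $\pi_2\circ\widetilde c_i\to\pi_2\circ\widetilde c_\infty\equiv 0$ in $C^\infty$ and $\sigma_i^{-1}$ is bounded in every $C^k$. The chain rule then gives $\|h_i\|_{C^k}\le C_k\|\pi_2\circ\widetilde c_i\|_{C^k}\to 0$.

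The main point requiring care is the smooth convergence of the inverses $\sigma_i^{-1}$; this rests on the uniform lower bound on $\sigma_i'$ inherited from the immersion property of the limit. We should also check that the Fermi-coordinate graph representation used here, constructed with respect to $g_{\overline f}$, coincides with the function $h_i$ in \eqref{eq:gami}. This holds by the uniqueness in \eqref{def:h}.
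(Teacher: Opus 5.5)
Your proposal is correct and follows the paper's argument. You write $h_i=\pi_2\circ\widetilde\gamma_i\circ(\pi_1\circ\widetilde\gamma_i)^{-1}$, pass the smooth convergence $\widehat\gamma_i\to\widehat\gamma_\infty$ through $\Phi^{-1}$, invert the first component, and conclude that $h_i\to\pi_2\circ\widetilde\gamma_\infty=0$. The differences are only cosmetic. Since $\Phi(x,0)=\widehat\gamma_\infty(x)$, your $\sigma$ is in fact $\mathrm{id}_{S^1}$, as the paper notes. You also spell out the $C^\infty$ convergence of the inverses via the uniform lower bound on $\sigma_i'$, which the paper dismisses as ``easy to see''.
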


% Recall that (see also Subsection \ref{sec:2dorbifolds})
Recall from subsection \ref{subsec:curvesonorbs} that
\[
\Phi: S^{1}\times (-\epsilon,\epsilon)\to \overline{U}_{\epsilon},\quad \Phi(x,y)={\rm exp}^{\overline{f}}_{\widehat{\gamma}_{\infty}(x)}(y\cdot \overline{N}_{\widehat{\gamma}_{\infty}(x)})
\]
defines a diffeomorphism onto the neighbourhood $\overline{U}_{\epsilon}$ of $\widehat{\Gamma}$. 
By construction of $h_{i}$ (see \eqref{def:h}), we have 
\[
(\pi_{1}|_{\widetilde{\gamma}_{i}(S^{1})})^{-1}(x)=(x,h_{i}(x)),
\]
where $\pi_{1}: S^{1}\times (-\epsilon,\epsilon)\to S^{1}$ is the projection, $\widetilde{\gamma}_{i}:=\Phi^{-1}\circ \widehat{\gamma}_{i}$ and $\pi_{1}|_{\widetilde{\gamma}_{i}(S^{1})}: \widetilde{\gamma}_{i}(S^{1})\to S^{1}$ is the restricted map which is in fact a diffeomorphism (see the proof of Lemma \ref{lem:graph}). In particular, we see 
\[
\widetilde{\gamma}_{i}(x)=\Big((\pi_{1}\circ \widetilde\gamma_{i})(x), h_{i}\circ(\pi_{1}\circ \widetilde\gamma_{i})(x)\Big).
\]
Since  $\pi_{1}|_{\widetilde{\gamma}_{i}(S^{1})}: \widetilde{\gamma}_{i}(S^{1})\to S^{1}$ is a diffeomorphism and $\widetilde{\gamma}_{i}=\Phi^{-1}\circ \widehat{\gamma}_{i}:S^{1}\to S^{1}\times (-\epsilon, \epsilon)$ is a smooth embedding, we see $\pi_{1}\circ \widetilde{\gamma}_{i}: S^{1}\to S^{1}$ is also a diffeomorphism. 
 Thus, using the projection $\pi_{2}:S^{1}\times (-\epsilon,\epsilon)\to (-\epsilon,\epsilon)$,  we obtain
\[ 
h_{i}=\pi_{2}\circ \widetilde{\gamma}_{i}\circ (\pi_{1}\circ \widetilde{\gamma}_{i})^{-1}.
\]
Since $\widehat{\gamma}_{i}\to \widehat{\gamma}_{\infty}$ smoothly, we have  $\pi_{1}\circ \widetilde{\gamma}_{i}\to \pi_{1}\circ \widetilde{\gamma}_{\infty}$ in $C^{\infty}$ as $i\to \infty$, where $\widetilde{\gamma}_{\infty}=\Phi^{-1}\circ \widehat{\gamma}_{\infty}$. Notice that, by definition of $\Phi$, we see  $\pi_{1}\circ \widetilde{\gamma}_{\infty}=id_{S^{1}}$.  In this situation, it is easy to see that $(\pi_{1}\circ \widetilde{\gamma}_{i})^{-1}\to id_{S^{1}}$ in $C^{0}$, $\p(\pi_{1}\circ \widetilde{\gamma}_{i})^{-1}/\p\theta\to 1$ and  $\p^{k}(\pi_{1}\circ \widetilde{\gamma}_{i})^{-1}/\p\theta^{k}\to 0$ for any $k\geq 2$. This implies that the inverse $(\pi_{1}\circ \widetilde{\gamma}_{i})^{-1}$ also converges to $id_{S^{1}}$ in $C^{\infty}$.
Therefore, we see
 \[
 h_{i}\to\pi_{2}\circ \widetilde{\gamma}_{\infty}\circ id_{S^{1}}=0 \quad (i\to \infty)
 \]
 in $C^{\infty}$, which proves Claim \ref{cl:m4}.\\

Since $\widehat{\gamma}_{i}(S^{1})=\gamma_{i}(S^{1})$ by construction of $\widehat{\gamma}_{i}$ (see Proof of Theorem \ref{thm:subcon}), we have $L_{i}:=F_{t_{i}}(L)=\pi^{-1}(\widehat{\gamma_{i}}(S^{1}))$. Moreover,  by Corollary \ref{cor:K}, $L_{\infty}:=\pi^{-1}(\widehat{\gamma}_{\infty}(S^{1}))$ is a $G$-invariant $f$-minimal Lagrangian submanifold in $M$. 

\begin{claim}\label{cl:m5}
There exists a diffeomorphism $\widetilde{\Phi}: L_{\infty}\times (-\epsilon,\epsilon)\to \widetilde{U}_{\mathcal{\epsilon}}\subset \mu_{can}^{-1}(0)$ such that  $L_{\infty}=\{\widetilde{\Phi}(p,0)\mid p\in L_{\infty}\}$ and for  sufficiently large $i$, there exists $\widetilde{h}_{i}\in C^{\infty}(L_{\infty})$ such that $L_{i}=\{\widetilde\Phi(p,\widetilde{h}_{i}(p))\mid p\in L_{\infty}\}$. Moreover, we have $\widetilde{h}_{i}\to 0$ as $i\to \infty$.
\end{claim}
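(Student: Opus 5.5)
We lift the graphical description of Claim \ref{cl:m4} from the quotient $\overline{U}_\epsilon\subset\mathcal{O}^{\rm reg}$ to $\mu_{\rm can}^{-1}(0)$ via horizontal lifts along the Riemannian submersion $\pi:P\to\overline{M}^{\rm pri}$. Since $\overline{U}_\epsilon$ lies in the regular part and $G$ acts freely on $P$, the preimage $\widetilde{U}_\epsilon:=\pi^{-1}(\overline{U}_\epsilon)$ is an open neighbourhood of $L_\infty=\pi^{-1}(\widehat\Gamma)$ in $\mu_{\rm can}^{-1}(0)$, and $\pi:\widetilde U_\epsilon\to\overline U_\epsilon$ is a principal $G$-bundle.

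\emph{Construction of $\widetilde\Phi$.} Let $\overline{V}:=\Phi_*(\partial/\partial y)$ on $\overline{U}_\epsilon$; this is the velocity field of the normal geodesics $y\mapsto \Phi(x,y)$. Let $\widetilde V$ be its $\pi$-horizontal lift to $\widetilde U_\epsilon$, and let $\theta_y$ denote the flow of $\widetilde V$. Define
\[
\widetilde\Phi(p,y):=\theta_y(p),\qquad p\in L_\infty,\ |y|<\epsilon .
\]
This flow exists for $|y|<\epsilon$: $\pi$ intertwines $\theta_y$ with the flow of $\overline V$, which is complete on $\Phi(S^1\times(-\epsilon,\epsilon))$, and the fibres are compact. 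Because horizontal lifts are $G$-invariant, $\widetilde\Phi$ is $G$-equivariant and satisfies
\[
\pi\circ\widetilde\Phi(p,y)=\Phi(\pi(p),y),
\]
where we identify $\widehat\Gamma\cong S^1$ via $x$. It follows that $\widetilde\Phi$ is a bijection $L_\infty\times(-\epsilon,\epsilon)\to\widetilde U_\epsilon$. Indeed, the map covers the diffeomorphism $\Phi$, and on each fibre $\pi^{-1}(\Phi(x,y))$ it is the $G$-equivariant map $\theta_y:\pi^{-1}(\Phi(x,0))\to\pi^{-1}(\Phi(x,y))$, which is a diffeomorphism between $G$-torsors. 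The differential is invertible as well: $d\widetilde\Phi$ sends $\partial_y$ to $\widetilde V$, which is horizontal and nonzero, and it sends $T_pL_\infty$ isomorphically, since $\theta_y$ is a diffeomorphism. Hence $\widetilde\Phi$ is a diffeomorphism, and $\widetilde\Phi(p,0)=p$.

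\emph{Graphs.} Set $\widetilde h_i:=h_i\circ\pi|_{L_\infty}$, which is smooth and $G$-invariant. Then
\[
\pi\bigl(\widetilde\Phi(p,\widetilde h_i(p))\bigr)=\Phi(\pi(p),h_i(\pi(p)))\in\widehat\gamma_i(S^1).
\]
So $\{\widetilde\Phi(p,\widetilde h_i(p))\}$ is a $G$-invariant subset of $\pi^{-1}(\widehat\gamma_i(S^1))=L_i$ that meets every fibre of $L_i$ in a full orbit. It therefore equals $L_i$.

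\emph{Convergence.} The $C^k$ norm of $\widetilde h_i$ on the compact manifold $L_\infty$ is controlled by the $C^k$ norm of $h_i$ via the chain rule, because $\pi|_{L_\infty}$ is a fixed smooth submersion with bounded derivatives. Claim \ref{cl:m4} then gives $\widetilde h_i\to0$ in $C^\infty$.

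The main point needing care is the diffeomorphism property of $\widetilde\Phi$, in particular that it is well defined and injective up to $|y|<\epsilon$. I would handle this as above, using equivariance and the fact that $\pi$ intertwines $\theta_y$ with the flow of $\overline V$. Everything else is formal.
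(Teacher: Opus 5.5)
Your proof is correct, with the same outline as the paper: lift the graph description over $\widehat\Gamma$ to $L_\infty$, set $\widetilde h_i=h_i\circ\widehat\gamma_\infty^{-1}\circ\pi$, check $G$-invariance plus projection, and push $C^\infty$ convergence through a fixed smooth map. The difference is how you build $\widetilde\Phi$.

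The paper uses the normal exponential map $\widetilde\Phi(p,r)=\exp^{\widetilde f}_p(r\widetilde N_p)$ of the conformal metric $g_{\widetilde f}=e^{2\widetilde f}g$, with $\widetilde N$ the horizontal lift of $\overline N$. It gets the diffeomorphism property for small $\epsilon$ from the tubular neighbourhood theorem. It then uses O'Neill's fact that horizontal geodesics of the Riemannian submersion $(P,g_{\widetilde f})\to(\overline M^{\rm pri},g_{\overline f})$ project to geodesics, and concludes that the graph projects onto $\widehat\gamma_i(S^1)$. You instead flow along the horizontal lift of $\Phi_*(\partial_y)$.

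The two maps are in fact identical. Both $r\mapsto\exp^{\widetilde f}_p(r\widetilde N_p)$ and $y\mapsto\theta_y(p)$ are the horizontal lift through $p$ of $y\mapsto\Phi(x,y)$.

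What your route buys: it uses only the connection of the principal bundle, not any metric on $P$. It builds in $\pi\circ\widetilde\Phi=\Phi\circ(\pi\times{\rm id})$ by construction. It also gives bijectivity on the full range $|y|<\epsilon$ by covering $\Phi$, without shrinking $\epsilon$.

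What the paper's route buys: it identifies $\widetilde\Phi$ as Fermi coordinates for $g_{\widetilde f}$, with $g_{\widetilde f}(\partial_i,\partial_r)=0$ and $|\partial_r|=1$. That is exactly what is used later, in Lemma \ref{lem:ana}, the field $W_u$, and Lemma \ref{lem:ba}. To reuse your $\widetilde\Phi$ there, you would need the identification above.

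Two small fixes. First, the flow of $\overline V$ is not complete on the strip; it exists from $y=0$ for $|y|<\epsilon$, which is all you use. Second, for invertibility of $d\widetilde\Phi$ you should say that $\widetilde V(\theta_y(p))\notin d\theta_y(T_pL_\infty)$. This holds because $d\theta_y$ carries $\widetilde V(p)$ to $\widetilde V(\theta_y(p))$, and $\pi_*\widetilde V(p)=\overline N$ is transverse to $\widehat\Gamma$.
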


To prove this, we define a weighted metric on $P=\mu_{{\rm can}}^{-1}(0)^{{\rm reg}}$ by $g_{\widetilde{f}}:=e^{2\widetilde{f}}g$, where $g$ denotes the induced metric on $P\subset M$ and $\widetilde{f}\in C^{\infty}(P)$ is a $G$-invariant function such that $\widetilde{f}=\pi^{*}\overline{f}$ (see \eqref{def:fbar} for more precise description). Then, $(P,g_{\widetilde{f}})\to (\overline{M}^{\rm pri}, g_{\overline{f}})$ is also a Riemannian submersion. Note that both the horizontal and vertical subspaces coincide with those of the original Riemannian submersion $(P,g)\to (\overline{M}^{\rm pri},\overline{g})$. 

Since $L_{\infty}$ is a $G$-invariant submanifold in $P$, we obtain a unit normal vector field
$\widetilde{N}_{p}:=\overline{N}_{\pi(p)}^{\mathcal{H}}$ w.r.t.\  $g_{\widetilde{f}}$ along $L_{\infty}$. By assumption, the codimension of $L_{\infty}$ in $P$ is equal to $1$, and hence, for sufficiently small $\epsilon>0$, the map
\[
\widetilde{\Phi}: L_{\infty}\times (-\epsilon,\epsilon)\to P,\quad \widetilde{\Phi}(p,r):={\rm exp}^{\widetilde{f}}_{p}(r\cdot \widetilde{N}_{p})
\]
defines a diffeomorphism onto an open neighbourhood $\widetilde{U}_{\epsilon}\subset P$  of $L_{\infty}$, where ${\rm exp}^{\widetilde{f}}$ is the exponential map of $(P, g_{\widetilde{f}})$. Obviously, $L_{\infty}=\{\widetilde{\Phi}(p,0)\mid p\in L_{\infty}\}$.

For sufficiently large $i$, the function $h_{i}: S^{1}\to \R$ yields a $G$-invariant smooth function $\widetilde{h}_{i}:=h_{i}\circ \widehat{\gamma}_{\infty}^{-1}\circ \pi: L_{\infty}\to \R$, where we regard $\widehat{\gamma}_{\infty}: S^{1}\to \widehat{\gamma}_{\infty}(S^{1})$ as a diffeomorphism. Our claim is that 
\begin{align}\label{eq:gL}
L_{i}=\{\widetilde{\Phi}(p, \widetilde{h}_{i}(p))\mid p\in L_{\infty}\}.
\end{align}
To show this, it suffices to check that the set of right hand side is $G$-invariant and its projection under the map $\pi: P\to \overline{M}^{\rm pro}$ coincides with $\widehat{\gamma}_{i}(S^{1})$. Since $\widetilde{f}$ is $G$-invariant, $G$ acts on $(P,g_{\widetilde{f}})$ isometrically, and hence we see that 
\[
g\cdot {\rm exp}_{p}^{\widetilde{f}}(\widetilde{h}_{i}(p)\widetilde{N}_{p})={\rm exp}_{g\cdot p}^{\widetilde{f}}(g_{*}\{\widetilde{h}_{i}(p)\widetilde{N}_{p}\})={\rm exp}_{g\cdot p}^{\widetilde{f}}(\widetilde{h}_{i}(g\cdot p)\widetilde{N}_{g\cdot p})
\]
since $\widetilde{h}_{i}$ is $G$-invariant and $g_{*}\widetilde{N}_{p}=\widetilde{N}_{g\cdot p}.$ This shows that the set of RHS of \eqref{eq:gL} is $G$-invariant. Moreover, since $\pi: (P, g_{\widetilde{f}})\to (\overline{M}^{\rm pri},g_{\overline{f}})$ is a Riemannian submersion, we have
$
\pi({\rm exp}_{p}^{\widetilde{f}}\overline{X}^{\mathcal{H}})={\rm exp}_{\pi(p)}^{\overline{f}}\overline{X}
$
for any $p\in P$ and $\overline{X}\in T_{\pi(p)}\overline{M}^{\rm pri}$ with sufficiently small norm $|\overline{X}|$. Thus, by \eqref{eq:gami} and definitions of $\widetilde{h}_{i}$ and $\widetilde{N}$, we see that the projection of the RHS of \eqref{eq:gL} coincides with $\widehat{\gamma}_{i}(S^{1})$. This proves \eqref{eq:gL}.

Since $h_{i}\to 0$ as $i\to \infty$, we also have $\widetilde{h}_{i}\to 0$ in $C^{\infty}$. In this sense, $L_{i}$ smoothly converges to $L_{\infty}$. This completes the proof of Theorem \ref{thm:main1}.
\end{proof}

\subsection{Proof of Theorem \ref{thm:main2}}
Next, we prove Theorem \ref{thm:main2}. 
Our proof below may be regarded as an adapted version of the proof of Simon's theorem \cite{Simon} for our setting.  A key ingredient to obtain a smooth convergence from a subsequential convergence is the following \L{}ojasiewicz--Simon inequality.
 
 \begin{theorem}[cf.\ Theorem 5.3 in \cite{Law}]\label{thm:LS}
 Let $(\Sigma,g)$ be a compact Riemannian manifold and $\mathcal{E}: C^1(\Sigma)\to \R$ be a functional such that it is written as
 \[
 \mathcal{E}(u)=\int_{\Sigma} E(p,u(p),\nabla u(p))\,dv_{\Sigma},
 \]
where $dv_{\Sigma}$ is the volume measure w.r.t.\  $g$. Here, $E(p,q,z)$ is a real valued smooth function for variables $p\in \Sigma$, $q\in \R$ and $z\in T_p\Sigma$.  
Suppose that $E$ satisfies the following two conditions:
\begin{enumerate}
\item For each $p\in \Sigma$, the function $E$ is uniformly convex with respect to $z$ when $q=0$, that is, there exists a positive constant $c$ independent of $z$ such that
\[
\frac{d^2}{ds^2}E(p,0,sz)\Big{|}_{s=0}\geq c|z|^2,\quad \forall z\in T_p\Sigma.
\]
\item For each $p\in \Sigma$, there exists a positive constant $\delta$ such that if $|q|, |z|<\delta$, then  $E(p,q,z)$ is real-analytic with respect to the latter two variables $q, z$.
\end{enumerate}

We define the {\rm Euler-Lagrange functional} $\mathcal{M}: C^2(\Sigma)\to C^0(\Sigma)$ by 
\[
\frac{d}{ds}\mathcal{E}(u+sv)\Big{|}_{s=0}=-\langle \mathcal{M}(u),v\rangle_{L^2(\Sigma)},\quad \forall v\in C^2(\Sigma),
\]
and suppose that $\mathcal{M}(0)=0$.

 Then, there exists a neighbourhood  $\mathcal{U}$ of $0$ in $C^{2,\alpha}(\Sigma)$, constants $\beta\in (\frac{1}{2}, 1)$ and $C_0>0$ depending only on $\Sigma$ and the form of $\mathcal{E}$ such that, we have the following inequality.
\begin{align}\label{eq:LS}
|\mathcal{E}(u)-\mathcal{E}(0)|^\beta\leq C_0\|\mathcal{M}(u)\|_{L^2},\quad \forall u\in \mathcal{U}.
\end{align}

 \end{theorem}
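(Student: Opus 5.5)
The statement is the \L{}ojasiewicz--Simon inequality, which the paper cites from \cite{Law}. A self-contained argument follows Simon's Lyapunov--Schmidt reduction to a finite-dimensional analytic function, to which the classical \L{}ojasiewicz inequality applies.

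\textbf{Step 1: linearisation.} First linearise $\mathcal{M}$ at $0$ to get $L := D\mathcal{M}(0)$, a second-order operator. Condition (1) makes $L$ uniformly elliptic, with leading part determined by $\partial_z^2E(p,0,0)$. $L$ is also formally self-adjoint with respect to $L^2(dv_\Sigma)$, since it is the Hessian of $\mathcal{E}$. Hence $L:C^{2,\alpha}(\Sigma)\to C^{0,\alpha}(\Sigma)$ is Fredholm of index zero. Its kernel $\mathcal{K}$ is finite dimensional, consists of smooth functions, and we have the $L^2$-orthogonal splitting $C^{0,\alpha}=\mathcal{K}\oplus L(C^{2,\alpha})$. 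Let $P$ be the $L^2$-orthogonal projection onto $\mathcal{K}$.

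\textbf{Step 2: Lyapunov--Schmidt reduction.} Define $\mathcal{N}(u) := \mathcal{M}(u)+P u$. Then $D\mathcal{N}(0)=L+P$ is an isomorphism $C^{2,\alpha}\to C^{0,\alpha}$. Condition (2) makes $\mathcal{N}$ real-analytic near $0$ as a map between these Banach spaces. The analytic inverse function theorem then gives an analytic local inverse $\Psi$ near $0$. It also gives the estimate $\|u-v\|_{C^{2,\alpha}}\le C\|\mathcal{N}(u)-\mathcal{N}(v)\|_{C^{0,\alpha}}$, together with the corresponding $W^{2,2}$/$L^2$ estimate from elliptic $L^2$ theory.

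Set $f(\xi):=\mathcal{E}(\Psi(\xi))$ for $\xi\in\mathcal{K}$ near $0$. This is a real-analytic function on a finite-dimensional space. At $0$ we have $\nabla f(0)=0$, because $\mathcal{M}(0)=0$ implies $\mathcal{N}(0)=0$ and so $\Psi(0)=0$. The classical \L{}ojasiewicz gradient inequality gives $\theta\in(0,\tfrac12]$ with $|f(\xi)-f(0)|^{1-\theta}\le C|\nabla f(\xi)|$ near $0$. We will take $\beta=1-\theta\in[\tfrac12,1)$; the value $\beta=\tfrac12$ can be avoided by decreasing $\theta$.

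\textbf{Step 3: comparison for general $u$.} Given $u$ near $0$, let $\xi:=Pu$ and $v:=\Psi(\xi)$. Since $\mathcal{N}(v)=\xi$, we get $\mathcal{M}(v)=\xi-Pv\in\mathcal{K}$. Three estimates then combine.

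\begin{itemize}
\item The gradient of $f$ is controlled by $\mathcal{M}$ at $v$: $\nabla f(\xi)=-P\,D\Psi(\xi)^{*}\mathcal{M}(v)$, so $|\nabla f(\xi)|\le C\|\mathcal{M}(v)\|_{L^2}$.
\item The distance from $u$ to $v$ is controlled by $\mathcal{M}(u)$: since $\mathcal{N}(u)-\mathcal{N}(v)=\mathcal{M}(u)-\mathcal{M}(v)$ has $L^2$ norm at least $\|(I-P)\mathcal{M}(u)\|$, and using that $\mathcal{M}(v)\in\mathcal{K}$, we get $\|u-v\|_{W^{2,2}}\le C\|\mathcal{M}(u)\|_{L^2}$.
\item The energy difference is quadratically small: Taylor expansion of $\mathcal{E}$ along the segment from $v$ to $u$ gives $|\mathcal{E}(u)-\mathcal{E}(v)|\le C\|\mathcal{M}(u)\|_{L^2}\|u-v\|+C\|u-v\|^2_{W^{1,2}}\le C\|\mathcal{M}(u)\|_{L^2}^2$.
\end{itemize}

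Combining these, $|\mathcal{E}(u)-\mathcal{E}(0)|^\beta\le |f(\xi)-f(0)|^\beta+C\|\mathcal{M}(u)\|^{2\beta}$. Using the bounds above, including $\|\mathcal{M}(v)\|\le\|\mathcal{M}(u)\|+C\|u-v\|$, and $2\beta\ge1$ with $\|\mathcal{M}(u)\|$ small, we obtain \eqref{eq:LS}.

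\textbf{Main obstacle.} The delicate point is analyticity of the nonlinear map $\mathcal{N}$ between H\"older spaces. Condition (2) gives analyticity of $E$ in $(q,z)$ only for $|q|,|z|<\delta$. One therefore needs a neighbourhood of $0$ in $C^{2,\alpha}$ small enough that $(u,\nabla u)$ stays in that range, and then uniform power-series bounds so that the composition $u\mapsto \partial_qE-\mathrm{div}\,\partial_zE$ is analytic. The mixing of the $L^2$ and $C^{2,\alpha}$ norms in Step 3 also needs care; I would handle it by working in $C^{2,\alpha}$ throughout and using elliptic estimates for $L+P$ in both scales.
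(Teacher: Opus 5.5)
The paper gives no proof of this statement; it imports it from \cite{Law}. Your Lyapunov--Schmidt reduction is Simon's original argument, which is the standard proof behind that citation, and it goes through. However, the second estimate in Step~3 is misstated. By construction $\mathcal{N}(v)=\xi=Pu$ and $\mathcal{N}(u)=\mathcal{M}(u)+Pu$, so $\mathcal{N}(u)-\mathcal{N}(v)=\mathcal{M}(u)$ \emph{exactly}, not $\mathcal{M}(u)-\mathcal{M}(v)$. As you wrote it, a lower bound on $\|\mathcal{N}(u)-\mathcal{N}(v)\|_{L^2}$ cannot bound $\|u-v\|_{W^{2,2}}$ from above. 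Alternatively, bounding it by $\|\mathcal{M}(u)\|_{L^2}+\|\mathcal{M}(v)\|_{L^2}$ and then using $\|\mathcal{M}(v)\|\le\|\mathcal{M}(u)\|+C\|u-v\|$ would be circular. With the correct identity, $\|u-v\|_{W^{2,2}}\le C\|\mathcal{M}(u)\|_{L^2}$ is immediate. Moreover, $\mathcal{M}(v)=P(u-v)$ gives $\|\mathcal{M}(v)\|_{L^2}\le\|u-v\|_{L^2}$ directly, so your final combination closes.

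Two points need more precision than your sketch gives.

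First, the $W^{2,2}$--$L^2$ estimate does not come from the inverse function theorem. Write $\mathcal{N}(u)-\mathcal{N}(v)=A_{u,v}(u-v)$ with $A_{u,v}=\int_0^1 D\mathcal{N}(v+s(u-v))\,ds$. For $u,v$ small in $C^{2,\alpha}$, this is a second-order operator whose coefficients are $C^0$-close to those of $L+P$; this includes the first-order coefficients, which involve $\nabla^2u$ and $\nabla^2v$. Since $L+P:W^{2,2}\to L^2$ is an isomorphism, $A_{u,v}$ is uniformly invertible there. This is exactly where the weak norm $\|\mathcal{M}(u)\|_{L^2}$ enters, so you cannot ``work in $C^{2,\alpha}$ throughout'' at that step.

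Second, on your main obstacle: hypothesis (2) as literally stated is pointwise in $p$, with $\delta$ possibly depending on $p$. It does not by itself make $\mathcal{N}$ analytic from $C^{2,\alpha}$ to $C^{0,\alpha}$. You need uniform bounds on the Taylor coefficients $a_\gamma(p)=\frac{1}{\gamma!}\partial^\gamma_{(q,z)}E(p,0,0)$ of the form $\|a_\gamma\|_{C^{2}(\Sigma)}\le CM^{|\gamma|}$. These hold, for instance, when $E$ is jointly real-analytic in $(p,q,z)$. That joint analyticity is what the argument of Lemma~\ref{lem:ana} actually yields in the paper's application, since its Fermi coordinates are real-analytic. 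So this obstacle is resolved by that stronger hypothesis, not by an additional argument.
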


In the following, we give a proof of Theorem \ref{thm:main2}.

Let $L_\infty$ be an $f$-minimal Lagrangian submanifold in $M$ obtained in Theorem \ref{thm:main1}. 
 We fix arbitrary  $\alpha\in (0,1)$.  For a smooth function $u\in C^{2,\alpha}(L_\infty)$, we consider a normal graph over $L_\infty$ contained in $P=\mu_{\rm can}^{-1}(0)^{\rm pri}$ with respect to the weighted metric $g_{\widetilde{f}}$;
\[
L_u:=\{\widetilde{\Phi}(p,u(p))\mid p\in L_\infty\}=\{{\rm exp}_p^{\widetilde{f}}(u(p)\widetilde{N}_p)\mid p\in L_\infty\},
\]
where we used the same notation given in the proof of Theorem \ref{thm:main1}. Also, we set 
\[
G_u: L_\infty \to L_u,\quad G_u(p):=\widetilde{\Phi}(p,u(p))={\rm exp}_p^{\widetilde{f}}(u(p)\widetilde{N}_p)
\]

We consider a sufficiently small open subset in $C^{2,\alpha}(L_\infty)$ around the origin
\[
\mathcal{U}_R:=\{u\in C^{2,\alpha}(L_\infty)\mid \|u\|_{C^{2,\alpha}}< R\}
\]
so that $G_u$ defines a diffeomorphism for any $u\in \mathcal{U}_R$, where $R>0$. In this case, $L_u$ is identified with the image of the embedding $G_u: L_\infty\to M$. 
Then, we consider a functional $\mathcal{E}: \mathcal{U}_R\to \R$ defined by  
\[
\mathcal{E}(u):={\rm Vol}_f(L_u).
\]
Let $dv_\infty$ be the volume measure on $L_\infty$ w.r.t.\  $g$. We define a smooth function $J_u$ on $L_\infty$ by
\[
dv_u=J_udv_\infty,
\] 
where $dv_u$ is the volume measure w.r.t.\  $G_u^*g$. More precisely, 
 identifying $L_\infty$ with $L_u$ by the diffeomorphism $G_u$, we have
\[
\mathcal{E}(u)=\int_{L_\infty} e^{nf\circ G_u}\, dv_u=\int_{L_\infty}e^{nf\circ G_u}\cdot J_u\,dv_\infty.
\]
We thus put
\begin{align}\label{eq:E}
E(p, u, \nabla u):=e^{nf\circ G_u}\cdot J_u.
\end{align}
Although we are interested in a function $u\in \mathcal{U}_R\subset C^{2,\alpha}(L_\infty)$, one may extend the functional $\mathcal{E}$ to the functional defined on $C^{1}(L_\infty)$ with the real-valued function $E$ given by $\eqref{eq:E}$.

\begin{lemma}\label{lem:ana}
The real valued function $E$  satisfies assumptions {\rm (i)} and {\rm (ii)} given in Theorem \ref{thm:LS}.
\end{lemma}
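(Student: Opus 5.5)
Both conditions are local in $p\in L_\infty$, so we work in a chart and write $E(p,q,z)$ explicitly. Fix $p\in L_\infty$ and choose local coordinates $x=(x^1,\dots,x^n)$ on $L_\infty$ near $p$, normal coordinates for $G_0^*g$ at $p$. Set $F(x,q):=\widetilde\Phi(x,q)={\rm exp}^{\widetilde f}_x(q\widetilde N_x)$. Then $G_u(x)=F(x,u(x))$, and
\[
dG_u(\partial_i)=\partial_iF(x,u)+\partial_i u\,\partial_qF(x,u).
\]
Hence $J_u=\sqrt{\det(g_{ij}(x,u,\nabla u))/\det(g_{ij}(x,0,0))}$ with
\[
g_{ij}(x,q,z)=g(\partial_iF,\partial_jF)+z_i\,g(\partial_qF,\partial_jF)+z_j\,g(\partial_iF,\partial_qF)+z_iz_j\,g(\partial_qF,\partial_qF),
\]
all evaluated at $(x,q)$. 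Consequently $E(p,q,z)=e^{nf(F(p,q))}\sqrt{\det g_{ij}(p,q,z)}/\sqrt{\det g_{ij}(p,0,0)}$. It depends only on $(p,q,z)$ and is smooth, since $\widetilde\Phi$ is a diffeomorphism and the determinant is positive near $(q,z)=(0,0)$.

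\textbf{Condition (ii).} The data $(M,g,f)$ are real analytic. The moment map $\mu_{\rm can}$ in \eqref{eq-mucan} is built from $g$, $J$, $f$ and Killing fields, all of which are analytic: Killing fields of an analytic metric are analytic, and $J$ is parallel. So $P=\mu_{\rm can}^{-1}(0)^{\rm pri}$ is an analytic submanifold by the analytic implicit function theorem, using that $0$ is a regular value. The orbit volume $V$ is analytic, so $\widetilde f$ in \eqref{def:fbar} and $g_{\widetilde f}$ are analytic on $P$. Next, $\widehat\gamma_\infty$ is a geodesic of the analytic metric $g_{\overline f}$ and is therefore analytic, so $L_\infty=\pi^{-1}(\widehat\gamma_\infty(S^1))$ is an analytic submanifold of $P$. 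Its normal $\widetilde N$ is the horizontal lift of an analytic field. Since ${\rm exp}^{\widetilde f}$ is analytic (solutions of analytic ODEs depend analytically on initial data), $F$ is analytic in $(x,q)$. The map $z\mapsto\sqrt{\det(\cdot)}$ is analytic where the determinant is positive, which holds for $|q|,|z|<\delta$. Composing these gives analyticity of $E(p,\cdot,\cdot)$ in $(q,z)$ for $|q|,|z|<\delta$.

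\textbf{Condition (i).} At $q=0$ we have $\partial_iF=e_i$, a $g$-orthonormal frame at $p$. Also $\partial_qF=\widetilde N_p$, which is $g_{\widetilde f}$-unit and normal to $L_\infty$ in $P$, so $g(\widetilde N_p,e_i)=0$ and $g(\widetilde N_p,\widetilde N_p)=e^{-2\widetilde f(p)}=:\lambda^2>0$. Hence $g_{ij}(p,0,z)=\delta_{ij}+\lambda^2z_iz_j$, and $\det=1+\lambda^2|z|^2$. This gives
\[
E(p,0,sz)=e^{nf(p)}\sqrt{1+\lambda^2s^2|z|^2},
\]
and therefore
\[
\tfrac{d^2}{ds^2}E(p,0,sz)|_{s=0}=e^{nf(p)}e^{-2\widetilde f(p)}|z|^2.
\]
By compactness of $L_\infty$ and continuity of $f$ and $\widetilde f$, the constant $c:=\min_{L_\infty}e^{nf-2\widetilde f}>0$ works uniformly.

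\textbf{Main obstacle.} The hard step is (ii): carefully justifying analyticity of $P$, $\widetilde f$ and $L_\infty$. The key points are the analyticity of $V$, namely that the orbit volume of an analytic isometric action is analytic on the principal stratum, and the fact that the limiting curve is an analytic geodesic. I would handle these via the analytic structure of the quotient Riemannian submersion $P\to\overline M^{\rm pri}$. Condition (i) is a direct computation.
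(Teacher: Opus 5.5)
Your proposal is correct and follows the paper's proof. It uses the same local formula for $E$, the same computation $E(p,0,sz)=e^{nf(p)}\sqrt{1+e^{-2\widetilde f(p)}s^2|z|^2}$ for (i) (via normal coordinates at $p$ rather than the matrix determinant lemma), and the same argument for (ii): analyticity of the coordinate data together with positivity of the determinant. The only real variation is in (ii). The paper gets analyticity of $L_\infty$ from Leung's regularity result for $f$-minimal submanifolds, whereas you get it from $\widehat\gamma_\infty$ being a geodesic of the analytic metric $g_{\overline f}$; you also justify the analyticity of $P$, $V$ and $g_{\widetilde f}$, which the paper uses implicitly when it asserts that the Fermi coordinates for $g_{\widetilde f}$ are analytic.
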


\begin{proof}
We fix a local coordinate chart $(x_1,\ldots, x_n)$ of $L_\infty$, and take the Fermi coordinate $(x_1,\ldots, x_n, r)$ on the tubular neighbourhood of $L_\infty$ in $P$ with respect to the weighted metric $g_{\widetilde{f}}$. We set $\p_i(p,r):=\p \widetilde{\Phi}/\p x_i$ and $\p_r(p,r):=\p \widetilde{\Phi}/ \p r$, where $\widetilde{\Phi}(p,r)={\rm exp}_p^{\widetilde{f}}(r\widetilde{N}_p)$. Then, 
\begin{align*}
(G_u^*g)_{ij}&=g\Big(dG_u\Big(\frac{\p}{\p x_i}\Big), dG_u\Big(\frac{\p}{\p x_j}\Big)\Big)
=g\Big (\p_i+\frac{\p u}{\p x_i}\p_r, \p_j+\frac{\p u}{\p x_j}\p_r\Big)\\
&=e^{-2\widetilde f\circ G_u}\cdot g_{\widetilde{f}}\Big (\p_i+\frac{\p u}{\p x_i}\p_r, \p_j+\frac{\p u}{\p x_j}\p_r\Big)\\
&=e^{-2\widetilde f \circ G_u}\cdot \Big{\{} g_{\widetilde{f}}(\p_i, \p_j)+\frac{\p u}{\p x_i}\frac{\p u}{\p x_j}\Big{\}}
%&=g(\p_i, \p_j)+e^{-2f\circ G_u}\frac{\p u}{\p x_i}\frac{\p u}{\p x_j}
 \end{align*}
since $g_{\widetilde{f}}(\p_i,\p_r)=0$ and $g_{\widetilde{f}}(\p_r,\p_r)=1$. We put 
\begin{align*}
&g_{ij}(p,r):=g(\p_i, \p_j)(p,r),\quad \widetilde{g}_{ij}(p,r):=g_{\widetilde{f}}(\p_i, \p_j)(p,r),\\
& z_i:=g_{\infty}\Big(z, \frac{\p}{\p x_i}\Big), \quad {\bm v}(z):=\begin{pmatrix} z_1\\ \vdots \\ z_n\end{pmatrix},\quad  {v}_{ij}(p,z):=({\bm v}(z){}^t\!{\bm v}(z))_{ij}
\end{align*}
for $z\in T_p L_\infty$.
%Note that $g_{ij}(p,0)=g_{\infty}(\p/\p x_i, \p/\p x_j)$.  
%Then, we see 
%\begin{align*}
%{\rm det}((G_u^*g)_{ij})(p)&={\rm det}\Big(g_{ij}(p,u(p))+e^{-2f\circ G_u}\cdot v_{ij}(\nabla u) (p)\Big)\\
%&={\rm det}\Big(g_{ij}(p,u(p))\Big)\cdot \{1+{}^t\!{\bm v}(\nabla u)(g_{ij}(p,u(p)))^{-1}{\bm v}(\nabla u)\}
%\end{align*}
Then by definition, $J_u$ is locally expressed by
\[
J_u(p)=e^{-n \widetilde{f}\circ G_u}\sqrt{\frac{{\rm det}\Big(\widetilde{g}_{ij}(p,u(p))+ v_{ij}(p, \nabla u)\Big)}{{{\rm det}(g_{ij}(p,0))}}}.
\]
%Since $u_{,i}=g_{\infty}(\nabla u, \p_i(p,0))$, we see 
%\[
%E(p,u,\nabla u)=\sqrt{\frac{{\rm det}\Big{(}g_{\widetilde{f}}(\p_i, \p_j)(p,u(p))+g_{\infty}(\nabla u, \p_i(p,0))g_{\infty}(\nabla u, \p_j(p,0))\Big{)}}{{{\rm det}(g_{\infty})_{ij}}}}
%\]

We shall check that $E$ satisfies assumptions (i) and (ii) given in Theorem \ref{thm:LS} using this local expression.

\begin{itemize}
\item[(i)] Fix arbitrary $p\in L_\infty$ and take a local coordinate $(x_1,\ldots, x_n)$ around $p$. Then the function $E$ can be locally written by 
\begin{align*}
E(p,q,z)= {e^{n(f - \widetilde f)\circ G_u}}\sqrt{\frac{{\rm det}\Big(\widetilde{g}_{ij}(p,q)+ v_{ij}(p,z) \Big)}{{{\rm det}(g_{ij}(p,0))}}},
\end{align*}
where $q\in \R$, $z\in T_pL_\infty$. When $q=0$, the matrix determinant lemma shows that 
\begin{align*}
{\rm det}\Big(\widetilde{g}_{ij}(p,0)+v_{ij}(p,z)\Big)
&={\rm det}(\widetilde{g}_{ij}(p,0))\{1+{}^t\!{\bm v}(z) (\widetilde{g}_{ij}(p,0))^{-1} {\bm v}(z)\}\\
&=e^{2n \widetilde f(p)}{\rm det}(g_{ij}(p,0))\{1+e^{-2\widetilde f(p)}g^{ij}(p,0)z_i z_j \}\\
&=e^{2n \widetilde f(p)}{\rm det}(g_{ij}(p,0))\{1+e^{-2\widetilde f(p)}|z|^2 \}.
\end{align*}
Thus, we obtain 
\[
E(p,0,sz)=e^{nf(p)}\sqrt{1+e^{-2\widetilde f(p)}|z|^2\cdot s^2}.
\]
Then, it is easy to see that 
\[
\frac{d^2}{ds^2}E(p,0,sz)\Big|_{s=0}=e^{n f(p) -2 \widetilde f(p)}|z|^2\geq c|z|^2
\]
for some positive constant $c>0$ independent of $p, z$. This confirms the assumption (i).

\item[(ii)] We fix $p\in L_{\infty}$. We shall show that there exists a positive constant $\delta(p)>0$ such that $E(p,q,z)$ is real analytic on $(q,z)$ whenever $|q|, |z|<\delta(p)$. Since $(M,g,f)$ is real analytic, any smooth $f$-minimal submanifold is also real analytic (see \cite[Lemma 1]{Leung}). Thus we can take the Fermi coordinate $(x_1,\ldots, x_n,r)$ so that all the functions $g_{ij}(p,r)$, $\widetilde{g}_{ij}(p,r)$ and ${v}_{ij}(p,z)$ are real analytic. Moreover, since 
\[
{\rm det}\Big(\widetilde{g}_{ij}(p,0)+ v_{ij}(p,0) \Big)={\rm det}(\widetilde{g}_{ij}(p,0))>0,
\]
we have
\[
{\rm det}\Big(\widetilde{g}_{ij}(p,q)+ v_{ij}(p,z) \Big)>0
\]
whenever $|q|, |z|<\delta(p)$ for some sufficiently small $\delta(p)>0$. Thus, if this is the case, $E(p,q,z)$ is real analytic on $(q,z)$, as required.

Since $L_\infty$ is compact, we can find a positive constant $\delta>0$ independent of $p$ such that $E(p,q,z)$ is real analytic on $(q,z)$ whenever $|q|, |z|<\delta$. This confirms the assumption (ii).
\end{itemize}
\end{proof}

Since the $f$-minimal submanifold $L_\infty$ corresponds to $u=0$, we have $\mathcal{M}(0)=0$, where $\mathcal{M}$ is the Euler-Lagrange functional for $\mathcal{E}$. Therefore, we can apply Theorem \ref{thm:LS}, and there exists an open neighbourhood $\mathcal{U}\subset \mathcal{U}_{R}$ of the origin in $C^{2,\alpha}$ satisfying  the \L{}ojasiewicz--Simon inequality \eqref{eq:LS}.

The Euler-Lagrange functional $\mathcal{M}$ for $\mathcal{E}: \mathcal{U}_R\to \R$ is explicitly written as follows. By the first variational formula \eqref{eq:fvv},  we have
\begin{align*}
\frac{d}{ds}\mathcal{E}(u+sv)\Big{|}_{s=0}&=-\int_{L_\infty}g(K_u, V)\,e^{nf\circ G_u}\, dv_{u}\\
&=-\int_{L_\infty}g(K_u, V)\,e^{nf\circ G_u}J_u\, dv_{\infty}
\end{align*}
where $K_u$ is the generalised mean curvature vector of $L_u$. Moreover, we see
\begin{align*}
V_p=\frac{d}{ds}\Big{|}_{s=0}G_{u+sv}(p)=\frac{d}{ds}\Big{|}_{s=0} {\rm exp}_p^{\widetilde{f}}\Big( (u(p)+sv(p))\widetilde{N}_p \Big)=v(p)\cdot d({\rm exp}_p^{\widetilde{f}})_{u(p)\widetilde{N}_p}(\widetilde{N}_p).
\end{align*}
Therefore, the Euler-Lagrange functional is explicitly given by
\begin{align}\label{eq:M}
&\mathcal{M}(u)=g(K_u, W_u)\cdot J_u\cdot e^{nf\circ G_u},\\
{\rm where}\quad &W_u(p):=d({\rm exp}_p^{\widetilde{f}})_{u(p)\widetilde{N}_p}(\widetilde{N}_p).  \label{def:W}
\end{align}

Now, we take a convergent subsequence $\{L_{t_i}\}_{i=1}^\infty$ as in Theorem  \ref{thm:main1}. 
We may assume that $L_{t_i}$ is a graph of some function $u_i\in C^\infty(L_\infty)$. Since $u_i\to 0$, there exists  $i_0$ such that  $u_{t_{i}}\in \mathcal{U}\subset \mathcal{U}_R$ for any $i\geq i_0$. We first aim to prove the following fact.
%\item $\in \mathcal{U}_\varepsilon$ with $\|u_{t_{i_0}}\|<\varepsilon/2$ and $\mathcal{E}(u_{t_{i_0}})-\mathcal{E}(0)<\varepsilon/2$.

\begin{proposition}\label{prop:keyLS}
By taking a sufficiently large $i_0$, $L_t=F_t(L)$ remains in the neighbourhood $\mathcal{U}$ for all $t\geq t_{i_0}$. More precisely, there exists a smooth function $u_t$ such that $u_t\in \mathcal{U}$ and $L_t=L_{u_t}$. 
\end{proposition}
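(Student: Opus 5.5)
We argue by the standard Simon-type continuity argument: assume $L_t$ leaves $\mathcal{U}$ after $t_{i_0}$, and show via the \L{}ojasiewicz--Simon inequality \eqref{eq:LS} that the total $L^2$-distance travelled is too small for this. Fix a smaller ball $\mathcal{U}_{\sigma}:=\{\|u\|_{C^{2,\alpha}}<\sigma\}\subset\mathcal{U}$. Choose $i_0$ so large that $\|u_{t_{i_0}}\|_{C^{2,\alpha}}<\sigma/2$ and $\mathcal{E}(u_{t_{i_0}})-\mathcal{E}(0)$ is small. This is possible since $\mathcal{E}(u_{t_i})\to\mathcal{E}(0)$, and $\mathcal{E}(u_t)={\rm Vol}_f(L_t)$ is nonincreasing, so it decreases to $\mathcal{E}(0)$ and $\mathcal{E}(u_t)\geq \mathcal{E}(0)$. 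Define $T^*:=\sup\{T\ge t_{i_0}: L_t=L_{u_t}$ with $u_t\in\mathcal{U}_\sigma$ for $t\in[t_{i_0},T)\}$, and suppose $T^*<\infty$.

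On $[t_{i_0},T^*)$, write the flow as a graph $u_t$. This is possible as long as $L_t$ is $C^1$-close to $L_\infty$: the normal velocity gives $\partial_t u_t = g(K_{u},W_u)/|W_u|^2$ up to a tangential reparametrisation factor comparable to $1$, with $W_u$ as in \eqref{def:W}. Hence $\|\partial_t u_t\|_{L^2}\le C\|K_t\|_{L^2(L_t)}$. On the other hand, $\|\mathcal{M}(u_t)\|_{L^2}\le C\|K_t\|_{L^2}$ by \eqref{eq:M}, since $J_u$, $W_u$ and $e^{nf}$ are uniformly bounded in $\mathcal{U}_R$. Then
\[
-\frac{d}{dt}\big(\mathcal{E}(u_t)-\mathcal{E}(0)\big)^{1-\beta}=(1-\beta)(\mathcal{E}-\mathcal{E}(0))^{-\beta}\int_{L_t}|K_t|^2dv_f\ge c\,\|K_t\|_{L^2},
\]
using \eqref{eq:LS}. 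Integrating gives $\int_{t_{i_0}}^{T^*}\|\partial_tu_t\|_{L^2}\,dt\le C(\mathcal{E}(u_{t_{i_0}})-\mathcal{E}(0))^{1-\beta}$, which is as small as desired. Consequently $\|u_t-u_{t_{i_0}}\|_{L^2}$ stays small on $[t_{i_0},T^*)$.

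The main obstacle is upgrading this $L^2$ smallness to $C^{2,\alpha}$ smallness, so that $u_{T^*}$ still lies in $\mathcal{U}_{\sigma}$, contradicting maximality of $T^*$. For this we use the uniform curvature bounds. By Lemma~\ref{lem:nabbd} the reduced curves have bounded $\overline{f}$-curvature and all its derivatives for all time. Since the flow stays in a compact subset of $P$ near $L_\infty$ on $[t_{i_0},T^*)$, Proposition~\ref{prop:bddB} yields $\|\nabla^kB_t\|\le C_k$ uniformly. This gives uniform $C^{k}$ bounds on $u_t$ for every $k$, and interpolation (Gagliardo--Nirenberg) between $L^2$ and $C^{k}$ then makes $\|u_t-u_{t_{i_0}}\|_{C^{2,\alpha}}$ small. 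Hence $\|u_t\|_{C^{2,\alpha}}<\sigma$ up to and slightly beyond $T^*$, the desired contradiction, so $T^*=\infty$.

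One point needs care: the constants in these bounds must not depend on $T^*$. They do not, because they come only from $\mathcal{U}_R$ and the global estimates already established.
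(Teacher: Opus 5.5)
Your proposal is correct and is essentially the paper's Simon-type argument. The paper chains $\|\partial_t u_t\|_{L^2}\le C_1\|\mathcal{M}(u_t)\|_{L^2}$ and $\|\mathcal{M}(u_t)\|_{L^2}^2\le -C_2e'(t)$ with \eqref{eq:LS}, where you instead compare both quantities to $\|K_t\|_{L^2}$; it then upgrades $L^2$-smallness to $C^{2,\alpha}$-smallness via the uniform $C^k$ bounds (Proposition~\ref{prop:bddB}, Lemma~\ref{lem:ba}) and the compactness Lemma~\ref{lem:l2}, where you use interpolation. One minor precision: from $K_t=\partial_tu_t\,W_{u_t}^{\perp_t}$ the exact identity is $\partial_tu_t=g(K_t,W_{u_t})/|W_{u_t}^{\perp_t}|^2$ (not $|W_{u_t}|^2$), which still gives your bound $\|\partial_tu_t\|_{L^2}\le C\|K_t\|_{L^2}$.
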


To prove this, we set 
\[
T_0:={\rm sup}\{T\mid \textup{ $\exists u_t\in C^\infty(L_\infty)$ s.t. $u_t\in \mathcal{U}$ and $L_t=L_{u_t}$ for any $t\in [t_{i_0}, T]$}\}.
\]
Our aim is to show $T_0=\infty$. Note that we may assume that $u_t$  smoothly depends on $t$.  On the interval $[t_{i_0}, T_0]$, we prove some lemmas. 

\begin{lemma}
There exists a positive constant $C_1$ depending only on $\mathcal{U}_R$ and $M$ such that 
\begin{align}\label{eq:c1}
{\|\p_tu_t\|_{L^2}\leq C_1\|\mathcal{M}(u_t)\|_{L^2}}.
\end{align}
\end{lemma}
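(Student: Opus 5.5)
Since $L_t = L_{u_t}$ for $t\in[t_{i_0},T_0]$, I will compare the graphical parametrisation with the flow. The plan is to differentiate the identity $F_t(L)=\{G_{u_t}(p)\}$ in time: there is a time-dependent tangential reparametrisation $\phi_t$ of $L$ such that $G_{u_t}(p) = F_t(\phi_t(p))$. Differentiating gives
\[
(\partial_t u_t)(p)\, W_{u_t}(p) \;=\; K_{u_t}(G_{u_t}(p)) + (\text{tangent vector to } L_{u_t}),
\]
with $W_u$ as in \eqref{def:W}. Taking the $g$-inner product with the unit normal $\nu_u$ of $L_{u_t}$ inside $P$ (recall $K_u$ is tangent to $\mu_{\rm can}^{-1}(0)$ and horizontal by Theorem \ref{thm:K}(ii), and the tangential term drops) yields
\[
\partial_t u_t \; g(W_{u_t},\nu_{u_t}) = g(K_{u_t},\nu_{u_t}).
\]

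Next I need the geometric quantities to be uniformly controlled on $\mathcal{U}\subset\mathcal{U}_R$. For $R$ small, $W_u$ is $C^0$-close to $\widetilde N$ (as $d\exp^{\widetilde f}$ at small vectors is near the identity) and $\nu_u$ is close to $e^{\widetilde f}\widetilde N$ (since $|\nabla u|$ is small). So there is a constant $c_0>0$ with
\[
g(W_u,\nu_u)\ge c_0 \qquad\text{and}\qquad |W_u|\le c_0^{-1}
\]
for all $u\in\mathcal{U}_R$. Because $K_u$ lies in the normal line of $L_u$ within $P$, it equals $g(K_u,\nu_u)\nu_u$, and therefore
\[
|g(K_u,W_u)| = |g(K_u,\nu_u)|\,|g(\nu_u,W_u)| \ge c_0\,|g(K_u,\nu_u)|.
\]
This is the main point to check carefully: the Euler--Lagrange expression uses $W_u$ rather than the normal, and the bound relies on $W_u$ being transverse to $L_u$ uniformly. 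I would establish it by continuity in the $C^{1}$ norm of $u$ at $u=0$, where $W_0=\widetilde N$ and $\nu_0 = e^{\widetilde f}\widetilde N$.

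Finally, I combine these estimates. The factors $J_u$ and $e^{nf\circ G_u}$ are bounded above and below by positive constants on $\mathcal{U}_R$ (by compactness of $M$ and the $C^{1}$ smallness of $u$). Hence pointwise
\[
|\partial_t u_t| \le c_0^{-1}|g(K_{u_t},\nu_{u_t})| \le c_0^{-2}|g(K_{u_t},W_{u_t})| \le C\,|\mathcal{M}(u_t)|.
\]
Integrating over $L_\infty$ with respect to $dv_\infty$ gives \eqref{eq:c1}, with $C_1$ depending only on $R$ and on $(M,g,f)$.
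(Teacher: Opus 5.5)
Your proposal is correct and follows essentially the same route as the paper: differentiate $G_{u_t}=F_t\circ\psi_t^{-1}$ and project onto the normal line of $L_{u_t}$ inside $\mu_{\rm can}^{-1}(0)$ to get $K_t=\partial_tu_t\,W_{u_t}^{\perp_t}$. Substituting into \eqref{eq:M} gives $\mathcal{M}(u_t)=\partial_tu_t\,|W_{u_t}^{\perp_t}|^2J_{u_t}e^{nf\circ G_{u_t}}$, where your $g(W_u,\nu_u)^2$ is exactly the paper's $|W_u^{\perp}|^2$, and the two-sided bounds follow from continuity at $u=0$ on a small $\mathcal{U}_R$. One point should be phrased more carefully: $K_u$ being tangent to $\mu_{\rm can}^{-1}(0)$, and hence lying on the normal line of $L_u$ there, is only needed and only valid for $u=u_t$, since $L_{u_t}=F_t(L)$ is a $G$-invariant Lagrangian with exact $\alpha_K$; it does not hold for arbitrary $u\in\mathcal{U}_R$.
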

\begin{proof}
Let $\pi: L_{u_t}\to L_\infty$ be the projection. Since $F_t$ is an embedding and $L_{u_t}=F_t(L)$, it follows that $\psi_t:=\pi\circ F_t: L\to L_\infty$ is a diffeomorphism. Moreover, we have $G_{u_t}=F_t\circ \psi_t^{-1}$, and hence, 
\[
\p_tG_{u_t}=\p_t(F_t\circ \psi_t^{-1})=K_t+dF_t(\p_t\psi_t^{-1}).
\]
Note that all the vectors $\p_tG_{u_t}$, $K_t$ and $dF_t(\p_t\psi_t^{-1})$ are tangent to $\mu_{\rm can}^{-1}(0)$. Thus, we take the orthogonal projection $\perp_t: T_p\mu_{\rm can}^{-1}(0)\to T_p^\perp F_t(L)$ onto the normal subspace of $F_t(L)$ in $\mu_{\rm can}^{-1}(0)$, and then we have
$
(\p_tG_{u_t})^{\perp_t}=K_t.
$
On the other hand, by the definition of $G_{u_t}$, we have
$
\p_tG_{u_t}=\p_tu_t\cdot W_{u_t},
$
where $W_u$ is defined by \eqref{def:W}. Therefore, we obtain the following relation 
\[
K_t=\p_tu_t\cdot W_{u_t}^{\perp_t}.
\]
Inserting this to \eqref{eq:M}, we have
\begin{align}\label{eq:M2}
\mathcal{M}(u_t)=\p_tu_t\cdot |W_{u_t}^{\perp_t}|^2\cdot J_{u_t}\cdot e^{nf\circ G_{u_t}}.
\end{align}
Note that at the origin $u=0$, we have
\[
|W_{0}^\perp|=|\widetilde{N}^\perp|=|\widetilde{N}|=e^{-\widetilde f},\quad J_0=1
\]
since $\widetilde{N}$ is a unit vector with respect to $g_{\widetilde{f}}=e^{2\widetilde{f}}g$. 
Thus, by taking  $r$ sufficiently small, we may assume that there exist positive constants $c_1,c_2,c_3,c_4$ depending only on $\mathcal{U}_R$ such that 
\begin{align}\label{eq:ur}
c_1\leq |W_u^\perp|\leq c_2,\quad c_3\leq J_u\leq c_4,\quad \forall u\in \mathcal{U}_R.
\end{align}
Therefore, \eqref{eq:M2} shows that there exists a positive constant $C_1'$ depends only on $\mathcal{U}_R$ and $M$ such that 
\[
|\mathcal{M}(u_t)|^2\geq C_1'|\p_t u_t|^2 
\]
and this implies the desired inequality \eqref{eq:c1}.
\end{proof}

For any $t\in [t_{i_0}, T_0]$, we set 
\[
e(t):=\mathcal{E}(u_t)-\mathcal{E}(0)={\rm Vol}_f(L_{u_t})-{\rm Vol}_f(L_\infty)
\]
Note that, without loss of generality, we may assume $e(t)>0$  since $F_t(L)=L_{u_t}$ and $F_t$ is volume decreasing.
\begin{lemma}
There exists a positive constant $C_2$ depending only on $\mathcal{U}_R$ and $M$ such that 
\begin{align}\label{eq:c2}
{ \|\mathcal{M}(u_t)\|_{L^2}^2\leq-C_2e'(t)}
\end{align}
\end{lemma}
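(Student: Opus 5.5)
The plan is to compute $e'(t)$ directly from the first variation formula \eqref{eq:fvv} along the flow, and then compare the result with $\|\mathcal{M}(u_t)\|_{L^2}^2$ using \eqref{eq:M2} and the uniform bounds \eqref{eq:ur}.

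First, since $\mathcal{E}(u_t)={\rm Vol}_f(F_t(L))$ and $\partial_t F_t=K_t$, formula \eqref{eq:fvv} gives
\[
e'(t)=-\int_{L}|K_t|^2\,dv_f(t).
\]
This is a geometric quantity. By diffeomorphism invariance of the integral, I will pull it back to $L_\infty$ through $G_{u_t}$. That yields
\[
e'(t)=-\int_{L_\infty}|K_{u_t}|^2\, e^{nf\circ G_{u_t}}J_{u_t}\,dv_\infty.
\]

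Next, I use the relation $K_t=\partial_t u_t\cdot W_{u_t}^{\perp_t}$ from the proof of the previous lemma. It gives $|K_{u_t}|=|\partial_t u_t|\,|W_{u_t}^{\perp_t}|$. In the same way \eqref{eq:M2}, or directly \eqref{eq:M} with $g(K_u,W_u)=g(K_u,W_u^{\perp})$ since $K_u$ is normal, gives the bound
\[
|\mathcal{M}(u_t)|\le |K_{u_t}|\,|W_{u_t}^{\perp_t}|\,J_{u_t}e^{nf\circ G_{u_t}}.
\]
Squaring and integrating over $L_\infty$ with respect to $dv_\infty$ then gives
\[
\|\mathcal{M}(u_t)\|_{L^2}^2\le \sup\big(|W_{u_t}^{\perp}|^2 J_{u_t}e^{nf\circ G_{u_t}}\big)\int_{L_\infty}|K_{u_t}|^2 e^{nf\circ G_{u_t}}J_{u_t}\,dv_\infty .
\]
The right-hand integral is exactly $-e'(t)$.

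Finally, I bound the supremum. The factors $|W_u^\perp|\le c_2$ and $J_u\le c_4$ are controlled by \eqref{eq:ur} for $u\in\mathcal{U}\subset\mathcal{U}_R$. The factor $e^{nf}$ is bounded by $\max_M e^{nf}$, since $M$ is compact. So one can take $C_2=c_2^2c_4\max_M e^{nf}$, which depends only on $\mathcal{U}_R$ and $M$.

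The only delicate point I expect is justifying the identification $e'(t)=-\int|K_t|^2dv_f$ in the $u_t$-parametrisation. The tangential reparametrisation term $dF_t(\partial_t\psi_t^{-1})$ does not change the volume, so it drops out of the derivative. This uses that $L_{u_t}=F_t(L)$ as sets and that $u_t$ depends smoothly on $t$ on $[t_{i_0},T_0]$, both of which are assumed in the setup.
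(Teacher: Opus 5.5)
Your proposal is correct and is essentially the paper's argument. You pull $e'(t)=-\int_L|K_t|^2\,dv_f(t)$ back to $L_\infty$ via $G_{u_t}$, apply Cauchy--Schwarz to $\mathcal{M}(u_t)=g(K_{u_t},W_{u_t})J_{u_t}e^{nf\circ G_{u_t}}$, and absorb the remaining factors into a constant using \eqref{eq:ur} and the compactness of $M$.

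Your version is slightly cleaner in two places. First, you use $|W_{u_t}^{\perp}|$, which is the quantity \eqref{eq:ur} actually bounds. Second, you explicitly absorb the surplus factor $e^{nf}$ into $C_2$, whereas the paper's displayed chain keeps an $e^{2nf}$ in the last integral.
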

\begin{proof}
Note that we have
\[
e'(t)=\frac{d}{dt} \mathcal{E}(u_t)=\frac{d}{dt} {\rm Vol}_f(L_{u_t})=\frac{d}{dt} {\rm Vol}_f(F_t(L))=-\int_L|K_t|^2\,dv_{f}(t)
\]
Since there exists an isometry $\psi_t: (L,F_t^*g)\to (L_\infty, G_{u_t}^*g)$ (see the proof of previous lemma) such that $F_t=G_{u_t}\circ \psi_t$, it follows that 
\[
-e'(t)=\int_{L_\infty}|K_{u_t}|^2\ e^{nf\circ G_{u_t}}\,dv_{u_t}=\int_{L_\infty}|K_{u_t}|^2\ e^{nf\circ G_{u_t}}J_{u_t}\,dv_{\infty},
\]
where we used the relation $K_t=K_{u_t}\circ \psi_t$.
Therefore, using \eqref{eq:M} and \eqref{eq:ur}, we see
\begin{align*}
\|\mathcal{M}(u_t)\|_{L^2}^2
&\leq \int_{L_\infty} |K_{u_t}|^2|W_{u_t}|^2|J_{u_t}|^2 e^{2nf\circ G_{u_t}}\,dv_{\infty}\\
&\leq C_2\int_{L_\infty} |K_{u_t}|^2 J_{u_t}e^{2nf\circ G_{u_t}}\,dv_{\infty}=-C_2e'(t)
\end{align*}
for some positive constant $C_2$ depending only on $\mathcal{U}_R$ and $M$. This proves the lemma.
 \end{proof}

\begin{lemma}\label{lem:ub}
For any $k\geq 0$, there exists a positive constant $C_k$ independent of $t$ such that $\|u_t\|_{C^k}\leq C_k$ for any $t\in [t_{i_0},T_0]$.
\end{lemma}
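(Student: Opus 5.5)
The bound on $u_t$ in $C^k$ will come from curvature bounds on the flow. On $[t_{i_0},T_0]$ we already know $u_t\in\mathcal{U}\subset\mathcal{U}_R$, so $\|u_t\|_{C^{2,\alpha}}<R$, which gives the $C^0$, $C^1$ and $C^2$ bounds directly. For $k\geq 3$ the plan is to use the second fundamental form of $L_t=L_{u_t}$ as a graph over $L_\infty$ in Fermi coordinates.

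The first step is to bound all derivatives $\|\nabla^k B_t\|$ uniformly for $t\geq t_{i_0}$, using Proposition \ref{prop:bddB}. We need two inputs.
\begin{itemize}
\item[(a)] There should be a compact $U\subset\mu_{\rm can}^{-1}(0)^{\rm pri}$ containing $F_t(L)$ for $t\geq t_{i_0}$. We take $U=\overline{\widetilde U_{\epsilon'}}$, a closed tubular neighbourhood of $L_\infty$. This works because $\|u_t\|_{C^0}<R$ is small, so each $L_{u_t}$ lies in the $R$-neighbourhood of $L_\infty$, which sits inside $P$.
\item[(b)] We need a uniform bound on the curvature of the quotient curve $\gamma_t$. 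Lemma \ref{lem:nabbd} gives exactly this: $\kappa_{\overline f}\to0$ uniformly, $\overline\nabla\overline f$ is bounded on the compact set $\pi(U)$, and so $\kappa$ is bounded uniformly in $t$.
\end{itemize}
Proposition \ref{prop:bddB} then yields $\|\nabla^kB_t\|\leq C_{k}$ for all $t\geq t_{i_0}+\varepsilon$. On the remaining finite interval $[t_{i_0},t_{i_0}+\varepsilon]$ the bounds follow from smoothness of the flow.

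The second step converts these into bounds on $u_t$, and I expect this translation to be the main obstacle. In the Fermi chart $\widetilde\Phi$ the embedding is $p\mapsto\widetilde\Phi(p,u(p))$. With $\|u\|_{C^1}$ small, the induced metric is uniformly equivalent to $g_\infty$, and the unit normal is a smooth function of $(p,u,\nabla u)$. The second fundamental form then has the schematic form
\[
B(u)=a(p,u,\nabla u)\,\nabla^2u+b(p,u,\nabla u),
\]
with $a$ uniformly invertible and $a,b$ smooth, depending only on the geometry of $\widetilde U_\epsilon$. Differentiating $k-2$ times expresses $\nabla^ku$ in terms of $\nabla^{k-2}B$ plus a polynomial in lower-order derivatives of $u$. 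An induction on $k$ that starts from the $C^2$ bound then gives $\|u_t\|_{C^k}\leq C_k$.

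To avoid redoing this computation by hand, I would first try to cite the standard fact, as in \cite{KK} or \cite{Simon}, that a graph with $C^1$-small height and bounded $\nabla^jB$ for $j\le k-2$ has bounded $C^k$ norm. Alternatively, since everything is $G$-invariant, it suffices to do the computation for the curve $\widehat\gamma_t$ as a graph $h_t$ over $\widehat\Gamma$ in the two-dimensional chart $\Phi$. There Lemma \ref{lem:nabbd}(ii) bounds all covariant derivatives of the curve, and $\widetilde h=h\circ\widehat\gamma_\infty^{-1}\circ\pi$ transfers the bounds back. This one-dimensional route is the one I would pursue first. The constants it produces depend only on the ambient geometry, $\mathcal{U}_R$, and the bounds from Lemma \ref{lem:nabbd}, and in particular are independent of $T_0$.
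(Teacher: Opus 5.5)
Your primary argument is the paper's. The paper also gets uniform bounds on $\|\nabla^kB_t\|$ from Proposition~\ref{prop:bddB}. It then transfers them to the second fundamental form of the graph $G_{u_t}$ with respect to $g_{\widetilde f}$, and converts them into $C^k$ bounds via Lemma~\ref{lem:ba}, which is exactly your induction on $B(u)=a\,\nabla^2u+b$. You go further than the paper by actually checking the hypotheses of Proposition~\ref{prop:bddB}. One correction there: your compact set only contains $F_t(L)$ up to the $T_0$ of the lemma, not for all $t\geq t_{i_0}$. This is harmless, because that proposition is local in time.

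The one-dimensional route you say you would try first is genuinely different, and here it is more economical.
\begin{itemize}
\item $L_t$ is $G$-invariant and $\widetilde\Phi$ is $G$-equivariant, so uniqueness of the graph representation forces $u_t$ to be $G$-invariant.
\item The projection argument of Claim~\ref{cl:m5} then gives $u_t=h_t\circ\widehat\gamma_\infty^{-1}\circ\pi$, where $h_t$ is the graph function of $\gamma_t$ over $\widehat\Gamma$ in the chart $\Phi$.
\item Lemma~\ref{lem:nabbd}(ii), together with the $C^1$-smallness of $h_t$ (inherited from $\|u_t\|_{C^{2,\alpha}}<R$) and the bounded geometry of $\Phi$ on a closed collar around $\widehat\Gamma$, bounds every $\partial_x^m h_t$.
\item Composing with the fixed submersion $\widehat\gamma_\infty^{-1}\circ\pi$ transfers these bounds to $u_t$.
\end{itemize}
This avoids Proposition~\ref{prop:bddB}, with its O'Neill tensors and higher-order MCF estimates, and it avoids the $n$-dimensional Lemma~\ref{lem:ba}. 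The paper's route, in exchange, is a general statement about graphs over hypersurfaces. It does not need to identify $u_t$ with a lifted one-variable function.

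The 1D route has one further real advantage. Lemma~\ref{lem:nabbd}(ii) is uniform over all $t\geq 0$, so the resulting $C_k$ are manifestly independent of $i_0$. Your step ``on $[t_{i_0},t_{i_0}+\varepsilon]$ the bounds follow from smoothness of the flow'' instead gives constants that depend on $i_0$. That fails downstream: in Proposition~\ref{prop:keyLS}, $i_0$ is chosen after $\delta$ from Lemma~\ref{lem:l2}, and $\delta$ depends on these $C_k$. If you keep the main route, control that initial layer another way. Use the $C^\infty$ smallness of $u_{t_i}$ from Theorem~\ref{thm:main1} as uniform initial data for the higher-order estimates.
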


\begin{proof}
By Proposition \ref{prop:bddB}, we have that 
$
|\nabla^kB_t|
$
is uniformly bounded for every $k\geq0$, where $B_t$ denotes the second fundamental form of the immersion $F_t: L\to M$, and $\nabla$ is the Levi-Civita connection of $F_t^*g$. It follows that the corresponding covariant derivatives of the second fundamental form of the hypersurface
$
\widetilde{F}_t: L\to\mu_{\rm can}^{-1}(0)
$
are also uniformly bounded. Since
$
F_t=G_{u_t}\circ\psi_t,
$
the same is true for the second fundamental form $B_{u_t}$ of the graph immersion $G_{u_t}$. Moreover, since $\mu_{\rm can}^{-1}(0)$ is compact, the metrics $g$ and $g_{\widetilde{f}}$ are uniformly equivalent, and hence so are the corresponding covariant derivatives. Therefore, the derivative of the second fundamental form of $G_{u_t}$ w.r.t.\  $g_{\widetilde{f}}$ is also uniformly bounded for every $k\geq0$. Thus, Lemma \ref{lem:ba} in Appendix \ref{app:norgra} then yields
$
\|u_t\|_{C^k}\leq C_k,
$
for every $k\geq2$.
\end{proof}

The following lemma may be standard, in fact, holds in greater generality.

\begin{lemma}\label{lem:l2}
Let $\Sigma$ be a smooth compact Riemannian manifold and $\alpha\in (0,1)$. For $k\geq 4$, we set 
\[
B_R:=\{u\in C^k(\Sigma)\mid \|u\|_{C^k}\leq R\}.
\]
Then, for any $\varepsilon>0$, there exists $\delta=\delta(\epsilon, R)$ such that if $u\in B_R$ and $\|u\|_{L^2}<\delta$, then we have $\|u\|_{C^{2,\alpha}}<\varepsilon$.
\end{lemma}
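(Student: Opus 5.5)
We argue by contradiction and compactness, using the compact embedding $C^{k}(\Sigma)\hookrightarrow C^{2,\alpha}(\Sigma)$ for $k\geq 4$ (in fact $k\geq 3$ would suffice). Suppose the statement fails for some $\varepsilon>0$ and $R$. Then there is a sequence $u_j\in B_R$ with $\|u_j\|_{L^2}<1/j$ but $\|u_j\|_{C^{2,\alpha}}\geq\varepsilon$ for every $j$.

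\textbf{Step 1 (extract a convergent subsequence).} Since $\|u_j\|_{C^k}\leq R$ with $k\geq 4$, the derivatives of order up to $3$ are uniformly bounded. The derivatives of order up to $2$ are therefore uniformly bounded and uniformly Lipschitz, i.e.\ equicontinuous. By Arzel\`a--Ascoli, applied in finitely many coordinate charts covering the compact manifold $\Sigma$, we may pass to a subsequence converging in $C^{2}$. Moreover, the third derivatives are bounded, so the second derivatives are uniformly Lipschitz. Hence the family $\{u_j\}$ is bounded in $C^{2,1}$. The standard interpolation $[v]_{\alpha}\leq C\|v\|_{C^0}^{1-\alpha}\|v\|_{C^{0,1}}^{\alpha}$, applied to second derivatives of differences, then upgrades $C^2$ convergence to $C^{2,\alpha}$ convergence. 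This gives $u_j\to u$ in $C^{2,\alpha}(\Sigma)$ for some limit $u$.

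\textbf{Step 2 (identify the limit).} Convergence in $C^{2,\alpha}$ implies uniform convergence, and on a compact manifold uniform convergence implies $L^2$ convergence. Hence $\|u\|_{L^2}=\lim_j\|u_j\|_{L^2}=0$. Since $u$ is continuous, it follows that $u\equiv 0$.

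\textbf{Step 3 (contradiction).} Therefore $\|u_j\|_{C^{2,\alpha}}\to 0$, which contradicts $\|u_j\|_{C^{2,\alpha}}\geq\varepsilon$. This proves the lemma.

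\textbf{Main obstacle.} The only delicate point is the compactness in Step 1: the H\"older seminorm must be handled through interpolation in charts, because $C^k$ boundedness alone gives only equicontinuity and not directly convergence of the H\"older seminorm.

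\textbf{Alternative (explicit $\delta$).} One could make $\delta$ explicit via Gagliardo--Nirenberg interpolation, for instance $\|u\|_{C^{2,\alpha}}\leq C\|u\|_{L^2}^{\theta}\|u\|_{C^k}^{1-\theta}$ for some $\theta\in(0,1)$. The compactness argument above avoids tracking exponents and is the route we will follow.
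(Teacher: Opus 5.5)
Your proof is correct and follows the same contradiction-plus-Arzel\`a--Ascoli route as the paper, including identifying the limit as $0$ via the $L^2$ bound. The only difference is technical: the paper uses $k\geq 4$ to extract convergence directly in $C^3$ and then invokes the continuous inclusion $C^3\hookrightarrow C^{2,\alpha}$, whereas you extract $C^2$ convergence and upgrade it by H\"older interpolation against the uniform $C^{2,1}$ bound, which is why your version needs only $k\geq 3$.
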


\begin{proof}
Suppose that the contrary were true. Then there exist
$\varepsilon_0>0$ and a sequence $\{u_j\}_{j=1}^{\infty}\subset B_R$ such that
$\|u_j\|_{L^2}\to 0$ as $j\to \infty$ and   $\|u_j\|_{C^{2,\alpha}}\geq \varepsilon_0$ for every $j$.

Since
$
\|u_j\|_{C^k}\leq R
$
for  $k\geq 4$, by using the Arzel\`a--Ascoli theorem, there exists a subsequence $\{u_{j_k}\}_{k=1}^\infty$ such that $u_{j_k}\in C^3(\Sigma)$ and 
$u_{j_k}\to u_\infty$ in $C^3(\Sigma)$. 
In particular, we have 
$
u_{j_k}\to u_\infty
$
in $L^2(\Sigma)$ and hence, $u_\infty=0$ since $\|u_j\|_{L^2}\to 0$ by assumption.

On the other hand, we have a continuous inclusion
$
C^3(\Sigma)\rightarrow C^{2,\alpha}(\Sigma)
$
since $L_\infty$ is compact and $0<\alpha<1$.
In particular, $u_{j_k}\to 0$ in $C^{2,\alpha}(\Sigma)$. However, this contradicts the assumption
$
\|u_j\|_{C^{2,\alpha}}\geq \varepsilon_0.
$
This proves the lemma.
\end{proof}

Now, we give a proof of Proposition \ref{prop:keyLS}.

\begin{proof}[Proof of Proposition \ref{prop:keyLS}]
By the \L{}ojasiewicz--Simon inequality \eqref{eq:LS}, we have 
\[
\frac{1}{\|\mathcal{M}(u_t)\|_{L^2}}\leq \frac{C_0}{e(t)^\beta}
\]
Combining this with \eqref{eq:c2}, we obtain
\[
\|\mathcal{M}(u_t)\|_{L^2}\leq -C_0C_2\frac{e'(t)}{e(t)^\beta}.
\]
Therefore, by \eqref{eq:c1}, we have
\begin{align*}
\|\p_tu_t\|_{L^2}\leq -C\frac{e'(t)}{e(t)^\beta},
\end{align*}
where we put $C:=C_0C_1C_2$.
Integrating this, we see
\begin{align}\label{eq:int}
\int_{t_{i_0}}^t \|\p_tu_t\|_{L^2}\,dt\leq -C\int_{t_{i_0}}^t \frac{e'(t)}{e(t)^\beta}\,dt 
=\frac{-C}{1-\beta}(e(t)^{1-\beta}-e(t_{i_0})^{1-\beta})\leq \frac{C}{1-\beta}e(t_{i_0})^{1-\beta}
\end{align}
since $\beta\in (\frac{1}{2},1)$ and $e(t)>0$. On the other hand, we have
\begin{align}\label{eq:Cau}
\|u(t)-u(t_{i_0})\|_{L^2}=\Big{\|}\int_{t_{i_0}}^t \p_tu_t\, dt\Big{\|}_{L^2}\leq \int_{t_{i_0}}^t \|\p_tu_t\|_{L^2}\,dt
\end{align}
by Minkowski integral inequality, and consequently, we obtain
\begin{align*}
\|u(t)-u(t_{i_0})\|_{L^2}\leq \frac{C}{1-\beta}e(t_{i_0})^{1-\beta}
\end{align*}
for any $t\in [t_{i_0}, T_0]$. In particular, we obtain
\[
\|u(t)\|_{L^2}\leq \|u(t_{i_0})\|_{L^2}+ \frac{C}{1-\beta}e(t_{i_0})^{1-\beta}
\]
Since $u_{t_i}\to 0$, $e(t_i)\to 0$ and the fact that both $\beta$ and $C=C_0C_1C_2$ are independent of $i_0$, for any small $\delta>0$, by taking a sufficiently large $i_0$, we may assume that 
\[
\|u(t_{i_0})\|_{L^2}<\frac{\delta}{2}\quad {\rm and}\quad \frac{C}{1-\beta}e(t_{i_0})^{1-\beta}<\frac{\delta}{2}
\]
so that we have $\|u(t)\|_{L^2}<\delta$ for all $t\in [t_{i_0},T_0]$.

As mentioned above,  by lemma \ref{lem:ub},  $u_t$ is bounded in $C^k$ for any $k\geq 2$. Then by Lemma \ref{lem:l2}, we see that for any $\varepsilon>0$, there exists $\delta>0$ such that if $\|u(t)\|_{L^2}<\delta$, then we have $\|u(t)\|_{C^{2,\alpha}}<\varepsilon$. As a consequence, for any $\varepsilon>0$, by taking a sufficiently large $i_0$, we have $\|u(t)\|_{C^{2,\alpha}}<\varepsilon$ for any $t\in  [t_{i_0}, T_0]$.

Now, we take a sufficiently small $\varepsilon>0$ so that $\mathcal{U}_\varepsilon\subset \mathcal{U}$, and suppose that $T_0<\infty$. Since we have $u_t\in \mathcal{U}_\varepsilon\subset \mathcal{U}$ for any $t\in  [t_{i_0}, T_0]$,  this implies that there exists $t>T_0$ such that $F_t(L)=L_{u_t}$ and $u_t\in \mathcal{U}$. This contradicts the definition of $T_0$. This proves the proposition.
\end{proof}

Theorem \ref{thm:main2} follows from Proposition \ref{prop:keyLS}:

\begin{proof}[Proof of Theorem \ref{thm:main2}]
By Proposition \ref{prop:keyLS}, there exists $u_t\in C^\infty(L_\infty)$ such that $L_t=F_t(L)=L_{u_t}$ for any sufficiently large $t$.  Moreover, by \eqref{eq:int}, we see
\[
\int_{t_{i_0}}^\infty \|\p_tu_t\|_{L^2}\,dt\leq \frac{C}{1-\beta}e(t_{i_0})^{1-\beta}<\infty.
\]
This implies that 
\[
\lim_{s\to \infty}\int_{s}^\infty \|\p_tu_t\|_{L^2}\,dt\to 0\quad (s\to \infty).
\]
Therefore, \eqref{eq:Cau} shows that we have 
\[
\|u(t)-u(s)\|_{L^2}\to 0,\quad (s,t\to \infty)
\]
Namely, $u(t)$ is a Cauchy sequence in the Hilbert space $L^2(L_\infty)$. Therefore, there exists a unique limit $u_\infty$. On the other hand, we have $u_{t_i}\to 0$, and hence $u_\infty=0$. Thus, $u_t$ converges to 0 in $L^2$. By Lemma \ref{lem:l2}, it follows that $u_t\to 0$ in $C^{2,\alpha}$ and moreover, we see $u_t\to 0$ in $C^\infty$ by using a standard argument since $\|u_t\|_{C^k}$ is bounded for any $k\geq 2$ (Lemma \ref{lem:ub}). This completes the proof.
\end{proof}

\appendix

\section{Orbifolds and Group Actions}\label{app-orbifold}

In this section, we summarize some basic definitions and results relating to orbifolds that are
used in this paper. For further details, we refer the reader to \cite[\S 1]{ALR}, \cite[\S 2]{KL} and \cite[Ch.13]{Thurston}.

\subsection{Basic notions of orbifolds}

Let $X$ be a topological space. An ($n$-dimensional) {\it orbifold chart} on $X$ is a triple $(\widetilde{U}, G, \varphi)$ consisting of a connected open subset  $\widetilde{U}\subset \R^{n}$, a finite group $G$ and a $G$-invariant continuous  map $\varphi: \widetilde{U}\to X$  such that $G$ acts on $\widetilde{U}$ smoothly and effectively (i.e.\ there is an injective homomorphism $G\to {\rm Diff}(\widetilde{U})$) and the $G$-invariant map $\varphi$ induces a homeomorphism $\overline{\varphi}: \widetilde{U}/G\to U$, where we set $U:=\varphi(\widetilde{U})$. 
A map $\lambda: \widetilde V \to \widetilde U$ between two orbifold charts $(\widetilde V,H,\psi)$ and $(\widetilde U, G, \varphi)$ is an \textit{embedding} if $\lambda$ is a smooth topological embedding such that 
$\varphi\circ \lambda=\psi$. Such an embedding induces an injective homomorphism $\rho_{\lambda}: H\to G$ so that $\lambda(h \widetilde{x})=\rho_{\lambda}(h)\lambda(\widetilde{x})$ for any $h\in H$ and $\widetilde{x}\in \widetilde{V}$.

An {\it orbifold atlas} of $X$ is a covering of $X$ by $n$-dimensional orbifold charts $\{(\widetilde{U}_{\alpha},G_{\alpha},\varphi_{\alpha})\}_{\alpha}$ satisfying  the following condition: If $x\in U_{\alpha_{1}}\cap U_{\alpha_{2}}\neq \emptyset$,   there 
are an orbifold chart $(\widetilde{V}, H, \psi)$ around $x$ and embeddings ${\lambda}_{i}: (\widetilde{V}, H, \psi)\to (\widetilde{U}_{\alpha_{i}},G_{\alpha_{i}},\varphi_{\alpha_{i}})$  for $i=1,2$. 

% Here, ${\lambda}:  (\widetilde{V}, H, \psi)\to (\mathcal{O},G,\varphi)$ is an {\it embedding} if $\lambda: \widetilde{V}\to \mathcal{O}$ is a smooth embedding such that $\varphi\circ \lambda=\psi$. 
% Note that the embedding $\lambda$ induces an injective homomorphism $\rho_{\lambda}: H\to G$ so that $\lambda(h \widetilde{x})=\rho_{\lambda}(h)\lambda(\widetilde{x})$ for any $h\in H$ and $\widetilde{x}\in \widetilde{V}$.
For the embeddings $\lambda_{i}: \widetilde{V}\to \widetilde{U}_{\alpha_{i}}$ ($i=1,2$), we set $\lambda_{12}:=\lambda_{2}\circ \lambda_{1}^{-1}: \lambda_{1}(\widetilde{V})\to\lambda_{2}(\widetilde{V})$, where we regard each $\lambda_{i}: \widetilde{V}\to \lambda_{i}(\widetilde{V})$ as a diffeomorphism. We call the map $\lambda_{12}$ a {\it transition map}. 
An atlas $\mathcal{A}$ is a \textit{refinement} of an atlas $\mathcal{B}$ when every chart of $\mathcal{A}$ admits an embedding into a chart of $\mathcal{B}$. Two atlases are \textit{equivalent} if they have a common refinement.

An {\it $n$-dimensional (smooth) orbifold} $\mathcal{O}$ is a Hausdorff and paracompact topological space $X$ equipped with an equivalence class $[\mathcal{A}]$ of $n$-dimensional orbifold atlases.
We denote the underlying topological space $X$ by $|\mathcal{O}|$. 
% The notion of an {\it orbifold with boundary} can be defined in a similar way (see \cite{KL}). However, throughout this paper we assume that the orbifold $\mathcal{O}$ has no boundary. 

An orbifold $\mathcal{O} = (X,[\mathcal{A}])$
is called {\it connected} (resp. {\it compact}) if $X=|\mathcal{O}|$ is connected (resp. compact). We say an atlas $\{(\mathcal{O}_{\alpha},G_{\alpha},\varphi_{\alpha})\}_{\alpha\in A}$ is {\it oriented} if each chart $\mathcal{O}_{\alpha}$ is oriented so that the action of $G_{\alpha}$ on $\mathcal{O}_{\alpha}$ preserves the orientation and every 
embedding between charts also preserves the orientation. An orbifold $\mathcal{O}$ is {\it oriented} if there is an oriented atlas $\mathcal{B} \in [\mathcal{A}]$.

For each $x\in |\mathcal{O}|$ and a chart $(\widetilde{U}, G, \varphi)$ around $x$, the {\it local group} $\Gamma_{x}$ is  the conjugacy class of the stabiliser subgroup $G_{\widetilde{x}}:=\{g\in G\mid g \widetilde{x}=\widetilde{x}\}$, where $\widetilde{x}$ is an element in $\widetilde{U}$ so that $\varphi(\widetilde{x})=x$. This definition of $\Gamma_{x}$ depends only on $x$, namely, it  does not depend on the choice of the orbifold chart around $x$ or the choice of $\widetilde{x}$.
The {\it regular part} $\mathcal{O}^{{\rm reg}}$ of $\mathcal{O}$ is defined by $\mathcal{O}^{{\rm reg}}:=\{x\in |\mathcal{O}|\mid \Gamma_{x}=\{e\}\}$. It turns out that $\mathcal{O}^{{\rm reg}}$ is an open dense subset of $\mathcal{O}$ and 
 has the structure of a smooth manifold. Moreover, $\mathcal{O}^{{\rm reg}}$ is connected whenever $\mathcal{O}$ is connected. 
The \textit{singular part} of $\mathcal{O}$ is then defined by
$\mathcal{O}^{{\rm sing}}:=|\mathcal{O}|\setminus \mathcal{O}^{{\rm reg}}$. A point $x\in |\mathcal{O}|$ will be called a {\it regular point} (resp. {\it singular point}) if $x\in \mathcal{O}^{{\rm reg}}$ (resp. $x\in \mathcal{O}^{{\rm sing}}$).

A continuous map $f: |\mathcal{O}_{1}|\to |\mathcal{O}_{2}|$ between two orbifolds is said to be {\it smooth} if for any $x\in |\mathcal{O}_{1}|$, there exist orbifold charts $(\widetilde{U},G,\varphi)$ around $x$ and $(\widetilde{V}, H, \psi)$ around $f(x)$  such that $f(U)\subset V$,  together with a homomorphism $\rho: G\to H$ and a $\rho$-equivariant smooth map $\widetilde{f}:  \widetilde{U}\to \widetilde{V}$ with $\psi\circ \widetilde {f}=f\circ \varphi$. 
We will denote such a map by $f: \mathcal{O}_1 \to \mathcal{O}_2$.
If a smooth map $f: \mathcal{O}_{1}\to \mathcal{O}_{2}$ is bijective and the inverse map $f^{-1}: \mathcal{O}_{2}\to \mathcal{O}_{1}$ is also a smooth map, then $f$ is said to be a {\it diffeomorphism}.

For an orbifold chart $(\widetilde{U},G,\varphi)$, the finite group $G$ naturally acts on the tangent bundle $T\widetilde{U}$ over $\widetilde{U}$ by the map $(\widetilde{x}, v)\mapsto (g\widetilde{x}, dg_{\widetilde{x}}(v))$, where $v\in T_{\widetilde{x}}\widetilde{U}$. 
We therefore obtain a natural projection $p: T\widetilde{U}/G\to U$,  where
the fibre $p^{-1}(x)$ is homeomorphic to $T_{\widetilde{x}}\widetilde{U}/G_{\widetilde{x}}$ (see \cite[\S 1.3]{ALR}) for each $x\in U$ (Note that the fibre is no longer a vector space). For a given orbifold atlas $\{(\widetilde{U}_{\alpha},G_{\alpha},\varphi_{\alpha})\}_{\alpha\in A}$ of $\mathcal{O}$, we define the {\it orbifold tangent bundle} $T\mathcal{O}$  by 
\[
T\mathcal{O}:=\Big(\bigsqcup_{\alpha\in A} T\widetilde{U}_{\alpha}/G_{\alpha}\Big)\Big{/}\sim,
\]
where the equivalence relation
$\sim$ is defined as follows: $[(\widetilde{x},v)]\in T\widetilde{U}_{\alpha_{1}}/G_{\alpha_{1}}$ is equivalent to $[(\widetilde{y},w)]\in T\widetilde{U}_{\alpha_{2}}/G_{\alpha_{2}}$ if $\varphi_{\alpha_{1}}(\widetilde{x})=\varphi_{\alpha_{2}}(\widetilde{y})=x\in U_{\alpha_{1}}\cap U_{\alpha_{2}}$ and for the orbifold atlas $(\widetilde{V}, H, \psi)$ around $x$ with embeddings ${\lambda}_{i}: \widetilde{V}\to \widetilde{U}_{\alpha_{i}}$, it holds that $(d\lambda_{12})_{\widetilde{x}}(v)=w$, where $\lambda_{12}$ is the transition map. Note that we have a natural projection $\pi: T\mathcal{O}\to \mathcal{O}$ and each fibre $\pi^{-1}(x)$ is homeomorphic to $T_{\widetilde{x}}\widetilde{U}_{\alpha}/G_{\alpha}$ where $\varphi_{\alpha}(\widetilde{x})=x$.
 The orbifold tangent bundle  $T\mathcal{O}$ becomes a $2n$-dimensional orbifold with an orbifold atlas $\{(T\widetilde{U}_{\alpha},G_{\alpha},\Phi_{\alpha})\}_{\alpha\in A}$, where $T\widetilde{U}_{\alpha}\simeq \widetilde{U}_{\alpha}\times \R^{n}$ and $\Phi_{\alpha}: T\widetilde{U}_{\alpha}\to T\widetilde{U}_{\alpha}/G_{\alpha}$ is the natural projection.
 Analogously to the orbifold tangent bundle,
we can construct the orbifold cotangent bundle $T^{*}\mathcal{O}$ and orbifold tensor bundles $\bigotimes^{k} T\mathcal{O}$ and $\bigotimes^{k} T^{*}\mathcal{O}$.
For example, the equivalence relation
for $T^{*}\mathcal{O}$ should be defined as follows; $[(\widetilde{x},v)]\in T^{*}\widetilde{U}_{\alpha_{1}}/G_{\alpha_{1}}$ is equivalent to 
 $[(\widetilde{y},w)]\in T^{*}\widetilde{U}_{\alpha_{2}}/G_{\alpha_{2}}$ if and only if $\varphi_{\alpha_{1}}(\widetilde{x})=\varphi_{\alpha_{2}}(\widetilde{y})=x\in U_{\alpha_{1}}\cap U_{\alpha_{2}}$ and $(\lambda_{12})_{\widetilde{y}}^{*}w=v$.
 
A {\it $1$-form} on $\mathcal{O}$ is  a smooth section of $T^{*}\mathcal{O}$, that is, a smooth map $s: \mathcal{O}\to T^{*}\mathcal{O}$ 
such that $\pi\circ s=id_{\mathcal{O}}$, where $\pi: T^{*}\mathcal{O}\to \mathcal{O}$ is the natural projection. By definition of $T^{*}\mathcal{O}$, the section $s: \mathcal{O}\to T^{*}\mathcal{O}$ yields a $G_{\alpha}$-equivariant smooth section $s_{\alpha}: \widetilde{U}_{\alpha}\to T^{*}\widetilde{U}_{\alpha}$ for each $\alpha\in A$, which satisfies the following commutative diagram:
\[
\xymatrix@C=25pt@R=25pt{
\widetilde{U}_{\alpha}\  \ar[r]^{s_{\alpha}}   \ar[d] &\ T^{*}\widetilde{U}_{\alpha} \ar[d]\\
\widetilde{U}_{\alpha}/G_{\alpha}\simeq U_{\alpha}\  \ar[r]^-{s} &\ T^{*}\mathcal{O}
}
\]
Thus, we obtain a family of smooth $1$-forms $\{s_{\alpha}\}_{\alpha\in A}$. Notice that if $U_{\alpha_{1}}\cap U_{\alpha_{2}}\neq \emptyset$,  the transition map $\lambda_{12}:\lambda_{1}(\widetilde{V})\to \lambda_{2}(\widetilde{V})$ satisfies 
\begin{align}\label{eq:treq}
\lambda_{12}^{*}s_{\alpha_{2}}|_{\lambda_{2}(\widetilde{V})}=s_{\alpha_{1}}|_{\lambda_{1}(\widetilde{V})}.
\end{align}
Conversely, if a family of smooth equivariant $1$-forms $\{s_{\alpha}\}_{\alpha\in A}$ defined on each chart satisfies the relation \eqref{eq:treq} for any transition map, then we obtain a smooth section $s:\mathcal{O}\to T^{*}\mathcal{O}$ by gluing the $1$-forms.
A $1$-form $s$ on $\mathcal{O}$ is therefore equivalent to
a family of equivariant $1$-forms $\{s_{\alpha}\}_{\alpha\in A}$ satisfying the relation \eqref{eq:treq} for any transition map.

Analogously, a {\it $(0,k)$-tensor} $\tau$ on $\mathcal{O}$ is defined by a smooth section $\tau: \mathcal{O}\to \bigotimes^{k}T^{*}\mathcal{O}$. If $\tau_{x}$ is symmetric (resp. anti-symmetric) for each $x\in \mathcal{O}$, then the $(0,k)$-tensor $\tau$ is called a {\it symmetric tensor} (resp. a {\it $k$-form}). For a $k$-form $\eta$, we define the exterior derivative $d\eta$ as a $(k+1)$-form obtained by gluing of the family of $(k+1)$-forms $\{d\eta_{\alpha}\}_{\alpha\in A}$, where $\{\eta_{\alpha}\}_{\alpha\in A}$ is the family of $k$-forms corresponding to $\eta$.
A $(k+1)$-form $\omega$ is said to be {\it closed} (resp. {\it exact}) if $d\omega_{\alpha}=0$ for any $\alpha\in A$ (resp. if there exists a $k$-form $\eta$ on $\mathcal{O}$ such that $\omega_{\alpha}=d\eta_{\alpha}$ for any $\alpha \in A$).

A {\it Riemannian metric} on $\mathcal{O}$ is a symmetric $(0,2)$-tensor $g$ so that $g_{x}$ is non-degenerate for each $x\in \mathcal{O}$. A {\it symplectic form} on $\mathcal{O}$ is a closed $2$-form $\omega$ so that $\omega_{x}$ is non-degenerate for each $x\in \mathcal{O}$. 
By the prior discussion, a Riemannian metric $g$ (resp. a symplectic form $\omega$) 
may be regarded as a family of $G_{\alpha}$-invariant Riemannian metrics $\{g_{\alpha}\}_{\alpha\in A}$ (resp. $G_{\alpha}$-invariant symplectic forms $\{\omega_{\alpha}\}_{\alpha\in A}$) defined on each chart, which 
satisfy the compatibility condition with respect to the chart embeddings (i.e.\ they are preserved under pullback by transition maps).
An orbifold $\mathcal{O}$ equipped with a Riemannian metric $g$ (resp. symplectic form $\omega$) is called a {\it Riemannian orbifold} (resp. {\it symplectic orbifold}).

For a smooth function $f: \mathcal{O}\to \R$ on a Riemannian orbifold $(\mathcal{O},g)$, we define a {\it weighted Riemannian metric} by $g_{f}:=e^{2f}g$. We say $f$ a {\it weight function} and  the triple $(\mathcal{O},g, f)$ will be called a {\it weighted Riemannian orbifold}.

Let $(\mathcal{O},g, f)$ be a weighted Riemannian orbifold. For a piecewise smooth curve $\gamma: [a,b]\to \mathcal{O}$, we define the (weighted) length $L_f(\gamma)$ of $\gamma$ as follows: We divide the interval into $a=a_0<a_1<\cdots <a_N=b$ so that the image of each restricted map $\gamma_i:=\gamma|_{[a_i,a_{i+1}]}$ is contained in an orbifold chart $(\widetilde{U}_i,G_i,\varphi_i)$. We take a lift $\widetilde{\gamma}_i: [a_i,a_{i+1}]\to \widetilde{U}_i$ of $\gamma_i$,  and then, we define $L_f(\gamma):=\sum_{i=1}^N L_f(\widetilde{\gamma}_i)$, where  $L_f(\widetilde{\gamma}_i)$ is the (weighted) length with respect to $\varphi_i^*g_f$. This definition is independent of the choice of subdivisions of $I$ and orbifold charts.

A {\it (weighted) distance function} $\widehat{d}: \mathcal{O}\times \mathcal{O}\to \R$ on a weighted Riemannian orbifold $(\mathcal{O},g, f)$ is defined by 
\[
\widehat{d}(x,y):={\rm inf}\{L_f(\gamma)\mid \textup{$\gamma: [a,b]\to \mathcal{O}$ is a piecewise smooth map with $\gamma(a)=x,\ \gamma(b)=y$}\}.
\]
We remark that, although the definition of $\widehat{d}$ is similar to that of smooth Riemannian manifolds, some properties of distance function cannot be extended to orbifolds. See Remark \ref{rem:derd} for example.

\subsection{Orbifolds obtained by a locally free action}\label{subs:lco}
A typical example of an orbifold is one obtained by
the quotient of a locally free proper effective $G$-action on a smooth manifold. In this subsection, we describe the orbifold structure on the quotient space relevant to our situation.

First, we recall some basic notions and facts on group action. Suppose that a compact connected Lie group $G$ acts on a smooth manifold $N$.
An action of $G$ on $N$ is said to be \textit{locally free}
if the stabiliser subgroup $G_{p}$ is discrete for any $p\in N$, and \textit{free} if $G_{p}$ is trivial for every $p\in N$.
If $\cap_{p\in N}G_{p}=\{e\}$, the action is said to be {\it effective}.

We denote the $G$-orbit through $p\in N$ by $G\cdot p$. Two orbits $G\cdot p$ and $G\cdot q$ are 
\textit{equivalent} if $G_{p}$ is conjugate to $G_{q}$, where $G_{p}$ is the stabiliser subgroup at $p\in N$, and the equivalence class $[G\cdot p]$ is called the {\it orbit type}. This equivalence relation defines a partial ordering, 
namely $[G\cdot p]\leq [G\cdot q]$ if $G_{q}$ is conjugate to a subgroup of $G_{p}$ in $G$. 
By the principal orbit theorem \cite[Ch.~IV, Thm. 3.1]{Bredon1972},
there exists  an equivalence class maximal with respect to the partial order,
called the {\it principal orbit type}. 
Furthermore, principal orbits have the largest dimension among $G$-orbits, and
any two principal orbits are diffeomorphic to each other. The codimension
of the principal orbit in $N$ is said to be {\it cohomogeneity} of the $G$-action.
A $G$-orbit $G\cdot p$ is called {\it exceptional} if $G\cdot p$ is not principal, but the dimension of $G\cdot p$ is the same as the dimension of the principal orbit.  A $G$-orbit $G\cdot p$ is said to be {\it singular} if it is neither principal nor exceptional (hence, the dimension of any singular orbit is smaller than the dimension of the principal orbits)
We denote the set of principal (resp. exceptional, singular) orbits by $N^{{\rm pri}}$ (resp. $N^{{\rm ex}}, N^{{\rm sing}}$). Also, we set $N^{{\rm reg}}:=N^{{\rm pri}}\cup N^{{\rm ex}}$, and we say an orbit in $N^{{\rm reg}}$  is
a {\it regular orbit}. The principal orbit theorem also states that
$N^{{\rm pri}}$ is an open dense subset in $N$.

Assume now that $N$ is equipped with a complete Riemannian metric $g$, and $G$ is a connected closed subgroup in the isometry group ${\rm Isom}(N,g)$. Then the action of $G$ on $N$ is proper, and the quotient space $N/G$ becomes a paracompact Hausdorff space. We denote the natural projection by $\pi: N\to N/G$. 

Suppose furthermore that $G$ acts on $N$ locally freely and effectively. Then, $N/G$ admits an orbifold structure $\mathcal{O}$.

\begin{proposition}\label{prop-riemorbifoldquotient}
    Let $(N,g)$ be a smooth Riemannian manifold. Consider a compact, connected Lie subgroup $G \leq \emph{Isom}(N,g)$ such that the action of $G$ is locally free and of cohomogeneity $k$.

    Then $N/G$ admits a $k$-dimensional orbifold structure $\mathcal{O} := (N/G, [\mathcal{A}])$ and an orbifold Riemannian metric $\overline g$ for which the quotient is a Riemannian submersion.
\end{proposition}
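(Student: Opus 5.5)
We build orbifold charts from slices, in the standard way. Fix $p\in N$. Since the action is locally free, the stabiliser $G_p$ is a discrete subgroup of the compact group $G$, hence finite. Moreover $G\cdot p$ is a smooth embedded submanifold of dimension $\dim G$, because $G$ is compact. Local freeness makes every orbit of maximal dimension $\dim G$, so all orbits are regular and $k=\dim N-\dim G$.

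Let $\nu_p:=T_p^\perp(G\cdot p)$, a $k$-dimensional space on which $G_p$ acts isometrically through $dg_p$. For small $\epsilon>0$, set $\widetilde U_p:=B_p^\perp(\epsilon)$ and $\varphi_p:=\pi\circ\exp_p|_{\widetilde U_p}$. The slice theorem (equivariant tubular neighbourhood, valid because $G$ acts properly by isometries) identifies $G\times_{G_p}B^\perp_p(\epsilon)$ with a $G$-invariant neighbourhood of $G\cdot p$ via $[g,v]\mapsto g\exp_p(v)$. Two consequences follow.
\begin{itemize}
\item $\varphi_p$ is $G_p$-invariant.
\item $\varphi_p$ induces a homeomorphism $B^\perp_p(\epsilon)/G_p\to U_p:=\pi(\exp_p(B^\perp_p(\epsilon)))$. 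Indeed, $\exp_p(v)$ and $\exp_p(w)$ lie in the same $G$-orbit if and only if $w=dg_p(v)$ for some $g\in G_p$, and $U_p$ is open because $\pi$ is an open map.
\end{itemize}
The action of $G_p$ on $\nu_p$ need not be effective. We therefore replace $G_p$ by $\widehat G_p:=G_p/\ker(G_p\to O(\nu_p))$; this is the group written $\widehat G_{p}$ in the proof of Proposition \ref{prop:topo}. This gives the chart $(B^\perp_p(\epsilon),\widehat G_p,\varphi_p)$. (If the action is effective, the kernel is typically trivial, but we do not need that.) The quotient $N/G$ is Hausdorff and paracompact because the action is proper.

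Next we check compatibility. Suppose $x\in U_p\cap U_q$. Choose $r\in\pi^{-1}(x)$ lying in the slice at $p$, so $r=\exp_p(v)$. Then $G_r\subset G_p$, since $G_r$ fixes $r$ and therefore preserves the slice near $v$. By shrinking, the slice $B^\perp_r(\delta)$ at $r$ embeds into the slice at $p$. The map is: flow a normal vector at $r$ by $\exp_r$, then project back to the $p$-slice along orbits using the slice theorem. The result is a smooth embedding $\lambda$ with $\varphi_p\circ\lambda=\varphi_r$, equivariant with respect to the inclusion $G_r\hookrightarrow G_p$. The same construction works at $q$ after translating $r$ by some $g\in G$, and that translation is an isometry. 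This yields the embeddings demanded by the atlas axiom. The equivariance bookkeeping after passing to the effective quotients $\widehat G$ is the only fiddly point, and I expect it to be the main technical obstacle; it is handled by noting that the ineffective kernel acts trivially near $v$ as well.

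The metric comes from the Riemannian submersion on each chart. On $B^\perp_p(\epsilon)$, let $\overline g_p(X,Y):=g(\widetilde X,\widetilde Y)$, where $\widetilde X$ is the horizontal part (orthogonal to orbits) of $d(\exp_p)(X)$. Equivalently, $\overline g_p$ is the pullback of $g$ restricted to the horizontal distribution under the local diffeomorphism from the slice to the orbit space. This metric is $G_p$-invariant because $G_p$ acts by isometries preserving the horizontal distribution. It is independent of the chosen slice, because $G$ acts isometrically and two slices are related by holonomy along orbits; hence the $\overline g_p$ are preserved by the transition embeddings. Over the principal part, this is exactly the metric making $\pi$ a Riemannian submersion. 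Locally, $\pi\circ\exp_p$ lifts to a submersion $\exp_p(\text{slice})\times G\to N$ whose horizontal spaces are isometric to the chart tangent spaces, so $\pi$ is a Riemannian submersion in the orbifold sense.
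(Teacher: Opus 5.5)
Your proof is correct, and your charts are the paper's: geodesic slices $B_p^\perp(\epsilon)$ acted on by the effective quotient $\widehat G_p$ of the slice representation. Unlike the paper, which leaves this to the slice theorem, you also check the atlas compatibility axiom. In that step, ``projecting back to the $p$-slice along orbits'' is only defined modulo $G_r=(G_p)_v$. To make it a map, take the local inverse of the covering $G\times B_p^\perp(\epsilon)\to G\times_{G_p}B_p^\perp(\epsilon)$ through $(e,v)$; equivariance then follows from uniqueness of local lifts. The kernels also match, $K_r=G_r\cap K_p$, because $\lambda$ has open image and each $dh_p$ is linear.

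The real difference is the metric, and here your route is the one that proves the statement. The paper takes $\widetilde g:=\exp_p^*g$, the induced metric of the geodesic slice as a submanifold of $N$. You instead pull back $g$ restricted to the horizontal distribution. The slice is horizontal at $p$ and along radial geodesics. However, when $k\ge 2$ and the horizontal distribution is not integrable, it is not horizontal in the other directions.

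A concrete case is the Hopf action of $S^1$ on $S^3\subset\C^2$. The slice at $(1,0)$ is $\{(\cos t,\sin t\,e^{i\phi})\}$, with induced metric $dt^2+\sin^2t\,d\phi^2$. The quotient metric, by contrast, is $dt^2+\tfrac14\sin^2(2t)\,d\phi^2$. So $\exp_p^*g$ in general neither makes $\pi$ a Riemannian submersion nor is preserved by the transition embeddings.

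Your metric has both properties. Since $\exp_r(w)=g(w)\exp_p(\lambda(w))$, the horizontal parts of $d\exp_r$ and $d(\exp_p\circ\lambda)$ differ only by the isometry $dg(w)$. The paper's shortcut is valid only when the slices are horizontal, e.g.\ $k=1$. The later applications have $k=2$ and use the submersion metric of Proposition \ref{prop:dime}, which is the one you construct.
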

\begin{proof}
We define an orbifold chart around each point $\overline{p}\in N/G$ as follows. We take an arbitrary point $p\in \pi^{-1}(\overline{p})$ and define the {\it geodesic slice} $\Sigma_{p}$ by $\Sigma_{p}:={\rm exp}_{p}(B^\perp_p(\epsilon))$, where $B_p^\perp(\epsilon):=\{v\in T_{p}^{\perp}(G\cdot p)\mid \|v\|<\epsilon\}$ is the $\epsilon$-ball in the normal space $T^{\perp}_{p}(G\cdot p)$ of the $G$-orbit $G\cdot p$  for some small $\epsilon>0$, and ${\rm exp}_{p}$ is the Riemannian exponential map at $p$. The isotropy subgroup $G_{p}$ (which is a finite subgroup by assumption) acts on $B_p^\perp(\epsilon)$ via the {\it slice representation} $\tau: G_{p}\times T^{\perp}_{p}(G\cdot p)\to T^{\perp}_{p}(G\cdot p)$,  $\tau(h,v):=dh_{p}(v)$.  
By taking the quotient group $\widehat{G}_p:=G_p/K_p$, where $K_p=\{g\in G_p\mid \tau(g)v=v,\ \forall v\in T_p^\perp(G\cdot p)\}$ is the kernel of $\tau$, the group $\widehat{G}_p$ acts on $ T^{\perp}_{p}(G\cdot p)$ effectively via the induced representation. Moreover, by the slice theorem, 
$(B_p^\perp(\epsilon), \widehat{G}_{p}, \pi\circ {\rm exp}_{p}|_{B_p^\perp(\epsilon)})$
becomes an orbifold chart around $\overline{p}\in N/G$.  

Note that $G\cdot p$ is a principal orbit if and only if the slice representation is trivial (see \cite[Theorem 2.1.3]{BCO}), and hence, if $p$ is an principal element, then $(B_{p}^\perp(\epsilon), \pi\circ {\rm exp}_{p}|_{B_p^\perp(\epsilon)})$ is exactly a chart as a manifold. In particular, the open dense subset $N^{{\rm pri}}/G$ coincides with the regular part of $\mathcal{O}$.

Finally, to construct the orbifold metric, note that the $G$-invariant metric $g$ on $N$ naturally lifts to
a smooth metric $\widetilde{g}$ on $B_p^\perp(\epsilon)$
by using the restricted map  ${\rm exp}_p|_{B_p^\perp (\epsilon)}: B_p^\perp (\epsilon)\to N$
and defining $\widetilde{g}:={\rm exp}_p^*g$. Since  $G_p\subset G\subset {\rm Isom}(N,g)$,  we have ${\rm exp}_p\circ \tau(h)=h\circ {\rm exp}_p$ for any $h\in G_p$ and this implies that $\widetilde{g}$ is a $G_p$-invariant Riemannian metric on $B_p(\epsilon)$; by definition it is clearly a Riemannian submersion. This collection of $G_p$-invariant metrics yields an orbifold Riemannian metric $\overline{g}$ on $\mathcal{O}$.
\end{proof}

\begin{remark}\label{rem:trivialprincipalcase}
In our applications in this work, the principal orbits are assumed to have trivial stabiliser, and the non-principal orbits then correspond precisely to cone points of the two-dimensional orbifold quotient. In this case, the open dense subset $N^{{\rm pri}}/G$ coincides with the regular part $\mathcal{O}^{{\rm reg}}$ of $\mathcal{O}$.
\end{remark}

 Finally, let $f: N\to \R$ be a $G$-invariant smooth function. Then, $f$ yields a well-defined continuous function $\overline{f}: N/G\to \R$ defined by the relation $f=\overline{f}\circ \pi$. Moreover, it turns out that $\overline{f}:\mathcal{O}\to \R$ is a  smooth function in the sense of orbifold smooth map.  Indeed, the local lift $\widetilde{f}: B_p(\epsilon)\to \R$ of $\overline{f}$ in each orbifold chart
$B_p(\epsilon)$ is given by $\widetilde{f}=f\circ {\rm exp}_{p}$, and this is a $G_{p}$-invariant smooth function on $B_{p}(\epsilon)$. 

In particular, if $(N,g)$ is a Riemannian manifold and
there is a $G$-invariant function $f: N\to \R$, then we can define  a weighted Riemannian structure $(\overline{g}, {\overline{f}})$ on the orbifold $\mathcal{O}$. In this paper, we shall 
use the following concrete example.

\begin{example}[Hsiang-Lawson metric (cf.\ \cite{HL})]
{\rm
Suppose that $G$ acts locally freely and isometrically on $(N,g)$.  Note that if $G\cdot p$ is an exceptional orbit, by taking an appropriate subgroup $G_{p'}$ which corresponds to the principal orbit type and satisfies $G_{p'}\subset G_{p}$, we obtain an $m$-fold covering $G/G_{p'}\to G\cdot p=G/G_{p}$, where $m:=\# G_{p}/G_{p'}$.  With this in mind, we define the {\it volume function} $V: N\to \R$ by
\begin{align}\label{def:vol}
V(p):=
\begin{cases}
{\rm Vol}(G\cdot p) & \textup{if $G\cdot p$ is principal},\\
m{\rm Vol}(G\cdot p) & \textup{if $G\cdot p$ is exceptional},
\end{cases}
\end{align}
where ${\rm Vol}(G\cdot p)$ is the volume of the orbit $G\cdot p$, and $m := [G_p : G_{p'}]$.
Then it turns out that  $V$ is a $G$-invariant smooth function (cf.\ \cite[Proposition 1]{Pacini}). Thus, $V$ yields a smooth function $\overline{V}: \mathcal{O}\to \R$ on the quotient orbifold. 

We  put
\[
\overline{f}:=\log \overline{V}^{1/k},
\]
where $k$ is the cohomogeneity of the $G$-action on $N$. Then, we define the weighted Riemannian metric on $\mathcal{O}$ by
\[
\overline{g}_{\overline{f}}:=e^{2\overline{f}}\overline{g}=\overline{V}^{2/k}\overline{g}.
\]
We refer to the weighted metric $\overline{g}_{\overline{f}}$ as  the {\it Hsiang-Lawson metric}.  
}
\end{example}

\section{ Proof of Proposition \ref{prop:distcomp}}\label{app:distcomp} 

In this appendix, we give a proof of Proposition  \ref{prop:distcomp}, which we restate here:

\begin{proposition}[Proposition  \ref{prop:distcomp}]\label{prop:distcomp2}
Suppose that  $L_\psi(T_{\max}):=\lim_{t\to T_{\max}}L_\psi(t)>0$. Then, by taking a sufficiently large $\mathcal{K}$, we have ${\rm sup}_{t\in [0,T_{\max})}R(t)<\infty$.
\end{proposition}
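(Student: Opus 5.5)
I would follow Huisken's distance-comparison argument in the version of Ma \cite{Ma}, applied to the single flow $\Gamma^1=\Gamma^2=\gamma$ with the metric $\widehat g=g_\psi$. The orbifold structure only matters near cone points, and the hypothesis $L_\psi(T_{\max})\geq\delta_0>0$ will be used to keep the argument local.

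\textbf{Step 1: reduction to near-diagonal pairs.} Since $L_\psi(t)\geq\delta_0$, we have $\sin(\pi l_\psi/L_\psi)\leq 1$, so $R(x,y,t)\leq L_\psi(0)/(\pi d_\psi)$. Hence $R$ is bounded wherever $d_\psi$ is bounded below by a fixed $r_0>0$. It also suffices to consider pairs with $l_\psi$ small. If $l_\psi(x,y,t)\geq\eta$ while $d_\psi(x,y,t)\to0$, the curve develops a fold whose two strands approach each other. I would handle this case by running the same maximum-principle computation, because $\sin(\pi l_\psi/L_\psi)$ is a positive concave function of $l_\psi$.

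\textbf{Step 2: local frame and regularity of $R$.} Choose $r_0$ smaller than the $\widehat g$-convexity radius of a compact set $D\subset\mathcal{O}^{\rm reg}$ that avoids small balls around the cone points. This makes Lemmas \ref{lem:derd}, \ref{lem:J} and \ref{lem:formulas} available at any pair whose geodesic stays in $D$. Near a cone point I would pass to the orbifold chart $\widetilde U$ with its $\mathbb{Z}_k$-invariant metric. In that chart the orbifold distance between $\gamma_t(x)$ and $\gamma_t(y)$ is the minimum over $R\in\mathbb{Z}_k$ of the lifted distances $\widehat d(\widetilde\gamma(x),R\widetilde\gamma(y))$. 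Each of these lifted distances is smooth, so the orbifold distance is a minimum of smooth functions. Taking the maximum of $R$ therefore means minimising $d_\psi$, and at the maximum point we may use whichever lift attains the minimum: it serves as a smooth lower barrier. This is how I would get around Remark \ref{rem:derd}.

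\textbf{Step 3: maximum principle.} Suppose the supremum of $R$ increases past a large level $R_0$ at a first time, at an interior pair $(\bar x,\bar y)$ with $d_\psi<r_0$. I would compute $\partial_t\log R$ and apply the operators $\mathcal{L}_\pm$ from \eqref{def:Lpm}, as in the proof of Proposition \ref{prop:key}. The ingredients are:
\begin{itemize}
\item the first- and second-variation formulas of Lemma \ref{lem:formulas};
\item the Jacobi-field bounds \eqref{eq:J}, \eqref{eq:L1}, \eqref{eq:L2};
\item the bound on $(e_1-e_2)^2$;
\item the evolution $\partial_t L_\psi=-\int\kappa_\psi^2\,ds_\psi$ and the corresponding evolution of $l_\psi$.
\end{itemize}
The curvature terms should combine as in Huisken's argument into a nonnegative quantity controlled by the concavity of $\sin$. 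The remaining error terms come from three sources: the curvature $\widehat K$, the factors $e^{2\psi}$ relating $\kappa_\psi$ to $\widehat\kappa$, and the term $\langle\overline\nabla\psi,N\rangle$. Each of these is bounded by a constant times $\log R$ or by a constant. Choosing $\mathcal{K}$ larger than these constants makes $\partial_t\log R<0$ at the maximum, which contradicts the choice of first time. Together with Step 1, this gives \eqref{eq:R}.

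\textbf{Main obstacle.} I expect the hardest part to be the error terms involving $\nabla\psi$ and $e^{2\psi(\Gamma^i)}$. The evolution of $l_\psi$ produces terms like $\int_x^y\kappa_\psi^2\,ds_\psi$, and these must be absorbed using $\delta_0$ and the length lower bound. I would first try the substitution $\kappa_\psi=e^{\psi}\widehat\kappa$ so that the computation reduces to Ma's unweighted one up to bounded errors, and I would check the signs against Ma's computation.
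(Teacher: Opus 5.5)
Your architecture is the paper's proof. You localise to $d_\psi<d_0$ from $R\le L_\psi(0)/(\pi d_\psi)$, which, incidentally, uses no $\delta_0$. You place both points in one strongly convex chart (Lemma \ref{lem:c0}), use the lift realising $d_\psi$ as a smooth barrier (the paper's $\widetilde Z_N\ge Z_N$), and apply the weighted operators $\mathcal{L}_\pm$ of \eqref{def:Lpm} at the first touching time. The gap is in Step 3: you misplace the one term for which the hypothesis $L_\psi(T_{\max})\ge\delta_0$ is actually needed.

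First, the $\kappa_\psi^2$-integrals coming from $\partial_t L_\psi$ and $\partial_t l_\psi$ need no absorption. With $\theta:=\pi l_\psi/L_\psi\in(0,\pi/2]$, they enter $\partial_t S_\psi$ with coefficients $-e^{-\mathcal{K}t}(\sin\theta-\theta\cos\theta)/\pi\le 0$ and $-e^{-\mathcal{K}t}\cos\theta\le 0$, so they are simply dropped. They could not be absorbed into a constant anyway, since they are not a priori bounded as $t\to T_{\max}$. Also, the velocity is $e^{2\psi}\widehat\kappa\widehat N$ and $\mathcal{L}_\pm$ carries the weights $e_i^2$, so the $\widehat\kappa_i$-terms of Lemma \ref{lem:formulas} cancel exactly. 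The only $\psi$-error in $\mathcal{L}_\pm\widetilde{d_\psi}$ is $(e_1-e_2)^2\le D^2E^2d^2$.

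What your error list omits is the spatial part of $\mathcal{L}_\pm$ acting on $S_\psi$:
\[ e_1^2\partial^2_{\widehat s_1}S_\psi+e_2^2\partial^2_{\widehat s_2}S_\psi\pm 2e_1e_2\,\partial_{\widehat s_1}\partial_{\widehat s_2}S_\psi=-(e_1\mp e_2)^2\frac{\pi^2}{L_\psi^2}\,S_\psi . \]
In the reflected configuration $\widehat g(\widehat T_1,\nu_1)=-\widehat g(\widehat T_2,\nu_2)$, which is handled with $\mathcal{L}_-$, this contributes roughly $4e^{2\psi}\pi^2L_\psi^{-2}S_\psi$ with the bad sign in $\mathcal{L}_-\widetilde Z_N$.
\begin{itemize}
\item It is neither a $\psi$-error nor a $\widehat{K}_{\mathcal{O}}$-error; it is already present for ordinary curve shortening flow in the plane.
\item It is created by the concavity of $\sin$, not controlled by it.
\item In Huisken's and Ma's proofs it is beaten by the discarded curvature integrals, via a lower bound on the turning of the tangent along both arcs. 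On a weighted orbifold that turning estimate is exactly the complication the paper avoids.
\end{itemize}

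The fix is where $\delta_0$ really enters. Bound the term by $4D^2\pi^2\delta_0^{-2}S_\psi$ and choose $\mathcal{K}>2D^2\widehat{K}_{\mathcal{O}}+2D^2E^2+4D^2\pi^2/\delta_0^2$. Then $\mathcal{L}_\pm\widetilde Z_N>0$ at the touching point, contradicting the minimum conditions. Take the threshold $R_0>\max\{1,R(0),L_\psi(0)/(\pi d_0)\}$, which ensures $\bar x\neq\bar y$ and $\bar t>0$. With these corrections your argument is the paper's, and the aside in Step 1 about restricting to small $l_\psi$ is unnecessary.
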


Before giving the proof, we prove a lemma.

\begin{lemma}\label{lem:c0}
Let $(\mathcal{O},\widehat{g})$ be a compact Riemannian orbifold with finitely many cone points. Then there exists a positive constant $c_0 $ such that the following property holds:  If $p,q\in \mathcal{O}^{\rm reg}$ and $\widehat{d}(p,q)<c_0$, then  there is an orbifold chart $(\widetilde{U}, G, \varphi)$ such that $p,q\in \varphi(\widetilde{U})$ and $\widetilde{U}$ is strongly convex with respect to the induced metric $\varphi^*\widehat{g}$.
\end{lemma}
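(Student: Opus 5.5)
The plan is a Lebesgue-number argument, using compactness of $\mathcal{O}$ together with the local structure near each cone point. First I would cover $\mathcal{O}$ by finitely many open sets, each carrying a strongly convex orbifold chart. Around each cone point $x_j$ (order $k_j$) I take a chart $(\widetilde U_j,\mathbb{Z}_{k_j},\varphi_j)$ in which $\widetilde U_j=\widetilde B(0,r_j)$ is a geodesic ball for $\varphi_j^*\widehat g$, centred at the fixed point $0$. Its radius is below the convexity radius at $0$, so the ball is strongly convex. I also make these balls pairwise disjoint in $\mathcal{O}$. The compact set $D=\mathcal{O}\setminus\bigsqcup_j\varphi_j(\widetilde B(0,r_j/2))$ lies in $\mathcal{O}^{\rm reg}$. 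There the convexity radius has a positive lower bound $r_D$, and every point $x\in D$ has a strongly convex ball $B(x,r_D)$. I choose $r_D$ small enough that $B(x,r_D)\subset\mathcal{O}^{\rm reg}$ and that it is an honest manifold chart with trivial group.

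Next I set $c_0:=\min\{r_D,\ \min_j r_j/4\}$. Given $p,q\in\mathcal{O}^{\rm reg}$ with $\widehat d(p,q)<c_0$, I split into cases. If $p\in D$, then $q\in B(p,r_D)$, and the manifold chart on $B(p,r_D)$ works, with trivial $G$. If $p\notin D$, then $p\in\varphi_j(\widetilde B(0,r_j/2))$ for some $j$. A near-minimizing path from $p$ to $q$ has length less than $r_j/4$. I lift it to $\widetilde U_j$ starting at a preimage $\widetilde p$, which is possible because $\varphi_j$ is a branched cover whose lifts of curves are controlled in length. The lift stays within distance $r_j/2+r_j/4<r_j$ of $0$, so $q\in\varphi_j(\widetilde B(0,r_j))$, and the chart $\widetilde U_j$ works.

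The step needing the most care is the lifting and distance comparison in the orbifold chart. I need the orbifold distance to satisfy $\widehat d(\varphi_j(\widetilde y),x_j)=\widetilde d(\widetilde y,0)$ and the path-lifting estimate. This should follow from the definition of the weighted length via lifts of curve pieces, since $\varphi_j$ is a local isometry away from $0$ and the action fixes $0$. Curves passing through the cone point can be perturbed to avoid it with arbitrarily small increase in length. Concretely, I would first prove that $\widehat d(\varphi_j(\widetilde y),\varphi_j(\widetilde z))=\min_{g\in\mathbb{Z}_{k_j}}\widetilde d(\widetilde y,g\widetilde z)$ for points near $0$, and then apply it with $\widetilde z=0$.
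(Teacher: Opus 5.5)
Your argument is correct, but it is organised differently from the paper's proof.

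\textbf{The paper's route.} It takes strongly convex cone balls $B_i=\varphi_i(\widetilde B_i(\delta_i))$ and lets $S$ be the complement of their union. It takes a uniform convexity radius $c_2$ on the compact set $S\subset\mathcal{O}^{\rm reg}$ and sets $c_0=\min\{c_1,c_2\}$, where $c_1=\min_{i\neq j}\widehat d(B_i,B_j)$. The case split is then purely set-theoretic:
\begin{itemize}
\item if one point lies in $S$, use the ball $B_p(c_2)$;
\item if both points lie in the same $B_i$, use the cone chart $\widetilde B_i(\delta_i)$;
\item the case of two different balls $B_i,B_j$ is excluded by $c_1$.
\end{itemize}
So the paper never compares the orbifold distance with the distance in a chart.

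\textbf{Your route.} You use a buffer instead: you remove only the half-radius balls, so that any $q$ within $r_j/4$ of a point in the inner ball automatically lands in the full strongly convex chart $\widetilde B(0,r_j)$. This removes the separation constant and the ``different balls'' case. The price is the path-lifting / first-exit lemma in the branched chart, i.e.\ that a short curve from $\varphi_j(\widetilde p)$ lifts to a curve of equal length that cannot leave $\widetilde B(0,r_j)$. You correctly flag this as the step needing care, and your sketch of it is sound.

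\textbf{What each buys.} The paper's version is shorter and more elementary. Yours makes explicit, via the choice of $r_D$, that the regular balls $B(x,r_D)$ stay away from the cone points. The paper leaves this implicit in the convexity radius of the incomplete manifold $\mathcal{O}^{\rm reg}$ on $S$.
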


\begin{proof}
Let $p_i$ ($i=1,\ldots, m$) be cone points and $(\widetilde{U}_i, G_i, \varphi_i)$ be an orbifold chart around $p_i$. Note that we may assume that $\varphi_i(0)=p_i$. Then, there exists a positive constant $\delta_i > 0$ such that the open geodesic ball $\widetilde{B}_i(\delta_i) \subset \widetilde{U}_i$  centred at the origin $0$ is strongly convex with respect to $\varphi_i^*\widehat{g}$.
We set $B_i:=\varphi_i(\widetilde{B}_i(\delta_i))$. We may assume that  $\widehat{d}(B_i,B_j)>0$ for any $i\neq j$ and we put 
$c_1:=\min_{i\neq j}\widehat{d}(B_i,B_j)>0$.

Let $S := \mathcal{O} \setminus \bigsqcup_{i = 1}^m B_i$. Note that $S\subset \mathcal{O}^{\rm reg}$. Moreover, the closed subset  $S$ is a compact set since $X=|\mathcal{O}|$ is compact.
Let $r(p)$ be the convexity radius at $p\in \mathcal{O}^{\rm reg}$, where we regard $\mathcal{O}^{\rm reg}$ is a smooth Riemannian manifold. It is known that the convexity radius $r(p)$ is a positive continuous function on a smooth Riemannian manifold (see \cite[Ch.IV, Theorem 5.3]{Sakai}), and hence, using the compactness of $S\subset \mathcal{O}^{\rm reg}$, there exists a positive constant $c_2> 0$ such that the open geodesic ball $B_p(c_2)\subset  \mathcal{O}^{\rm reg}$ around $p$ is strongly convex for every $p\in S$. Note that $B_p(c_2)$ can be regarded as a chart of $\mathcal{O}^{\rm reg}$.

Now, we define $c_0 := \min \{ c_1,c_2 \}$, and  we take any $p, q \in \mathcal{O}^{\mathrm{reg}}$ with $\widehat{d}(p, q) < c_0$.  We divide the case into the following possibilities:
\begin{itemize}
\item At least one of $p$ and $q$ belongs to $S$: We may assume $p\in S$. Then $B_p(c_0)\subset B_p(c_2)$ yields an orbifold chart $(\widetilde{U}_p, \{e\}, \varphi_p)$ around $p$ such that $\varphi_p: \widetilde{U}_p\to B_p(c_0)$ is a homeomorphism, and this is the desired chart. 
\item $p,q\in B_i$ for some $i$: In this case, $(\widetilde{B}_i(\delta_i), G_i, \varphi_i|_{\widetilde{B}_i(\delta_i)})$ is the desired orbifold chart. 
\item $p\in B_i$ and $q\in B_j$ for $i\neq j$: This case is excluded since $\widehat{d}(p, q) < c_0<c_1$.
\end{itemize}
Thus, we obtain the lemma.
\end{proof}

We now prove Proposition \ref{prop:distcomp2}. We use the notation of subsection \ref{ssec:dist}, in particular recall that
\begin{equation*}
R(t) := \sup_{x,y\in S^{1};\, x \neq y} R(x,y,t) := \sup_{x,y\in S^{1};\, x \neq y} \left[ \frac{L_{\psi}(t) e ^{-\mathcal{K} t} }{\pi {d}_{\psi}(x,y,t)} \sin \left( \frac{\pi l_{\psi}(x,y,t)}{L_{\psi}(t)} \right) \right].
\end{equation*}

We aim to show that ${\rm sup}_{t\in [0,T_{\max})}R(t)<\infty$. We shall show this by contradiction. Thus, we suppose that ${\rm sup}_{t\in [0,T_{\max})}R(t)=\infty$.

\textbf{Step 1.}  First, note that for each $t\in [0,T_{\max})$, we have $R(t)<\infty$. 
Indeed, it is known that  for an embedded curve $c: S^1\to M$ into a smooth Riemannian manifold $M$, we have that 
\[\lim_{(x,y)\to (x_0,x_0)} \frac{l_c(x,y)}{d_M(c(x),c(y))}=1\] for any $x_0\in S^1$, where $l_c$ is the intrinsic distance between two points on $S^1$ with respect to the induced metric and $d_M$ is the distance function on $M$. In particular, we have
\begin{equation*}
 \lim_{(x,y)\to (x_0,x_0)}R(x,y,t)=e^{-\mathcal{K}t}<\infty.
\end{equation*}
This implies that the function $R(x,y,t)$ continuously extends to $S^1\times S^1$ and for $ t < T_{\rm max}$,
\[R(t)={\rm max}_{(x,y)\in S^1\times S^1}R(x,y,t)<\infty.\]

\textbf{Step 2.} We now define a comparison function $Z_N$. Recall that $\widehat{K}_{\mathcal{O}}=\sup_{\mathcal{O}}|\widehat{K}|$, where $\widehat{K}$ is the Gaussian curvature of $(\mathcal{O},\widehat{g})$. We define  constants $d_0$, $N$ so that
\begin{equation*}
d_0 := \min \Big\{\frac{1}{2 \sqrt{\widehat{K}}_{\mathcal{O}}}, c_0 \Big{\}}, \quad N > \max \Big\{ \frac{L_{\psi}(0)}{\pi d_0}, R(0), 1 \Big\},
\end{equation*}
where $c_0$ is the positive constant given in Lemma \ref{lem:c0}. We then define $Z_N: S^1\times S^1\times [0,T_{\max})\to \R$:
\begin{equation*}
Z_N (x,y,t) := N d_{\psi}(x, y, t) - \frac{L_{\psi}(t)}{\pi} \sin \left( \frac{\pi l_{\psi}(x,y,t)}{L_{\psi}(t)} \right) e^{- \mathcal{K} t}.
\end{equation*}
Note that if $x\neq y$, then $d_\psi(x,y,t)\neq0$ since $\gamma_t$ is embedding, and if this is the case,  we have $Z_N(x,y,t)=d_{\psi}(x,y,t)\{N-R(x,y,t)\}$.

Since we assume that  ${\rm sup}_{t\in [0,T_{\max})}R(t)=\infty$, there exists a first time $\overline{t}\in [0,T_{\max})$ such that $R(\bar{t})=N$. If this is the case, it must hold that 
\begin{itemize}
\item[(i)] $Z_N(x,y,t) > 0$ for all $x, y \in {S}^1$ with $x\neq y$ and for all $0 < t < \bar{t}$;
\item[(ii)] $Z_N(x,y, \bar{t}) \geq 0$ for all $x, y \in {S}^1$;
\item[(iii)] $Z_N(\bar{x}, \bar{y}, \bar{t}) = 0$ for some $\bar{x}, \bar{y} \in {S}^1$ with $\bar{x}\neq \bar{y}$;
\item[(iv)] $d_\psi(\bar{x},\bar{y},\bar{t})<d_0$.
\end{itemize}
Here, in the item (iii), the fact $\bar{x} \neq \bar{y}$ holds because
\begin{equation*}
\lim_{y \to x} \frac{L_{\psi}(t)}{\pi d_{\psi}(x, y, t)} \sin \left( \frac{\pi l_{\psi}(x,y,t)}{L_{\psi}(t)} \right) e^{- \mathcal{K} t} = e^{- \mathcal{K} t} \leq 1,
\end{equation*} 
and $N > 1$. The item (iv) is a consequence of the fact that  $R(\bar{x}, \bar{y}, \bar {t})=N$ which follows from (iii); indeed if $d_{\psi}(x,y,t) \geq  d_0$, then by the definition of $N$ we have
\begin{equation*}
R(x,y,t) \leq \frac{L_{\psi}(t) e^{- \mathcal{K} t}}{\pi d_0} \leq \frac{L_{\psi}(0)}{\pi d_0}<N.
\end{equation*}
For the remainder of the proof, we fix $\overline x, \overline y$ so that iii), iv) above hold.

\textbf{Step 3.}  Our next aim is to modify $Z_N$ to a function $\widetilde Z_N$ which is smooth near $(\overline x, \overline y, \overline t)$, so we may apply the differential operators $\mathcal{L}_{\pm}$ of \eqref{def:Lpm} to $\widetilde Z_N$. First, we note that $\sin(\pi l_\psi(x,y,t)/L_\psi(t)$ is always smooth.

\begin{lemma}
The function  $\sin \left( {\pi l_{\psi}(x,y,t)}/{L_{\psi}(t)} \right)$ is smooth around $(\bar{x}, \bar{y}, \bar{t})$.
\end{lemma}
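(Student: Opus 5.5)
The function $\sin(\pi l_\psi(x,y,t)/L_\psi(t))$ is a composition of $\sin$, which is real analytic, with the quotient $l_\psi/L_\psi$. It therefore suffices to show that $L_\psi(t)$ is smooth and positive in $t$, and that $l_\psi(x,y,t)$ is smooth near $(\bar x,\bar y,\bar t)$.

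The first point is immediate. $\gamma$ is a smooth $\psi$-CSF on $S^1\times[0,T_{\max})$ contained in $\mathcal{O}^{\rm reg}$, so $L_\psi(t)=\int_{S^1}e^{\psi(\gamma_t)}|\partial_\theta\gamma_t|\,d\theta$ is smooth. Moreover $L_\psi(t)\ge L_\psi(T_{\max})>0$, because $L_\psi$ is non-increasing by \eqref{eq:fv}.

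For $l_\psi$, the plan is to use $\bar x\neq\bar y$ from Step 2 (iii). We write the two arcs of $S^1$ between $x$ and $y$ as $I_1(x,y)$ and $I_2(x,y)$, so that
\[
l_\psi(x,y,t)=\min\Big\{\int_{I_1(x,y)}ds_\psi(t),\ \int_{I_2(x,y)}ds_\psi(t)\Big\}, \qquad \int_{I_1}ds_\psi+\int_{I_2}ds_\psi=L_\psi(t).
\]
Each arc integral $\int_x^y e^{\psi(\gamma_t(\theta))}|\partial_\theta\gamma_t(\theta)|\,d\theta$ is smooth in $(x,y,t)$ for $x\neq y$: the integrand is smooth and positive, so the integral is smooth in the endpoints and in $t$. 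Write $a$ for this integral, so the other arc has length $L_\psi-a$. Then $l_\psi=\min\{a,L_\psi-a\}$, which is smooth except where $a=L_\psi/2$. The key observation handles this remaining case without any choice of arc: $\sin(\pi a/L_\psi)=\sin(\pi(L_\psi-a)/L_\psi)$, so
\[
\sin\Big(\frac{\pi l_\psi}{L_\psi}\Big)=\sin\Big(\frac{\pi a(x,y,t)}{L_\psi(t)}\Big)
\]
identically near $(\bar x,\bar y,\bar t)$. The right-hand side is a composition of smooth functions, so the non-smoothness of the minimum at $a=L_\psi/2$ disappears, and smoothness follows.

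The main obstacle is making sure the arc parametrization is well defined in a neighbourhood of the point. This requires $x\neq y$ throughout the neighbourhood, which holds because $\bar x\neq\bar y$ and $S^1$ is Hausdorff. Near $(\bar x,\bar y,\bar t)$ we then fix a lift of $x$ and $y$ to $\mathbb{R}$ with $x<y<x+2\pi$, so that $a$ is given by a single integral from $x$ to $y$. We also need $\bar t<T_{\max}$, so that all quantities are evaluated on the smooth flow; this holds by the choice of $\bar t$ in Step 2.
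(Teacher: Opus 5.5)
Your proposal is correct and follows the paper's proof. Both write $l_\psi=\min\{a,\,L_\psi-a\}$ with $a=\int_x^y d\widehat{s}_t$, and both use $\sin(\pi-\theta)=\sin\theta$ to replace $\sin(\pi l_\psi/L_\psi)$ by the manifestly smooth $\sin(\pi a/L_\psi)$ near $(\bar x,\bar y,\bar t)$; your remarks that $\bar x\neq\bar y$, $\bar t<T_{\max}$ and $L_\psi>0$ just make explicit what the paper leaves implicit.
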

\begin{proof}
We first note that the intrinsic distance $l_{\psi}(x,y,t)$ is given by
\begin{equation*}
l_{\psi}(x,y,t) = \min \left\{ \int_x^y d \widehat{s}_t, L_{\psi}(t) - \int_x^y d \widehat{s}_t  \right\},
\end{equation*}
where we define an appropriate orientation on $S^1$, and $d\widehat{s}_t$ denotes the volume measure on $S^1$ with respect to $\gamma_t^*\widehat{g}$. In any case, we have
\begin{equation}\label{eq:sin0}
\sin \left( \frac{\pi l_{\psi}(x,y,t)}{L_{\psi}(t)} \right) = \sin \left( \frac{\pi \int_x^y d \widehat{s}_t}{L_{\psi}(t)} \right)
\end{equation}
since
\begin{equation*}
\sin \left( \frac{ \pi(L_{\psi}(t) - \int_x^y d \widehat{s}_t) }{L_{\psi}(t)} \right) = \sin \left( \pi - \frac{\pi \int_x^y d \widehat{s}_t}{L_{\psi}(t)} \right) = \sin \left( \frac{\pi \int_x^y d \widehat{s}_t}{L_{\psi}(t)} \right).
\end{equation*}
This implies smoothness of the function around $(\bar{x}, \bar{y}, \bar{t})$. 
\end{proof}

We now consider the function $d_\psi(x,y,t)$.  Since $\gamma_{\bar{t}}(\bar{x}), \gamma_{\bar{t}}(\bar{y})\in \mathcal{O}^{\rm reg}$ and $\widehat{d}(\gamma_{\bar{t}}(\bar{x}), \gamma_{\bar{t}}(\bar{y}))=d_\psi(\bar{x}, \bar{y}, \bar{t})<d_0\leq c_0$, Lemma \ref{lem:c0} shows that there is an orbifold chart $(\widetilde{U}, G,\varphi)$ such that $\gamma_{\bar{t}}(\bar{x}), \gamma_{\bar{t}}(\bar{y})\in \varphi(\widetilde{U})$ and 
$\widetilde{U}$ is convex with respect to $\varphi^*\widehat{g}$. We denote the induced metric $\varphi^*\widehat{g}$ on $\widetilde{U}$ by the same $\widehat{g}$. We note that if $\widetilde U$ is an orbifold chart for a cone point, then $d_\psi$ on $\varphi(\widetilde U)$ may not be a differentiable function (See Remark \ref{rem:derd}). We will therefore instead work on the chart $\widetilde U$.

For sufficiently small $\varepsilon>0$,  we consider the restricted maps $\gamma_i:=\gamma:  I_i\times [\bar{t}-\varepsilon,\bar{t}+\epsilon]\to \mathcal{O}^{\rm reg}$, where $I_i$ ($i=1,2$) are some closed intervals (independent of $t$) such that  $\bar{x}$ (resp. $\bar{y}$) is contained in the interior of $I_1$ (resp. $I_2$) and $\gamma_{t}(I_i)\subset \varphi(\widetilde{U})$ for any $t\in [\bar{t}-\varepsilon,\bar{t}+\epsilon]$. Since $\bar{x}\neq \bar{y}$, we may assume that $I_1\cap I_2=\emptyset$. Note that this also implies that $\gamma_{t}(I_1)\cap \gamma_{t}(I_2)=\emptyset$ because $\gamma_{t}$ is an embedding.

Next, we fix a lift $\Gamma^1: I_1\times [\bar{t}-\varepsilon,\bar{t}+\epsilon]\to \widetilde{U}$ of the restricted flow $\gamma_1$, and choose a lift $\Gamma^2: I_2 \times [\bar{t}-\varepsilon,\bar{t}+\epsilon]\to \widetilde{U}$ of the restricted flow  $\gamma_{2}$ so that 
\begin{align}\label{eq:dp1}
d_{\psi}(\bar{x}, \bar{y}, \bar{t})  =\widehat{d}(\Gamma_{\bar{t}}^1(\bar{x}), \Gamma_{\bar{t}}^2(\bar{y})),
\end{align}
where $\widehat{d}$ is the distance function on $(\widetilde{U}, \widehat{g})$. Note that there are $k \geq 2$ possible choices of lifts of $\gamma_{2}$, and  the inequality $d_{\psi}(\bar{x}, \bar{y}, \bar{t}) <\widehat{d}(\Gamma_{\bar{t}}^1(\bar{x}), \Gamma_{\bar{t}}^2(\bar{y}))$ may hold for some other choices of lifts.

On the other hand, for any $(x,y,t)\in I_1\times I_2\times[\bar{t}-\varepsilon,\bar{t}+\epsilon]$, it holds that 
\begin{align}\label{eq:dp2}
d_{\psi}(x, y, t)  \leq \widehat{d}(\Gamma_{t}^1(x), \Gamma_{t}^2(y)).
\end{align}
Moreover, both lifts $\Gamma_t^1$ and $\Gamma_t^2$ are $\widetilde{\psi}$-CSF on $\widetilde{U}$, where $\widetilde{\psi}=\varphi^*\psi$. We thus define the lifted distance function $\widetilde{d_\psi}: I_1\times I_2\times [\bar{t}-\varepsilon,\bar{t}+\epsilon]\to \R$ by 
\[
\widetilde{d_\psi}(x,y,t):=\widehat{d}(\Gamma_{t}^1(x), \Gamma_{t}^2(y)).
\]
Note that $\widetilde{d_\psi}$ is a differentiable (smooth) function by Lemma \ref{lem:derd}.  We may then define $\widetilde{Z}_N: I_1\times I_2\in [\bar{t}-\varepsilon,\bar{t}+\epsilon]\to \R$ by
\begin{equation*}
\widetilde{Z}_N (x, y, t) 
:= N \widetilde{d_\psi}(x, y, t) - \frac{L_{\psi}(t)}{\pi} \sin \left( \frac{\pi l_{\psi}(x,y, t)}{L_{\psi}(t)} \right) e^{- \mathcal{K} t }.
\end{equation*}
By the above discussion, $\widetilde{Z}_N$ is differentiable. Moreover, by \eqref{eq:dp1} and \eqref{eq:dp2}, we have  $Z_N (\bar{x}, \bar{y}, \bar{t}) = \widetilde{Z}_N (\bar{x}, \bar{y}, \bar{t})$ and $Z_N \leq \widetilde{Z}_N$.  In particular, it is easy to see that the properties (i)--(iii) still hold for the replaced function $\widetilde{Z}_N$.

\textbf{Step 4.} We now work with the smooth function $\widetilde Z_N$, and show that our assumptions imply that $\mathcal{L}_{\pm}\widetilde Z \leq 0$ at $(\overline x, \overline y, \overline t)$. In the following, as long as we fix the time parameter $t=\bar{t}$, we parametrise $\gamma(\cdot, \bar{t})$ by the arc length parameter $\widehat{s}$ with respect to the metric $\gamma_{\bar{t}}^* \widehat{g}$. Moreover, we fix the orientation of $S^1$ so that it satisfies
\begin{equation}\label{eq:orient}
0<l_{\psi}(\bar{x}, \bar{y}, \bar{t}) = \widehat{s}(\bar{y}) - \widehat{s}(\bar{x}) \leq \frac{L_{\psi}(\bar{t})}{2},
\end{equation}
where $\widehat{s}(x)$ denotes the parameter corresponding to $x\in S^1$.
Using this parameter, we may regard the function $l_\psi(\cdot,\cdot,\bar{t}): S^1\times S^1\to \R$ as a function of two variables $\widehat{s}_1$ and $\widehat{s}_2$, and  \eqref{eq:sin0} shows that 
  \begin{align}\label{eq:sin}
\sin \left( \frac{\pi l_{\psi}(\widehat{s}_1,\widehat{s}_2,\bar{t})}{L_{\psi}(\bar t)}\right) =\sin \left(  \frac{\pi (\widehat{s}_2 - \widehat{s}_1)}{L_{\psi}(\bar{t})} \right).
\end{align}
for any $(\widehat{s}_1,\widehat{s}_2)$ sufficiently close to $(\widehat{s}(\bar{x}),\widehat{s}(\bar{y}))$. 

Now, since $\widetilde{Z}_N$ attains a new minimum at $(\bar{x}, \bar{y}, \bar{t})$, at this point we have
\begin{align*}
&\frac{\partial \widetilde{Z}_N}{\partial t} \leq 0, \quad \frac{\partial \widetilde{Z}_N}{\partial \widehat{s}_1}  (\bar{x}, \bar{y}, \bar{t}) = \frac{\partial \widetilde{Z}_N}{\partial \widehat{s}_2}  (\bar{x}, \bar{y}, \bar{t}) = 0, \quad \frac{\partial^2 \widetilde{Z}_N}{\partial \widehat{s}_1^2} \frac{\partial^2 \widetilde{Z}_N}{\partial \widehat{s}_2^2} - \left( \frac{\partial^2 \widetilde{Z}_N}{\partial \widehat{s}_1 \partial \widehat{s}_2} \right)^2 \geq 0.
\end{align*}
We set
\begin{equation*}
e_1 := e^{\widetilde{\psi}(p_1)} = e^{\widetilde{\psi}( \Gamma^1_{\bar{t}}(\bar{x}) )}, \quad e_2 := e^{\widetilde{\psi}(p_2)} = e^{\widetilde{\psi}( \Gamma^2_{\bar{t}}(\bar{y}) )}.
\end{equation*}
Then, we have 
\begin{equation*}
e_1^2 \frac{\partial^2 \widetilde{Z}_N}{\partial \widehat{s}_1^2} + e_2^2 \frac{\partial^2 \widetilde{Z}_N}{\partial \widehat{s}_2^2} \geq 2 e_1 e_2 \sqrt{ \abs{ \frac{\partial^2 \widetilde{Z}_N}{\partial \widehat{s}_1^2} \frac{\partial^2 \widetilde{Z}_N}{\partial \widehat{s}_2^2}  } } \geq 2 e_1 e_2 \abs{ \frac{\partial^2 \widetilde{Z}_N}{\partial \widehat{s}_1 \partial \widehat{s}_2} }.
\end{equation*}
Thus, using the operators $\mathcal{L}_{\pm}$ given in \eqref{def:Lpm}, we see
\begin{equation}\label{eq:pmz}
\mathcal{L}_{\pm}( \widetilde{Z}_N)\biggm\vert_{(\bar{x},\bar{y},\bar{t})} 
= \frac{\partial \widetilde{Z}_N}{\partial t} - \left( e_1^2 \frac{\partial^2 \widetilde{Z}_N}{\partial \widehat{s}_1^2} + e_2^2 \frac{\partial^2  \widetilde{Z}_N }{\partial \widehat{s}_2^2} \pm 2 e_1 e_2 \frac{\partial^2  \widetilde{Z}_N }{\partial \widehat{s}_1 \partial \widehat{s}_2} \right) \leq 0,
\end{equation}
where the upper (resp. lower) signs correspond to $\mathcal{L}_+$ (resp. $\mathcal{L}_-$).

\textbf{Step 5.} Finally, we compute $\mathcal{L}_{\pm}\widetilde{Z}_N$ at $(\overline x, \overline y, \overline t)$ directly and show that this leads to a contradiction with \eqref{eq:pmz}.  We first consider the derivatives of the second term in $\widetilde{Z}_N$, namely, we set
\[
S_\psi(x,y,t):=\frac{L_{\psi}(t)}{\pi} \sin \left( \frac{\pi l_{\psi}(x,y, t)}{L_{\psi}(t)} \right) e^{- \mathcal{K} t } \, = \, \frac{L_{\psi}(t)}{\pi}\sin \left(  \frac{\pi (\widehat{s}_2(y) - \widehat{s}_1(x))}{L_{\psi}(\bar{t})} \right)e^{-\mathcal{K}t}.
\]

\begin{lemma}\label{lem:LS}
At $(\bar{x}, \bar{y}, \bar{t})$, we have
\begin{align*}
\mathcal{L}_{\pm}(S_\psi)\leq -\Big{(}\mathcal{K} -4D^2\frac{\pi^2}{L_\psi^2}\Big{)}\cdot \frac{L_\psi}{\pi}\sin\left( \frac {\pi l_\psi}{L_{\psi}}\right) e^{- \mathcal{K} \bar{t} },
\end{align*} 
where $D:={\rm sup}_{\widetilde{U}}e^{\widetilde{\psi}}$.
\end{lemma}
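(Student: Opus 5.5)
We compute $\mathcal{L}_{\pm}(S_\psi)$ at $(\bar{x},\bar{y},\bar{t})$ directly, treating the three ingredients of $S_\psi$ — the prefactor $L_\psi(t)e^{-\mathcal{K}t}/\pi$, the argument $\theta:=\pi(\widehat{s}_2-\widehat{s}_1)/L_\psi(t)$, and the function $\sin$ — separately. By \eqref{eq:sin} we may work with $\sin(\pi(\widehat{s}_2-\widehat{s}_1)/L_\psi)$, which is smooth near $(\bar{x},\bar{y},\bar{t})$. By \eqref{eq:orient}, $\theta\in(0,\pi/2]$ at this point, so $\sin\theta>0$ and $\cos\theta\geq 0$. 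For brevity write $\Theta:=\frac{L_\psi}{\pi}\sin\theta\, e^{-\mathcal{K}\bar t}$, the quantity appearing on the right-hand side of the claim.

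\textbf{Spatial part.} Since $\partial_{\widehat{s}_2}\theta=-\partial_{\widehat{s}_1}\theta=\pi/L_\psi$, we get
\[
\partial^2_{\widehat{s}_1}S_\psi=\partial^2_{\widehat{s}_2}S_\psi=-\frac{\pi^2}{L_\psi^2}\,\Theta,\qquad \partial_{\widehat{s}_1}\partial_{\widehat{s}_2}S_\psi=+\frac{\pi^2}{L_\psi^2}\,\Theta .
\]
Hence
\[
-\Bigl(e_1^2\partial^2_{\widehat{s}_1}+e_2^2\partial^2_{\widehat{s}_2}\pm 2e_1e_2\partial_{\widehat{s}_1}\partial_{\widehat{s}_2}\Bigr)S_\psi=\frac{\pi^2}{L_\psi^2}\,(e_1^2+e_2^2\mp 2e_1e_2)\,\Theta\;\leq\;4D^2\frac{\pi^2}{L_\psi^2}\,\Theta,
\]
using $e_i\leq D$. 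This is exactly the $4D^2\pi^2/L_\psi^2$ term in the claim.

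\textbf{Time derivative.} Differentiating at fixed $x,y$ gives
\[
\partial_t S_\psi=\frac{L_\psi'}{\pi}\sin\theta\, e^{-\mathcal{K}t}-\mathcal{K}\,\Theta+\frac{L_\psi}{\pi}\cos\theta\;\partial_t\theta\; e^{-\mathcal{K}t}.
\]
The first term is $\leq 0$ because $L_\psi$ is nonincreasing by \eqref{eq:fv} and $\sin\theta>0$. It remains to control $\partial_t\theta$. Writing $\ell:=\int_{\bar x}^{\bar y}d\widehat{s}_t$, we have $\theta=\pi\ell/L_\psi$, so
\[
\partial_t\theta=\frac{\pi}{L_\psi}\Bigl(\partial_t\ell-\frac{\ell}{L_\psi}L_\psi'\Bigr).
\]
By \eqref{eq:fv} together with $d\widehat{s}=ds_\psi$, we get $\partial_t\ell=-\int_{\bar x}^{\bar y}\kappa_\psi^2\,ds_\psi$ and $L_\psi'=-\int_{S^1}\kappa_\psi^2\,ds_\psi$. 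Therefore
\[
\partial_t\ell-\frac{\ell}{L_\psi}L_\psi'=-\int_{\bar x}^{\bar y}\kappa_\psi^2\,ds_\psi+\frac{\ell}{L_\psi}\int_{S^1}\kappa_\psi^2\,ds_\psi .
\]

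\textbf{Main obstacle.} The sign of this last expression is not determined: the averaged curvature on the arc $[\bar x,\bar y]$ can be smaller than on the whole curve, and $\cos\theta\geq0$ does not help. Two routes seem available.

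\emph{First route.} Bound the bad term $\frac{\ell}{L_\psi}\int_{S^1}\kappa_\psi^2\,ds_\psi\cdot\cos\theta$ against $\sin\theta$ and the spare $L_\psi'\sin\theta$ term, via the elementary inequality $\theta\cos\theta\leq\sin\theta$ on $[0,\pi/2]$. Concretely, with $\ell/L_\psi=\theta/\pi$, the combination is
\[
\frac{L_\psi'}{\pi}\bigl(\sin\theta-\theta\cos\theta\bigr)e^{-\mathcal{K}t}-\frac{L_\psi}{\pi}\cos\theta\,\frac{\pi}{L_\psi}\int_{\bar x}^{\bar y}\kappa_\psi^2\,ds_\psi\; e^{-\mathcal{K}t},
\]
and both pieces are $\leq 0$, since $L_\psi'\leq 0$, $\sin\theta-\theta\cos\theta\geq0$ and $\cos\theta\geq0$. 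So it should suffice to absorb the $\ell$-term into the $L_\psi'$-term in exactly this way.

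\emph{Second route.} If that bookkeeping fails, use the a priori bound $L_\psi(t)\geq\delta_0>0$ together with interior curvature bounds on the compact time interval $[0,\bar t]$ to absorb the term into a larger $\mathcal{K}$.

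I expect the first route to work. Combining the spatial and time estimates then gives
\[
\mathcal{L}_\pm(S_\psi)\leq-\Bigl(\mathcal{K}-4D^2\frac{\pi^2}{L_\psi^2}\Bigr)\frac{L_\psi}{\pi}\sin\Bigl(\frac{\pi l_\psi}{L_\psi}\Bigr)e^{-\mathcal{K}\bar t},
\]
which is the claim.
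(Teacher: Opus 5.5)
Your first route is exactly the paper's argument. After substituting $\ell/L_\psi=\theta/\pi$, the paper writes $\partial_t S_\psi$ as $-\mathcal{K}\Theta$ plus $\frac{L_\psi'}{\pi}(\sin\theta-\theta\cos\theta)e^{-\mathcal{K}\bar t}$ plus $-\cos\theta\,e^{-\mathcal{K}\bar t}\int_{\bar x}^{\bar y}\kappa_\psi^2\,ds_\psi$, discards the last two terms by the same sign facts you use, and obtains the spatial bound $\frac{\pi^2}{L_\psi^2}(e_1\mp e_2)^2\Theta\leq 4D^2\frac{\pi^2}{L_\psi^2}\Theta$ just as you do.

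So your proof is complete once you recombine the terms that way. The ``main obstacle'' only appears because you split off the $L_\psi'\sin\theta$ term too early. Drop the second route: it is circular, since $\bar t$ is fixed only after $\mathcal{K}$ is chosen, and it would not give the stated inequality anyway.
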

\begin{proof}
Recall that we have the expression \eqref{eq:sin0}. Using this and the facts
\begin{align*}
\frac{\p}{\p t}\int_x^yd\widehat{s}_t
= - \int_x^y \kappa_{\psi}^2 d \widehat{s}_{\bar{t}},\quad 
\frac{\partial L_{\psi}}{\partial t} (\bar{t}) 
= - \int_{S^1} \kappa_{\psi}^2 d \widehat{s}_{\bar{t}},
\end{align*}
which follow from \eqref{eq:fv}, a direct computation shows that 
\begin{align*}
\frac{\partial S_\psi}{\partial t} 
&=- e^{- \mathcal{K} \bar{t}}  \left\{  \frac{ 1 }{\pi}  \sin \left(\frac{\pi l_{\psi}}{L_{\psi}}\right) - \frac{l_{\psi}}{ L_{\psi}} \cos \left(\frac{\pi l_{\psi}}{L_{\psi}}\right)  \right\} \int_{\mathbb{S}^1} \kappa_{\psi}^2 d \widehat{s}_{\bar{t}}  \\
&- e^{- \mathcal{K} \bar{t}} \cos \left(\frac{\pi l_{\psi}}{L_{\psi}}\right) \int_x^y \kappa_{\psi}^2 d \widehat{s}_{\bar{t}}-\mathcal{K} \cdot \frac{L_{\psi}}{\pi} \sin \left(  \frac{\pi l_{\psi}}{L_{\psi}}  \right)e^{- \mathcal{K} \bar{t}}\\
&\leq -\mathcal{K} \cdot \frac{L_{\psi}}{\pi} \sin \left(  \frac{\pi l_{\psi}}{L_{\psi}}  \right)e^{- \mathcal{K} \bar{t}}
\end{align*}
since we have
\begin{equation*}
\frac{1 }{\pi}  \sin \left(\frac{\pi l_{\psi}}{L_{\psi}} \right) - \frac{ l_{\psi} }{ L_{\psi} } \cos \left(\frac{\pi l_{\psi}}{L_{\psi}} \right)
\geq 0,\quad  \cos \left(\frac{\pi l_{\psi}}{L_{\psi}}\right)\geq 0
\end{equation*}
owing to the assumption that  $0<l_\psi/L_\psi\leq 1/2$ at $(\bar{x}, \bar{y}, \bar{t})$ (see \eqref{eq:orient}). 

Moreover, by using  the expression \eqref{eq:sin}, we have
\begin{align}\label{eq:dS}
\frac{\partial  S_\psi}{\partial \widehat{s}_1}  
=  -\cos \left( \frac{\pi (\widehat{s}_2-\widehat{s}_1)}{L_{\psi}}\right) e^{- \mathcal{K} \bar{t} } ,
\qquad 
\frac{\partial S_\psi}{\partial \widehat{s}_2} 
= 
\cos \left( \frac{\pi(\widehat{s}_2-\widehat{s}_1)}{L_{\psi}}\right) e^{- \mathcal{K} \bar{t} }.
\end{align}
and hence, we see
\begin{align*}
e_1^2 \frac{\partial^2 S_\psi}{\partial \widehat{s}_1^2} + e_2^2 \frac{\partial^2  S_\psi }{\partial \widehat{s}_2^2} \pm 2 e_1 e_2 \frac{\partial^2 S_\psi}{\partial \widehat{s}_1 \partial \widehat{s}_2}
\,&=\,-(e_1\mp e_2)^2\frac{\pi}{L_\psi}\sin\left( \frac {\pi l_\psi}{L_{\psi}}\right) e^{- \mathcal{K} \bar{t} }
\,\geq\, - 4D^2\frac{\pi}{L_\psi}\sin\left( \frac {\pi l_\psi}{L_{\psi}}\right) e^{- \mathcal{K} \bar{t} }
\end{align*}
at $(\bar{x}, \bar{y}, \bar{t})$, where we again used $0 < l_\psi/L_\psi\leq 1/2$. Thus, we obtain the lemma. 
\end{proof}

For the derivatives of the first term $N\widetilde{d_\psi}$  in $\widetilde{Z}_N$, we can apply Lemmas \ref{lem:J} and \ref{lem:formulas} since we choose the constant $d_0$ so that $d_\psi(\bar{x},\bar{y},\bar{t})<d_0<1/2{\widehat{K}_{\mathcal{O}}^{1/2}}$.
 Moreover, by a similar argument given in the proof of Proposition \ref{prop:key}, we obtain the following estimate:
By Lemma \ref{lem:formulas}, we see that 
\[
\frac{\partial  \widetilde{Z}_N}{\partial \widehat{s}_1}=-N\widehat{g}(\widehat{T}_1,\tau_1) -\frac{\partial  S_\psi}{\partial \widehat{s}_1}  ,\quad\frac{\partial  \widetilde{Z}_N}{\partial \widehat{s}_2}=N\widehat{g}(\widehat{T}_2,\tau_2) -\frac{\partial  S_\psi}{\partial \widehat{s}_2}. 
\]
Since $\partial  \widetilde{Z}_N/\partial \widehat{s}_1=\partial  \widetilde{Z}_N/\partial \widehat{s}_2=0$ at $(\bar{x},\bar{y},\bar{t})$ and we have \eqref{eq:dS}, we see that 
\[
\widehat{g}(\widehat{T}_1, \tau_1) = \widehat{g}(\widehat{T}_2, \tau_2)=\frac{1}{N}\cos \left( \frac{\pi l_\psi}{L_{\psi}}\right) e^{- \mathcal{K} \bar{t}} \geq 0.
\]
This also implies that we have either 
\[\widehat{g}(\widehat{T}_1, \nu_1) =  \widehat{g}(\widehat{T}_2, \nu_2)\quad  \textup{or}\quad \widehat{g}(\widehat{T}_1, \nu_1) = -\widehat{g}(\widehat{T}_2, \nu_2)\]
 (see subsection \ref{ssec:dist} for the definition of $\{\tau_i,\nu_i\}$). We first suppose the case when $\widehat{g}(\widehat{T}_1, \nu_1) =  \widehat{g}(\widehat{T}_2, \nu_2)$. Then, by a similar  argument given in the proof of Proposition \ref{prop:key} (see \eqref{eq:Lp}), we have 

\begin{align}\label{eq:Lp2}
\mathcal{L}_{+}(N\widetilde{d_\psi}) \bigg\vert_{ (\bar{x}, \bar{y}, \bar{t})} 
&\geq \left(-2D^2 \widehat{K}_{\mathcal{O}}- 2 D^2 E^2 \right) N\widetilde{d_\psi},\\
\implies \mathcal{L}_{+} \widetilde{Z}_N \bigg\vert_{ (\bar{x}, \bar{y}, \bar{t})} 
&\geq \left( \mathcal{K} - 2D^2 \widehat{K}_{\mathcal{O}} - 2 D^2 E^2- 4D^2 \frac{\pi^2}{L_\psi^2} \right) \frac{L_{\psi}}{\pi} \sin \left(  \frac{\pi l_{\psi}}{L_{\psi}}   \right) e^{- \mathcal{K} \bar{t}}.
\end{align}
where $D:={\rm sup}_{\widetilde{U}}e^{\widetilde{\psi}}$ and $E:={\rm sup}_{\widetilde{U}}|\widehat{\nabla}{\widetilde{\psi}}|_{\widetilde{\psi}}$.

Moreover, since we assume that $L_\psi(T_{\max})=\lim_{t\to T_{\max}}L_\psi(t)>0$, there exists a constant $\delta_0$ such that $L_\psi(\bar{t})\geq \delta_0$.  Therefore, if we  take the constant $\mathcal{K}$ sufficiently large so that 
\begin{equation*}
\mathcal{K} 
> 2 D^2 \widehat{K}_{\mathcal{O}} + 2 D^2 E^2 + 4 D^2 \frac{\pi^2}{\delta_0^2},
\end{equation*}
then we see $\mathcal{L}_+\widetilde{Z}_N>0$ at $(\bar{x}, \bar{y}, \bar{t})$.
This, however, contradicts \eqref{eq:pmz}. The case that $\widehat{g}(\widehat{T}_1, \tau_1) =- \widehat{g}(\widehat{T}_2, \tau_2)$ also leads to a contradiction by using $\mathcal{L}_{-} \widetilde{Z}_N$. This completes the proof.

\section{Estimates for a normal graph}\label{app:norgra}

In this appendix, we give  a proof of the following lemma, which is used in the proof of Theorem \ref{thm:main2}.

\begin{lemma}\label{lem:ba}
Let $(P,g)$ be a smooth compact Riemannian manifold and $\Sigma$ a compact hypersurface in $P$ with globally defined unit normal vector field $N$. For a smooth function $u\in C^\infty(\Sigma)$, we consider the normal graph over $\Sigma$
\[
G_u: \Sigma\to P,\quad G_u(p):={\rm exp}_p(u(p)N_p). 
\]
Let $\mathcal{U}_R=\{u\in C^{2,\alpha}(\Sigma)\mid \|u\|_{C^{2,\alpha}}<R\}$ be a small neighbourhood of the origin in $C^{2,\alpha}$ such that $G_u$ is an embedding for any $u\in \mathcal{U}_R$. 
We denote the second fundamental form of the normal graph $\Sigma_u=G_u(\Sigma)$ in $P$  by $B_u$.  

Then, for any $k\geq 2$, there exists a polynomial $P_k$ depending only on $\Sigma$, $\mathcal{U}_R$ and $(P,g)$ such that 
\[
\|u\|_{C^k(\Sigma)}\leq P_k\left(
1,
\|B_u\|,
\|\nabla^uB_u\|,
\ldots,
\|(\nabla^u)^{k-2}B_u\|
\right),\quad {\rm for}\quad \forall u\in \mathcal{U}_R,
\]
where ${\nabla}^u$ is the normal connection on $\Sigma_u$ with respect to the induced metric $G_u^*g$, and $\|(\nabla^u)^mB_u\|$ denotes the $C^0$-norm with respect to $G_u^*g$.
\end{lemma}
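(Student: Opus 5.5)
The plan is to work in Fermi coordinates adapted to $\Sigma$ and to read off $u$ from the second fundamental form of its graph, one derivative at a time. Since $\Sigma$ is compact, I would fix a finite atlas $\{(V_a,(x_1,\ldots,x_{n}))\}$ of $\Sigma$ and use the coordinates $(x,r)\mapsto {\rm exp}_x(rN_x)$ on a tubular neighbourhood. In these coordinates $g=\widetilde g_{ij}(x,r)dx^idx^j+dr^2$, with $\widetilde g_{ij}$ smooth and all of their derivatives uniformly bounded on the compact set $\overline{V_a}\times[-\epsilon,\epsilon]$, and $\Sigma_u$ is locally the graph $\{r=u(x)\}$. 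Shrinking $\mathcal{U}_R$ if needed, membership in $\mathcal{U}_R$ gives uniform bounds on $|u|$, $|\partial u|$ and $|\partial^2 u|$. Hence the induced metric $(G_u^*g)_{ij}=\widetilde g_{ij}(x,u)+\partial_iu\,\partial_ju$ is uniformly equivalent to the fixed metric on $\Sigma$. This already handles the $C^0$ and $C^1$ parts of $\|u\|_{C^k}$, and it also lets me compare $C^0$-norms taken with respect to $G_u^*g$ and with respect to the coordinate metric, up to fixed constants.

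For $k=2$, I would compute the unit normal $\nu_u=(1+|\nabla u|^2_{\widetilde g})^{-1/2}(\partial_r-\widetilde g^{ij}\partial_ju\,\partial_i)$ and the scalar second fundamental form $h_{ij}=g(\onab_{\partial_i G_u}\partial_jG_u,\nu_u)$. The claim to verify is the structural identity
\[
h_{ij}=(1+|\nabla u|^2)^{-1/2}\bigl(-\partial_i\partial_ju+Q_{ij}(x,u,\partial u)\bigr),
\]
where $Q_{ij}$ is built from the Christoffel symbols of $g$ and is smooth in its arguments. Solving for $\partial_i\partial_j u$ gives $|\partial^2u|\le C(1+\|B_u\|)$, and this is $P_2$.

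For general $k$ the argument is an induction on $m=k-2$. Differentiating the identity above $m$ times in coordinates gives $\partial^{m+2}u=-(1+|\nabla u|^2)^{1/2}\partial^m h+R_m$, where $R_m$ is a polynomial in $\partial^{\le m+1}u$ whose coefficients are smooth functions of $(x,u)$. Separately, I would express $\partial^m h$ through $(\nabla^u)^{\le m}B_u$. The Christoffel symbols of $G_u^*g$ involve $\partial^{\le 2}u$, and converting $\partial^m h$ into covariant derivatives introduces at most $\partial^{m+1}u$. So $\partial^m h$ is a polynomial in $(\nabla^u)^{j}B_u$ for $j\le m$ and in $\partial^{\le m+1}u$. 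Feeding in the inductive bounds on $\partial^{\le m+1}u$ from the earlier steps produces the polynomial $P_k$. Taking the maximum over the finitely many charts then makes the constants depend only on $\Sigma$, $(P,g)$ and $\mathcal{U}_R$.

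I expect the main obstacle to be the bookkeeping in this last conversion. The key point is that the top-order derivative $\partial^{m+2}u$ appears only linearly, with the uniformly invertible coefficient $(1+|\nabla u|^2)^{-1/2}$. All other terms involve at most $m+1$ derivatives of $u$ and enter polynomially. I would organise this by a "$\ast$-product" notation, writing each quantity as a contraction of tensors with bounded coefficients. The induction would then be checked only at the level of which derivative orders can occur, not via explicit formulas.
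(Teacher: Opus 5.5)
Your proposal is correct and follows essentially the same route as the paper. Both work in Fermi coordinates around $\Sigma$ and use the graph identity expressing $\partial_i\partial_j u$ as $n_u$ times the second fundamental form plus a smooth function of $(x,u,\partial u)$ and the ambient Christoffel symbols, then differentiate, convert to $\nabla^u$-derivatives, and induct, with the top-order derivative of $u$ entering only linearly (the paper takes $k=2$ directly from $u\in\mathcal{U}_R$, writes out only $k=3$, and leaves the induction implicit). One harmless slip: with your $\nu_u$ and $h_{ij}=g(\onab_{\partial_iG_u}\partial_jG_u,\nu_u)$, the $(\partial_i\partial_j u)\partial_r$ term pairs to $+n_u^{-1}\partial_i\partial_j u$, so the sign in your structural identity should be $+$; this does not affect any of the estimates.
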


\begin{proof}
The claim is obvious for $k=2$ since $u\in \mathcal{U}_R$. We prove the claim for $k=3$. 

Let $g_0$ be the induced metric on $\Sigma$ and denote by $\nabla$ the Levi-Civita connection of $g_0$. We write
$
g_u:=G_u^*g
$
and denote by $\nabla^u$ the Levi-Civita connection of $g_u$ on
$\Sigma$. 

We may assume that any graph $\Sigma_u$ for $u\in \mathcal{U}_R$ is contained in a fixed tubular neighbourhood $\mathcal{N}$ of $\Sigma$. 
Let
\[
\Phi:\Sigma\times(-\varepsilon,\varepsilon)\to \mathcal{N},
\quad
\Phi(p,r)=\exp_p(rN_p)
\]
be the corresponding diffeomorphism.

We derive the expression of the second fundamental form $B_u$ for $\Sigma_u$. Let $(x_1,\ldots, x_n)$ be a local coordinate of $\Sigma$ and we take the corresponding Fermi coordinate $(x_1,\ldots, x_n,r)$ on $\mathcal{N}$. We put $\p_i(p,r):=\p/\p x_i|_{(p,r)}$ and $\p_r(p,r):=\p/\p r|_{(p,r)}$. Note that $\langle \p_i,\p_r\rangle=0$ and $\langle \p_r,\p_r\rangle=1$. We put
\[
g_{ij}(p,r):=\langle\p_i,\p_j\rangle_{(p,r)}
\]
and denote by $(g^{ij}(p,r))$ the inverse matrix of $(g_{ij}(p,r))$.

We see that the tangent space of $\Sigma_u$ is spanned by
\[
e_i(p,u(p)):=(G_u)_*\p_i(p,0)
=
\p_i+(\p_iu)\p_r,\quad i=1,\ldots, n,
\]
and a unit normal vector field $N_u$ of  the graph $\Sigma_u$ is given by
\[
N_u
=
\frac{1}{n_u}
\left(
\p_r
-
g^{ij}(p,u(p))(\p_j u)\p_i
\right),
\quad
n_u
=
\sqrt{
1+
g^{ij}(p,u(p))(\p_i u)(\p_j u)
}.
\]
Note that $N_u$ (and $n_u$) smoothly depends on $p$, $u$ and $\nabla u$.

In order to compute $\overline\nabla_{e_i}e_j$, we extend $u$ as a function on $\mathcal{N}$ satisfying $u(p,r)=u(p)$. Then $\p_r u=0$. Since
$
\overline\nabla_{\partial_r}\partial_r=0
$
and
$
\overline\nabla_{\partial_r}\partial_i
=
\overline\nabla_{\partial_i}\partial_r,
$
where $\onab$ is the Levi-Civita connection of $(P,g)$, 
we obtain
\[
\overline\nabla_{e_i}e_j
=
\overline\nabla_{\partial_i}\partial_j
+
(\p_i u)\overline\nabla_{\partial_j}\partial_r
+
(\p_j u)\overline\nabla_{\partial_i}\partial_r
+
(\p_i\p_j u)\partial_r.
\]
Consequently,
\begin{align*}
\langle B_u(e_i,e_j), N_u\rangle
&=
\frac{1}{n_u}
(\p_i\p_j u)
+
\langle
\overline\nabla_{\partial_i}\partial_j
+
(\p_i u)\overline\nabla_{\partial_j}\partial_r
+
(\p_j u)\overline\nabla_{\partial_i}\partial_r,
\,
N_u
\rangle
\end{align*}
and this shows that we have
\begin{align}\label{eq:nab2}
(\nabla^2 u)(\p_i,\p_j)=\langle B_u(e_i,e_j), n_uN_u\rangle+O_{ij}(u,\nabla u, \overline\Gamma),
\end{align}
where $O_{ij}$ is a smooth function of $u$, $\nabla u$ and the Christoffel symbols of $\overline{\nabla}$.  

We consider the derivative of  \eqref{eq:nab2} in the direction of $\p_k$. A direct computation shows that 
\begin{align}\label{eq:nab3}
\p_k(\nabla^2 u)(\p_i,\p_j)
={}&
\left\langle
(\nabla^uB_u)(e_k,e_i,e_j),n_uN_u
\right\rangle \\
&+
(\Gamma^u)_{ki}^l
\left\langle
B_u(e_l,e_j),n_uN_u
\right\rangle \nonumber\\
&+
(\Gamma^u)_{kj}^l
\left\langle
B_u(e_i,e_l),n_uN_u
\right\rangle \nonumber\\
&+
\langle B_u(e_i,e_j),\overline{\nabla}_{e_k}(n_uN_u)\rangle
+e_k(O_{ij}(u,\nabla u, \overline{\Gamma})), \nonumber
\end{align}
where $\nabla_{e_k}^ue_i=(\Gamma^u)_{ki}^le_l$.  Since the induced metric $g_u$ is given by
\[
(g_u)_{ij}
=
g(e_i,e_j)
=
g_{ij}+(\p_i u)(\p_j u),
\]
 the Christoffel symbols $(\Gamma^u)_{ki}^l$ of $\nabla^u$
depend smoothly on $p, u, \nabla u$ and $\nabla^2u$. Moreover, we see that the last two terms of the RHS of \eqref{eq:nab3} depend on $p, u, \nabla u$ and $\nabla^2u$, $\overline{\Gamma}$ and the derivatives of $\overline{\Gamma}$. Therefore, we may write 
\begin{align}\label{eq:nab32}
\p_k(\nabla^2 u)(\p_i,\p_j)=
\left\langle(\nabla^uB_u)(e_k,e_i,e_j),n_uN_u\right\rangle
+O_{ijk}(B_u,u,\nabla u, \nabla^2 u, \overline{\Gamma}, \p\overline{\Gamma})
\end{align}
for any $i,j,k=1,\ldots, n$.

On the other hand, we have
\begin{align}\label{eq:nab33}
(\nabla^3u)(\p_k,\p_i,\p_j)
=
\p_k\bigl((\nabla^2u)(\p_i,\p_j)\bigr)
-
(\nabla^2u)(\nabla_{\p_k}\p_i,\p_j)
-
(\nabla^2u)(\p_i,\nabla_{\p_k}\p_j).
\end{align}
Since $\nabla$ is a fixed connection on the compact manifold $\Sigma$, \eqref{eq:nab2}--\eqref{eq:nab33} imply that there exists a polynomial $P_3$ depending only on $\Sigma$, $\mathcal{U}_R$ and $(P,g)$ such that
\[
\|u\|_{C^3(\Sigma)}
\leq
P_3\left(
1,
\|B_u\|,
\|\nabla^uB_u\|
\right),
\quad \forall u\in \mathcal{U}_R.
\]
This proves the claim in the case when $k=3$. 

We obtain the desired claim for every $k\geq 2$ by an inductive argument, and thus, we omit it.
\end{proof}

\bibliographystyle{abbrv}

\bibliography{main}

\end{document}